\documentclass[11pt,a4paper]{amsart}
\usepackage[margin=1in]{geometry}
\usepackage{amsmath,amssymb,amsthm,mathtools}
\usepackage[T1]{fontenc}
\usepackage{lmodern}
\usepackage[colorlinks=true,linkcolor=blue,urlcolor=blue,citecolor=blue]{hyperref}
\usepackage{enumitem}
\usepackage{graphicx}
\usepackage{tikz}
\usepackage{needspace}
\usepackage{xcolor}
\usepackage{comment}
\newenvironment{mathematicalcorrection}{\begingroup\color{red}}{\endgroup}
\newcommand{\mathfix}[1]{{\color{red}#1}}
\newcommand{\R}{\mathbb R}
\newcommand{\eps}{\varepsilon}
\newcommand{\Cyl}{C_{3,3}\times\R}
\DeclareMathOperator{\Sing}{Sing}

\newtheorem{theorem}{Theorem}[section]
\newtheorem{lemma}[theorem]{Lemma}
\newtheorem{proposition}[theorem]{Proposition}
\newtheorem{corollary}[theorem]{Corollary}
\newtheorem{conjecture}{Follow-up question}
\theoremstyle{definition}
\newtheorem{definition}[theorem]{Definition}
\theoremstyle{remark}
\newtheorem{remark}[theorem]{Remark}
\newtheorem*{remark*}{Remark}

\makeatletter
\renewcommand{\tocsection}[3]{%
  \indentlabel{\@ifnotempty{#2}{\bfseries\ignorespaces\makebox[\@ifempty{#1}{25pt}{75pt}][l]{#1 #2\quad}}}\bfseries#3}
\renewcommand{\tocsubsection}[3]{%
  \indentlabel{\@ifnotempty{#2}{\ignorespaces\makebox[30pt][l]{#1 #2\quad}}}#3}
\newcommand\@dotsep{4}
\def\@tocline#1#2#3#4#5#6#7{\relax
  \ifnum #1>\c@tocdepth \else
    \par \addpenalty\@secpenalty\addvspace{#2}%
    \begingroup \hyphenpenalty\@M
    \@ifempty{#4}{\@tempdima\csname r@tocindent\number#1\endcsname\relax}{\@tempdima#4\relax}%
    \parindent\z@ \leftskip#3\relax \advance\leftskip\@tempdima\relax
    \rightskip\@pnumwidth plus1em \parfillskip-\@pnumwidth
    #5\leavevmode\hskip-\@tempdima{#6}\nobreak
    \leaders\hbox{$\m@th\mkern \@dotsep mu\hbox{.}\mkern \@dotsep mu$}\hfill
    \nobreak
    \hbox to\@pnumwidth{\@tocpagenum{\ifnum#1=1\bfseries\fi#7}}\par
    \nobreak
    \endgroup
  \fi}
\def\l@section{\@tocline{1}{0pt}{1pc}{25pt}{}}
\def\l@subsection{\@tocline{2}{0pt}{2pc}{30pt}{}}
\makeatother

\title[Conical to cylindrical singularities]{On Cylindrical singularities of minimal hypersurfaces }
\author[A. Halavati]{Aria Halavati}
\address{Department of Decision Sciences and BIDSA, Bocconi University, Milano, Italy}
\email{aria.halavati@unibocconi.it}

\author[L. Spolaor]{Luca Spolaor}
\address{Department of Mathematics, UC San Diego, AP\&M, La Jolla, California, USA}
\email{lspolaor@ucsd.edu}

\begin{document}
\begin{abstract}
In a neighborhood of the origin in $\R^9$, we construct a minimal
hypersurface with respect to a smooth conformally
Euclidean metric, whose singular set consists of the origin and a
sequence of isolated singular points converging to it. At each of
the isolated singular points the hypersurface has the unique tangent
cone $C_{4,3}$, the cone over
$S^4(\sqrt{4/7})\times S^3(\sqrt{3/7})$, whereas at the origin its
unique tangent cone is the cone $C_{3,3}\times\R$. The metric is Euclidean near each isolated singular
point and agrees with the Euclidean metric to infinite order at the
origin. A key step, of independent interest, is the construction of
an entire stationary hypersurface in Euclidean $\R^9$ with exactly
one singular point at the origin, at which its unique tangent cone is $C_{4,3}$,
and whose unique tangent cone at infinity is $C_{3,3}\times\R$.
\end{abstract}
\maketitle

\medskip
\begingroup\small
\tableofcontents
\endgroup

\section{Introduction}\label{sec:introduction}
For an area-minimizing hypersurface of dimension $n$, Federer's
dimension-reduction theorem bounds the Hausdorff dimension of the
interior singular set by $n-7$ \cite{Fed}. Such hypersurfaces are
therefore smooth in ambient dimensions at most seven. Singularities
first occur in ambient dimension eight, as exemplified by the Simons
cone, proved to be area-minimizing by Bombieri--De Giorgi--Giusti
\cite{BDGG}. Its Euclidean products give higher-dimensional singular
sets, whose possible geometry becomes increasingly complicated as
the dimension grows. In general, the strongest available structural
conclusions are rectifiability and quantitative measure bounds:
for $n\ge7$, Simon \cite{Sim95} established countable $(n-7)$-rectifiability of
the singular set, and Naber--Valtorta \cite{NV} proved local
finiteness and quantitative bounds for its $(n-7)$-dimensional
Hausdorff measure. A finer local geometric description remains
unavailable in general.

The geometry of a minimal hypersurface near a singular point is
described to first order by its tangent cones. If one tangent cone
has multiplicity one and is regular, meaning that it is smooth away
from its vertex, the situation is well understood. Building on the
integrable case treated by Allard--Almgren \cite{AA}, Simon's
asymptotic \L ojasiewicz theory \cite{Sim83,Sim85} proves uniqueness of the tangent
cone and represents the hypersurface as a normal graph over it in
a punctured neighborhood of the singular point: that is in a neighborhood of the singular point the minimal hypersurface is homeomorphic to its tangent model. See also Adams--Simon \cite{AS} for convergence rates
and Engelstein--Spolaor--Velichkov \cite{ESV} for the corresponding
regularity theory for almost area-minimizing currents.

Much less is understood at points with cylindrical tangent cones,
that is, cones of the form $C\times\R^\ell$, where $C$ has an
isolated vertex singularity and $\ell\ge1$. The singular set
$\{0\}\times\R^\ell$ of such a cone, called spine, is not isolated, and
convergence to the cone away from the spine does not by itself
describe the hypersurface near the spine itself.
For quadratic cylindrical cones with a one-dimensional spine, a major
breakthrough was Sz\'ekelyhidi's uniqueness theorem for the Simons
case $C_{3,3}\times\R$ \cite{Sz}, beyond the integrable cases
covered by Simon \cite{Sim94}. Firester--Tsiamis--Wang
\cite{FTWUniqueness} subsequently treated $C_{2,4}\times\R$.
Together these works establish uniqueness for multiplicity-one
tangent cones $C_{p,q}\times\R$ of area-minimizing hypersurfaces.

Uniqueness of the tangent cone leaves open the structure of the
singular set near the point. In a striking construction, Simon
\cite{SimPrescribed} proved the existence of a stable minimal hypersurface in a smooth non euclidean metric in $\R^9$ whose singular set is an arbitrary closed
subset of the singular axis of the Simons cone times $\R$. In Euclidean space, Sz\'ekelyhidi \cite{SzCyl} constructed hypersurfaces with an isolated
singular point whose unique tangent cone is nevertheless cylindrical;
his construction gives area-minimizing examples when the
cross-section is strictly minimizing and strictly stable.
Thus the nearby singular set need not resemble the spine of its
tangent cone. Beyond such constructions, general structural results
such as \cite{Sim93}, and classifications under additional symmetry
\cite{SzCyl}, a detailed local description remains largely open,
even for quadratic cylindrical tangent cones.

Motivated by the question of better understanding the behavior of area minimizing hypersurfaces close to a cylindrical tangent cone, in this note we prove the following theorem. In what follows we work in $\R^9=\R_x^4\times\R_z^4\times\R_y$ and, for integers $p,q\geq 1$ we use the convention
\[
 C_{p,q}=\{(x,z)\in\R^{p+1}\times\R^{q+1}:
                  q|x|^2=p|z|^2\}.
\]
The two cones relevant to this paper are
\[
 V=C_{3,3}\times\R=\{|z|=|x|\},\qquad
 C_{4,3}=\{4|z|^2=3(|x|^2+y^2)\}.
\]
The first has a singular axis; the second is singular only at its
vertex.

\begin{theorem}[Accumulating singularities]
\label{thm:main-accumulation}
There exist an open neighborhood $\mathcal U$ of zero in $\R^9$,
a conformally Euclidean $C^\infty$ metric $g$ on $\mathcal U$,
and an embedded multiplicity-one hypersurface $\Sigma$,
stationary with respect to $g$, such that, for some $k_0$,
\[
 \Sing\Sigma=\{0\}\cup\{p_k:k\ge k_0\},
 \qquad p_k=(0,0,2^{-k}),
\]
and
\[
 \operatorname{Tan}(\Sigma,0)=\{C_{3,3}\times\R\},
 \qquad
 \operatorname{Tan}(\Sigma,p_k)=\{C_{4,3}\}.
\]
All these tangent cones have multiplicity one.

The metric is Euclidean near each $p_k$ and agrees with the
Euclidean metric to infinite order at zero:
\[
 D^j(g-g_{\rm euc})(0)=0
 \qquad\text{for every integer }j\ge0.
\]
More precisely, for every pair of nonnegative integers $m,N$,
there is a constant $C_{m,N}$ such that
\[
 \|g-g_{\rm euc}\|_{C^m(B_\rho(0))}
 \le C_{m,N}\rho^N
\]
for all sufficiently small $\rho>0$.
In particular, the metric can be made arbitrarily close to
Euclidean in any prescribed finite collection of $C^m$ norms
by shrinking $\mathcal U$.
\end{theorem}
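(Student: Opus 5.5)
The plan is to build $\Sigma$ by gluing rescaled copies of a single ``transition'' hypersurface, namely the entire stationary hypersurface $M\subset\R^9$ announced in the abstract: $M$ has exactly one singular point, at $0$, with unique tangent cone $C_{4,3}$, and its unique tangent cone at infinity is $V=C_{3,3}\times\R$. I will take the construction of $M$ as the key step and proceed in order: (i) construct $M$ with quantitative asymptotics at infinity; (ii) place scaled copies of $M$ at the points $p_k$; (iii) interpolate between consecutive copies along pieces that are graphs over $V$; (iv) absorb the resulting error with a conformal factor.

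For (i), I would work in the $O(4)\times O(4)$-invariant class, so that a hypersurface is described by a curve in the quarter-space $\{(r,s,y): r=|x|,\ s=|z|\ge0\}$. The stationarity equation then becomes a geodesic-type ODE/PDE in the three variables $(r,s,y)$ for the weighted length $\int r^3s^3$. $C_{4,3}$ fits this framework because $O(5)\supset O(4)$, with $|x|^2+y^2=r^2+y^2$. I would look for $M$ as a graph over $V$ outside a compact set, of the form $u(r,y)\sim \rho^{\gamma}\phi$ with $\rho$ the distance to the spine. Near $0$ it would be a graph over $C_{4,3}$. Compactly, I would connect these two ends by a continuity or shooting argument in the symmetry class. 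The decay rates at both ends come from Simon's asymptotic analysis, since both cones are regular modulo the spine. I expect this step to be the main obstacle. Concretely, one needs existence of an entire solution that interpolates between the two cones and has no extra singularities, together with control of the rate at which it approaches $V$ at infinity, at least a polynomial rate $|X|^{-\alpha}$ in rescaled $C^{2,\beta}$ norms. As a first attempt I would minimize in the symmetry class with boundary data given by $V$ on large balls, centred so that the $y$-axis singularity is forced toward a single point. I would then pass to the limit and identify the tangent cones by monotonicity and the classification of invariant cones.

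For (ii)--(iii), I set $\Sigma_k=p_k+\lambda_k M$ with scales $\lambda_k=2^{-k}\epsilon_k$, where $\epsilon_k\to0$ superpolynomially, for instance $\epsilon_k=2^{-k^2}$ or faster. On the annular region $A_k=\{X: \lambda_k R\le |X-p_k|\le 2^{-k-2}\}$, step (i) makes $\Sigma_k$ a graph $u_k$ over $V$ with $|u_k|\lesssim \lambda_k^{\alpha}\,|X-p_k|^{1-\alpha}$. This bound is tiny relative to the scale $2^{-k}$. I then define $\Sigma$ as the graph over $V$ of $\sum_k\chi_k u_k$, with cutoffs $\chi_k$ supported where $|y-2^{-k}|\lesssim 2^{-k-2}$ and smoothly joined to the plain cone $V$ in between. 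Near each $p_k$, inside $B_{\lambda_kR}(p_k)$, I keep $\Sigma_k$ exactly. This hypersurface is embedded with multiplicity one, and its singular set is $\{0\}\cup\{p_k\}$. Since $V$ is exactly minimal, the mean curvature of the glued surface is supported in the transition regions. In scale-invariant $C^m$ norms it is bounded by $C_m\epsilon_k^{\alpha}$ at scale $2^{-k}$.

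For (iv), I look for $g=e^{2f}g_{\rm euc}$. Under such a change, $H_g=e^{-f}(H_{\rm euc}+8\,\partial_\nu f)$. So it suffices to find a smooth $f$, vanishing near each $p_k$ and away from the transition regions, with $\partial_\nu f=-H_{\rm euc}/8$ on $\Sigma$. I would take $f=-\tfrac18 H\,d_\Sigma\,\psi$, where $d_\Sigma$ is the signed distance and $\psi$ cuts off to a tubular neighbourhood of width $\sim 2^{-k}$. The tubular neighbourhood is uniform at scale $2^{-k}$ on the smooth transition pieces, since these stay a definite distance from the spine there. Then $\|D^m f\|$ at scale $2^{-k}$ is at most $C_m\epsilon_k^{\alpha}2^{km}$. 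Because $\epsilon_k$ decays faster than any power of $2^{-k}$, this gives $\|g-g_{\rm euc}\|_{C^m(B_\rho)}\le C_{m,N}\rho^N$, hence infinite-order flatness and smoothness at $0$. The metric is Euclidean near $p_k$, so the tangent cone there is the one of $M$ at $0$, namely $C_{4,3}$. At $0$, blow-ups of $\Sigma$ converge to $V$: the graph functions tend to zero in scale-invariant norms, and the tiny balls around the $p_k$ converge to the spine. By monotonicity, valid since $g$ is asymptotically Euclidean, this gives $\operatorname{Tan}(\Sigma,0)=\{V\}$ with multiplicity one.
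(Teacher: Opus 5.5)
There is a genuine gap, and it sits on the spine $\{x=z=0\}$. It breaks steps (ii)--(iv).

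\textbf{The connector is not a graph over $V$ near the spine.} It is a normal graph over $V$ only on regular sectors $\{r\ge\theta\rho\}$. Near the spine, at every height $Y\ne0$, it is a smooth cap modelled on the Hardt--Simon leaf of radius $a_\infty(Y)=u_\infty(0,Y)\sim\kappa_H|Y|^{2/3}>0$. Its normal height over $V$ behaves like $AJ_2=A(y^2/r^2-\tfrac13)$, which is unbounded as $r\to0$. Consequences:
\begin{itemize}
\item There is no uniform rate $|X|^{-\alpha}$ in rescaled $C^{2,\beta}$ norms, since $R^{-1}M$ has curvature of order $R^{1/3}$ on the axis.
\item Your bound $|u_k|\lesssim\lambda_k^\alpha|X-p_k|^{1-\alpha}$ fails near the spine.
\item The annuli $A_k$ contain spine points $(0,0,y)$, so the transition regions do not stay a definite distance from the spine.
\item Joining to the plain cone $V$ between insertions puts whole spine intervals into $\Sing\Sigma$, contradicting $\Sing\Sigma=\{0\}\cup\{p_k\}$.
\end{itemize}
To keep the spine regular you must keep a positive smoothing radius at every $y\ne0$. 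That means a background of modulated leaves $a(y)h(|x|/a(y))$, with $a$ flat at $0$ and $a(y)=u_k(0,y)$ on the polar parts of each attachment annulus. This is what the paper does.

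\textbf{Step (iv) still fails with that background, whatever the size of $\lambda_k$.} Near the spine the tubular neighbourhood has width only about $a(y)$, not $2^{-k}$. A direct interpolation leaves axis curvature
\[
H=-(1-\chi_0)\,\frac{a''}{(1+a'^2)^{3/2}},\qquad -\frac{a''}{a}\gtrsim R_k^{-2},
\]
where $R_k$ is the transition length. $H$ is constant on the small orbit sphere $\{|z|=a\}$, so the condition $\partial_\nu F=-H/8$ forces
\[
R_k^{-2}\lesssim\|D^2F\|+R_k^{-1}\|DF\|.
\]
This is impossible for a $C^2$ factor as $R_k\to0$; see the interpolation obstruction at the start of Section~\ref{sec:accumulation}. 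So no conformal factor with your claimed bounds exists, and superpolynomially small scales buy nothing.

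The missing ingredient is a transverse correction of the background. Successive radial solves of the linearized Hardt--Simon equation give profiles $V_J[a]$ with the same axis data and residual $O\bigl(a^3R^{-2J-2}(a+t)^{2J-2}\bigr)$. When $a$ is the connector's axis radius, these profiles match the connector to arbitrary order. That matching needs the sharp end estimates
\[
|a_\infty^{(j)}|\le C_j\,a_\infty|Y|^{-j},\qquad
|\partial_t^m\partial_Y^n(u_\infty-a_\infty h(t/a_\infty))|\le C_{mn}\,a_\infty^3|Y|^{-2-n}(a_\infty+t)^{-m}.
\]
The paper obtains these from the quadratic phase bound, Sz\'ekelyhidi's barriers and $A_2$-weighted estimates. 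The correction order $J$ is then increased diagonally as $y\to0$.

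\textbf{Step (i) is also not viable as sketched.} Simon's asymptotic theory needs a smooth link, which $V$ lacks. Minimizing with boundary data from the area-minimizing cone $V$ gives no mechanism to stop the minimizer from being $V$ itself. The paper instead uses a Fouladgar--Simon degree argument with dipped boundary data, plus a strict density drop produced by the Jacobi field $J_2$.
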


Here and below, $\operatorname{Tan}(\Sigma,p)$ denotes the set
of tangent cones at $p$ of the varifold associated with $\Sigma$. To
our knowledge, Theorem~\ref{thm:main-accumulation} provides the first
example of a minimal hypersurface, with respect to a smooth metric, in
which singular points whose tangent cones have an isolated
singularity accumulate at a point with a cylindrical tangent cone. 

\begin{remark*}
We do not address here whether $\Sigma$ is stable, or
area-minimizing, with respect to $g$. In view of the recent work of
Liu \cite{Liu2026} on Simon's examples, we expect that this question
can be settled by similar methods, but we do not pursue it here.
\end{remark*}

As is well known (see, for instance, the work of
Edelen--Spolaor \cite{ESp}, which builds on Simon--Solomon
\cite{SS}), local regularity questions near a cone are closely tied
to Liouville-type theorems, that is, to the classification of complete
minimal hypersurfaces with a prescribed tangent cone at infinity. An
entire model interpolating between the two cones of
Theorem~\ref{thm:main-accumulation} is therefore an essentially
necessary ingredient of the construction; its existence is the
content of the following result.

\begin{theorem}[Existence of the connector]\label{thm:main-connector}
There exists an entire Lipschitz function
$U:\R^4_x\times\R_y\to[0,\infty)$ whose symmetric graph
$M=\{|z|=U(x,y)\}$ is a complete multiplicity-one stationary
eight-dimensional hypersurface in Euclidean $\R^9$, with
\[
 \Sing M=\{0\},\qquad
 \operatorname{Tan}(M,0)=\{C_{4,3}\},\qquad
 \lim_{R\to\infty}R^{-1}M=C_{3,3}\times \R.
\]
The last limit holds in the sense of varifolds, so that the
tangent cone at infinity is unique, and the convergence is smooth on
compact subsets of $\R^9\setminus\{x=z=0\}$. The function $U$ is $O(4)_x$-invariant,
even in $y$, smooth away from zero, and $U_y>0$ for $y>0$.
\end{theorem}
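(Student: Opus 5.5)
We will construct $M$ by a variational method in the symmetry class $O(4)_x\times O(4)_z$ with $y\mapsto -y$ reflection, which reduces the problem to a free-boundary-type problem in the three-dimensional quarter-space $\{(r,s,y): r=|x|\ge0,\ s=|z|\ge0\}$ with weighted area functional $\int r^3 s^3\,d\mathcal H^2$. The idea is to produce $M$ as a limit of solutions of Plateau problems on increasing balls $B_{R_j}$, with boundary data given by $V=C_{3,3}\times\R$ slightly perturbed so that the solution is forced to develop a $C_{4,3}$-type singularity on the axis $\{x=z=0\}$ rather than being the cone $V$ itself. Concretely, in the orbit space the cross-section of $V$ is the plane $\{s=r\}$, while $C_{4,3}$ corresponds to $\{4s^2=3(r^2+y^2)\}$, which meets the axis $\{r=s=0\}$ only at the origin. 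We choose as boundary data on $\partial B_{R}$ the trace of a one-parameter family of hypersurfaces $\{s=U_{R,t}(r,y)\}$ lying on the side $\{s\le r\}$ near $y=0$ and converging to $V$ as $|y|\to R$, and we select the parameter $t$ by a continuity/degree argument so that the resulting minimizer (or min-max solution) in the symmetric class passes through the origin in the "$C_{4,3}$-position" (i.e. near $0$ the set $\{s<U\}$ contains the axis punctured at $0$ for $y\ne0$, matching the topology of $C_{4,3}$).

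Key steps, in order. First, using the foliations of $\R^8\setminus C_{3,3}$ by the Hardt--Simon leaves and the analogous foliations of the complements of $C_{4,3}$ in $\R^9$, we build barriers which show that the symmetric-graph solutions $\{|z|=U_R(x,y)\}$ stay between translated/dilated leaves, giving uniform Lipschitz bounds on $U_R$ and a graph representation; monotonicity of $U_R$ in $y$ for $y>0$ follows from the Alexandrov moving-plane/reflection argument in $y$, using evenness of the data, and strict positivity $U_y>0$ from the strong maximum principle applied to the Jacobi field $\partial_y$. Second, we normalize the scale by requiring that a fixed geometric quantity (e.g. the value $U_R(0,1)=1$) holds, using the scaling invariance and a continuity argument in the boundary parameter; this prevents the limit from collapsing to $V$ or escaping to $C_{4,3}$. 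Third, pass to the limit $R\to\infty$: Lipschitz bounds and Allard/Schoen--Simon regularity give a stationary limit $M=\{|z|=U(x,y)\}$, smooth away from the axis $\{x=z=0\}$; along the axis, since $U(0,y)>0$ for $y\ne0$ by strict monotonicity and normalization, $M$ is smooth there too, so $\Sing M\subset\{0\}$.

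Fourth, identify the cones. At the origin, any tangent cone is a symmetric, even-in-$y$ Lipschitz graph cone $\{|z|=\Phi(x,y)\}$ with $\Phi(0,y)\ge0$ and $O(4)\times O(4)$ symmetry; a classification of such stationary cones (the orbit-space geodesic/ODE analysis, as in the Hsiang--Lawson theory of cohomogeneity-two cones) combined with the topological position at the origin leaves $C_{4,3}$ as the only candidate, and uniqueness then follows from Simon's theorem for regular cones \cite{Sim83}. At infinity, blow-downs are again symmetric cones lying, by the barriers of step one, between the leaves asymptotic to $V$; the same classification and the $y$-monotonicity (blow-downs are translation invariant in $y$ in the limit, since $U$ is monotone and bounded-slope) forces $C_{3,3}\times\R$, and uniqueness of the tangent cone at infinity comes from Sz\'ekelyhidi's uniqueness theorem \cite{Sz} (applied at infinity, for which stationarity plus multiplicity one and the graph structure suffice), with smooth convergence away from the spine by Allard.

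The main obstacle I expect is the second/fourth steps combined: ensuring that the limit is genuinely a connector, i.e. neither $V$ itself nor $C_{4,3}$, and that the singularity at $0$ is $C_{4,3}$ rather than, say, a smooth point or another cone such as $C_{3,4}$-type. I would attack this first with the shooting/degree argument in the boundary parameter, tracking the topology of $\{s<U\}$ near the axis, which is preserved under the limits thanks to the barriers.
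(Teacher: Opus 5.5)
You take a genuinely different route from the paper: Plateau or min-max problems on exhausting balls, a shooting parameter, the normalization $U_R(0,1)=1$, and tracking of the topology of $\{s<U\}$. As written it has a genuine gap, exactly at the step you flag as the main obstacle. Nothing in the proposal controls densities, and densities are what identify both the singularity and the cone at infinity.

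\textbf{Producing and identifying the singularity.} A one-parameter continuity argument cannot be run on minimizers, since they are neither unique nor continuous in the data. The paper instead applies the Fouladgar--Simon degree theorem to the SME on a fixed domain. It verifies nonexistence of positive solutions at one end of the boundary path, via the harmonic limit $W+b$ and the signed transmission lemma, and strong positivity at the other end. Even then, the degree argument yields only \emph{some} zero.

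Your ``$C_{4,3}$-position'' cannot pin the singularity down, for two reasons. First, positivity of $U(0,y)$ for $y\ne0$ is not preserved under uniform limits: the singularity is created precisely when positive approximants degenerate. Second, even when it holds, an isolated zero on the axis is compatible with a tangent cone that has axial zeros, for instance if $U(0,y)=o(|y|)$; compare the isolated singularities with cylindrical tangent cone in \cite{SzCyl}.

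The missing idea is the strict density drop $\Theta(M_\eps,0)<\theta_V$. The boundary data are designed so that the normalized deviation from $V$ has leading term $cJ_2=c(y^2/r^2-1/3)$ at the origin, with $c>0$. This is a negative direction for the second variation of the link area of $V$ (Lemma~\ref{link:lem:negative-mode}, Proposition~\ref{link:prop:density}). The drop excludes an axial interval of zeros and any tangent cone with an axial zero (Lemma~\ref{lem:axial-zero-density-extraction}). Separately, $\theta_{13}>\theta_V$ excludes off-axis zeros. Only then does the bigraph classification of \cite{FTW} yield $C_{4,3}$.

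\textbf{Passing to the entire limit.} This step does not go through as stated either.
\begin{itemize}
\item Leaf barriers bound the phase of $M_{R_j}$ only by its boundary phase at the top scale $R_j$. That gives no uniform control at intermediate scales, so nothing prevents the limit or its blow-downs from degenerating.
\item Your claim that blow-downs are $y$-translation invariant because $U$ is monotone with bounded slope is false. The profile $\frac{\sqrt3}{2}(|x|^2+y^2)^{1/2}$ of $C_{4,3}$ is such a function, and it is its own blow-down.
\item The normalization $U(0,1)=1$ with monotonicity gives $U(0,y)>0$ only for $|y|\ge1$.
\item Allard and Schoen--Simon do not regularize possible zeros $U(x_0,0)=0$ with $x_0\ne0$. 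These are high-density singular points off the spine, on a surface not known to be stable.
\end{itemize}

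The paper resolves all of this through monotonicity. It picks the transition radius by $\Theta(M_j,0,\rho_j)=d\in(\theta_{43},\theta_V)$, so the limit satisfies $\Theta_M(0,1)=d$ and $\Theta_M(0,R)\le\theta_V$ for every $R$. Hence tangent cones at $0$ have density at most $d<\theta_V$, and blow-downs have density in $[d,\theta_V]$. The classification of symmetric, axially monotone SME cones of density at most $\theta_V$ (only $C_{4,3}$ and $V$) then identifies both cones. An axial zero away from $0$ would pass to the tangent cone at $0$, which rules it out. Uniqueness of the blow-down is then automatic, with no appeal to a uniqueness theorem at infinity.
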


We call $M$ a \emph{connector}, since it connects a
singularity modeled on $C_{4,3}$ at the origin with the cylindrical
cone $C_{3,3}\times\R$ at infinity. Besides its role in the proof of
Theorem~\ref{thm:main-accumulation}, its existence is of interest in
connection with the Bernstein-type problem for minimal hypersurfaces
asymptotic to $C_{3,3}\times\R$ studied in \cite{ES}.
A special case of \cite[Theorem~1.1]{ES}, together with its stationary-varifold
extension in Remark~1.2, asserts that if a nonzero stationary integral
eight-varifold in all of $\R^9$ lies entirely
on one side of $V=C_{3,3}\times \R$ and has density at infinity less than
$2\Theta(V)$, then it is a leaf product $H(\lambda)\times\R$, where $H(\lambda)$ is a leaf of the Simons foliation. The connector in
Theorem~\ref{thm:main-connector} satisfies the density condition,
$\Theta_M(\infty)=\Theta(V)$, but meets both sides of $V$. Indeed, the two sides are distinguished by
$q(x,z,y)=|z|^2-|x|^2$, and on its tangent cone
\[
 C_{4,3}:\qquad
 q=\frac34(|x|^2+y^2)-|x|^2
   =\frac{3y^2-|x|^2}{4}.
\]
This is negative at $y=0$, $x\ne0$, and positive at $x=0$,
$y\ne0$. Containment in either closed side would persist under
dilation and passage to a tangent varifold. Since $C_{4,3}$ is
contained in neither side, $M$ meets both open sides in every
neighborhood of zero. Thus the one-sidedness hypothesis of
\cite[Theorem~1.1 and Remark~1.2]{ES} fails.

Write $U(x,y)=u_\infty(t,y)$, where $t=|x|$, for the radial
profile of the connector. At a fixed height $y\ne0$, its orbit at $x=0$
is the sphere $\{|z|=U(0,y)\}\cong S^3$. We call its radius
$a_\infty(y)=U(0,y)$ the \emph{smoothing radius}: it sets
the scale on which the transverse cone vertex is rounded off.
The \emph{smoothing region} is the part of the slice where
$|x|$ is comparable to or smaller than this radius.
Figure~\ref{fig:connector-slice-evolution} illustrates these slices
and the growth of their smoothing radii away from the singular origin.
\begin{figure}[p]
\centering
\begingroup
\definecolor{connectorblue}{HTML}{157D96}
\definecolor{connectororange}{HTML}{CF713F}
\definecolor{connectorink}{HTML}{223442}
\definecolor{connectorgray}{HTML}{909BA2}
\definecolor{connectorpale}{HTML}{D8DDE0}
\newcommand{\connectorsliceaxes}[1]{%
  \foreach \tick in {0,2,4,6}{%
    \draw[connectorpale!65,line width=.25pt] (\tick,0)--(\tick,6.5);
    \draw[connectorpale!65,line width=.25pt] (0,\tick)--(6,\tick);
    \draw[connectorgray,line width=.4pt] (\tick,0)--(\tick,-.1);
    \draw[connectorgray,line width=.4pt] (0,\tick)--(-.1,\tick);
    \node[below,inner sep=2pt] at (\tick,0) {$\tick$};
    \node[left,inner sep=2pt] at (0,\tick) {$\tick$};
  }
  \draw[connectorgray,line width=.5pt] (0,6.5)--(0,0)--(6,0);
  \node at (3,7) {$y=#1$};
  \node at (3,-.9) {$|x|$};
  \node[rotate=90] at (-1.1,3.25) {$|z|$};
}
\resizebox{\textwidth}{!}{%
\begin{tikzpicture}[x=.59cm,y=.59cm,font=\sffamily\footnotesize,
                   text=connectorink,line join=round]
\path[use as bounding box] (0,0) rectangle (26,30);
\begin{scope}[shift={(3.5,22.5)}]
\connectorsliceaxes{0}
\def\slicebelow{
(0.000000,0.000000) (0.050000,0.043140) (0.100000,0.086031) (0.150000,0.128764) (0.200000,0.171412)
(0.250000,0.214032) (0.300000,0.256669) (0.350000,0.299357) (0.400000,0.342125) (0.450000,0.384992)
(0.500000,0.427975) (0.550000,0.471086) (0.600000,0.514335) (0.650000,0.557726) (0.700000,0.601264)
(0.750000,0.644952) (0.800000,0.688791) (0.850000,0.732780) (0.900000,0.776919) (0.950000,0.821205)
(1.000000,0.865638) (1.050000,0.910213) (1.100000,0.954930) (1.150000,0.999783) (1.200000,1.044770)
(1.250000,1.089888) (1.300000,1.135133) (1.350000,1.180501) (1.400000,1.225989) (1.450000,1.271594)
(1.500000,1.317311) (1.550000,1.363138) (1.600000,1.409072) (1.650000,1.455108) (1.700000,1.501244)
(1.750000,1.547476) (1.800000,1.593802) (1.850000,1.640218) (1.900000,1.686722) (1.950000,1.733311)
(2.000000,1.779982) (2.050000,1.826733) (2.100000,1.873560) (2.150000,1.920462) (2.200000,1.967437)
(2.250000,2.014481) (2.300000,2.061593) (2.350000,2.108771) (2.400000,2.156012) (2.450000,2.203315)
(2.500000,2.250678) (2.550000,2.298098) (2.600000,2.345575) (2.650000,2.393107) (2.700000,2.440691)
(2.750000,2.488326) (2.800000,2.536011) (2.850000,2.583745) (2.900000,2.631525) (2.950000,2.679351)
(3.000000,2.727221) (3.050000,2.775134) (3.100000,2.823089) (3.150000,2.871084) (3.200000,2.919119)
(3.250000,2.967192) (3.300000,3.015302) (3.350000,3.063448) (3.400000,3.111630) (3.450000,3.159846)
(3.500000,3.208095) (3.550000,3.256377) (3.600000,3.304690) (3.650000,3.353034) (3.700000,3.401408)
(3.750000,3.449811) (3.800000,3.498242) (3.850000,3.546702) (3.900000,3.595188) (3.950000,3.643700)
(4.000000,3.692239) (4.050000,3.740802) (4.100000,3.789389) (4.150000,3.838001) (4.200000,3.886635)
(4.250000,3.935293) (4.300000,3.983972) (4.350000,4.032673) (4.400000,4.081395) (4.450000,4.130137)
(4.500000,4.178900) (4.550000,4.227682) (4.600000,4.276484) (4.650000,4.325304) (4.700000,4.374142)
(4.750000,4.422999) (4.800000,4.471873) (4.850000,4.520764) (4.900000,4.569671) (4.950000,4.618596)
(5.000000,4.667536) (5.050000,4.716491) (5.100000,4.765462) (5.150000,4.814448) (5.200000,4.863449)
(5.250000,4.912464) (5.300000,4.961493) (5.350000,5.010536) (5.400000,5.059592) (5.450000,5.108662)
(5.500000,5.157744) (5.550000,5.206839) (5.600000,5.255947) (5.650000,5.305067) (5.700000,5.354198)
(5.750000,5.403341) (5.800000,5.452496) (5.850000,5.501662) (5.900000,5.550839) (5.950000,5.600027)
(6.000000,5.649225)}
\begin{scope}
\clip (0,0) rectangle (6,6.5);
\draw[connectorgray,line width=.6pt,dashed] (0,0)--(6,6);
\draw[connectorgray,line width=.7pt,densely dotted] plot coordinates {
(0.000000,0.000000) (0.050000,0.043301) (0.100000,0.086603) (0.150000,0.129904) (0.200000,0.173205)
(0.250000,0.216506) (0.300000,0.259808) (0.350000,0.303109) (0.400000,0.346410) (0.450000,0.389711)
(0.500000,0.433013) (0.550000,0.476314) (0.600000,0.519615) (0.650000,0.562917) (0.700000,0.606218)
(0.750000,0.649519) (0.800000,0.692820) (0.850000,0.736122) (0.900000,0.779423) (0.950000,0.822724)
(1.000000,0.866025) (1.050000,0.909327) (1.100000,0.952628) (1.150000,0.995929) (1.200000,1.039230)
(1.250000,1.082532) (1.300000,1.125833) (1.350000,1.169134) (1.400000,1.212436) (1.450000,1.255737)
(1.500000,1.299038) (1.550000,1.342339) (1.600000,1.385641) (1.650000,1.428942) (1.700000,1.472243)
(1.750000,1.515544) (1.800000,1.558846) (1.850000,1.602147) (1.900000,1.645448) (1.950000,1.688750)
(2.000000,1.732051) (2.050000,1.775352) (2.100000,1.818653) (2.150000,1.861955) (2.200000,1.905256)
(2.250000,1.948557) (2.300000,1.991858) (2.350000,2.035160) (2.400000,2.078461) (2.450000,2.121762)
(2.500000,2.165064) (2.550000,2.208365) (2.600000,2.251666) (2.650000,2.294967) (2.700000,2.338269)
(2.750000,2.381570) (2.800000,2.424871) (2.850000,2.468172) (2.900000,2.511474) (2.950000,2.554775)
(3.000000,2.598076) (3.050000,2.641377) (3.100000,2.684679) (3.150000,2.727980) (3.200000,2.771281)
(3.250000,2.814583) (3.300000,2.857884) (3.350000,2.901185) (3.400000,2.944486) (3.450000,2.987788)
(3.500000,3.031089) (3.550000,3.074390) (3.600000,3.117691) (3.650000,3.160993) (3.700000,3.204294)
(3.750000,3.247595) (3.800000,3.290897) (3.850000,3.334198) (3.900000,3.377499) (3.950000,3.420800)
(4.000000,3.464102) (4.050000,3.507403) (4.100000,3.550704) (4.150000,3.594005) (4.200000,3.637307)
(4.250000,3.680608) (4.300000,3.723909) (4.350000,3.767211) (4.400000,3.810512) (4.450000,3.853813)
(4.500000,3.897114) (4.550000,3.940416) (4.600000,3.983717) (4.650000,4.027018) (4.700000,4.070319)
(4.750000,4.113621) (4.800000,4.156922) (4.850000,4.200223) (4.900000,4.243524) (4.950000,4.286826)
(5.000000,4.330127) (5.050000,4.373428) (5.100000,4.416730) (5.150000,4.460031) (5.200000,4.503332)
(5.250000,4.546633) (5.300000,4.589935) (5.350000,4.633236) (5.400000,4.676537) (5.450000,4.719838)
(5.500000,4.763140) (5.550000,4.806441) (5.600000,4.849742) (5.650000,4.893044) (5.700000,4.936345)
(5.750000,4.979646) (5.800000,5.022947) (5.850000,5.066249) (5.900000,5.109550) (5.950000,5.152851)
(6.000000,5.196152)};
\fill[connectororange,opacity=.085] plot coordinates {\slicebelow} --(6,6)--cycle;
\draw[connectororange,line width=1pt] plot coordinates {\slicebelow};
\end{scope}
\fill[connectorink] (0,0) circle[radius=1.6pt];
\end{scope}
\begin{scope}[shift={(16.5,22.5)}]
\connectorsliceaxes{0.25}
\def\slicebelow{
(0.378054,0.378054) (0.400000,0.393813) (0.450000,0.430713) (0.500000,0.468766) (0.550000,0.507752)
(0.600000,0.547510) (0.650000,0.587918) (0.700000,0.628883) (0.750000,0.670332) (0.800000,0.712210)
(0.850000,0.754469) (0.900000,0.797073) (0.950000,0.839990) (1.000000,0.883195) (1.050000,0.926665)
(1.100000,0.970381) (1.150000,1.014327) (1.200000,1.058487) (1.250000,1.102849) (1.300000,1.147401)
(1.350000,1.192132) (1.400000,1.237033) (1.450000,1.282095) (1.500000,1.327310) (1.550000,1.372671)
(1.600000,1.418170) (1.650000,1.463802) (1.700000,1.509561) (1.750000,1.555441) (1.800000,1.601436)
(1.850000,1.647543) (1.900000,1.693755) (1.950000,1.740070) (2.000000,1.786483) (2.050000,1.832991)
(2.100000,1.879589) (2.150000,1.926274) (2.200000,1.973042) (2.250000,2.019892) (2.300000,2.066819)
(2.350000,2.113822) (2.400000,2.160897) (2.450000,2.208041) (2.500000,2.255253) (2.550000,2.302531)
(2.600000,2.349871) (2.650000,2.397271) (2.700000,2.444731) (2.750000,2.492247) (2.800000,2.539818)
(2.850000,2.587443) (2.900000,2.635119) (2.950000,2.682845) (3.000000,2.730619) (3.050000,2.778440)
(3.100000,2.826307) (3.150000,2.874217) (3.200000,2.922170) (3.250000,2.970165) (3.300000,3.018200)
(3.350000,3.066274) (3.400000,3.114385) (3.450000,3.162534) (3.500000,3.210718) (3.550000,3.258938)
(3.600000,3.307190) (3.650000,3.355476) (3.700000,3.403794) (3.750000,3.452143) (3.800000,3.500522)
(3.850000,3.548931) (3.900000,3.597368) (3.950000,3.645833) (4.000000,3.694325) (4.050000,3.742844)
(4.100000,3.791389) (4.150000,3.839958) (4.200000,3.888553) (4.250000,3.937171) (4.300000,3.985812)
(4.350000,4.034476) (4.400000,4.083162) (4.450000,4.131870) (4.500000,4.180599) (4.550000,4.229349)
(4.600000,4.278119) (4.650000,4.326908) (4.700000,4.375716) (4.750000,4.424544) (4.800000,4.473389)
(4.850000,4.522253) (4.900000,4.571133) (4.950000,4.620031) (5.000000,4.668946) (5.050000,4.717877)
(5.100000,4.766824) (5.150000,4.815786) (5.200000,4.864764) (5.250000,4.913757) (5.300000,4.962764)
(5.350000,5.011786) (5.400000,5.060822) (5.450000,5.109871) (5.500000,5.158934) (5.550000,5.208010)
(5.600000,5.257098) (5.650000,5.306200) (5.700000,5.355314) (5.750000,5.404439) (5.800000,5.453577)
(5.850000,5.502726) (5.900000,5.551887) (5.950000,5.601059) (6.000000,5.650242)}
\def\sliceabove{
(0.000000,0.208508) (0.050000,0.212447) (0.100000,0.223946) (0.150000,0.242120) (0.200000,0.265762)
(0.250000,0.293660) (0.300000,0.324789) (0.350000,0.358366) (0.378054,0.378054)}
\begin{scope}
\clip (0,0) rectangle (6,6.5);
\draw[connectorgray,line width=.6pt,dashed] (0,0)--(6,6);
\draw[connectorgray,line width=.7pt,densely dotted] plot coordinates {
(0.000000,0.216506) (0.050000,0.220794) (0.100000,0.233184) (0.150000,0.252488) (0.200000,0.277263)
(0.250000,0.306186) (0.300000,0.338194) (0.350000,0.372492) (0.400000,0.408503) (0.450000,0.445814)
(0.500000,0.484123) (0.550000,0.523211) (0.600000,0.562917) (0.650000,0.603117) (0.700000,0.643720)
(0.750000,0.684653) (0.800000,0.725862) (0.850000,0.767300) (0.900000,0.808934) (0.950000,0.850735)
(1.000000,0.892679) (1.050000,0.934746) (1.100000,0.976921) (1.150000,1.019191) (1.200000,1.061544)
(1.250000,1.103970) (1.300000,1.146462) (1.350000,1.189012) (1.400000,1.231615) (1.450000,1.274264)
(1.500000,1.316957) (1.550000,1.359687) (1.600000,1.402453) (1.650000,1.445251) (1.700000,1.488078)
(1.750000,1.530931) (1.800000,1.573809) (1.850000,1.616710) (1.900000,1.659631) (1.950000,1.702572)
(2.000000,1.745530) (2.050000,1.788505) (2.100000,1.831495) (2.150000,1.874500) (2.200000,1.917518)
(2.250000,1.960548) (2.300000,2.003591) (2.350000,2.046644) (2.400000,2.089707) (2.450000,2.132780)
(2.500000,2.175862) (2.550000,2.218952) (2.600000,2.262051) (2.650000,2.305157) (2.700000,2.348271)
(2.750000,2.391391) (2.800000,2.434517) (2.850000,2.477650) (2.900000,2.520789) (2.950000,2.563933)
(3.000000,2.607082) (3.050000,2.650236) (3.100000,2.693395) (3.150000,2.736558) (3.200000,2.779726)
(3.250000,2.822897) (3.300000,2.866073) (3.350000,2.909252) (3.400000,2.952435) (3.450000,2.995622)
(3.500000,3.038811) (3.550000,3.082004) (3.600000,3.125200) (3.650000,3.168399) (3.700000,3.211600)
(3.750000,3.254804) (3.800000,3.298011) (3.850000,3.341220) (3.900000,3.384431) (3.950000,3.427645)
(4.000000,3.470861) (4.050000,3.514079) (4.100000,3.557299) (4.150000,3.600521) (4.200000,3.643745)
(4.250000,3.686970) (4.300000,3.730198) (4.350000,3.773427) (4.400000,3.816658) (4.450000,3.859890)
(4.500000,3.903124) (4.550000,3.946359) (4.600000,3.989596) (4.650000,4.032834) (4.700000,4.076073)
(4.750000,4.119314) (4.800000,4.162556) (4.850000,4.205800) (4.900000,4.249044) (4.950000,4.292290)
(5.000000,4.335536) (5.050000,4.378784) (5.100000,4.422033) (5.150000,4.465283) (5.200000,4.508534)
(5.250000,4.551785) (5.300000,4.595038) (5.350000,4.638292) (5.400000,4.681546) (5.450000,4.724802)
(5.500000,4.768058) (5.550000,4.811315) (5.600000,4.854573) (5.650000,4.897831) (5.700000,4.941090)
(5.750000,4.984351) (5.800000,5.027611) (5.850000,5.070873) (5.900000,5.114135) (5.950000,5.157398)
(6.000000,5.200661)};
\fill[connectorblue,opacity=.085] plot coordinates {\sliceabove} --(0,0)--cycle;
\fill[connectororange,opacity=.085] plot coordinates {\slicebelow} --(6,6)--cycle;
\draw[connectorblue,line width=1pt] plot coordinates {\sliceabove};
\draw[connectororange,line width=1pt] plot coordinates {\slicebelow};
\end{scope}
\draw[connectorblue,fill=white,line width=.65pt] (0,0.208508) circle[radius=1.8pt];
\fill[connectorink] (0.378054,0.378054) circle[radius=1.45pt];
\end{scope}
\begin{scope}[shift={(3.5,13.25)}]
\connectorsliceaxes{0.5}
\def\slicebelow{
(0.715257,0.715257) (0.750000,0.740483) (0.800000,0.777578) (0.850000,0.815514) (0.900000,0.854201)
(0.950000,0.893560) (1.000000,0.933526) (1.050000,0.974040) (1.100000,1.015052) (1.150000,1.056518)
(1.200000,1.098399) (1.250000,1.140662) (1.300000,1.183277) (1.350000,1.226216) (1.400000,1.269457)
(1.450000,1.312977) (1.500000,1.356759) (1.550000,1.400784) (1.600000,1.445037) (1.650000,1.489504)
(1.700000,1.534172) (1.750000,1.579030) (1.800000,1.624065) (1.850000,1.669270) (1.900000,1.714634)
(1.950000,1.760149) (2.000000,1.805807) (2.050000,1.851601) (2.100000,1.897525) (2.150000,1.943572)
(2.200000,1.989736) (2.250000,2.036013) (2.300000,2.082396) (2.350000,2.128881) (2.400000,2.175465)
(2.450000,2.222141) (2.500000,2.268908) (2.550000,2.315760) (2.600000,2.362694) (2.650000,2.409708)
(2.700000,2.456798) (2.750000,2.503961) (2.800000,2.551194) (2.850000,2.598495) (2.900000,2.645861)
(2.950000,2.693289) (3.000000,2.740778) (3.050000,2.788326) (3.100000,2.835930) (3.150000,2.883588)
(3.200000,2.931298) (3.250000,2.979060) (3.300000,3.026870) (3.350000,3.074728) (3.400000,3.122632)
(3.450000,3.170580) (3.500000,3.218571) (3.550000,3.266604) (3.600000,3.314677) (3.650000,3.362789)
(3.700000,3.410939) (3.750000,3.459126) (3.800000,3.507348) (3.850000,3.555606) (3.900000,3.603897)
(3.950000,3.652220) (4.000000,3.700575) (4.050000,3.748961) (4.100000,3.797377) (4.150000,3.845823)
(4.200000,3.894296) (4.250000,3.942797) (4.300000,3.991325) (4.350000,4.039879) (4.400000,4.088459)
(4.450000,4.137063) (4.500000,4.185691) (4.550000,4.234343) (4.600000,4.283018) (4.650000,4.331715)
(4.700000,4.380433) (4.750000,4.429173) (4.800000,4.477934) (4.850000,4.526715) (4.900000,4.575516)
(4.950000,4.624335) (5.000000,4.673174) (5.050000,4.722031) (5.100000,4.770905) (5.150000,4.819798)
(5.200000,4.868707) (5.250000,4.917633) (5.300000,4.966575) (5.350000,5.015533) (5.400000,5.064507)
(5.450000,5.113496) (5.500000,5.162500) (5.550000,5.211518) (5.600000,5.260551) (5.650000,5.309597)
(5.700000,5.358658) (5.750000,5.407731) (5.800000,5.456818) (5.850000,5.505917) (5.900000,5.555029)
(5.950000,5.604154) (6.000000,5.653290)}
\def\sliceabove{
(0.000000,0.403553) (0.050000,0.405425) (0.100000,0.411026) (0.150000,0.420297) (0.200000,0.433122)
(0.250000,0.449326) (0.300000,0.468675) (0.350000,0.490898) (0.400000,0.515707) (0.450000,0.542817)
(0.500000,0.571959) (0.550000,0.602886) (0.600000,0.635381) (0.650000,0.669252) (0.700000,0.704334)
(0.715257,0.715257)}
\begin{scope}
\clip (0,0) rectangle (6,6.5);
\draw[connectorgray,line width=.6pt,dashed] (0,0)--(6,6);
\draw[connectorgray,line width=.7pt,densely dotted] plot coordinates {
(0.000000,0.433013) (0.050000,0.435172) (0.100000,0.441588) (0.150000,0.452079) (0.200000,0.466369)
(0.250000,0.484123) (0.300000,0.504975) (0.350000,0.528559) (0.400000,0.554527) (0.450000,0.582559)
(0.500000,0.612372) (0.550000,0.643720) (0.600000,0.676387) (0.650000,0.710194) (0.700000,0.744983)
(0.750000,0.780625) (0.800000,0.817007) (0.850000,0.854035) (0.900000,0.891628) (0.950000,0.929718)
(1.000000,0.968246) (1.050000,1.007162) (1.100000,1.046422) (1.150000,1.085990) (1.200000,1.125833)
(1.250000,1.165922) (1.300000,1.206234) (1.350000,1.246746) (1.400000,1.287439) (1.450000,1.328298)
(1.500000,1.369306) (1.550000,1.410452) (1.600000,1.451723) (1.650000,1.493109) (1.700000,1.534601)
(1.750000,1.576190) (1.800000,1.617869) (1.850000,1.659631) (1.900000,1.701470) (1.950000,1.743380)
(2.000000,1.785357) (2.050000,1.827396) (2.100000,1.869492) (2.150000,1.911642) (2.200000,1.953842)
(2.250000,1.996090) (2.300000,2.038382) (2.350000,2.080715) (2.400000,2.123087) (2.450000,2.165496)
(2.500000,2.207940) (2.550000,2.250417) (2.600000,2.292924) (2.650000,2.335460) (2.700000,2.378024)
(2.750000,2.420615) (2.800000,2.463230) (2.850000,2.505868) (2.900000,2.548529) (2.950000,2.591211)
(3.000000,2.633913) (3.050000,2.676635) (3.100000,2.719375) (3.150000,2.762132) (3.200000,2.804906)
(3.250000,2.847696) (3.300000,2.890502) (3.350000,2.933321) (3.400000,2.976155) (3.450000,3.019002)
(3.500000,3.061862) (3.550000,3.104734) (3.600000,3.147618) (3.650000,3.190513) (3.700000,3.233419)
(3.750000,3.276336) (3.800000,3.319262) (3.850000,3.362198) (3.900000,3.405143) (3.950000,3.448097)
(4.000000,3.491060) (4.050000,3.534031) (4.100000,3.577010) (4.150000,3.619997) (4.200000,3.662991)
(4.250000,3.705992) (4.300000,3.749000) (4.350000,3.792015) (4.400000,3.835036) (4.450000,3.878063)
(4.500000,3.921097) (4.550000,3.964136) (4.600000,4.007181) (4.650000,4.050231) (4.700000,4.093287)
(4.750000,4.136348) (4.800000,4.179414) (4.850000,4.222484) (4.900000,4.265560) (4.950000,4.308640)
(5.000000,4.351724) (5.050000,4.394812) (5.100000,4.437905) (5.150000,4.481002) (5.200000,4.524102)
(5.250000,4.567206) (5.300000,4.610315) (5.350000,4.653426) (5.400000,4.696541) (5.450000,4.739660)
(5.500000,4.782782) (5.550000,4.825907) (5.600000,4.869035) (5.650000,4.912166) (5.700000,4.955300)
(5.750000,4.998437) (5.800000,5.041577) (5.850000,5.084720) (5.900000,5.127865) (5.950000,5.171013)
(6.000000,5.214163)};
\fill[connectorblue,opacity=.085] plot coordinates {\sliceabove} --(0,0)--cycle;
\fill[connectororange,opacity=.085] plot coordinates {\slicebelow} --(6,6)--cycle;
\draw[connectorblue,line width=1pt] plot coordinates {\sliceabove};
\draw[connectororange,line width=1pt] plot coordinates {\slicebelow};
\end{scope}
\draw[connectorblue,fill=white,line width=.65pt] (0,0.403553) circle[radius=1.8pt];
\fill[connectorink] (0.715257,0.715257) circle[radius=1.45pt];
\end{scope}
\begin{scope}[shift={(16.5,13.25)}]
\connectorsliceaxes{1}
\def\slicebelow{
(1.357597,1.357597) (1.400000,1.389722) (1.450000,1.428146) (1.500000,1.467123) (1.550000,1.506615)
(1.600000,1.546590) (1.650000,1.587017) (1.700000,1.627868) (1.750000,1.669115) (1.800000,1.710735)
(1.850000,1.752705) (1.900000,1.795005) (1.950000,1.837614) (2.000000,1.880516) (2.050000,1.923693)
(2.100000,1.967129) (2.150000,2.010812) (2.200000,2.054726) (2.250000,2.098861) (2.300000,2.143204)
(2.350000,2.187744) (2.400000,2.232471) (2.450000,2.277376) (2.500000,2.322451) (2.550000,2.367686)
(2.600000,2.413074) (2.650000,2.458609) (2.700000,2.504282) (2.750000,2.550088) (2.800000,2.596020)
(2.850000,2.642074) (2.900000,2.688243) (2.950000,2.734523) (3.000000,2.780908) (3.050000,2.827395)
(3.100000,2.873980) (3.150000,2.920657) (3.200000,2.967424) (3.250000,3.014277) (3.300000,3.061212)
(3.350000,3.108227) (3.400000,3.155318) (3.450000,3.202482) (3.500000,3.249717) (3.550000,3.297019)
(3.600000,3.344387) (3.650000,3.391819) (3.700000,3.439311) (3.750000,3.486861) (3.800000,3.534469)
(3.850000,3.582131) (3.900000,3.629846) (3.950000,3.677612) (4.000000,3.725427) (4.050000,3.773290)
(4.100000,3.821200) (4.150000,3.869154) (4.200000,3.917151) (4.250000,3.965190) (4.300000,4.013270)
(4.350000,4.061389) (4.400000,4.109546) (4.450000,4.157740) (4.500000,4.205970) (4.550000,4.254235)
(4.600000,4.302534) (4.650000,4.350866) (4.700000,4.399229) (4.750000,4.447623) (4.800000,4.496048)
(4.850000,4.544501) (4.900000,4.592983) (4.950000,4.641493) (5.000000,4.690030) (5.050000,4.738593)
(5.100000,4.787181) (5.150000,4.835794) (5.200000,4.884431) (5.250000,4.933092) (5.300000,4.981775)
(5.350000,5.030481) (5.400000,5.079209) (5.450000,5.127958) (5.500000,5.176728) (5.550000,5.225518)
(5.600000,5.274327) (5.650000,5.323156) (5.700000,5.372004) (5.750000,5.420869) (5.800000,5.469753)
(5.850000,5.518654) (5.900000,5.567572) (5.950000,5.616507) (6.000000,5.665458)}
\def\sliceabove{
(0.000000,0.763576) (0.050000,0.764456) (0.100000,0.767100) (0.150000,0.771518) (0.200000,0.777723)
(0.250000,0.785732) (0.300000,0.795555) (0.350000,0.807196) (0.400000,0.820652) (0.450000,0.835904)
(0.500000,0.852925) (0.550000,0.871673) (0.600000,0.892096) (0.650000,0.914135) (0.700000,0.937722)
(0.750000,0.962784) (0.800000,0.989247) (0.850000,1.017032) (0.900000,1.046063) (0.950000,1.076265)
(1.000000,1.107564) (1.050000,1.139889) (1.100000,1.173174) (1.150000,1.207354) (1.200000,1.242370)
(1.250000,1.278165) (1.300000,1.314688) (1.350000,1.351888) (1.357597,1.357597)}
\begin{scope}
\clip (0,0) rectangle (6,6.5);
\draw[connectorgray,line width=.6pt,dashed] (0,0)--(6,6);
\draw[connectorgray,line width=.7pt,densely dotted] plot coordinates {
(0.000000,0.866025) (0.050000,0.867107) (0.100000,0.870345) (0.150000,0.875714) (0.200000,0.883176)
(0.250000,0.892679) (0.300000,0.904157) (0.350000,0.917537) (0.400000,0.932738) (0.450000,0.949671)
(0.500000,0.968246) (0.550000,0.988370) (0.600000,1.009950) (0.650000,1.032896) (0.700000,1.057119)
(0.750000,1.082532) (0.800000,1.109054) (0.850000,1.136607) (0.900000,1.165118) (0.950000,1.194519)
(1.000000,1.224745) (1.050000,1.255737) (1.100000,1.287439) (1.150000,1.319801) (1.200000,1.352775)
(1.250000,1.386317) (1.300000,1.420387) (1.350000,1.454948) (1.400000,1.489966) (1.450000,1.525410)
(1.500000,1.561249) (1.550000,1.597459) (1.600000,1.634013) (1.650000,1.670890) (1.700000,1.708069)
(1.750000,1.745530) (1.800000,1.783255) (1.850000,1.821229) (1.900000,1.859435) (1.950000,1.897861)
(2.000000,1.936492) (2.050000,1.975316) (2.100000,2.014324) (2.150000,2.053503) (2.200000,2.092845)
(2.250000,2.132340) (2.300000,2.171981) (2.350000,2.211758) (2.400000,2.251666) (2.450000,2.291697)
(2.500000,2.331845) (2.550000,2.372103) (2.600000,2.412468) (2.650000,2.452932) (2.700000,2.493492)
(2.750000,2.534142) (2.800000,2.574879) (2.850000,2.615698) (2.900000,2.656596) (2.950000,2.697568)
(3.000000,2.738613) (3.050000,2.779726) (3.100000,2.820904) (3.150000,2.862145) (3.200000,2.903446)
(3.250000,2.944805) (3.300000,2.986218) (3.350000,3.027685) (3.400000,3.069202) (3.450000,3.110768)
(3.500000,3.152380) (3.550000,3.194037) (3.600000,3.235738) (3.650000,3.277480) (3.700000,3.319262)
(3.750000,3.361082) (3.800000,3.402940) (3.850000,3.444833) (3.900000,3.486761) (3.950000,3.528721)
(4.000000,3.570714) (4.050000,3.612738) (4.100000,3.654791) (4.150000,3.696874) (4.200000,3.738984)
(4.250000,3.781121) (4.300000,3.823284) (4.350000,3.865472) (4.400000,3.907685) (4.450000,3.949921)
(4.500000,3.992180) (4.550000,4.034461) (4.600000,4.076763) (4.650000,4.119087) (4.700000,4.161430)
(4.750000,4.203793) (4.800000,4.246175) (4.850000,4.288575) (4.900000,4.330993) (4.950000,4.373428)
(5.000000,4.415880) (5.050000,4.458349) (5.100000,4.500833) (5.150000,4.543333) (5.200000,4.585848)
(5.250000,4.628377) (5.300000,4.670921) (5.350000,4.713478) (5.400000,4.756049) (5.450000,4.798633)
(5.500000,4.841229) (5.550000,4.883838) (5.600000,4.926459) (5.650000,4.969092) (5.700000,5.011736)
(5.750000,5.054392) (5.800000,5.097058) (5.850000,5.139735) (5.900000,5.182422) (5.950000,5.225120)
(6.000000,5.267827)};
\fill[connectorblue,opacity=.085] plot coordinates {\sliceabove} --(0,0)--cycle;
\fill[connectororange,opacity=.085] plot coordinates {\slicebelow} --(6,6)--cycle;
\draw[connectorblue,line width=1pt] plot coordinates {\sliceabove};
\draw[connectororange,line width=1pt] plot coordinates {\slicebelow};
\end{scope}
\draw[connectorblue,fill=white,line width=.65pt] (0,0.763576) circle[radius=1.8pt];
\fill[connectorink] (1.357597,1.357597) circle[radius=1.45pt];
\end{scope}
\begin{scope}[shift={(3.5,4.0)}]
\connectorsliceaxes{2}
\def\slicebelow{
(2.606254,2.606254) (2.650000,2.641755) (2.700000,2.682626) (2.750000,2.723798) (2.800000,2.765260)
(2.850000,2.806999) (2.900000,2.849003) (2.950000,2.891262) (3.000000,2.933765) (3.050000,2.976501)
(3.100000,3.019462) (3.150000,3.062638) (3.200000,3.106020) (3.250000,3.149601) (3.300000,3.193372)
(3.350000,3.237326) (3.400000,3.281456) (3.450000,3.325755) (3.500000,3.370215) (3.550000,3.414832)
(3.600000,3.459599) (3.650000,3.504511) (3.700000,3.549562) (3.750000,3.594746) (3.800000,3.640060)
(3.850000,3.685498) (3.900000,3.731055) (3.950000,3.776728) (4.000000,3.822513) (4.050000,3.868405)
(4.100000,3.914400) (4.150000,3.960496) (4.200000,4.006688) (4.250000,4.052973) (4.300000,4.099349)
(4.350000,4.145812) (4.400000,4.192358) (4.450000,4.238987) (4.500000,4.285694) (4.550000,4.332477)
(4.600000,4.379334) (4.650000,4.426262) (4.700000,4.473259) (4.750000,4.520323) (4.800000,4.567452)
(4.850000,4.614644) (4.900000,4.661897) (4.950000,4.709209) (5.000000,4.756578) (5.050000,4.804002)
(5.100000,4.851481) (5.150000,4.899011) (5.200000,4.946593) (5.250000,4.994224) (5.300000,5.041902)
(5.350000,5.089628) (5.400000,5.137398) (5.450000,5.185212) (5.500000,5.233069) (5.550000,5.280968)
(5.600000,5.328906) (5.650000,5.376884) (5.700000,5.424901) (5.750000,5.472954) (5.800000,5.521044)
(5.850000,5.569168) (5.900000,5.617328) (5.950000,5.665520) (6.000000,5.713745)}
\def\sliceabove{
(0.000000,1.402195) (0.050000,1.402617) (0.100000,1.403885) (0.150000,1.406003) (0.200000,1.408980)
(0.250000,1.412828) (0.300000,1.417561) (0.350000,1.423193) (0.400000,1.429743) (0.450000,1.437229)
(0.500000,1.445667) (0.550000,1.455077) (0.600000,1.465473) (0.650000,1.476870) (0.700000,1.489282)
(0.750000,1.502717) (0.800000,1.517183) (0.850000,1.532683) (0.900000,1.549219) (0.950000,1.566788)
(1.000000,1.585385) (1.050000,1.605000) (1.100000,1.625622) (1.150000,1.647237) (1.200000,1.669828)
(1.250000,1.693376) (1.300000,1.717861) (1.350000,1.743259) (1.400000,1.769547) (1.450000,1.796701)
(1.500000,1.824693) (1.550000,1.853498) (1.600000,1.883089) (1.650000,1.913439) (1.700000,1.944521)
(1.750000,1.976308) (1.800000,2.008774) (1.850000,2.041893) (1.900000,2.075638) (1.950000,2.109986)
(2.000000,2.144911) (2.050000,2.180390) (2.100000,2.216399) (2.150000,2.252918) (2.200000,2.289924)
(2.250000,2.327396) (2.300000,2.365315) (2.350000,2.403662) (2.400000,2.442418) (2.450000,2.481566)
(2.500000,2.521090) (2.550000,2.560973) (2.600000,2.601199) (2.606254,2.606254)}
\begin{scope}
\clip (0,0) rectangle (6,6.5);
\draw[connectorgray,line width=.6pt,dashed] (0,0)--(6,6);
\draw[connectorgray,line width=.7pt,densely dotted] plot coordinates {
(0.000000,1.732051) (0.050000,1.732592) (0.100000,1.734215) (0.150000,1.736915) (0.200000,1.740690)
(0.250000,1.745530) (0.300000,1.751428) (0.350000,1.758373) (0.400000,1.766352) (0.450000,1.775352)
(0.500000,1.785357) (0.550000,1.796350) (0.600000,1.808314) (0.650000,1.821229) (0.700000,1.835075)
(0.750000,1.849831) (0.800000,1.865476) (0.850000,1.881987) (0.900000,1.899342) (0.950000,1.917518)
(1.000000,1.936492) (1.050000,1.956240) (1.100000,1.976740) (1.150000,1.997968) (1.200000,2.019901)
(1.250000,2.042517) (1.300000,2.065793) (1.350000,2.089707) (1.400000,2.114237) (1.450000,2.139363)
(1.500000,2.165064) (1.550000,2.191318) (1.600000,2.218107) (1.650000,2.245412) (1.700000,2.273214)
(1.750000,2.301494) (1.800000,2.330236) (1.850000,2.359423) (1.900000,2.389037) (1.950000,2.419065)
(2.000000,2.449490) (2.050000,2.480297) (2.100000,2.511474) (2.150000,2.543005) (2.200000,2.574879)
(2.250000,2.607082) (2.300000,2.639602) (2.350000,2.672429) (2.400000,2.705550) (2.450000,2.738955)
(2.500000,2.772634) (2.550000,2.806577) (2.600000,2.840775) (2.650000,2.875217) (2.700000,2.909897)
(2.750000,2.944805) (2.800000,2.979933) (2.850000,3.015274) (2.900000,3.050820) (2.950000,3.086564)
(3.000000,3.122499) (3.050000,3.158619) (3.100000,3.194918) (3.150000,3.231389) (3.200000,3.268027)
(3.250000,3.304826) (3.300000,3.341781) (3.350000,3.378887) (3.400000,3.416138) (3.450000,3.453531)
(3.500000,3.491060) (3.550000,3.528721) (3.600000,3.566511) (3.650000,3.604424) (3.700000,3.642458)
(3.750000,3.680608) (3.800000,3.718871) (3.850000,3.757243) (3.900000,3.795721) (3.950000,3.834302)
(4.000000,3.872983) (4.050000,3.911761) (4.100000,3.950633) (4.150000,3.989596) (4.200000,4.028647)
(4.250000,4.067785) (4.300000,4.107006) (4.350000,4.146309) (4.400000,4.185690) (4.450000,4.225148)
(4.500000,4.264681) (4.550000,4.304286) (4.600000,4.343961) (4.650000,4.383706) (4.700000,4.423517)
(4.750000,4.463393) (4.800000,4.503332) (4.850000,4.543333) (4.900000,4.583394) (4.950000,4.623513)
(5.000000,4.663690) (5.050000,4.703921) (5.100000,4.744207) (5.150000,4.784545) (5.200000,4.824935)
(5.250000,4.865375) (5.300000,4.905864) (5.350000,4.946400) (5.400000,4.986983) (5.450000,5.027611)
(5.500000,5.068284) (5.550000,5.108999) (5.600000,5.149757) (5.650000,5.190556) (5.700000,5.231396)
(5.750000,5.272274) (5.800000,5.313191) (5.850000,5.354146) (5.900000,5.395137) (5.950000,5.436164)
(6.000000,5.477226)};
\fill[connectorblue,opacity=.085] plot coordinates {\sliceabove} --(0,0)--cycle;
\fill[connectororange,opacity=.085] plot coordinates {\slicebelow} --(6,6)--cycle;
\draw[connectorblue,line width=1pt] plot coordinates {\sliceabove};
\draw[connectororange,line width=1pt] plot coordinates {\slicebelow};
\end{scope}
\draw[connectorblue,fill=white,line width=.65pt] (0,1.402195) circle[radius=1.8pt];
\fill[connectorink] (2.606254,2.606254) circle[radius=1.45pt];
\end{scope}
\begin{scope}[shift={(16.5,4.0)}]
\connectorsliceaxes{4}
\def\slicebelow{
(5.071911,5.071911) (5.100000,5.096403) (5.150000,5.140106) (5.200000,5.183942) (5.250000,5.227908)
(5.300000,5.272000) (5.350000,5.316216) (5.400000,5.360551) (5.450000,5.405003) (5.500000,5.449568)
(5.550000,5.494243) (5.600000,5.539027) (5.650000,5.583914) (5.700000,5.628904) (5.750000,5.673994)
(5.800000,5.719180) (5.850000,5.764461) (5.900000,5.809834) (5.950000,5.855296) (6.000000,5.900846)}
\def\sliceabove{
(0.000000,2.490883) (0.050000,2.491095) (0.100000,2.491734) (0.150000,2.492799) (0.200000,2.494295)
(0.250000,2.496223) (0.300000,2.498589) (0.350000,2.501399) (0.400000,2.504657) (0.450000,2.508371)
(0.500000,2.512549) (0.550000,2.517199) (0.600000,2.522330) (0.650000,2.527951) (0.700000,2.534072)
(0.750000,2.540702) (0.800000,2.547852) (0.850000,2.555532) (0.900000,2.563751) (0.950000,2.572521)
(1.000000,2.581851) (1.050000,2.591750) (1.100000,2.602228) (1.150000,2.613293) (1.200000,2.624954)
(1.250000,2.637218) (1.300000,2.650092) (1.350000,2.663582) (1.400000,2.677695) (1.450000,2.692433)
(1.500000,2.707803) (1.550000,2.723806) (1.600000,2.740444) (1.650000,2.757719) (1.700000,2.775632)
(1.750000,2.794182) (1.800000,2.813368) (1.850000,2.833188) (1.900000,2.853638) (1.950000,2.874716)
(2.000000,2.896417) (2.050000,2.918736) (2.100000,2.941667) (2.150000,2.965204) (2.200000,2.989340)
(2.250000,3.014068) (2.300000,3.039380) (2.350000,3.065267) (2.400000,3.091720) (2.450000,3.118731)
(2.500000,3.146290) (2.550000,3.174388) (2.600000,3.203014) (2.650000,3.232158) (2.700000,3.261811)
(2.750000,3.291961) (2.800000,3.322598) (2.850000,3.353712) (2.900000,3.385292) (2.950000,3.417327)
(3.000000,3.449808) (3.050000,3.482723) (3.100000,3.516062) (3.150000,3.549815) (3.200000,3.583971)
(3.250000,3.618520) (3.300000,3.653453) (3.350000,3.688759) (3.400000,3.724429) (3.450000,3.760453)
(3.500000,3.796822) (3.550000,3.833526) (3.600000,3.870557) (3.650000,3.907906) (3.700000,3.945564)
(3.750000,3.983523) (3.800000,4.021774) (3.850000,4.060310) (3.900000,4.099122) (3.950000,4.138203)
(4.000000,4.177545) (4.050000,4.217142) (4.100000,4.256985) (4.150000,4.297069) (4.200000,4.337386)
(4.250000,4.377930) (4.300000,4.418695) (4.350000,4.459674) (4.400000,4.500861) (4.450000,4.542251)
(4.500000,4.583838) (4.550000,4.625617) (4.600000,4.667582) (4.650000,4.709727) (4.700000,4.752049)
(4.750000,4.794542) (4.800000,4.837201) (4.850000,4.880022) (4.900000,4.923000) (4.950000,4.966131)
(5.000000,5.009412) (5.050000,5.052837) (5.071911,5.071911)}
\begin{scope}
\clip (0,0) rectangle (6,6.5);
\draw[connectorgray,line width=.6pt,dashed] (0,0)--(6,6);
\draw[connectorgray,line width=.7pt,densely dotted] plot coordinates {
(0.000000,3.464102) (0.050000,3.464372) (0.100000,3.465184) (0.150000,3.466536) (0.200000,3.468429)
(0.250000,3.470861) (0.300000,3.473831) (0.350000,3.477337) (0.400000,3.481379) (0.450000,3.485954)
(0.500000,3.491060) (0.550000,3.496695) (0.600000,3.502856) (0.650000,3.509541) (0.700000,3.516746)
(0.750000,3.524468) (0.800000,3.532704) (0.850000,3.541451) (0.900000,3.550704) (0.950000,3.560460)
(1.000000,3.570714) (1.050000,3.581463) (1.100000,3.592701) (1.150000,3.604424) (1.200000,3.616628)
(1.250000,3.629308) (1.300000,3.642458) (1.350000,3.656074) (1.400000,3.670150) (1.450000,3.684681)
(1.500000,3.699662) (1.550000,3.715087) (1.600000,3.730952) (1.650000,3.747249) (1.700000,3.763974)
(1.750000,3.781121) (1.800000,3.798684) (1.850000,3.816658) (1.900000,3.835036) (1.950000,3.853813)
(2.000000,3.872983) (2.050000,3.892541) (2.100000,3.912480) (2.150000,3.932795) (2.200000,3.953479)
(2.250000,3.974528) (2.300000,3.995935) (2.350000,4.017695) (2.400000,4.039802) (2.450000,4.062250)
(2.500000,4.085034) (2.550000,4.108147) (2.600000,4.131586) (2.650000,4.155343) (2.700000,4.179414)
(2.750000,4.203793) (2.800000,4.228475) (2.850000,4.253454) (2.900000,4.278726) (2.950000,4.304286)
(3.000000,4.330127) (3.050000,4.356246) (3.100000,4.382636) (3.150000,4.409294) (3.200000,4.436215)
(3.250000,4.463393) (3.300000,4.490824) (3.350000,4.518504) (3.400000,4.546427) (3.450000,4.574590)
(3.500000,4.602988) (3.550000,4.631617) (3.600000,4.660472) (3.650000,4.689550) (3.700000,4.718845)
(3.750000,4.748355) (3.800000,4.778075) (3.850000,4.808001) (3.900000,4.838130) (3.950000,4.868457)
(4.000000,4.898979) (4.050000,4.929693) (4.100000,4.960595) (4.150000,4.991681) (4.200000,5.022947)
(4.250000,5.054392) (4.300000,5.086010) (4.350000,5.117800) (4.400000,5.149757) (4.450000,5.181879)
(4.500000,5.214163) (4.550000,5.246606) (4.600000,5.279204) (4.650000,5.311956) (4.700000,5.344857)
(4.750000,5.377906) (4.800000,5.411100) (4.850000,5.444435) (4.900000,5.477910) (4.950000,5.511522)
(5.000000,5.545268) (5.050000,5.579146) (5.100000,5.613154) (5.150000,5.647289) (5.200000,5.681549)
(5.250000,5.715932) (5.300000,5.750435) (5.350000,5.785056) (5.400000,5.819794) (5.450000,5.854646)
(5.500000,5.889609) (5.550000,5.924684) (5.600000,5.959866) (5.650000,5.995154) (5.700000,6.030547)
(5.750000,6.066043) (5.800000,6.101639) (5.850000,6.137335) (5.900000,6.173127) (5.950000,6.209016)
(6.000000,6.244998)};
\fill[connectorblue,opacity=.085] plot coordinates {\sliceabove} --(0,0)--cycle;
\fill[connectororange,opacity=.085] plot coordinates {\slicebelow} --(6,6)--cycle;
\draw[connectorblue,line width=1pt] plot coordinates {\sliceabove};
\draw[connectororange,line width=1pt] plot coordinates {\slicebelow};
\end{scope}
\draw[connectorblue,fill=white,line width=.65pt] (0,2.490883) circle[radius=1.8pt];
\fill[connectorink] (5.071911,5.071911) circle[radius=1.45pt];
\end{scope}

\draw[connectorblue,line width=1pt] (5,1.8)--(6.2,1.8);
\node[anchor=west] at (6.4,1.8) {Profile: $|z|>|x|$};
\draw[connectororange,line width=1pt] (5,.8)--(6.2,.8);
\node[anchor=west] at (6.4,.8) {Profile: $|z|<|x|$};
\draw[connectorgray,line width=.6pt,dashed] (14,1.8)--(15.2,1.8);
\node[anchor=west] at (15.4,1.8) {Cylinder $C_{3,3}\times\R$};
\draw[connectorgray,line width=.7pt,densely dotted] (14,.8)--(15.2,.8);
\node[anchor=west] at (15.4,.8) {Core cone $C_{4,3}$};
\end{tikzpicture}%
}
\endgroup
\caption{Transverse slices at $y=0,\frac14,\frac12,1,2,4$, read from
left to right and top to bottom, with common scales. The dashed and
dotted curves are the corresponding sections of $V=C_{3,3}\times\R$
and $C_{4,3}$. Blue and orange distinguish the regions $|z|>|x|$ and
$|z|<|x|$. Open circles mark the smooth $S^3$ orbits at $x=0$;
filled circles mark crossings with $V$. The origin in the $y=0$
slice is singular. Negative $y$ gives the same slices by symmetry.
The colored curves use an illustrative algebraic profile, not a
numerical solution of the minimal surface equation.}
\label{fig:connector-slice-evolution}
\end{figure}

\subsection{Relation to earlier constructions and rigidity results}
Hardt--Simon \cite{HS} associate smooth area-minimizing leaves to an
area-minimizing hypercone with isolated singularity; their dilations
foliate each complementary component. These leaves describe how a
conical singularity can be smoothed on either side. Caffarelli--Hardt--Simon
\cite{CHS} constructed local nonconical minimal hypersurfaces with
prescribed isolated conical singularities. Smale \cite{Smale1989}
subsequently constructed minimal hypersurfaces with many isolated
conical singularities. Simon--Solomon \cite{SS}
proved a global rigidity theorem for minimizing hypersurfaces
asymptotic to suitable quadratic cones. Together with the local
regularity theorem of Edelen--Spolaor \cite{ESp} discussed above,
these results concern
cones with smooth links. The asymptotic cone in
Theorem~\ref{thm:main-connector} instead, has a singular link, and the
connector meets both sides of that cone.

The cylindrical analysis requires control near the axis without
assuming that the singular set fills it. Sz\'ekelyhidi's barriers
and nonconcentration estimates \cite{Sz} address this difficulty
and are essential ingredients here. The connector has a different
geometry from the local examples in \cite{SzCyl}: its tangent cone
at the singular point is $C_{4,3}$, whereas its cone at infinity is $V$.

One-sidedness imposes a further restriction on entire hypersurfaces.
Simon \cite{SimLiouville} proved a cylindrical Liouville theorem
under strict stability and a finite-density hypothesis.
Edelen--Sz\'ekelyhidi \cite{ES} obtained the corresponding rigidity
for one-sided hypersurfaces under a density bound, with an extension
to stationary integral varifolds. The calculation above verifies
directly why that theorem does not exclude the present connector:
the density bound is satisfied, but the tangent cone forces the
surface to cross $V$ (See the orange and blue sides in Figure~\ref{fig:connector-slice-evolution}).

The prescribed-singular-set construction of Simon
\cite{SimPrescribed} already allows accumulation in smooth metrics.
In a recent work, Liu \cite{Liu2026} proves area-minimizing
properties for these examples; his theorem covers the Simons-cone cross-section
when the construction parameters are sufficiently small. The
new feature in Theorem~\ref{thm:main-accumulation} is the change of
tangent-cone type: the isolated singular points have the tangent cone
$C_{4,3}$, which has an isolated singularity, while their
accumulation point has the cylindrical tangent cone $V$. The metric is conformally Euclidean,
is exactly Euclidean near each inserted singularity, and has the
full Euclidean jet at the accumulation point, that is the origin. Since $g$ is not Euclidean on any neighborhood of the origin, the last property implies
that $g$ is not real analytic near the origin. In fact we conjecture that such an example cannot be constructed with an analytic metric (in particular the euclidean metric).
For related strong unique continuation results in the Euclidean
setting and a discussion of real-analytic ambient metrics, see
\cite[Theorem~1.2 and the subsequent discussion]{SzCyl}.

\subsection{The construction: main ingredients and use of AI}
In this subsection we describe the main ideas of the proof, what AI contributed and where they come from. More
detailed explanations of each main step are given in the corresponding
sections. We first construct the entire Euclidean connector, then
establish the estimates needed to insert copies of it at infinitely
many scales and obtain a smooth ambient metric.

\smallskip\noindent\emph{Constructing the connector.}
In Section~\ref{sec:finite-domain-data} we begin with Dirichlet problems on a fixed rounded cylinder
$\Omega\subset\R^4_x\times\R_y$ (see Figure~\ref{fig:finite-domain}). On this domain we will use Leray--Schauder degree argument from Fouladgar--Simon \cite{FS}, so that given $a>0$ we can find  boundary data $g_{\eps,b_{\eps,a}}$ and solutions $u_{\eps,a}$ of the symmetric minimal surface equation (SME) such that the minimum of $u_{\eps,a}$ is $a$ for every $\eps>0$. The parameter $b_{\eps,a}$ is chosen to guarantee such condition by representing a dip in the boundary data.  Keeping $\eps$ fixed, we let $a\downarrow0$ and pass to a subsequence;
this gives a singular solution and a limiting parameter $b_\eps$.
Only afterward do we let $\eps\downarrow0$, obtaining convergence
to $V=C_{3,3}\times\R$. The analysis also determines the limiting
choice of $b$. Using leaf comparison and moving planes, we also show axial monotonicity of solutions throughout the construction. The boundary data are chosen to give more than convergence to $V$:
the leading normalized perturbation near the origin is a positive
multiple of the degree-zero Jacobi field
\[
 J_2(r,y)=\frac{y^2}{r^2}-\frac13,
 \qquad r=(|x|^2+|z|^2)^{1/2}.
\]
Section~\ref{sec:classification} uses this field to obtain a strict
density drop below $\Theta(V)$. A separate density comparison excludes
zeros away from the axis. The strict drop then excludes an interval
of axial zeros, and the cone classification identifies the unique
remaining singularity as $C_{4,3}$. Thus each member of the family
has the desired local singularity, although the family converges
at fixed scale to the cylindrical cone.

Section~\ref{sec:connector} extracts an entire surface from these
two facts. We choose a radius at which the density ratio has a fixed
value strictly between $\Theta(C_{4,3})$ and $\Theta(V)$, and rescale
that radius to one. The chosen radii tend to zero, so the rescaled
domains exhaust the whole space. The density normalization prevents
the limit from being either cone. Compactness and cone classification
then give the connector with tangent cone $C_{4,3}$ at zero and
unique blow-down $V$.

\smallskip\noindent\emph{Quantitative control of the end.}
For gluing, convergence to $V$ away from its spine is not sufficient:
we need uniform control of higher order derivatives through the smoothing regions. Section~\ref{sec:end}
first proves quadratic growth of the signed Hardt--Simon phase and
positivity of its leading coefficient. Sz\'ekelyhidi's barriers and
nonconcentration estimates \cite{Sz} transfer this information to
the spine and determine the smoothing radius
$a_\infty(Y)=u_\infty(0,Y)\sim\kappa_H|Y|^{2/3}$.
We then differentiate the equation for $u_\infty$ directly. Normalizing
by the positive dilation Jacobi field of a comparison leaf leads
to a uniformly controlled Muckenhoupt $A_2$ equation. The weighted
estimates of \cite{FKS,FJK} give bounds for all axial derivatives,
including those of $a_\infty$. These bounds control the forcing in
the equation for the difference between the connector and the leaf
with the same smoothing radius. Appendix~\ref{app:connector-derivatives} supplies
the detailed derivative and remainder estimates.

\smallskip\noindent\emph{Gluing the singularities.}
Section~\ref{sec:accumulation} starts from $V$, keeping the Simons-cone
slice at $y=0$. At other heights we replace a small neighborhood of
the spine by a positive Hardt--Simon leaf and cut back to the cone
outside a narrow tube around the spine. Into this background we insert translated and
dilated connectors in disjoint balls whose centers approach zero.
The background radius function $a(y)$ is chosen to agree with the
smoothing radius of the corresponding dilated connector on the
axial intervals belonging to the near-axis portions of each
attachment annulus.

Matching the axis values alone does not suffice. Direct interpolation
leaves a mean-curvature error that tends to zero but varies across
the much smaller smoothing scale; the resulting conformal correction
cannot have bounded second derivatives. We therefore improve the
background before interpolating. Start with
$V_0[a](t,y)=a(y)h(t/a(y))$, where $t=|x|$.
If $a$ were constant, this would be an exact minimal hypersurface.
When $a$ varies with $y$, each transverse slice is still a foliation
leaf, but the full hypersurface has a mean-curvature error: its
leading terms involve $a''$ and $(a')^2/a$.

To cancel these terms, fix $y$, linearize the radial equation about
the leaf, and solve this ordinary differential equation with the
negative of the leading error as forcing. The correction has zero
value and zero radial derivative at $t=0$, so it preserves the
smoothing radius $a(y)$ and smoothness at the axis. Its dependence
on $y$ enters through the factors $a(a')^2$ and $a^2a''$; the
coefficients of these factors are fixed functions of $t/a(y)$,
determined by that radial equation.

Adding this first correction gives $V_1[a]$. We then compute its
remaining error, cancel the next terms by another radial solve,
and continue to a finite approximation $V_J[a]$; see
Figure~\ref{fig:radial-correction-scheme}. Section~\ref{sec:accumulation}
proves two estimates for this procedure: the corrected background
has a smaller mean-curvature residual, and, wherever $a(y)$ agrees
with the connector's axis value, it approximates the connector to
the corresponding higher order. Thus both errors introduced by
interpolation are controlled. At each stage we use a finite sum
with a remainder estimate; convergence as $J\to\infty$ at a fixed
point is not required.

\begin{figure}[htbp]
\centering
\begin{tikzpicture}[font=\small]
  \node[draw=blue!65!black,fill=blue!4,rounded corners,
        text width=3.65cm,minimum height=2.15cm,align=center]
    (leaf) at (0,0)
    {\textbf{Modulated leaf}\\[4pt]
     $V_0[a]=a h(t/a)$\\[4pt]
     Error from\\axial variation};
  \node[draw=orange!80!black,fill=orange!5,rounded corners,
        text width=3.65cm,minimum height=2.15cm,align=center]
    (first) at (5.15,0)
    {\textbf{First correction}\\[4pt]
     $V_1[a]=V_0[a]+W_1[a]$\\[4pt]
     Leading error canceled};
  \node[draw=green!40!black,fill=green!4,rounded corners,
        text width=3.65cm,minimum height=2.15cm,align=center]
    (finite) at (10.3,0)
    {\textbf{Finite}\\\textbf{approximation}\\[4pt]
     $V_J[a]$\\[4pt]
     Higher-order residual};
  \draw[->,thick] (leaf.east)--(first.west)
    node[midway,above,font=\scriptsize,align=center] {Radial\\solve};
  \draw[->,thick] (first.east)--(finite.west)
    node[midway,above,font=\scriptsize] {Repeat};
  \node at (5.15,-1.55)
    {$W_j(0,y)=\partial_tW_j(0,y)=0$: the smoothing radius stays $a(y)$.};
\end{tikzpicture}
\caption{Successive radial corrections. At each step, solve the
linearized radial equation to cancel the leading remaining error,
then estimate the new residual. When the axis data agree with the
connector's, a separate matching estimate controls the difference
between $V_J[a]$ and the connector.}
\label{fig:radial-correction-scheme}
\end{figure}

For each fixed correction order, a finite-accuracy gluing lemma
controls the remaining curvature and the derivatives of a conformal
factor $F$ satisfying $H+8\nu F=0$ on the glued surface. We first
fix the insertion centers and scales and construct the matching
radius function. We then choose successively smaller radii at which
the correction order increases, placing the transitions away from
the connector cores. This diagonal choice makes every fixed
derivative of $F$ vanish to every prescribed order at zero.
The resulting metric is smooth and flat relative to the Euclidean
metric there, and is exactly Euclidean near each inserted
singularity. Finally, mass estimates show that the modifications
preserve $V$ as the unique tangent cone at the accumulation point
and allow stationarity to extend across it.

\medskip

\emph{Use of LLMs.} The higher order estimates of Section \ref{sec:connector-estimates}, in particular the details in Appendix \ref{app:connector-derivatives} were produced with the help of ChatGPT-6 ASTRA, in particular the idea of using $A_2$ weights is entirely LLM generated, and it was surprising to us. We also used ASTRA to complete the details of the gluing process in Appendix \ref{sec:corrector-estimates}. Finally we used ASTRA for editing the final version of this manuscript. We take full responsibility for the correctness of the manuscript.  

\subsection{Forthcoming work and questions} 
We conclude the introduction with some natural open questions, which we
expect to follow from an extension of the present construction.

\begin{conjecture}[Connectors for quadratic cones]
\label{conj:connectors}
Let $p,q\ge1$, and suppose that $C_{p,q}$ is area-minimizing.
There exists a complete, connected, embedded hypersurface
\[
 M\subset\R^{p+1}_x\times\R^{q+1}_z\times\R_y
\]
whose associated multiplicity-one varifold is stationary,
such that
\[
 \operatorname{Sing}M=\{0\},\qquad
 \lim_{\rho\downarrow0}\rho^{-1}M=C_{p+1,q},\qquad
 \lim_{R\to\infty}R^{-1}M=C_{p,q}\times\R.
\]
Both limits are unique, have multiplicity one, and are smooth
locally away from the singular sets of the limiting cones.
Moreover, $M$ is invariant under
$O(p+1)\times O(q+1)$ and under $y\mapsto-y$.
\end{conjecture}

Here the tangent cone is understood in the form
\[
 C_{p+1,q}
 =\{q(|x|^2+y^2)=(p+1)|z|^2\}.
\]
Applying the same conjecture with $p$ and $q$ interchanged
gives the other attachment, whose tangent cone is
\[
 C_{p,q+1}
 =\{(q+1)|x|^2=p(|z|^2+y^2)\}.
\]
The conjecture concerns stationary hypersurfaces. 
\begin{conjecture}[Accumulation of quadratic singularities]
\label{conj:quadratic-accumulation}
Let $p,q\ge1$, and suppose that $C_{p,q}$ is area-minimizing.
There exist an open neighborhood $\mathcal U$ of zero in
\[
 \R^{p+1}_x\times\R^{q+1}_z\times\R_y,
\]
a smooth conformally Euclidean metric $g$ on $\mathcal U$,
and an embedded hypersurface $\Sigma\subset\mathcal U$
whose associated multiplicity-one integral varifold is
stationary with respect to $g$, such that, for some $k_0$,
\[
 \operatorname{Sing}\Sigma
 =\{0\}\cup\{p_k:k\ge k_0\},
 \qquad p_k=(0,0,2^{-k}).
\]
The tangent cones are unique and satisfy
\[
 \operatorname{Tan}(\Sigma,0)
 =\{C_{p,q}\times\R\},
 \qquad
 \operatorname{Tan}(\Sigma,p_k)
 =\{C_{p+1,q}\},
\]
all with multiplicity one.

The metric is Euclidean near each $p_k$ and agrees with
the Euclidean metric to infinite order at zero.
More precisely, for every pair of nonnegative integers
$m,N$, there is a constant $C_{m,N}$ such that
\[
 \|g-g_{\rm euc}\|_{C^m(B_\rho(0))}
 \le C_{m,N}\rho^N
\]
for all sufficiently small $\rho>0$.
\end{conjecture}

Interchanging the two factors gives the analogous conjecture
with tangent cone $C_{p,q+1}$ at every $p_k$. Analogous questions can be stated replacing $\R$ with $\R^k$, but, a part for the obvious product constructions, new techniques might be required to handle the general case $k\geq 2$.

A roadmap towards the solution of this conjecture using the results of this paper is given in Appendix \ref{sec:general-cones}.

\subsection{Organization}
Section~\ref{sec:connector} reduces
Theorem~\ref{thm:main-connector} to the existence of a family of
singular solutions of Dirichlet problems on a bounded domain and to
the classification of their singularities.
Sections~\ref{sec:finite-domain-data} and~\ref{sec:classification}
address these two ingredients. Section~\ref{sec:end} studies the
connector at infinity, and Section~\ref{sec:accumulation} carries
out the matching and conformal correction, thereby proving
Theorem~\ref{thm:main-accumulation}.
Appendices~\ref{app:connector-derivatives} and~\ref{sec:corrector-estimates}
contain the detailed weighted derivative and gluing estimates.
Appendix~\ref{sec:general-cones} outlines a strategy for the
extension to other minimizing quadratic cones formulated in
Conjectures~\ref{conj:connectors}
and~\ref{conj:quadratic-accumulation}.

\textbf{Acknowledgments} A.H. was supported by the European Research Council under Grant Agreement No.101076411 (ANGEVA). Views and opinions expressed are however those of the authors only and do not necessarily reflect those of the European Union or the European Research Council. Neither the European Union nor the granting authority can be held responsible for them. L.S. thanks the NSF for its support through the grants DMS-2044954 — CAREER: Fine Structure of the Singular Set in Some Geometric Variational Problems and DMS-2603324 — Higher Multiplicity Singularities in Geometric Variational Problems.

\section{Construction of the connector}\label{sec:connector}
We prove Theorem~\ref{thm:main-connector} by rescaling singular
solutions to a suitable family of Dirichlet problems with boundary data parametrized by $\eps$ on a fixed bounded domain. The two propositions below
give the required family and classify its singularities; their
proofs occupy Sections~\ref{sec:finite-domain-data}
and~\ref{sec:classification}. Each surface has a $C_{4,3}$ singularity
at zero, whereas the family converges, as $\eps\to 0$, with multiplicity one to $V$.

The rescaling must retain the transition between these two models.
We fix an intermediate density and choose the radius where the
density ratio attains that value. These radii shrink to zero, so
compactness gives an entire limit after dilation. The same density
normalization excludes both conical limits. The classification of
symmetric, axially monotone cones then identifies the tangent cone at
zero and the cone at infinity. This explains the role of the density
gaps recorded below and the order in which the two propositions
are used.

\subsection{Notation and preliminary results}
Write
\[
 t=|x|,\qquad r=(|x|^2+|z|^2)^{1/2},\qquad
 \rho=(r^2+y^2)^{1/2},\qquad
 J_2(r,y)=\frac{y^2}{r^2}-\frac13.
\]
Thus $r=\sqrt2t$ on $V$. For an eight-dimensional stationary varifold $N$ in Euclidean space,
\[
 \Theta(N,p,s)=\frac{\|N\|(B_s(p))}{\omega_8s^8},\qquad
 \omega_8=\frac{\pi^4}{24},\qquad
 \Theta(N,p)=\lim_{s\downarrow0}\Theta(N,p,s).
\]
For a cone centered at zero we abbreviate $\Theta(N)=\Theta(N,0)$. The main logic of the existence theorem is the following density gaps:
\begin{equation}\label{eq:density-gaps}
 \theta_{43}:=\Theta(C_{4,3})=\frac{768\sqrt{21}}{2401}
 <\theta_V:=\Theta(V)=\frac{15\pi}{32}
 <\theta_{13}:=\Theta(C_{1,3})=\frac{9\sqrt3\pi}{32}.
\end{equation}
The first gap provides the interval in which we choose the
normalizing density. The second excludes off-axis zeros, whose
density would be at least $\theta_{13}$. To use the first gap, we
must also show that the solutions of the bounded-domain Dirichlet problems have density below
$\theta_V$ at zero. Their leading Jacobi field $J_2$ produces this
strict deficit through the spherical link-area calculation in
Section~\ref{sec:link-area-deficit} dictated by the second variation along such jacobi field, which is a strictly negative direction. 

For a nonnegative function $u$ on a domain
$\Omega\subset\R^4_x\times\R_y$ we write
\[
 S(u)=\{(x,z,y)\in\R^4\times\R^4\times\R:\ (x,y)\in\Omega,\
 |z| = u(x,y)\}.
\]
If $u\in C^2(\Omega)$ is positive, then $S(u)$ is a smooth
hypersurface, and it is minimal if and only if $u$ solves the
symmetric minimal surface equation (SME)
\begin{align}\label{eq:SME}
    \left(\delta_{ij} - \frac{u_i u_j}{1+|Du|^2}\right)u_{ij} = \frac{3}{u}.
\end{align}

Here indices refer to the five coordinates $(x,y)$, and repeated
indices are summed. As in \cite{FS}, we work with locally uniform
limits of positive solutions: a nonnegative function
$u\in C^0(\Omega)$ is called an \emph{SME limit} on $\Omega$ if it
is the locally uniform limit on $\Omega$ of positive solutions of
\eqref{eq:SME} of class $C^2(\Omega)$, and an SME limit with nonempty
zero set is called a \emph{singular SME solution}. An \emph{SME
cone} is a set $S(u)$, where $u$ is an SME limit on $\R^5$ which is
positively homogeneous of degree one. By \cite[Corollary~2.5]{FS}, if
$u$ is a singular SME solution, then $S(u)$, with multiplicity one,
is a stationary integral varifold in $\Omega\times\R^4$, which is
smooth away from the points $(x,0,y)$ with $u(x,y)=0$ and singular at
each of them. Identifying $(x,y)$ with $(x,0,y)$, we write this as
\[
\{u= 0\}={\rm Sing} (S(u)) \,.
\]

We also use the Hardt--Simon phase. The cone $C_{3,3}$ has two
complementary components in $\R^8$, and each of them is foliated by
the dilations of a smooth, $O(4)\times O(4)$-invariant,
area-minimizing Hardt--Simon leaf \cite{HS}. Outside a compact set,
every leaf is a normal graph over the regular part of $C_{3,3}$, with
normal height $\lambda r^{-2}+O(|\lambda|^{4/3}r^{-3})$ for some
$\lambda\ne0$, where the normal of $C_{3,3}$ points towards
$\{|z|>|x|\}$ (see \cite[Proposition~3.3]{Sz}). We denote this leaf
by $ H_\lambda$, so that $H_\lambda\subset\{|z|>|x|\}$ for $\lambda>0$
and $ H_\lambda\subset\{|z|<|x|\}$ for $\lambda<0$, and we set
$ H_0=C_{3,3}$. 

The \emph{Hardt--Simon phase}
$P:\R^4_x\times\R^4_z\times\R_y\to\R$ is defined by $P=\lambda$ on
$ H_\lambda\times\R$ and $P=0$ on the axis $\{x=z=0\}$. It is
continuous, independent of $y$, positively homogeneous of degree three
in $(x,z)$, odd under the exchange of $x$ and $z$, and smooth on each
of the two open sides of $V$. In particular, there is a constant
$c_H>0$ such that
\[
 P(0,z,y)=c_H|z|^3,\qquad P(x,0,y)=-c_H|x|^3 .
\]

\subsection{Proof of Theorem \ref{thm:main-connector}} The proof relies on two propositions, which are proved in the next two sections. 

 The first one provides singular SME solutions on a fixed
bounded domain whose normalized deviation from $V$ is governed, as
$\eps\to0$, by the Jacobi field $J_2=y^2/r^2-1/3$. Its proof uses the
degree argument of \cite{FS} on a mean-convex domain, with a family
of boundary data depending on $\eps$, which is designed to produce
this Jacobi field in the limit $\eps\to0$.

Fix $R>0$ and $0<Y_0<Y$, and choose a smooth bounded convex rounded
cylinder $\Omega\subset\R^4_x\times\R_y$, invariant under $O(4)_x$
and reflection in $y$, with vertical side
$\{|x|=R,\ |y|\le Y_0\}$ and strictly convex caps ending at the
poles $(0,\pm Y)$, as in Section~\ref{sec:finite-domain-data}.
The domain, including the cap shape, is fixed throughout the
Dirichlet construction and is independent of $\eps$;
see Figure~\ref{fig:finite-domain}.

\begin{figure}[htbp]
\centering
\begin{tikzpicture}[x=1.05cm,y=1.05cm,>=latex,
    every node/.style={font=\small},line cap=round,line join=round]
  \path[fill=black!4]
    (1.35,-1.15) -- (1.35,1.15)
    .. controls (1.35,2.05) and (0.82,2.65) .. (0,2.65)
    .. controls (-0.82,2.65) and (-1.35,2.05) .. (-1.35,1.15)
    -- (-1.35,-1.15)
    .. controls (-1.35,-2.05) and (-0.82,-2.65) .. (0,-2.65)
    .. controls (0.82,-2.65) and (1.35,-2.05) .. (1.35,-1.15)
    -- cycle;
  \fill[blue!7] (0,0) circle (0.86);
  \draw[black!35,densely dashed] (-1.35,1.15) -- (1.35,1.15);
  \draw[black!35,densely dashed] (-1.35,-1.15) -- (1.35,-1.15);
  \draw[->,black!55] (-1.9,0) -- (2.15,0)
    node[right,text=black] {$x_1$};
  \draw[->,black!55] (0,-2.98) -- (0,3.08)
    node[above,text=black] {$y$};
  \draw[line width=0.9pt]
    (1.35,-1.15) -- (1.35,1.15)
    .. controls (1.35,2.05) and (0.82,2.65) .. (0,2.65)
    .. controls (-0.82,2.65) and (-1.35,2.05) .. (-1.35,1.15)
    -- (-1.35,-1.15)
    .. controls (-1.35,-2.05) and (-0.82,-2.65) .. (0,-2.65)
    .. controls (0.82,-2.65) and (1.35,-2.05) .. (1.35,-1.15);
  \draw[blue!65!black,dashed,line width=0.7pt] (0,0) circle (0.86);
  \fill (0,2.65) circle (1.3pt);
  \fill (0,-2.65) circle (1.3pt);
  \fill (0,0) circle (1pt);
  \node[above left] at (-0.04,2.65) {$(0,Y)$};
  \node[below left] at (-0.04,-2.65) {$(0,-Y)$};
  \node[below left] at (0,0) {$0$};
  \node[below right] at (1.35,-0.03) {$R$};
  \node[below left] at (-1.35,-0.03) {$-R$};
  \node[above left,fill=black!4,inner sep=1pt] at (0,1.15) {$Y_0$};
  \node[below left,fill=black!4,inner sep=1pt] at (0,-1.15) {$-Y_0$};
  \node at (-0.55,1.83) {$\Omega$};
  \draw[->,blue!65!black,line width=0.6pt]
    (0,0) -- (0.608,0.608)
    node[midway,below right,inner sep=1pt] {$4r_*$};
  \draw[black!60,line width=0.5pt] (1.35,0.68) -- (2.15,0.93);
  \node[right,align=left] at (2.15,0.93)
    {vertical side\\$|x|=R,\quad |y|\le Y_0$};
  \draw[black!60,line width=0.5pt] (0.94,2.23) -- (2.15,2.28);
  \node[right] at (2.15,2.28) {smooth convex cap};
  \draw[blue!65!black,line width=0.5pt] (0.61,-0.61) -- (2.15,-1.27);
  \node[right,align=left,text=blue!65!black] at (2.15,-1.27)
    {interior ball\\$\overline{B^5_{4r_*}}\subset\Omega$};
\end{tikzpicture}
\caption{A schematic meridian section of the fixed domain $\Omega$,
in the plane $x_2=x_3=x_4=0$. The full five-dimensional domain is
invariant under rotations of $x\in\R^4$; in this section $|x|=|x_1|$.
The dashed circle is the section of the interior ball used in
Proposition~\ref{SME:prop:finite-family}.}
\label{fig:finite-domain}
\end{figure}

\begin{proposition}[Existence of SME solution]
\label{SME:prop:finite-family}
For the domain $\Omega$ fixed above, choose $r_*>0$ such that $B_{4r_*}^5\subset \Omega$. There exists $\eps_0>0$ such that, for every $0<\eps<\eps_0$, there exists a singular SME solution $u_\eps$ on $\Omega$, continuous on $\overline\Omega$, with the following properties:
\begin{enumerate}
    \item $u_\eps$ is $O(4)_x$-invariant, even in $y$, and nondecreasing in $y\ge0$; the varifold $M_\eps=S(u_\eps)$, with multiplicity one, is stationary in $\Omega\times\R^4$.
    \item $\{u_\eps=0\}={\rm Sing} (S(u_\eps))\neq \emptyset$. 
    \item There is a constant $K>0$, independent of $\eps$, such that the uniform bound
    \[
    |P|\leq K\,\eps\qquad \text{holds on }S(u_\eps),
    \]
    and $\sup_{0<\eps<\eps_0}\operatorname{Lip}(u_\eps;U)<\infty$ for every $U\Subset\Omega$.
    \item $u_\eps\to |x|$ locally uniformly in $\Omega$, as $\eps\to 0$, and consequently $S(u_\eps)\to V$ as varifolds in $\Omega\times\R^4$, with multiplicity one.
    \item There are a constant $C_*>0$ and smooth functions $f_\eps\colon V\cap B^9_{2r_*}\cap\{r\ge C_*\eps^{1/3}\}\to \R$ such that the part of $S(u_\eps)$ lying over this region is the normal graph $\{X+f_\eps(X)\nu_V(X)\}$, where $\nu_V$ is the unit normal of $V$ pointing towards $\{|z|>|x|\}$, and
\begin{equation}\label{roadmap:eq:regular-bounds}
 |D^k f_\eps|\le C_k\eps r^{-2-k},
 \qquad k=0,1,2,
\end{equation}
with constants independent of $\eps$.
    \item Let $\mathcal D\subset\R^3_\xi\times\R_y$ be obtained from
$\Omega$ by replacing $|x|$ with $|\xi|/\sqrt2$, and put $r=|\xi|$.
There is a harmonic, $O(3)_\xi$-invariant, reflection-even function
$W$ on $\mathcal D$ such that, with $m_0=W(0,0)$,
\[
 W_y(0,y)>0\quad(y>0,\ (0,y)\in\mathcal D),
\]
near the origin 
\begin{equation}\label{roadmap:eq:quadratic-phase}
 W(r,y)-m_0
   =c\left(y^2-\frac{r^2}{3}\right)+E(r,y),
 \qquad c>0,
\end{equation}
where the analytic remainder satisfies
\[
 |D^k_{\xi,y}E|\le C_k\rho^{4-k},\qquad k=0,1,2,3,4,
\]
for sufficiently small $\rho>0$, and
\[
 \frac{f_\eps}{\eps}
       \longrightarrow \frac{W-m_0}{r^2}\qquad \text{smoothly on regular compact subsets of $V$ over $\Omega$ as $\eps\to 0$}.
\]
\end{enumerate}
\end{proposition}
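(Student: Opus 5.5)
The plan is to follow the Fouladgar--Simon degree scheme on the fixed mean-convex domain $\Omega$, with a two-parameter family of boundary data. The data are
\[
g_{\eps,b}=|x|+\eps\,\frac{W_b(\sqrt2|x|,y)}{2|x|^2}+(\text{cutoff corrections}),
\]
where $W_b$ is the harmonic $O(3)_\xi$-invariant function on $\mathcal D$ with prescribed boundary trace. The trace is chosen even in $y$ and increasing in $|y|$ on the caps, and $b$ controls the depth of a dip near the equator. We first verify that the data remain positive and admissible on $\partial\Omega$. Mean convexity of $\Omega$ and the leaf barriers $a\,h(t/a)$ then give uniform boundary gradient and Lipschitz bounds. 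For each $a>0$, a Leray--Schauder degree argument in $b$ produces a positive solution $u_{\eps,a}$ of \eqref{eq:SME} with $\min u_{\eps,a}=a$. The degree is nonzero because for $b$ very negative the minimum is large, for $b$ very positive it falls below $a$, and continuity in $b$ holds by uniqueness for positive solutions on mean-convex domains. Symmetry and evenness then follow from uniqueness. Monotonicity in $y\ge0$ follows by moving planes in $y$, comparing $u(x,y)$ with $u(x,2\lambda-y)$, since the boundary data are monotone on the caps. The interior Lipschitz bounds pass to the limit $a\downarrow0$ along a subsequence, which gives $b_\eps$ and a singular SME limit $u_\eps$. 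This establishes items (1)--(2), with $\Sing S(u_\eps)=\{u_\eps=0\}$ taken from \cite[Corollary~2.5]{FS}.

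For item (3), we place Hardt--Simon leaf products $H_{\pm K\eps}\times\R$ as barriers. The boundary data lie between them by construction, since $|P(g_{\eps,b})|\le K\eps/2$ uniformly in the bounded range of $b$. The maximum principle, together with the foliation by leaf products, then traps $S(u_{\eps,a})$ in $\{|P|\le K\eps\}$. This bound is uniform in $a$, so it passes to the limit. Item (4) follows immediately, because $|P|\le K\eps$ forces $u_\eps\to|x|$ uniformly, and multiplicity one follows from Allard together with the density bounds.

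For item (5), on $\{r\ge C_*\eps^{1/3}\}$ the phase bound says that $S(u_\eps)$ lies between leaves whose normal heights over $V$ are $\pm K\eps r^{-2}$. Rescaling a ball of radius $r/2$ to unit size gives height $O(\eps r^{-3})\ll1$ in the rescaled picture. Allard regularity plus Schauder estimates then yield a graph with $|D^kf_\eps|\le C_k\eps r^{-2-k}$.

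For item (6), the linearized operator on $V$ acting on $O(4)\times O(4)$-invariant functions of the form $r^{-2}w$ is conjugate to the Laplacian on $\R^3_\xi\times\R_y$; the exponent $-2$ is the Jacobi field decay rate. It follows that $f_\eps/\eps$ is bounded by (5), hence subconverges to $(W-m_0)/r^2$, with $W$ harmonic on $\mathcal D$. Here $W$ is determined by the boundary trace with parameter $b_0=\lim b_\eps$, and the constant $m_0$ is forced by the requirement that the singularity persist, that is, that the bounded Jacobi-field part vanishes at the axis. Evenness and symmetry make $W-m_0$ start with a harmonic quadratic, which must be $c(y^2-r^2/3)$, and analyticity gives the $\rho^4$ remainder. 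The claim $W_y>0$ follows from the maximum principle applied to $W_y$ on the upper half, using the monotone cap data and $W_y=0$ on $\{y=0\}$.

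The main obstacle is proving $c>0$, together with uniqueness of the limit, namely that $m_0$ and $b_0$ are determined so that the full sequence converges. For this I would use the Sz\'ekelyhidi nonconcentration estimates \cite{Sz} to show that the limit of $f_\eps/\eps$ has no singular part on the spine. Combined with the zero of $u_\eps$, this pins the axis value. Positivity of $c$ then follows from $W_y(0,y)>0$ once the monotone boundary trace excludes a degenerate quadratic term: Hopf applied to $W_y$ gives $W_{yy}(0,0)>0$.
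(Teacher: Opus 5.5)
Your outline has the right architecture: the Fouladgar--Simon degree scheme, leaf-product phase barriers, moving planes, the harmonic conjugate $w=r^2f$ on $\mathcal D$, and Hopf at the origin for $c>0$. However, the two steps where the real difficulty lies are either incorrect or missing.

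\textbf{The degree step.} Positive solutions of \eqref{eq:SME} are not unique, on mean-convex domains or otherwise. The operator $Q(u)=A^{ij}(Du)u_{ij}-3/u$ is increasing in $u$, so its linearization carries the positive potential $3/u^2$ and there is no comparison principle. The stable/unstable catenoid pair already gives two positive solutions in the lowest-dimensional analogue, and mean convexity only buys boundary gradient bounds. Consequently $\min u$ is not a continuous function of $b$, and symmetry cannot be deduced from uniqueness. The paper instead runs the FS homotopy inside the closed invariant space $\mathcal B$, using uniqueness only for the linear problems in that homotopy.

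More seriously, Theorem~\ref{thm:fs-existence} needs an endpoint at which \emph{no} positive solution exists. Your claim ``for $b$ very positive the minimum falls below $a$'', for every $a>0$, is exactly this assertion, stated without a mechanism. It is not cheap: $\partial\Omega$ meets the axis at the poles $(0,\pm Y)$, where any positive trace has positive phase, so no phase maximum principle can rule out positive solutions. The paper's mechanism is perturbative in $\eps$, and it is where $\eps_0$ comes from.
\begin{enumerate}
\item Every positive solution contains the orbit $(0,u(0,0)\omega,0)$, which the phase bound $c_H|z|^3\le K\eps$ places within $O(\eps^{1/3})$ of the origin.
\item The normalized heights converge to $(W+b)/r^2$ by Proposition~\ref{prop:harmonic-asymptotics}.
\item The signed transmission Lemma~\ref{lem:signed-coordinate-contact} converts this $\{x=0\}$ contact into $m_0+b\ge0$. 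It is obtained by sliding Sz\'ekelyhidi's barriers \cite[Propositions~5.8, 5.10]{Sz}, which exclude first contact even at singular spine points.
\end{enumerate}
Choosing $b_-=-m_0-\delta$ then gives nonexistence near that endpoint for small $\eps$.

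\textbf{Item (6).} The same lemma, used in both directions, is what forces $b_\eps\to-m_0$. That is what makes the leading term of the limit $c(y^2-r^2/3)$ rather than a nonzero constant $m_0+b$.
\begin{itemize}
\item A zero of $u_\eps$ is moved by axial monotonicity to $(x_i,0)$, and by $c_H|x_i|^3\le K\eps$ to the origin. This gives a $\{z=0\}$ contact and hence $m_0+b\le0$.
\item The axis orbit gives the opposite inequality $m_0+b\ge0$.
\end{itemize}
Nonconcentration \cite[Proposition~5.6]{Sz} does not do this job. It only bounds the size of the deviation near the spine by its size away from it, carries no sign information, and cannot pin the axis value. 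The zero of $u_\eps$ alone gives only one of the two inequalities.

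\textbf{The boundary data.} Your trace $|x|+\eps W_b/(2|x|^2)$ is singular at the poles, so $|P|\le K\eps/2$ on $\partial\Omega$ fails there. This is why the paper prescribes the exact leaf $h_{\eps(B_0+b)}(|x|)$ on the caps. Combined with the sign condition $D\le0$ and the phase maximum principle, that choice also yields the interior bound $u\le h_{\eps(B_0+b)}(|x|)$, which is what starts the moving planes. Monotonicity of the cap data alone does not suffice, and the positive zero-order coefficient forces the small-domain ABP version of the maximum principle.
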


The second proposition classifies the singular set of the
solutions $u_\eps$ produced by
Proposition~\ref{SME:prop:finite-family}. Its proof combines a
classification of symmetric tangent cones with a second-variation
computation for the area of the link of $V$ in the direction of
$J_2$; the latter yields a strict drop of the density below
$\Theta(V)$.

\begin{proposition}[Classification of the singularities]\label{prop:SME-singularity-classification} There is $\eps_1\in(0,\eps_0]$ such that, for every $0<\eps<\eps_1$, the singular SME solution $u_\eps$ of Proposition~\ref{SME:prop:finite-family} has the following properties:
\begin{enumerate}
    \item $\{u_\eps = 0\}={\rm Sing}(S(u_\eps)) = \{(0,0)\}$.
    \item The density at the origin satisfies $\Theta(M_\eps,0) = \theta_{43} = \Theta(C_{4,3})$. Moreover, $C_{4,3}$ is the unique tangent cone of $M_\eps$ at the origin.
\end{enumerate}
\end{proposition}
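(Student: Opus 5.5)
We argue by contradiction along sequences $\eps_j\downarrow0$, combining three ingredients: the density gaps \eqref{eq:density-gaps}, the convergence $S(u_\eps)\to V$ of Proposition~\ref{SME:prop:finite-family}(4), and the $J_2$-asymptotics in item (6). The first step is a classification of tangent cones. Any tangent cone at a zero $(x_0,y_0)$ of $u_\eps$ is an SME cone. It inherits the $O(4)_x$-invariance. If $x_0=0$, it also inherits evenness in $y$ and monotonicity in $|y|$. We then classify such cones of density $\le\theta_V$ as $V$ or $C_{4,3}$. Off the axis ($x_0\neq0$), the tangent cone is a cone in $\R^8$ times a line with symmetry $O(3)\times O(4)$. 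Hence it is $C_{2,3}\times\R^2$ or has larger density; I would expect the only candidates to have density at least $\theta_{13}$, via the Fouladgar--Simon classification of symmetric SME cones.

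\textbf{Step 1: no off-axis zeros near the origin.} Upper semicontinuity of density, together with varifold convergence to $V$ (density $\theta_V$ at every point of the spine and $1$ elsewhere), gives the following: for small $\eps$, every point of $S(u_\eps)$ in a fixed ball has density $<\theta_{13}$, up to a small error. This forces all zeros to lie on the axis $\{x=0\}$ inside $B_{2r_*}$. Near $\partial\Omega$, item (3) and item (5) keep the surface smooth in the region $r\ge C_*\eps^{1/3}$. Since $u_\eps$ is nondecreasing in $|y|$ and even, the axial zero set is a symmetric interval $\{0\}\times[-y_\eps,y_\eps]$.

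\textbf{Step 2: strict density drop at the origin.} This is the main obstacle. Item (6) says that the normalized deviation $f_\eps/\eps$ converges to $(W-m_0)/r^2$, whose leading part is $c\,J_2$ with $c>0$. I would then run the link-area second-variation computation of Section~\ref{sec:link-area-deficit}, which shows that $J_2$ is a strictly area-decreasing direction for the link of $V$. Using the monotonicity formula at scale $\rho$ with $\eps^{1/3}\ll\rho\ll1$, the density ratio is expanded as
\[
 \Theta(M_\eps,0,\rho)\le\theta_V+\eps^2\rho^{-6}\bigl(-c'\,+o(1)\bigr),
\]
for some $c'>0$. The first-order term vanishes by stationarity of $V$, and the remaining terms are controlled by \eqref{roadmap:eq:regular-bounds} and Sz\'ekelyhidi's nonconcentration estimates near the spine. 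Monotonicity then gives $\Theta(M_\eps,0)<\theta_V$. The difficulty is that the near-spine region $r<C_*\eps^{1/3}$ must not contribute more than the gained deficit; this is where the barriers of \cite{Sz} are needed.

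\textbf{Step 3: conclusion.} If $y_\eps>0$, the origin would lie in the interior of an interval of singular points. Any tangent cone there would then be translation invariant along $y$, hence equal to $V$, so its density would be $\theta_V$. This contradicts Step 2, so $y_\eps=0$ and $\operatorname{Sing}=\{0\}$, which is item (1). At the origin, the tangent cones have density $<\theta_V$ and are $y$-even, monotone and $O(4)_x$-invariant SME cones. The classification step then leaves only $C_{4,3}$, which gives density $\theta_{43}$. Uniqueness of the tangent cone follows from Simon's theorem for cones with smooth link, or directly from the fact that the set of tangent cones is connected and the classification leaves a single possibility.
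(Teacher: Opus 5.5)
Your outline follows the paper's order: off-axis exclusion by $\theta_V<\theta_{13}$, a strict density drop from $J_2$, exclusion of an axial interval by density, then cone classification. However, the steps that actually identify the singularity are either unsupported or incorrect.

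\textbf{The cone classification is only asserted.} Symmetry plus a density bound does not explain why an $O(4)_x$-invariant, even, axially monotone SME cone with isolated singularity must be $C_{4,3}$, and this is the heart of item (2). The paper needs a specific mechanism.
\begin{enumerate}
\item If $u(0,1)>0$, the differentiated SME and the Harnack inequality upgrade $u_y\ge0$ to $u_y>0$ on $\{y>0\}$.
\item Homogeneity then makes $y\mapsto u(a,y)$ a bijection of $(0,\infty)$ onto $(c_0a,\infty)$, so the cone is a bigraph $y=\pm G(|x|,|z|)$ meeting $\{y=0\}$ orthogonally.
\item Only then does the classification of equivariant cohomogeneity-two bigraph cones \cite[Corollary~3.18]{FTW} give $C_{4,3}$ (Lemma~\ref{lem:monotone-cone-classification}).
\end{enumerate}

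\textbf{The off-axis description is wrong.} The $O(4)_x$-orbit through $(x_0,0,y_0)$ is a $3$-sphere, so the tangent cone splits $\R^3$, not a line. It is $\R^3\times K$ with $K=S(v)\subset\R^2\times\R^4$ a five-dimensional $O(4)_z$-invariant cone, so $C_{2,3}\times\R^2$ cannot occur. To get $\Theta\ge\theta_{13}$ you must first show $v>0$ away from the vertex; otherwise the tangent profile's zero set would be four-dimensional, contradicting \cite[Theorem~9.1]{FS}. You then need the area gap \cite[Theorem~1.1]{CWZ} for $O(4)$-invariant minimal hypersurfaces of $S^5$; ``I would expect'' does not supply this. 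Also, item (5) gives no control near $\partial\Omega$. The paper instead moves any off-axis zero to $\{y=0\}$ by axial monotonicity, and then into $B_{C\eps^{1/3}}$ by $c_H|x|^3\le K\eps$.

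\textbf{Step 3 is not justified.} A cone singular along a line through its vertex need not be translation invariant along that line, so you cannot conclude that the tangent cone $K$ at $0$ is $V$. The splitting happens at a \emph{non-vertex} point. Suppose $u_K(0,1)=0$ and let $q=(0,0,1)$.
\begin{enumerate}
\item Every tangent cone of $K$ at $q$ is invariant under $y$-translations, since dilations of $K$ about $0$ become translations after blowing up at $q$.
\item Its one-homogeneous $O(4)$-invariant profile is $c|x|$, with $c>0$ by the noncollapsing in \cite{FS}, and the radial ODE forces $c=1$.
\item Monotonicity centered at $q$ gives $\theta_V=\Theta_K(q)\le\Theta_K(0)=\Theta(M_\eps,0)$, contradicting the density drop (Lemma~\ref{lem:axial-zero-density-extraction}).
\end{enumerate}

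\textbf{Step 2 misplaces the difficulty.} No barriers are needed. An $\eps$-dependent scale is not admissible, because item (6) gives convergence only on fixed compact sets, so you must work on a fixed small sphere $\partial B_{s_*}$. Since $W-m_0$ vanishes quadratically at $0$, the normal height there is about $\eps\,cJ_2$, and the density correction is of order $\eps^2s_*^{-2}$, not $\eps^2\rho^{-6}$. The near-spine part is handled as follows.
\begin{enumerate}
\item Remove the polar caps $\{r<\eps^\beta\}$ with $2/7<\beta<1/3$.
\item Their area is $O(\eps^{7\beta})=o(\eps^2)$ by the Lipschitz bound alone (Lemma~\ref{link:lem:cap-area}), while $\eps^{1-3\beta}\to0$ keeps the remaining part graphical with the required bounds.
\item Convert link area into a mass bound via $8\|M\|(B_\rho)\le\rho\,\mathcal H^7(M\cap\partial B_\rho)$ and monotonicity.
\end{enumerate}
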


Using Propositions~\ref{SME:prop:finite-family}
and~\ref{prop:SME-singularity-classification} together with the
compactness theory of \cite{FS}, we now prove
Theorem~\ref{thm:main-connector}. The idea is to zoom in at the scale
at which $S(u_\eps)$ passes from one cone to the other. Choose
$\eps_j\to0$ and put $u_j=u_{\eps_j}$. Each $M_j=S(u_j)$ has tangent
cone $C_{4,3}$ at the origin, whereas $M_j\to V$ at fixed scales.
Fixing $\theta_{43}<d<\theta_V$ and choosing $\rho_j$ with
\[
\Theta(M_j,0,\rho_j)=d,
\]
we obtain $\rho_j\to0$. Rescaling by $\rho_j^{-1}$ places this
transition at unit scale, and compactness gives an entire limit $M$
with
\[
\Theta(M,0,1)=d;
\]
in particular, $M$ is neither of the two cones. Monotonicity and the
classification of cones then identify $C_{4,3}$ as the tangent cone
at the origin and $V$ as the tangent cone at infinity, while the
density bound and axial monotonicity exclude further singularities.

\begin{proof}[Proof of Theorem~\ref{thm:main-connector}] Let $u_\eps$, $0<\eps<\eps_1$, be the singular SME solutions given by Propositions~\ref{SME:prop:finite-family} and~\ref{prop:SME-singularity-classification}.
Choose a sequence $\eps_j\downarrow0$, with each $\eps_j$ small enough
for both propositions to apply, and set
\[
 u_j:=u_{\eps_j},\qquad M_j:=S(u_j),\qquad X=(x,y)\in\R^5.
\]
Proposition~\ref{SME:prop:finite-family}(4) gives
\[
 u_j\longrightarrow |x|\quad\text{locally uniformly in }\Omega,
 \qquad M_j\longrightarrow V
 \quad\text{with multiplicity one over }\Omega.
\]

\noindent\textbf{Step 1. Choose a shrinking radius at density \(d\).}
Proposition~\ref{SME:prop:finite-family}(3) supplies a
constant $L$, independent of $j$, such that
$\operatorname{Lip}(u_j;B^5_{r_*})\le L$.
Proposition~\ref{prop:SME-singularity-classification}(1) gives $u_j(0)=0$,
so
\[
 0\le u_j(X)=u_j(X)-u_j(0)\le L|X|
 \qquad(X\in B^5_{r_*}).
\]
Moreover, Proposition~\ref{prop:SME-singularity-classification}(2)
identifies the tangent cone and density of each $M_j$:
\[
 \operatorname{Tan}(M_j,0)=\{C_{4,3}\},
 \qquad \Theta_{M_j}(0)=\theta_{43}.
\]

To choose a radius with density ratio exactly $d$, and later pass
that normalization to the limit, we use the fact that spheres carry
no mass. Indeed, on the regular part of $M_j$, a smooth minimal
hypersurface of dimension eight, the position vector $p\in\R^9$
satisfies
\[
\Delta_M|p|^2=16,
\]
so $|p|^2$ is not constant on any open subset of it. Analyticity implies
that every spherical level has zero eight-dimensional measure on the
regular part. The singular set also has zero eight-dimensional
measure. Hence $\|M_j\|(\partial B_r)=0$ for every $r>0$; the same
argument will apply to the limiting SME graph below. Consequently,
the density ratio is continuous in $r>0$, and varifold convergence
gives convergence of the mass of each fixed ball whose boundary has
zero mass for the limit.

Fix $d\in(\theta_{43},\theta_V)$ and a sufficiently small fixed
$r_0<r_*$. Multiplicity-one convergence to $V$ gives
\[
 \Theta_{M_j}(0)=\theta_{43}<d<
 \Theta_{M_j}(0,r_0)\longrightarrow\theta_V
\]
for all large $j$. Continuity and monotonicity therefore give a radius
$\rho_j\in(0,r_0)$ with $\Theta_{M_j}(0,\rho_j)=d$. For every fixed
$0<r\le r_0$,
\[
 \Theta_{M_j}(0,r)\longrightarrow\theta_V>d
 \quad\Longrightarrow\quad \rho_j<r\ \text{eventually}.
\]
Consequently $\rho_j\to0$.

\medskip\noindent\textbf{Step 2. Pass the normalization to an entire limit.}
Set $U_j(X)=\rho_j^{-1}u_j(\rho_jX)$. For each fixed $R$, once
$\rho_jR<r_*$,
\[
 U_j(0)=0,\qquad
 \|U_j\|_{L^\infty(B^5_R)}\le LR,\qquad
 \operatorname{Lip}(U_j;B^5_R)\le L.
\]
Arzel\`a--Ascoli and a diagonal choice of positive SME approximants give
an entire SME limit $U$ with $U(0)=0$ and global Lipschitz constant at
most $L$. The compactness statements in \cite{FS} yield
\[
 U_j\longrightarrow U\quad\text{locally uniformly},\qquad
 S(U_j)\longrightarrow M=S(U)\quad\text{with multiplicity one}.
\]
The convergence is smooth where $U>0$, and the symmetries and axial
order pass to $U$. The varifold convergence follows from the area
formula and dominated convergence, as in the proof of
Proposition~\ref{SME:prop:finite-family}(4), since the zero set of
$U$ has vanishing five-dimensional measure by
\cite[Theorem~9.1]{FS}.

For each fixed $R$ and all large $j$, monotonicity gives
\[
 \Theta_{S(U_j)}(0,R)
 =\Theta_{M_j}(0,\rho_jR)
 \le\Theta_{M_j}(0,r_0).
\]
The zero spherical mass established in Step~1 allows passage at the
specified radii, including $R=1$. Thus
\[
 \Theta_M(0,1)=d,\qquad
 \Theta_M(0,R)\le\theta_V\quad(R>0),\qquad
 \Theta_M(\infty)\le\theta_V.
\]

\medskip\noindent\textbf{Step 3. Identify the singular point and its tangent.}
For $s>0$, the rescaled profile $s^{-1}U(sX)$ is an SME limit on $\R^5$ and
has the same Lipschitz bound, rotational and reflection symmetries,
and axial monotonicity as $U$. Since $U(0)=0$, these profiles are
uniformly bounded on each fixed ball. SME compactness therefore
represents every tangent cone of $M$ at zero as an SME cone
with those same properties. Its density is at most
$\Theta_M(0,1)=d<\theta_V$, so
Lemma~\ref{lem:monotone-cone-classification} gives
\[
 \operatorname{Tan}(M,0)=\{C_{4,3}\}.
\]
Lemma~\ref{lem:off-axis-density-extraction} excludes off-axis zeros
using the global density bound. If $U(0,y_0)=0$ for $y_0\ne0$,
reflection and order would give
\[
 0\le U(0,y)\le U(0,|y_0|)=0\qquad(|y|\le|y_0|).
\]
For any $s_i\downarrow0$, the profiles $s_i^{-1}U(s_iX)$ would then
vanish at $(0,1)$ for all large $i$. Locally uniform convergence
would pass this axial zero to a tangent cone, contradicting
$C_{4,3}$. Therefore $U>0$ away from zero, and SME regularity and
\cite[Corollary~2.5]{FS} give
\[
 \operatorname{Sing}M=\{0\}.
\]

\medskip\noindent\textbf{Step 4. Identify the cone at infinity and obtain strict order.}
The same global Lipschitz bound gives compactness of $R^{-1}M$ as
$R\to\infty$ in the SME class. Let $K_\infty$ be a subsequential
limit along $R_i\to\infty$. For $0<a<b<\infty$,
\[
 \Theta_M(0,R_ib)-\Theta_M(0,R_ia)\longrightarrow0.
\]
The vanishing monotonicity defect makes $K_\infty$ a stationary
cone, and
\[
 d\le\Theta_{K_\infty}(0)=\Theta_M(\infty)\le\theta_V.
\]
It inherits the symmetries and axial order. Lemma~\ref{lem:monotone-cone-classification} now
gives
\[
 K_\infty\in\{C_{4,3},V\},\qquad
 \Theta_{K_\infty}(0)\ge d>\theta_{43}
 \quad\Longrightarrow\quad K_\infty=V.
\]
Compactness and this identification of every subsequential limit show
that the blow-down is unique and $\Theta_M(\infty)=\theta_V$.
Positive-height SME regularity gives smooth convergence on regular
compact subsets of $V$ away from its axis.

Finally, $U_y\ge0$ solves the differentiated SME on the connected
region $y>0$, where $U>0$. If $U_y$ were identically zero there,
reflection and continuity would make $U$ independent of $y$ and
give an entire zero axis, already excluded. The local Harnack
inequality for this nonnegative solution therefore yields $U_y>0$.
\end{proof}

\section{Proposition~\ref{SME:prop:finite-family}: The Dirichlet problem on a bounded domain}\label{sec:finite-domain-data}

In this section we construct the singular solutions in
Proposition~\ref{SME:prop:finite-family} by means of the
following existence theorem of Fouladgar--Simon.

\begin{theorem}[Fouladgar--Simon {\cite[Theorem~4.4]{FS}}]
\label{thm:fs-existence}
Fix integers $n,m\ge2$ and $0<\alpha<1$. On a uniformly convex
$C^{2,\alpha}$ domain $\Omega\subset\mathbb R^n$, consider
\[
 \sum_{i,j=1}^n\left(\delta_{ij}-\frac{u_i u_j}{1+|Du|^2}\right)u_{ij}
       =\frac{m-1}{u}.
\]
Let $\varphi_\tau\in C^{2,\alpha}(\overline\Omega)$,
$\tau\in[0,1]$, with $(X,\tau)\mapsto\varphi_\tau(X)$ jointly
continuous. Assume the following conditions (cf.\ \cite[Definition~4.1]{FS}).
\begin{enumerate}
\item No positive $C^2(\Omega)\cap C^0(\overline\Omega)$ solution
has trace $\varphi_0$.
\item On $\partial\Omega$, $\varphi_1>\psi$ for some
$\psi\in C^0(\partial\Omega)$ that is \emph{strongly positive}:
there exists $\eta>0$ such that every positive
$C^2(\Omega)\cap C^0(\overline\Omega)$ solution $v$ with
$v|_{\partial\Omega}\ge\psi$ satisfies $\inf_\Omega v\ge\eta$.
\end{enumerate}
Then, for some $\tau_*\in(0,1)$, there is a singular solution $u$
with trace $\varphi_{\tau_*}$ and
\[
 \varnothing\ne\{u=0\}\subset\Omega,\qquad
 \operatorname{dist}(\{u=0\},\partial\Omega)>0.
\]
Here a singular solution is a nonnegative function
$u\in C^0(\overline\Omega)$ with a zero in $\Omega$ which is the
locally uniform limit in $\Omega$ of positive classical solutions; for
$n=5$ and $m=4$ this is a singular SME solution in the sense of
Section~\ref{sec:connector}.
\end{theorem}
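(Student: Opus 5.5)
The statement is a degree-theoretic existence result of Fouladgar--Simon, so the plan is to reproduce the Leray--Schauder argument of \cite{FS} rather than to derive it from anything earlier in this paper.

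\textbf{Approximating problems.} Regularize the singular right-hand side. For $\delta>0$, replace $(m-1)/u$ by $(m-1)/\max(u,\delta)$, or work directly with positive solutions bounded below. For each $\tau$, consider the Dirichlet problem with trace $\varphi_\tau$. Uniform convexity of $\Omega$ and the $C^{2,\alpha}$ regularity of the data give global boundary gradient barriers; this is Serrin-type mean-convexity, where the positive right-hand side helps, since subsolutions built from the boundary data control $u$ from below near $\partial\Omega$. Combined with interior gradient estimates for the SME, this gives a priori $C^{2,\alpha}(\overline\Omega)$ bounds for every positive solution $u$ with $\inf u\ge\eta'>0$, uniformly in $\tau$.

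\textbf{Setting up the degree.} On the open set $\mathcal O_{\eta'}=\{u\in C^{2,\alpha}(\overline\Omega):\inf u>\eta'\}$, intersected with a large ball, define the fixed-point map $T_\tau$: freeze the coefficients at $u$ and solve the linear Dirichlet problem with trace $\varphi_\tau$. The map $T_\tau$ is compact and continuous in $(u,\tau)$.

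\textbf{Argument by contradiction.} Suppose that for no $\tau$ is there a singular solution with trace $\varphi_\tau$. Then, by compactness of positive solutions (locally uniform limits of positive solutions with $\inf\to0$ are exactly singular solutions), there is a uniform $\eta'>0$ with $\inf u\ge 2\eta'$ for every positive solution at every $\tau\in[0,1]$. Hence no fixed points lie on $\partial\mathcal O_{\eta'}$, and the degree $\deg(I-T_\tau,\mathcal O_{\eta'},0)$ is independent of $\tau$. At $\tau=0$ hypothesis (1) gives no solutions, so the degree is $0$.

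\textbf{Degree at $\tau=1$ (main obstacle).} Here one must show the degree is nonzero, which is where strong positivity is used. Deform the data $\varphi_1$ within the class of traces exceeding $\psi$, for instance $\varphi_1+s$ with $s$ large. Every solution along this deformation stays above $\eta$, so the degree is invariant. For large constant data the solution is unique, by comparison with the Hardt--Simon-type monotonicity of the equation, or by uniqueness when $u$ is large, since $(m-1)/u$ is then a small decreasing perturbation. It is also nondegenerate, so the degree equals $1$. I expect the delicate points to be the uniqueness/nondegeneracy of this reference problem and the fact that the right-hand side is decreasing in $u$, which breaks the naive maximum principle. One would handle these via the sub/supersolution structure used in \cite{FS}.

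\textbf{Conclusion.} Degree $0$ versus degree $1$ is a contradiction, so some $\tau_*$ admits a singular solution. We have $\tau_*\neq0,1$: $\tau_*=1$ is excluded by strong positivity, and $\tau_*=0$ is excluded because the limit of positive solutions at $\tau=0$ would contradict (1) only if positive, so we argue via the degree failure occurring at an interior $\tau$. Finally, the boundary barriers keep $u$ bounded below near $\partial\Omega$, which yields $\operatorname{dist}(\{u=0\},\partial\Omega)>0$.
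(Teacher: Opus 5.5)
The paper does not prove this statement; it imports it from \cite{FS}. Your Leray--Schauder outline has the right architecture, but its last step fails. Hypothesis (1) excludes only \emph{positive} solutions with trace $\varphi_0$. Your contradiction argument yields positive solutions $u_j$ with traces $\varphi_{\tau_j}$ and $\inf u_j\to0$, and nothing prevents $\tau_j\to0$. The limit is then a singular solution with trace $\varphi_0$, which is fully compatible with (1). Indeed, suppose singular solutions occurred only at $\tau=0$. Then for each $\tau_1\in(0,1)$ you would have a uniform lower bound on $[\tau_1,1]$, the degree at $\tau_1$ would equal the nonzero degree at $\tau=1$, and no contradiction would arise. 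Your sentence about ``the degree failure occurring at an interior $\tau$'' does not address this, so what you actually prove is $\tau_*\in[0,1)$. Positivity of $\tau_*$ needs extra input, such as nonexistence of positive solutions for all $\tau$ in an initial interval $[0,\tau_0]$. This is exactly what the paper checks in its application (Step~1 of Section~\ref{sec:finite-fs}, for $b\in[b_-,b_-+\delta/2]$). That check, rather than the cited theorem, is what places $b_\eps$ in the interior in Step~2.

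Second, the degree-one computation, which you rightly call the crux, is asserted rather than proved. Strong positivity gives only $\inf u\ge\eta$ along your deformation of the data. Uniqueness and nondegeneracy for large constant data $c$ need a global statement, for instance that every positive solution with trace $c$ is uniformly close to $c$. Nothing you cite gives this:
\begin{itemize}
\item The remark that $(m-1)/u$ is a small decreasing perturbation presupposes it. Decreasing is also the wrong sign, since the linearization carries the potential $+(m-1)/u^2$.
\item Hardt--Simon foliations exist only when the relevant cone is area-minimizing, not for general $n,m\ge2$.
\item The heuristic never uses $n\ge2$, and for $n=1$, $m=2$ its conclusion is false. The catenaries $a\cosh(x/a)$ on $(-L,L)$ give, for every large $c$, a shallow and a deep positive solution with the same data.
\end{itemize}
The paper's description of the \cite{FS} homotopy (linear Dirichlet problems ending at a terminal constant) suggests an alternative: deform the compact map itself to a constant map, whose degree is trivially $1$. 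But then you must keep fixed points away from $\{u=0\}$ along that deformation, and strong positivity as formulated controls only genuine SME solutions with data $\ge\psi$.

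Finally, your boundary barriers assume $\inf u\ge\eta'$ and a comparison principle that fails for this equation. You need a lower barrier near $\partial\Omega$ that is uniform as $\inf u\to0$. One way is to slide exact solutions and apply the strong maximum principle at a first contact, which does survive the sign of the zero-order term. Without such a barrier, degenerating solutions could lose their infimum at $\partial\Omega$ instead of producing an interior zero. Then neither the trace $\varphi_{\tau_*}$ nor $\operatorname{dist}(\{u=0\},\partial\Omega)>0$ follows.
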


The remark following \cite[Theorem~4.4]{FS} states that mean convexity
of $\partial\Omega$ suffices. We use that extension for our rounded
cylinder, whose straight side is not uniformly convex. We also use
the proof in the $O(4)_x$-invariant, reflection-even space; the
preservation of this space is checked in Section~\ref{sec:finite-fs}.
For our boundary paths, which have a positive lower bound at each
fixed $\eps$, the \cite{FS} construction supplies positive approximating
solutions whose traces belong to the same path. We retain this
property to pass our a priori estimates to the singular solution.

We now prove all the conclusions of
Proposition~\ref{SME:prop:finite-family}. Section~\ref{sec:finite-data}
chooses the domain and boundary path. Section~\ref{sec:finite-estimates}
establishes estimates for any positive symmetric solution with
these data, without assuming existence. In
Section~\ref{sec:finite-fs}, we verify the \cite{FS} endpoint conditions,
obtain the singular solution for each fixed $\eps$, and prove its
symmetries, monotonicity, phase and derivative bounds, and limiting
behavior as $\eps\to0$.

The harmonic function $W$ introduced in
Section~\ref{sec:finite-data} plays two roles. Before applying
\cite{FS}, the negative value at the center of the normalized limit
$W+b$ rules out positive solutions at the lower endpoint of the
boundary path.
After applying \cite{FS}, the existence of a zero gives the opposite
coordinate contact and forces the limiting normalized phase to
vanish at the center. This identifies the leading Jacobi field
needed in the next section.

\subsection{The domain and boundary path}\label{sec:finite-data} We first fix the domain and the boundary data. Fix $R>0$ and
$0<Y_1<Y_0<Y$, and let $\Omega\subset\R^4_x\times\R_y$ be a bounded
convex domain with smooth boundary, invariant under $O(4)_x$ and
under $y\mapsto-y$, whose boundary consists of the lateral side
$\{|x|=R,\ |y|\le Y_0\}$ and two strictly convex caps (the rounded
ends) with poles $(0,\pm Y)$; see Figure~\ref{fig:finite-domain}. In
particular, $\partial\Omega$ is mean convex. Choose a smooth even
function $D:[-Y,Y]\to\R$ with
    \begin{align}
        D\le0,\qquad D'(y)>0\quad(0<y<Y_1),\qquad
    D=0\quad(|y|\ge Y_1).
    \end{align}
Fix $B_0>0$ and define
    \begin{align}
        q(y) = B_0 + \sqrt{2}R^2 D(y).
    \end{align}
For $\lambda>0$ we write $H_\lambda=\{|z|=h_\lambda(|x|)\}$
for the Hardt--Simon leaf of phase $\lambda$ (see
Section~\ref{sec:connector}); thus $h_\lambda(0)=(\lambda/c_H)^{1/3}$,
and $h_\lambda$ is increasing (cf.\ Section~\ref{sec:end}). For $\eps>0$
and $b$ with $B_0+b>0$ we define the boundary data
    \begin{equation}\label{eq:barrel-trace}
 g_{\eps,b}(x,y)=
 \begin{cases}
 h_{\eps(B_0+b)}(R)+\eps D(y),
          & |x|=R,\ |y|\le Y_0,\\
 h_{\eps(B_0+b)}(|x|),&\text{on the rounded ends}.
 \end{cases}
\end{equation}
As $\eps\to0$, these boundary data converge to $|x|$, the
boundary values of $V$, since the phase of the leaf tends to zero.
Figure~\ref{fig:boundary-radius-and-q} compares the fixed profile $q$
with the prescribed radius $g_{\eps,b}$. They are related as follows:
\begin{equation}\label{e:boq}
 \frac{g_{\eps,b}(R,y)-h_{\eps(B_0+b)}(R)}{\eps}
   =D(y)=\frac{q(y)-B_0}{\sqrt2R^2}.
\end{equation}
Thus, to first order in $\eps$, the lateral boundary data have
normalized phase $q(y)+b$ (see the proof of
Proposition~\ref{prop:harmonic-asymptotics}); this is why the
normalized limit of $u_\eps$ is described by the harmonic function $W$
below.
On the rounded ends the radius varies with $|x|$ along that same
leaf; its phase, rather than its radius, is constant there.
Since $D\le0$, we have
$g_{\eps,b}(x,y)\le h_{\eps(B_0+b)}(|x|)$ on all of
$\partial\Omega$, with equality outside the perturbed part of the side.

\begin{figure}[htbp]
\centering
\begin{tikzpicture}[font=\small,>=stealth]
  \begin{scope}[shift={(0,6.9)}]
    \node[font=\small\bfseries] at (3,3.7) {Fixed profile $q(y)$};
    \draw[->,gray] (.3,.2)--(6.05,.2) node[right] {$y$};
    \draw[->,gray] (.3,.2)--(.3,3.1);
    \draw[gray,dashed] (.3,2.6)--(5.6,2.6);
    \node[left] at (.25,2.6) {$B_0$};
    \draw[orange!85!black,very thick] (.6,2.6)--(1.6,2.6);
    \draw[orange!85!black,very thick,domain=-1:1,samples=81]
      plot ({3+1.4*\x},{2.6-1.5*(1-\x*\x)^2});
    \draw[orange!85!black,very thick] (4.4,2.6)--(5.4,2.6);
    \draw[gray,dotted] (3,.2)--(3,1.1);
 
    \fill[orange!85!black] (3,1.1) circle(1.7pt);
    \node[below right,font=\footnotesize] at (3.05,1.1) {$q(0)$};
    \draw[<->,orange!85!black] (3,1.2)--(3,2.5);
    \node[above,font=\footnotesize] at (3,2.68)
      {dip depth $\sqrt2R^2[-D(0)]$};
    \foreach \xx/\txt in {.6/-Y_0,1.6/-Y_1,3/0,4.4/Y_1,5.4/Y_0}
      {\draw[gray] (\xx,.15)--(\xx,.25);
       \node[below,font=\footnotesize] at (\xx,.1) {$\txt$};}
    \node[align=center,text width=5.8cm,font=\footnotesize] at (3,-1.15)
      {$q=B_0+\sqrt2R^2D$.\\The shape and level of $q$ are fixed.};
  \end{scope}
  \begin{scope}[shift={(7.3,6.9)}]
    \node[font=\small\bfseries] at (3,3.7) {Side radius $g_{\eps,b}(R,y)$};
    \draw[->,gray] (.3,.2)--(6.05,.2) node[right] {$y$};
    \draw[->,gray] (.3,.2)--(.3,3.1);
    \draw[gray,dashed] (.3,2.6)--(5.6,2.6);
    \node[anchor=south west,font=\footnotesize] at (.3,2.76)
      {reference height $h_{\eps(B_0+b)}(R)$};
    \draw[blue!70!black,very thick] (.6,2.6)--(1.6,2.6);
    \draw[blue!70!black,very thick,domain=-1:1,samples=81]
      plot ({3+1.4*\x},{2.6-.85*(1-\x*\x)^2});
    \draw[blue!70!black,very thick] (4.4,2.6)--(5.4,2.6);
    \draw[gray,dotted] (3,.2)--(3,1.75);
    \fill[blue!70!black] (3,1.75) circle(1.7pt);
    \draw[<->,blue!70!black] (3,1.85)--(3,2.5);
    \node[below,align=center,font=\footnotesize] at (3,1.6)
      {dip depth $\eps[-D(0)]$};
    \foreach \xx/\txt in {.6/-Y_0,1.6/-Y_1,3/0,4.4/Y_1,5.4/Y_0}
      {\draw[gray] (\xx,.15)--(\xx,.25);
       \node[below,font=\footnotesize] at (\xx,.1) {$\txt$};}
    \node[align=center,text width=5.8cm,font=\footnotesize] at (3,-1.15)
      {$g_{\eps,b}(R,y)=h_{\eps(B_0+b)}(R)+\eps D(y)$.\\
       $b$ changes the reference leaf; $\eps$ also scales the dip.};
  \end{scope}
  \node[font=\small\bfseries,align=center] at (6.7,4.65)
    {The radius trace on the whole boundary};
  \node[font=\footnotesize,align=center] at (6.7,4.05)
    {Write $t_\partial(y)=|x|$ on $\partial\Omega$:
     $t_\partial=R$ on the side, and $t_\partial(\pm Y)=0$.};
  \draw[->,gray] (.25,0)--(13.4,0) node[right] {$y$};
  \draw[->,gray] (.25,0)--(.25,3.15) node[above] {$|z|$};
  \draw[gray,dotted] (2.9,0)--(2.9,3.05);
  \draw[gray,dotted] (10.5,0)--(10.5,3.05);
  \node[font=\footnotesize,align=center] at (1.55,3.2) {Rounded end};
  \node[font=\footnotesize,align=center] at (6.7,3.2) {Straight side};
  \node[font=\footnotesize,align=center] at (11.85,3.2) {Rounded end};
  \draw[blue!70!black,very thick] (.6,.7)
    .. controls (.8,1.95) and (1.8,2.3)..(2.9,2.3)
    --(4.7,2.3);
  \draw[blue!70!black,very thick,domain=-1:1,samples=81]
    plot ({6.7+2*\x},{2.3-.85*(1-\x*\x)^2});
  \draw[blue!70!black,very thick] (8.7,2.3)--(10.5,2.3)
    .. controls (11.6,2.3) and (12.6,1.95)..(12.8,.7);
  \draw[gray,dashed,thick] (4.55,2.3)--(8.85,2.3);
  \node[above,font=\footnotesize] at (6.7,2.4)
    {$h_{\eps(B_0+b)}(t_\partial(y))$};
  \node[below,font=\footnotesize,text=blue!70!black] at (6.7,1.38)
    {$g_{\eps,b}(t_\partial(y),y)$};
  \fill[blue!70!black] (.6,.7) circle(1.8pt);
  \fill[blue!70!black] (12.8,.7) circle(1.8pt);
  \foreach \xx/\txt in {.6/-Y,2.9/-Y_0,4.7/-Y_1,6.7/0,8.7/Y_1,10.5/Y_0,12.8/Y}
    {\draw[gray] (\xx,-.05)--(\xx,.05);
     \node[below,font=\footnotesize] at (\xx,-.12) {$\txt$};}
  \node[align=center,font=\footnotesize,text width=12.7cm] at (6.7,-1.1)
    {Outside $|y|<Y_1$, the trace agrees with the reference leaf.
     At the two poles, $g_{\eps,b}(0,\pm Y)=h_{\eps(B_0+b)}(0)>0$.};
\end{tikzpicture}
\caption{The fixed profile $q$ and the actual boundary radius
$g_{\eps,b}$.}
\label{fig:boundary-radius-and-q}
\end{figure}

Let $\mathcal D\subset\R^3_\xi\times\R_y$ be the auxiliary domain
obtained from $\Omega$ by replacing $|x|$ with $|\xi|/\sqrt2$.
Let $W$ be the unique bounded harmonic function on $\mathcal D$ with
boundary values $q(y)$ on the vertical side and $B_0$ on the rounded ends:
\begin{align}\label{eq:harmonic-limit}
    \begin{cases}
        \Delta_{\xi,y}W = 0&\text{in }\mathcal D,\\
        W(\xi,y) = q(y)&\text{for } |\xi|=\sqrt2R,\\
        W(\xi,y) = B_0&\text{on }\partial\mathcal D\setminus\{|\xi|=\sqrt2R\}.
    \end{cases}
\end{align}
These data match smoothly. By uniqueness, $W$ is $O(3)_\xi$-invariant and even in $y$. Write
\[
 m_0=W(0,0).
\]
Since $q\le B_0$ and $q\not\equiv B_0$, the strong maximum
principle gives
\[
 \min q<m_0<B_0.
\]
Fix $\delta>0$ so small that
$\delta<(B_0-m_0)/2$, and choose the parameter interval
\begin{equation}\label{eq:barrel-b-interval}
 b_-=-m_0-\delta,\qquad b_+=-\min q+\delta.
\end{equation}
The interval $[b_-,b_+]$ is fixed independently of $\eps$, and
$B_0+b\ge B_0-m_0-\delta>0$ throughout it. For each fixed
sufficiently small $\eps>0$, set
\[
 \varphi_\tau=g_{\eps,b_-+\tau(b_+-b_-)},\qquad 0\le\tau\le1.
\]
We will apply Theorem~\ref{thm:fs-existence} to this path in the
symmetric space. It selects a pair $(b_\eps,u_\eps)$ with
$\min u_\eps=0$. The selected parameter may depend on $\eps$;
only its allowed interval is fixed independently of $\eps$.

\subsection{A priori estimates for symmetric solutions}\label{sec:finite-estimates} We now consider any positive, $O(4)_x$-invariant, reflection-even
SME solution with boundary trace
$g_{\eps,b}$, where $b\in[b_-,b_+]$. The estimates in this subsection
hold whenever such a solution exists. In particular, they
apply to a hypothetical sequence at the lower endpoint and will
allow us to rule it out in Section~\ref{sec:finite-fs}.
We first control the phase and axial monotonicity, then identify
the harmonic limit, and finally relate its sign to coordinate
contacts. In the linearized equations below we write
\[
 A^{ij}(p)=\delta_{ij}-\frac{p_ip_j}{1+|p|^2}.
\]

\begin{proposition}[Moving plane estimates]\label{prop:moving-plane-phase-estimate}
    There are constants $\eps_2>0$ and $K_0>0$, depending only on
$\Omega$, $B_0$, $D$ and $b_\pm$, with the following property. Let
$0<\eps<\eps_2$ and $b\in[b_-,b_+]$, let $u\in C^2(\overline\Omega)$
be a positive solution of \eqref{eq:SME} with
$u|_{\partial\Omega}=g_{\eps,b}$, and put $M=S(u)$. Then 
\begin{enumerate}
    \item $|P|\le
K_0\eps$ on $M$,
    \item $u(x,y)\le h_{\eps(B_0+b)}(|x|)$ in $\Omega$,
    \item $u$ is nondecreasing in $y\ge0$: $u(x,y_1)\le u(x,y_2)$ whenever $0\le y_1\le y_2$ and $(x,y_2)\in\Omega$.
\end{enumerate} 
\end{proposition}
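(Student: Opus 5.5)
We prove the three items in the order (2), (3), (1). Throughout we use the Hardt--Simon leaves $H_\lambda\times\R$ as barriers; each is an exact smooth minimal hypersurface of the symmetric form $\{|z|=h_\lambda(|x|)\}$. Their profiles $h_\lambda(t)$ are ordered strictly in $\lambda$ and foliate the side $\{|z|>|x|\}$.

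\textbf{Item (2).} This is a direct comparison. Since $D\le0$, the trace satisfies $g_{\eps,b}\le h_{\eps(B_0+b)}(|x|)$ on $\partial\Omega$. Both $u$ and $h_{\eps(B_0+b)}(|x|)$ are positive $C^2$ solutions of \eqref{eq:SME} on $\overline\Omega$. The right-hand side $3/u$ is decreasing in $u$, so the difference of two solutions satisfies a linear elliptic equation whose zeroth-order coefficient has the favourable sign. The comparison principle then gives $u\le h_{\eps(B_0+b)}(|x|)$. Equivalently, $P\le\eps(B_0+b)$ on $M$, which already yields the upper half of (1).

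\textbf{Item (3).} We use Alexandrov moving planes in $y$. For $s\in(0,Y)$ set $u^s(x,y)=u(x,2s-y)$ on the reflected cap $\Omega_s=\{(x,y)\in\Omega: y>s,\ (x,2s-y)\in\Omega\}$. The goal is $u^s\le u$ on $\Omega_s$ for all $s\in(0,Y)$. Letting $s\downarrow0$ gives $u(x,-y)\le u(x,y)$, and monotonicity in $y\ge0$ follows from the standard moving-plane conclusion $u_y\ge0$ on $\{y>0\}$.

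The boundary inequality on $\partial\Omega_s\cap\partial\Omega$ comes from two facts about the data. First, $g_{\eps,b}$ is nondecreasing in $y\ge0$ along the lateral side, because $D'\ge0$ there. Second, on the convex caps the trace $h(|x|)$ is nondecreasing as one moves inward toward the $y$-axis at fixed height: $h$ is increasing in $t$, and by convexity of the caps the reflected point lies in $\Omega$ with $|x|$ no larger than the boundary point's. On the plane $\{y=s\}$ we have equality. The start of the procedure, for $s$ near $Y$, uses the narrow-domain maximum principle. Continuation in $s$ then uses the strong maximum principle together with the Hopf lemma for the linearized operator with coefficients $A^{ij}$ and zeroth-order term $3/(u u^s)\ge0$, whose sign is favourable.

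The main delicacy is the corner where the lateral side meets the caps at $|y|=Y_0$, and the pole $x=0$. At the corner we will use that $D=0$ for $|y|\ge Y_1$, so near the corner the data coincide with an exact leaf. Serrin's corner lemma, or a direct comparison with that leaf, then handles it.

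\textbf{Item (1), lower bound.} This is the hardest part. We must show $P\ge-K_0\eps$, which includes ruling out large excursions into $\{|z|<|x|\}$. We compare with a leaf $H_{-\mu}$ on the negative side, with $\mu=K\eps$. The obstacle is that $h_{-\mu}$ vanishes at some $t_\mu\sim\mu^{1/3}$, so the comparison is not a comparison of graphs.

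Instead we argue geometrically, sliding the leaves $H_{-\mu}\times\R$, $\mu\in[K\eps,\infty)$, which foliate $\{|z|<|x|\}$. For large $\mu$ the leaf is disjoint from $M$, since $M$ lies over the bounded set $\overline\Omega$ and $u$ is positive and continuous. Decrease $\mu$ until first contact. Contact cannot occur on the boundary: there the trace has phase at least $\eps(B_0+b)+O(\eps\,\min D)\ge-C\eps$, which exceeds $-K\eps$ once $K$ is large. Here we use that $h_\lambda(R)$ depends smoothly on $\lambda$ with $\partial_\lambda h_\lambda(R)\approx c R^{-2}$ uniformly for small $\lambda$. An interior first contact between two smooth minimal hypersurfaces, with $M$ lying on one side, contradicts the strong maximum principle.

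Hence $|P|\le K_0\eps$ with $K_0$ depending only on $\Omega$, $B_0$, $D$, $b_\pm$. The one step needing care is checking the lower bound of the boundary phase uniformly for $b\in[b_-,b_+]$ and small $\eps$; this is where $\eps_2$ is fixed.
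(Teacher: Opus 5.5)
Your overall architecture (foliation barriers for the phase, moving planes in $y$) matches the paper. Your sliding argument with the negative leaves $H_{-\mu}\times\R$ for the lower bound in (1) is essentially the paper's. The gap is in item (2), and it propagates to (3).

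\textbf{The sign in (2) is backwards.} You claim that, since $3/u$ is decreasing in $u$, the difference of two solutions satisfies a linear equation with a favourable zeroth-order sign. Take $w=u-h$ with $h=h_{\eps(B_0+b)}(|x|)$. Subtracting the two SMEs gives $A^{ij}(Du)w_{ij}+b^iw_i+\frac{3}{uh}\,w=0$. The zeroth-order coefficient is positive, which is the wrong sign for the maximum principle. The SME has no general comparison principle, so ``the comparison principle then gives $u\le h$'' is not available.

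The inequality is true, but it must come from the foliation, in one of two ways:
\begin{itemize}
\item Slide $\lambda\downarrow\eps(B_0+b)$ through the strictly ordered family $h_\lambda$. For $\lambda>\eps(B_0+b)$ you have $u<h_\lambda$ on $\partial\Omega$, so a first touching is interior. There $w_\lambda=u-h_\lambda\le0$ attains the value $0$, and $A^{ij}(w_\lambda)_{ij}+b^i(w_\lambda)_i=-c\,w_\lambda\ge0$. Hence the strong maximum principle applies whatever the sign of $c$.
\item As the paper does, note that an interior maximum $\lambda$ of $P|_M$ would make $M$ touch $H_\lambda\times\R$ from $\{P\le\lambda\}$. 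Therefore $\max_M P=\max_{\partial M}P\le\eps(B_0+b)$.
\end{itemize}
This is the same argument you already run for the minimum; the paper treats both signs at once this way.

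\textbf{The sign error recurs in (3).} You call $3/(uu^s)$ favourable; it is not. The continuation step should instead proceed in two stages:
\begin{itemize}
\item The strong maximum principle at a zero minimum, which is sign-independent, gives $v_{s_0}>0$.
\item Then apply the small-domain (ABP) maximum principle on $\Omega_s\setminus K$; it holds for any bounded $c$ on sets of small measure.
\end{itemize}
This Berestycki--Nirenberg form needs no Hopf or Serrin corner lemma. Note also that $\partial\Omega$ is smooth at $|y|=Y_0$, so there is no corner there. The corners of $\Omega_s$ lie along $\{y=s\}\cap\partial\Omega$.

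\textbf{The cap boundary inequality in (3) is misjustified.} The reflection $y\mapsto 2s-y$ does not change $x$. For a cap point $(x,y)$, the reflected point $(x,2s-y)$ lies in the interior of $\Omega$, so no monotonicity property of the trace controls $u$ there. What is needed is $u(x,2s-y)\le h_{\eps(B_0+b)}(|x|)=u(x,y)$, which is exactly item (2). So (2) must first be established by the foliation argument and then invoked at this step, as the paper does.
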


\begin{proof}
By elliptic regularity, $u$ is smooth in $\Omega$. On the caps of
$\partial\Omega$ the boundary data lie on the leaf
$H_{\eps(B_0+b)}$, so that $P=\eps(B_0+b)$ there. On the lateral
side, $|x|=R$ and $g_{\eps,b}=R+O(\eps)$ uniformly in
$b\in[b_-,b_+]$; since $P$ is Lipschitz near the point
$(|x|,|z|)=(R,R)$, where it vanishes, we obtain $|P|\le K_0\eps$ on
$\partial M$ for a suitable constant $K_0$ and all small $\eps$. If
$P|_M$ attained its maximum $\lambda$ at an interior point, then $M$
would touch the smooth minimal hypersurface $ H_\lambda\times\R$ from
the side $\{P\le\lambda\}$ at that point. By the strong maximum
principle and the connectedness of $M$, this would give
$M\subset H_\lambda\times\R$, which is impossible because
$P|_{\partial M}$ is not constant, as $D\not\equiv0$. The same
argument applies to the minimum, and hence $|P|\le K_0\eps$ on $M$.
Since $D\le0$, moreover, $P\le\eps(B_0+b)$ on $\partial M$, and
therefore on $M$; that is,
\[
 u(x,y)\le h_{\eps(B_0+b)}(|x|).
\]
The inequality is strict in $\Omega$, because the lateral data are not
identically those of this leaf.

The monotonicity in $y$ follows from the moving plane method
\cite{GNN,BN}. Fix a plane $y=s$, $0<s<Y$, and

reflect the upper portion $\Omega_s=\Omega\cap\{y>s\}$ by
$y\mapsto2s-y$. Convexity and reflection symmetry of the domain imply
that the reflected portion is contained in $\Omega$. Put
\[
 v_s(x,y)=u(x,y)-u(x,2s-y).
\]
It vanishes on the reflecting plane. If $(x,y)\in\partial\Omega_s$
lies on a cap, then $u(x,y)=h_{\eps(B_0+b)}(|x|)\ge u(x,2s-y)$. If it
lies on the lateral side, then $|x|=R$ and $|2s-y|<y\le Y_0$, so that
$(x,2s-y)$ lies on the lateral side as well, and $D(2s-y)\le D(y)$
because $D$ is even and nondecreasing on $[0,Y]$.
Consequently
\[
 \begin{aligned}
 y=s:&\quad v_s=0,\\
 \partial\Omega\text{ on a rounded end}:&\quad
 v_s=h_{\eps(B_0+b)}(|x|)-u(x,2s-y)\geq0,\\
 \partial\Omega\text{ on the side}:&\quad
 v_s=\eps\bigl(D(y)-D(2s-y)\bigr)\geq0.
 \end{aligned}
\]
For the fixed positive smooth solution put $\widehat u_s(x,y)=
u(x,2s-y)$. Subtraction of the equations gives
\[
 A^{ij}(Du)(v_s)_{ij}+b_s^i(v_s)_i+c_sv_s=0,
 \qquad c_s=\frac3{u\widehat u_s}>0,
\]
where
\[
 b_s^k=\int_0^1\partial_{p_k}A^{ij}
     (D\widehat u_s+\tau Dv_s)\,d\tau\,
     (\widehat u_s)_{ij}.
\]
Ellipticity and all coefficient bounds are uniform in $s$ for this
fixed solution. The small-domain maximum principle, a
consequence of the Aleksandrov--Bakelman--Pucci estimate, takes the form
\[
 \|(v_s)^-\|_{L^\infty(E)}
 \leq C_{\rm MP}\|c_s\|_\infty|E|^{1/5}
          \|(v_s)^-\|_{L^\infty(E)},
 \qquad v_s|_{\partial E}\geq0,
\]
for every open set $E\subset\Omega_s$, where $C_{\rm MP}$ depends
only on $\operatorname{diam}\Omega$, the ellipticity constants and
$\sup_s\|b_s\|_\infty$.
For sufficiently small $|E|$ it forces $v_s\geq0$ in $E$, despite the
positive zero-order coefficient.

We now run the continuation argument of \cite{GNN}. Let $s_0$ be the
infimum of those $s\in(0,Y)$ such that $v_{s'}\ge0$ in $\Omega_{s'}$
for all $s'\in[s,Y)$. Since $|\Omega_s|\to0$ as $s\uparrow Y$, the
small-domain maximum principle gives $s_0<Y$. Suppose that $s_0>0$.
By continuity, $v_{s_0}\ge0$ in $\Omega_{s_0}$. Moreover,
$v_{s_0}\not\equiv0$, since for points $(x,y)$ of the upper cap the
reflected point $(x,2s_0-y)$ lies in $\Omega$, where
$u<h_{\eps(B_0+b)}(|x|)$. As $v_{s_0}\ge0$ and
$A^{ij}(Du)(v_{s_0})_{ij}+b^i_{s_0}(v_{s_0})_i=-c_{s_0}v_{s_0}\le0$,
the strong maximum principle gives $v_{s_0}>0$ in $\Omega_{s_0}$.
Choose a compact set $K\subset\Omega_{s_0}$ such that the
small-domain maximum principle applies on every open set of measure at
most $2|\Omega_{s_0}\setminus K|$. For $s<s_0$ sufficiently close to
$s_0$, uniform continuity gives $v_s>0$ on $K$ and
$|\Omega_s\setminus K|\le2|\Omega_{s_0}\setminus K|$; the small-domain
maximum principle on $E=\Omega_s\setminus K$, on whose boundary
$v_s\ge0$, then gives $v_s\ge0$ in $\Omega_s$. This contradicts the
definition of $s_0$, and hence $s_0=0$. Consequently,
\[
 0\leq y_1<y_2\quad\Longrightarrow\quad
 u(x,y_2)-u(x,y_1)=v_{(y_1+y_2)/2}(x,y_2)\geq0.
\]
These order inequalities pass to locally uniform singular limits.
Note that the moving-plane argument uses reflection symmetry of
the domain and of the boundary data, but does not
require any symmetry of the solution.
\end{proof}

The next proposition identifies the first-order effect of the
boundary data. A direct computation, recorded in the proof below, shows that for an invariant Jacobi field $f$ on $V$, the
conjugate $r^2f$ satisfies the ordinary harmonic equation in
$\R^3_\xi\times\R_y$. Boundedness removes the
auxiliary axis, so the boundary data determine the limit uniquely.
Reflection symmetry and axial monotonicity then force its first
nonconstant term at the origin to be a positive multiple of
$y^2-r^2/3$. The proof makes this reduction and the positivity
of the coefficient explicit.

\begin{proposition}[Vanishing $\eps$ asymptotics]\label{prop:harmonic-asymptotics}
    Let $\eps_i\downarrow0$ and $b_i\in[b_-,b_+]$ with $b_i\to b_*$.
For each $i$, let $u_i$ be either an $O(4)_x$-invariant positive
solution of \eqref{eq:SME} in $C^2(\overline\Omega)$ with
$u_i|_{\partial\Omega}=g_{\eps_i,b_i}$, or a uniform limit on
$\overline\Omega$ of such solutions with the same $\eps_i$ and with
parameters converging to $b_i$. Then $M_i=S(u_i)\to V$, and on every
compact subset of the regular part of $V$ over $\Omega$, for all large
$i$, $M_i$ is the normal graph of a function $f_i$, with
    \begin{align}
        \frac{f_i}{\eps_i}\to \frac{W(r,y)+b_*}{r^2} \quad\text{in } C^{\infty}_{loc}(\mathrm{Reg}( C_{3,3}\times \R)),
    \end{align}
    where $W(r,y)$ is the harmonic function defined in (\ref{eq:harmonic-limit}) and satisfies:
    \begin{equation}\label{eq:barrel-quadratic}
    W(r,y)=m_0 + c\left(y^2-\frac{r^2}{3}\right)+O((r^2+y^2)^2),
    \end{equation}
    with $m_0 = W(0,0)$ and $c>0$.
\end{proposition}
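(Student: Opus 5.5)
The plan has four steps: compactness, a linearization/blow-up argument that identifies the normalized limit, identification of that limit through a harmonic conjugation, and a Taylor expansion of $W$ at the origin.

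\textbf{Step 1: convergence to $V$.} By Proposition~\ref{prop:moving-plane-phase-estimate}(1) we have $|P|\le K_0\eps_i$ on $M_i$, so $M_i$ lies between the leaves $H_{\pm K_0\eps_i}\times\R$. These leaves converge to $V$. The bound passes to uniform limits, so it also covers the second alternative for $u_i$. Hence $u_i\to|x|$ uniformly on $\overline\Omega$, and by the area formula $M_i\to V$ as varifolds with multiplicity one. On a compact set $K$ of the regular part of $V$, Allard's theorem (or SME regularity where $u_i>0$) makes $M_i$ a normal graph of $f_i$ with $\|f_i\|_{C^k(K)}\to0$.

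\textbf{Step 2: the normalized limit.} The phase bound gives $|f_i|\le C\eps_i$ on $K$. The graph functions satisfy the minimal surface equation over $V$, whose linearization is the Jacobi operator $L_V$. Quadratic terms are $O(\eps_i^2)$, so interior Schauder estimates show that $f_i/\eps_i$ is bounded in $C^k(K)$, and a subsequence converges smoothly to an $O(4)\times O(4)$-invariant Jacobi field $f$ on the regular part of $V$ over $\Omega$. Next I compute $L_V$ on invariant functions $f(r,y)$. On $V$, where $r=\sqrt2 t$, the Jacobi operator is $\partial_r^2+\frac6r\partial_r+\partial_y^2+\frac{6}{r^2}$; here $|A|^2=6/r^2$ for $C_{3,3}$ in the variable $r$ along the cone. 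Substituting $f=w/r^2$ gives
\[
L_V(w r^{-2})=r^{-2}\bigl(w_{rr}+\tfrac2r w_r+w_{yy}\bigr).
\]
The coefficient of $w$ is $6-10+... $; more precisely $r^{-2}$ is the Jacobi field corresponding to $\lambda r^{-2}$ leaves, which is consistent. So $w=r^2f$ is harmonic in $\R^3_\xi\times\R_y$ on $\mathcal D\setminus\{\xi=0\}$. The bound $|f|\le Cr^{-2}$ coming from $|P|\le K\eps$ (phase $\approx r^2\cdot$normal height) shows that $w$ is bounded. A bounded harmonic function is unaffected by the removable line $\{\xi=0\}$, which has zero capacity in $\R^4$, so $w$ is harmonic on all of $\mathcal D$.

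\textbf{Step 3: boundary values.} On the caps the data lie on $H_{\eps(B_0+b)}$, so the normalized phase is $B_0+b_i$. On the side, \eqref{e:boq} together with the leaf expansion $h_\lambda(R)=R+\lambda/(\sqrt2R^2)\cdot(\dots)$, i.e.\ normal height $\lambda r^{-2}$ with $r=\sqrt2R$, converts the radial perturbation $\eps D$ into the normalized phase $q(y)+b$. To obtain the boundary data of $w$ I need uniform up-to-boundary estimates, and these come from barriers: comparison with the leaves $H_{\eps(B_0+b)\pm\eps\delta}$, modulated near the side. This step is the main technical obstacle. My first attempt would use boundary Schauder estimates for the graph equation near the smooth side, where $V$ is regular, after normalizing. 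At the poles $(0,\pm Y)$ the cone is singular, but the bound $|w|\le C$ together with the capacity argument still yields continuity of the boundary values, since $\{\xi=0\}$ is polar. Once this is done, uniqueness of bounded harmonic functions with the given data identifies $w=W+b_*$, and the full sequence converges.

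\textbf{Step 4: expansion of $W$.} $W$ is $O(3)$-invariant and even in $y$, and harmonic, hence real analytic near $0$. Its first-order terms vanish by symmetry. The degree-2 part is an invariant harmonic quadratic that is even in $y$, hence a multiple $c(y^2-r^2/3)$, and the degree-3 part vanishes by parity, so the remainder is $O(\rho^4)$. For $c>0$: $\partial_yW$ is harmonic on $\mathcal D\cap\{y>0\}$ and vanishes at $y=0$. On the side its boundary value is $q'(y)\ge0$, on the caps $W\equiv B_0=\max$ so $\partial_yW\ge0$ there, and it is not identically zero. The maximum principle gives $\partial_yW>0$ in $\{y>0\}$, and Hopf's lemma at $y=0$ gives $\partial_y^2W(0,0)>0$, i.e.\ $2c>0$.
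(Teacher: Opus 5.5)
Your proposal is correct and follows essentially the paper's proof. Phase trapping and interior estimates give $f_i/\eps_i\to f$, and the conjugate $w=r^2f$ is a bounded harmonic function on $\mathcal D$ off the removable axis. Boundary Schauder estimates away from the poles identify its boundary data as $q+b_*$ on the side and $B_0+b_*$ on the caps. Uniqueness of the bounded Dirichlet problem then gives $w=W+b_*$; here the two poles are polar, so what you need there is uniqueness off a polar set, not continuity of the boundary values. Finally, Hopf's lemma and the symmetries give the expansion with $c>0$.

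The only deviation is that you get $W_y\ge0$ by applying the maximum principle directly to $\partial_yW$, whereas the paper compares $W(\xi,y)$ with $W(\xi,2s-y)$ by reflection. Your version needs $W\in C^1$ up to $\partial\mathcal D$ and an outward normal with $\nu_y\ge0$ on the upper cap, and both hold here.
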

\begin{proof}
By Proposition~\ref{prop:moving-plane-phase-estimate}, $|P|\le
K_0\eps_i$ on $M_i$; this bound passes to uniform limits. Hence
$M_i\to V$ in the Hausdorff sense, and the argument in the proof of
Proposition~\ref{prop:height-derivative-analysis} below shows that,
on every compact subset of the regular part of $V$ over $\Omega$,
$M_i$ is eventually the normal graph of a function $f_i$ with
$|D^kf_i|\le C_k\eps_i$ for every $k$. Near $\partial\Omega$, away
from the poles $(0,\pm Y)$, the functions $u_i$ are bounded below by
a positive constant, and $(u_i-|x|)/\eps_i$ solves a uniformly
elliptic linear equation whose boundary values
$(g_{\eps_i,b_i}-|x|)/\eps_i$ are bounded in $C^{2,\alpha}$. Boundary
Schauder estimates therefore give bounds for $f_i/\eps_i$ which are
uniform up to $\partial\Omega\setminus\{(0,\pm Y)\}$. Consequently,
every subsequence of $f_i/\eps_i$ has a further subsequence which
converges smoothly on compact subsets of the regular part of $V$ over
$\Omega$, and continuously up to the boundary away from the poles.

On the side $t=R$, the conversion from vertical
height to normal height gives
\[
 \begin{aligned}
 f_i&=\frac{g_{\eps_i,b_i}-R}{\sqrt2}
                      +o(\eps_i),\\
 \frac{r^2f_i}{\eps_i}
      &\longrightarrow B_0+b_*+\sqrt2R^2D(y)=q(y)+b_*.
 \end{aligned}
\]
On the rounded ends the phase is exactly $\eps_i(B_0+b_i)$.
The polar points are removable for the bounded limiting harmonic
Dirichlet problem. Any subsequential limit $f=\lim_i f_i/\eps_i$ and its bounded
conjugate $w=r^2f$ satisfy the explicit linearization identities
\[
 \begin{aligned}
 L_Vf&=f_{rr}+\frac6r f_r+f_{yy}+\frac6{r^2}f=0,\\
 r^2L_V(r^{-2}w)&=w_{rr}+\frac2r w_r+w_{yy}
       =\Delta_{\R^3_\xi\times\R_y}w=0,\\
 w|_{\partial\mathcal D}&=
 \begin{cases}q(y)+b_*,&\text{on the side},\\B_0+b_*,&\text{on the rounded ends}.
 \end{cases}
 \end{aligned}
\]
Removing the auxiliary axis and using uniqueness of the bounded
Dirichlet problem gives
\begin{equation}\label{eq:barrel-harmonic-limit}
 w=W+b_*.
\end{equation}

The uniqueness shows that the entire sequence has this normalized
limit whenever $b_i\to b_*$. These estimates apply to singular limits
as well: on a fixed compact subset of the regular part of \(V\), the phase estimate keeps \(u_\varepsilon\) uniformly positive for sufficiently small \(\varepsilon\). Thus elliptic regularity still applies there, and the harmonic-limit argument goes through.

The reflection argument in the proof of
Proposition~\ref{prop:moving-plane-phase-estimate} applies to $W$,
with the ordinary maximum principle in place of the small-domain one.
Indeed, for $0<s<Y$ the function $W(\xi,y)-W(\xi,2s-y)$ is harmonic in
$\mathcal D\cap\{y>s\}$, vanishes on $\{y=s\}$, and is nonnegative on
$\partial\mathcal D\cap\{y>s\}$, since $W\le B_0$ with equality on the
rounded ends, and the lateral data $q$ are even and nondecreasing in
$|y|$. Hence $W_y\ge0$ for $y>0$. It is not identically zero, since
the boundary data vary. The strong maximum principle for its harmonic
derivative gives $W_y>0$ in the upper half-domain. On the central plane,
$W_y=0$, and Hopf's lemma at the origin of the auxiliary half-domain
gives
\[
 W_{yy}(0,0)>0.
\]
Analyticity and the two symmetries yield
\[
 W(r,y)=m_0+a_2 r^2+c y^2+O((r^2+y^2)^2),
 \qquad
 6a_2+2c=0,\qquad c=\tfrac12W_{yy}(0,0)>0.
\]
This proves the estimate, including the positivity of $c$ and the
analytic, $O(3)$-invariant, reflection-even remainder. In particular,
$W(0,y)$ has its unique minimum $m_0$ at $y=0$.
\end{proof}

\begin{proposition}\label{prop:height-derivative-analysis}
    Let $0<\eps<\eps_2$, and let $u_\eps$ be a positive solution as in
Proposition~\ref{prop:moving-plane-phase-estimate}, or a uniform limit
on $\overline\Omega$ of such solutions with the same $\eps$. There is a
constant $C_*$, independent of $\eps$ and $b$, such that the part of
$S(u_\eps)$ lying over $V\cap B^9_{2r_*}\cap\{r\ge C_*\eps^{1/3}\}$ is the
normal graph over $V$ of a function $f_\eps$ satisfying
\begin{equation}
 |D^k f_\eps|\le C_k\eps r^{-2-k},
 \qquad k=0,1,2,
\end{equation}
with constants independent of $\eps$, and the same bound holds
for every $k\ge0$, with $C_k$ depending on $k$.
\end{proposition}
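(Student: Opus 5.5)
The plan is to turn the phase bound $|P|\le K_0\eps$ of Proposition~\ref{prop:moving-plane-phase-estimate} into graphical control via a scaling argument based on the homogeneity of the Hardt--Simon foliation. Near a point $X\in S(u_\eps)$ with $r(X)=r\ge C_*\eps^{1/3}$, rescale by $r^{-1}$: set $\widetilde M=r^{-1}(S(u_\eps)-(0,0,y(X)))$. Since $P$ is homogeneous of degree three in $(x,z)$ and independent of $y$, the rescaled surface satisfies $|P|\le K_0\eps r^{-3}\le K_0C_*^{-3}$ on $\widetilde M$, and $\widetilde M$ contains a point with $r=1$.

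The geometric input is that, for $|\lambda|\le \lambda_0$ small, the sandwich $\{|P|\le\lambda\}\cap\{1/2\le r\le 2\}$ is a thin neighbourhood of $V$. By the expansion $\lambda r^{-2}+O(|\lambda|^{4/3}r^{-3})$ of the leaves, this neighbourhood has normal width $\lesssim\lambda$ over the regular annulus of $V$. Choosing $C_*$ large makes $\widetilde M$ trapped in a slab of width $\le C\eps r^{-3}$ around $V$ in the region $\{1/2\le r\le2\}\times(-2,2)$.

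The next step is to upgrade this $C^0$ closeness to a graph with derivative bounds. $\widetilde M$ is a stationary multiplicity-one varifold there; for singular limits this is by \cite[Corollary~2.5]{FS}. It is smooth there because $u_\eps>0$ off the axis, and it lies in a thin slab around the smooth minimal hypersurface $V\cap\{1/2\le r\le2\}$. Mass bounds come from monotonicity and the Lipschitz bound for the SME profile. Allard's regularity theorem, or more simply a sheeting argument using that $S(u)$ is a symmetric graph $|z|=u$ whose meridian curve is a single graph, then gives a single normal graph $\widetilde f$ over $V$ with small $C^{1,\alpha}$ norm. I would use the $O(4)_x$ symmetry to make single-sheetedness evident: the profile $u$ is a function of $(t,y)$, and $V$ corresponds to $u=t$.

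Subtracting the minimal surface equation of $V$ shows that $\widetilde f$ solves a uniformly elliptic linear equation $L\widetilde f=0$, with coefficients controlled in $C^{\alpha}$ and depending smoothly on $(\widetilde f,D\widetilde f)$. Interior Schauder estimates on the slightly smaller annulus then give $|D^k\widetilde f|\le C_k\sup|\widetilde f|\le C_k\eps r^{-3}$ for every $k$. Scaling back, $f_\eps(X)=r\widetilde f$ and $D^kf_\eps=r^{1-k}D^k\widetilde f$, which yields $|D^kf_\eps|\le C_k\eps r^{-2-k}$.

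The region is taken inside $B^9_{2r_*}$, so the rescaled annuli stay inside $\Omega\times\R^4$ with a definite margin. This holds because $B^5_{4r_*}\subset\Omega$ and $r=\sqrt2t$ on $V$. For uniform limits with fixed $\eps$, the bounds pass to the limit since they are uniform and the graphs converge smoothly.

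The main obstacle is the quantitative width claim: that $|P|\le\lambda$ confines the surface to within normal distance $C\lambda r^{-2}$ of $V$ uniformly, including the fact that leaves with $|\lambda|r^{-3}$ small are graphs over the annulus. I would take this from \cite[Proposition~3.3]{Sz} together with homogeneity: $H_\lambda=\lambda^{1/3}H_1$ and $P$ is comparable to the signed distance times $r^2$ on the region $\{|P|\le\lambda_0 r^3\}$. The smallness threshold $\lambda_0$ then fixes $C_*=(K_0/\lambda_0)^{1/3}$.
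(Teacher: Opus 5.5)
Your proposal follows the same approach as the paper: rescale at scale $r$, turn $|P|\le K_0\eps$ into $C^0$ closeness of order $\eps r^{-3}$ through the leaf expansion of \cite[Proposition~3.3]{Sz}, linearize against $V$, apply interior estimates, and scale back.

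The one step that does not go through as written is the passage from the slab to a single $C^{1,\alpha}$ normal graph.
\begin{itemize}
\item Allard's theorem needs a mass ratio close to $1$. Your monotonicity and Lipschitz mass bound gives only an upper bound of size depending on the Lipschitz constant, not a ratio near $1$.
\item The sheeting argument also fails as stated. A meridian curve $|z|=u(t,y)$ that is $C^0$-close to $|z|=t$ meets each normal line $t+|z|=2t_0$ exactly once only if $\partial_t u>-1$, so single-sheetedness over $V$ needs slope control you have not yet proved.
\item A smaller point: $u_\eps>0$ off the axis is not known at this stage. In your region, positivity comes from the phase bound instead.
\end{itemize}

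The paper avoids these issues by staying with the profile. In the rescaled ball, $\hat u=r^{-1}u_\eps$ has height comparable to $1$. The Lipschitz bound of \cite[Theorem~8.1]{FS} then makes the SME uniformly elliptic, which gives derivative bounds for $\hat u$. The difference $\hat u-r^{-1}t$ solves a linear equation with controlled coefficients, so its $C^k$ norm is at most $C_k\eps r^{-3}$. Only then is this $C^1$ smallness used to show that normal projection onto $V$ is a diffeomorphism. With that substitution, the rest of your argument is fine.
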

\begin{proof}
    By Proposition~\ref{prop:moving-plane-phase-estimate},
$|P|\le K_0\eps$ on $S(u_\eps)$, and this bound passes to uniform limits.
Given this bound, the leaf expansion and separation estimates
\cite[Proposition~3.3 and Lemma~5.1]{Sz} give
\[
 r\ge C_*\eps^{1/3}
 \quad\Longrightarrow\quad
 t\approx u_\eps\approx r,\qquad
 |u_\eps-t|\le C\eps r^{-2}.
\]
Fix a point of this region, and work in a base ball of radius $cr$ about it, contained in the interior, and
dilate by $r^{-1}$. The rescaled functions
$\widehat u=r^{-1}u_\eps$ and $\widehat t=r^{-1}t$
have uniformly positive height and bounded derivatives, by
\cite[Theorem~8.1]{FS} and positive-height elliptic regularity.
Their difference $z=\widehat u-\widehat t$ satisfies
\[
 a^{ij}z_{ij}+b^iz_i+\frac{3}{\widehat u\,\widehat t}\,z=0,
 \qquad
 \|z\|_{C^k(B_{c/2})}\le C_k\|z\|_{C^0(B_c)}
                         \le C_k\eps r^{-3},
\]
with uniform ellipticity and coefficient bounds. Here and below a
$C^k$ estimate means any fixed finite order. Scaling back gives the
same bounds for $u_\eps-t$. For $C_*$ sufficiently large,
the $C^1$ error makes normal projection onto $V$ a diffeomorphism.
Its normal height therefore satisfies
\begin{equation}\label{eq:barrel-regular-estimates}
 |D^k f_\eps|\le C_k\eps r^{-2-k},
 \qquad r\ge C_*\eps^{1/3}.
\end{equation}
The constants are uniform on each fixed interior region.
\end{proof}

The preceding normal-graph convergence holds away from the spine
$\{x=z=0\}$. Although the limiting function $w=r^2f$ extends
continuously to the spine, we cannot evaluate the normal heights
$f_i$ there: the reference cylinder is singular at those points.
The next lemma supplies the missing link between this regular
convergence and intersections of the hypersurfaces with the
coordinate subspaces $\{x=0\}$ and $\{z=0\}$.
The phase is nonnegative on $\{x=0\}$ and nonpositive on
$\{z=0\}$. The lemma transfers these signs to the limiting value
$w(0,y_*)$, using barriers near the spine. The lemma is needed both before existence, to check the lower
endpoint in \cite{FS}, and after existence, to select the limiting phase.

\begin{lemma}[Signed transmission at the spine]
\label{lem:signed-coordinate-contact}
Let $I\subset\R$ be an open interval, and let $\mathcal U\subset\R^9$
be a fixed open neighborhood of the spine segment
\[
 \{(0,0,y):y\in I\}.
\]
Let $M_i$ be multiplicity-one stationary integral varifolds in
$\mathcal U$ of the form $S(u_i)$, where the $u_i$ are $O(4)_x$-invariant
SME limits (so that $M_i$ is invariant
under $O(4)_x\times O(4)_z$), and let $\eps_i\downarrow0$.
Assume that their signed Hardt--Simon phase satisfies
\[
 |P|\le K\eps_i\quad\text{on }\operatorname{spt}M_i\cap\mathcal U
\]
for a constant $K$ independent of $i$. Suppose that, near each compact subset
of $V\cap\mathcal U\cap\{r>0\}$, the whole support of $M_i$
is, for all sufficiently large $i$, a single normal graph over $V$.
Write $f_i$ for its signed Euclidean normal height, and assume
\[
 \frac{r^2f_i}{\eps_i}\longrightarrow w(r,y)
 \quad\text{smoothly on compact subsets of }
 V\cap\mathcal U\cap\{r>0\},
\]
where $w$ extends continuously to $r=0$ for $y\in I$.

For any $y_*\in I$, the following conclusions hold:
\begin{enumerate}[label=(\roman*),leftmargin=*]
\item If there are points $(0,z_i,y_i)\in\operatorname{spt}M_i
\cap\mathcal U$ with $y_i\to y_*$, then $w(0,y_*)\ge0$.
\item If there are points $(x_i,0,y_i)\in\operatorname{spt}M_i
\cap\mathcal U$ with $y_i\to y_*$, then $w(0,y_*)\le0$.
\end{enumerate}
In each case the phase bound forces the contact points to converge
to $(0,0,y_*)$. If both types of contact approach the same spine
point, then $w(0,y_*)=0$.
\end{lemma}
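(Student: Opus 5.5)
The plan is to argue by contradiction with a barrier built from the Hardt--Simon phase, in the spirit of Sz\'ekelyhidi's barriers \cite{Sz}. I treat (i); statement (ii) follows by the same argument with the roles of $x$ and $z$ interchanged, using that $P$ is odd under this exchange. The last sentence of the lemma is then immediate from (i) and (ii).

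\textbf{Contacts converge to the spine.} At a contact point $(0,z_i,y_i)$ we have
\[
P=c_H|z_i|^3\ge0 .
\]
The bound $|P|\le K\eps_i$ then gives $|z_i|\le (K\eps_i/c_H)^{1/3}\to0$. Hence the contact points converge to $(0,0,y_*)$, and the phase there is nonnegative.

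\textbf{Linearized phase of the barrier.} Suppose now that $w(0,y_*)<-2\delta<0$. The leaf expansion $\lambda r^{-2}+O(|\lambda|^{4/3}r^{-3})$ identifies the phase of $M_i$ on regular compact sets with $r^2f_i$ up to lower-order terms. So on compact regular sets
\[
P|_{M_i}=\eps_i w+o(\eps_i),
\]
and by continuity of $w$ up to $r=0$ we get $P\le-\delta\eps_i$ on $M_i\cap\{r=r_0,\ |y-y_*|\le\eta\}$, for $r_0$ small and all large $i$. As comparison phase I take
\[
\Phi_i(r,y)=\eps_i\Bigl(-\tfrac{\delta}{2}+A\bigl((y-y_*)^2-\tfrac{r^2}{3}\bigr)-\mu r^2\Bigr).
\]
The middle term is harmonic in $\R^3_\xi\times\R_y$, which is exactly the linearized equation for $r^2f$ from Proposition~\ref{prop:harmonic-asymptotics}. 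The small term $-\mu r^2$ makes $\Phi_i$ strictly superharmonic, and this slack will absorb the nonlinear errors.

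\textbf{Choice of constants.} First choose $\eta$ so that the cylinder $\{r\le r_0,\ |y-y_*|\le\eta\}$ lies in $\mathcal U$ and $w<-\delta$ there. Then choose $A$ with $A\eta^2\ge K+\delta$, so that $\Phi_i\ge K\eps_i\ge P$ on the caps $|y-y_*|=\eta$ once $r_0\ll\eta$. Finally choose $r_0$ (and $\mu$) so small that $(A/3+\mu)r_0^2\le\delta/2$, so that $\Phi_i\ge-\delta\eps_i\ge P$ on the side $r=r_0$.

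\textbf{Barrier and comparison.} The barrier $\Sigma_i$ is the $O(4)\times O(4)$-invariant hypersurface whose transverse slice at height $y$ is the leaf $H_{\Phi_i(\cdot,y)}\times\{y\}$. It is smoothed across the spine where $\Phi_i$ changes sign by gluing in the standard pieces of \cite{Sz}, so that $\Sigma_i$ is mean-convex towards $\{P>\Phi_i\}$. Once this is available, compare $M_i$ with $\Sigma_i$ in the cylinder. The region $\{P\le\Phi_i\}$ contains $\operatorname{spt}M_i$ on the boundary of the cylinder. If $M_i$ left this region, translate the barrier family in the phase parameter until first contact. First contact must occur in the interior, where the strong maximum principle for stationary varifolds against a smooth mean-convex barrier forbids it; at regular points this is just comparison of the SME solutions $u_i$ with the barrier profile. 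Hence $P\le\Phi_i$ on $M_i$ in the cylinder. At the contact points this gives $0\le P\le\Phi_i\to-\eps_i\delta/2<0$, a contradiction. Therefore $w(0,y_*)\ge0$.

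\textbf{Main obstacle.} The hard step is verifying the sign of the mean curvature of $\Sigma_i$ uniformly, all the way down to the smoothing scale $r\sim\eps_i^{1/3}$ and through the zero set of $\Phi_i$ near the spine. In the region $r\ge C\eps_i^{1/3}$ the $y$-modulation errors are of order $\eps_i^{4/3}r^{-3}$ and lower, which the strict superharmonicity $-\mu\eps_i r^2$ term should dominate after a routine scaling check. In the core region I would rely directly on the barrier and nonconcentration lemmas of \cite{Sz}, which were designed precisely for phases of size $O(\eps)$ near the spine of $V$.
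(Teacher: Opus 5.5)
The skeleton of your argument matches the paper's. The phase bound pushes the $\{x=0\}$ contacts into the spine with $P\ge0$ there, barriers are slid down from phase above $K\eps_i$, and the exchange $x\leftrightarrow z$ gives (ii). The gap is the barrier itself, which carries the whole lemma and which you neither construct nor can import in the form you use.

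Your $\Phi_i$ is superharmonic in $\R^3_\xi\times\R_y$, negative near $(0,y_*)$ and positive on the caps. By the minimum principle, its zero set, along which it vanishes to first order, must run from the spine out to the lateral side $r=r_0$. The barrier therefore crosses $V$ throughout the regular region, not only near the spine, and there your error count $\eps_i^{4/3}r^{-3}$ fails. Modulating the leaf family by such a phase produces the term $\partial_\lambda^2G(\Phi_i,r)\,|\nabla\Phi_i|^2$, where $G(\lambda,r)=\lambda r^{-2}+O(|\lambda|^{4/3}r^{-3})$ is the leaf height. That expansion gives no bound on $\partial_\lambda^2G$ at $\lambda=0$. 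A $|\lambda|^{4/3}$ term contributes roughly $|\Phi_i|^{-2/3}\eps_i^2r^{-3}$, with sign $\operatorname{sgn}\Phi_i$, which the slack $6\mu\eps_i r^{-2}$ cannot absorb near $\{\Phi_i=0\}$.

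This is exactly why \cite[Propositions~5.8 and~5.10]{Sz} parametrize the barriers by a $C^2$-bounded $q(y)$ with phase $\eps q^9$, which vanishes to ninth order. Those results do not cover an $r$-dependent, first-order-vanishing $\Phi_i$, so "gluing in the standard pieces" has nothing to attach to.

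The contact step is also not a smooth maximum principle. During the slide the barrier passes through spine points where its phase vanishes. There it is locally $C_{3,3}\times\R$ and singular, and $M_i$ may itself be singular on the axis, as in the application to $u_\eps$. What you need is the one-sided comparison that includes contact at singular spine points, which is part of \cite[Proposition~5.8]{Sz}.

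The repair is to drop the $r$-dependence and the superharmonic slack and use Sz\'ekelyhidi's family $X_q$ directly. For $C^2$-bounded $q=q(y)$ and any $\eta>0$ there is $r_0$ with $\eps(q^9-\eta)\le P|_{X_q}\le\eps(q^9+\eta)$ on $\{r<r_0\}$. Choose $c>0$ with $w<-3c$ near $(0,y_*)$. Take $q_s=A-s\psi$, with $\psi=1$ near $y_*$, $A^9-c/4>K$ and $0\le s\le A+c^{1/9}$, and set $\eta=c/4$ with $r_0$ chosen after $\eta$. The lateral ordering then comes from the trapping estimate rather than from a harmonic profile: $P|_{M_i}<-2c\eps_i<-\tfrac54c\eps_i\le P|_{X_{q_s}}$ on $r=r_0$. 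The final barrier has phase at most $-\tfrac34c\eps_i$ near $y_*$, and its negative side contains no point with $x=0$.
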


\begin{proof}
We recall precisely the signed barrier input. Let $H= H_1$ be the
Hardt--Simon leaf with $P=1$. For a uniformly $C^2$-bounded
family of real functions $q$ on a fixed axial interval,
\cite[Propositions~5.8 and~5.10]{Sz} give barriers $X_q$ modeled on
the slicewise dilated leaves
$\eps^{1/3}q(y)^3H$; when $q(y)<0$, the dilation by the negative
factor $\eps^{1/3}q(y)^3$ is understood as the dilation by
$\eps^{1/3}|q(y)|^3$ followed by the exchange of the factors $x$ and
$z$, so that in all cases the leaf has phase $\eps q(y)^9$. Given
$\eta>0$, there is a lateral radius $r_0>0$, depending only on
$\eta$ and on the family, such that for all sufficiently small $\eps$
the barriers exist in $\{r<r_0\}$ and satisfy
\begin{equation}\label{eq:signed-barrier-trapping}
 \eps(q^9-\eta)\le P|_{X_q}\le\eps(q^9+\eta).
\end{equation}
The constants are uniform for the family. Each barrier separates
the comparison region into a negative and a positive side, lies between
the two comparison leaves of phases $\eps(q^9\pm\eta)$, and cannot be
touched from its negative side by a stationary hypersurface of the
type considered here. The cited result includes
contact at their singular spine points when $q=0$; ordinary smooth
comparison alone would not suffice there. Exchanging $x$ and $z$
reverses both the phase and the comparison direction.

We prove (i). Suppose that contact points as in (i) exist, but
$w(0,y_*)<0$. Choose $c>0$ and $I_0\Subset I$ containing
$y_*$ so that $w(r,y)<-3c$ for small $r$ and $y\in I_0$.
Choose $\psi\in C_c^\infty(I_0)$ with $0\le\psi\le1$ and
$\psi=1$ near $y_*$. For $A$ sufficiently large, use
\[
 q_s=A-s\psi,\qquad 0\le s\le A+c^{1/9},\qquad
 A^9-c/4>K.
\]
This is a uniformly $C^2$-bounded signed family, with $q_s^9\ge-c$,
$q_s=A$ near the axial ends, and final value $q_s^9=-c$ near $y_*$.
Choose one lateral radius $r_0$ for the whole family in
\eqref{eq:signed-barrier-trapping}, with $\eta=c/4$.
The fixed-leaf expansion converts regular graphical convergence
to $P/\eps_i\to w$ on $r=r_0$. Thus, for large $i$,
\[
 \begin{aligned}
 r=r_0:&\quad P|_{M_i}<-2c\eps_i
               <-\tfrac54c\eps_i\le P|_{X_{q_s}},\\
 y\in\partial I_0\ \hbox{or}\ s=0:&\quad
 P|_{M_i}\le K\eps_i<(A^9-c/4)\eps_i\le P|_{X_{q_s}}.
 \end{aligned}
\]
The cylinder is chosen compactly inside the common domain.
These inequalities order its entire comparison boundary. Starting
with $s=0$, a first loss of containment cannot occur at that boundary
or in its interior, by the signed barrier comparison just recalled.
At the final parameter the barrier near $y_*$ has phase at most
$-3c\eps_i/4$; its negative side contains no point with $x=0$.
But $P(0,z_i,y_i)=c_H|z_i|^3$ and the phase bound give
$|z_i|=O(\eps_i^{1/3})$, so the assumed contact is in this comparison
region for large $i$. This is a contradiction. Exchanging the two
four-dimensional factors proves the other assertion.
\end{proof}

\subsection{Proof of Proposition~\ref{SME:prop:finite-family}}\label{sec:finite-fs}

We divide the proof into four steps.

\smallskip\noindent\emph{Step 1: verify the boundary-path and endpoint conditions.}
Use the convex domain and $r_*$ chosen in
Section~\ref{sec:finite-data}. For fixed sufficiently small
$\eps>0$, the traces
$\varphi_\tau=g_{\eps,b_-+\tau(b_+-b_-)}$ form a continuous path
in $C^{2,\alpha}(\partial\Omega)$ with the required symmetries.
Indeed $D=0$ near the junctions with the rounded ends, so the two
formulas in \eqref{eq:barrel-trace} match smoothly. The leaves
depend smoothly on the positive phase $\eps(B_0+b)$. On the side,
$g_{\eps,b}=R+O(\eps)>0$ uniformly in $b$, and on the rounded
ends it is the positive leaf radius. Compactness gives
\[
 \min_{b\in[b_-,b_+]}\min_{\partial\Omega}g_{\eps,b}>0.
\]
These traces admit symmetric $C^{2,\alpha}(\overline\Omega)$
extensions depending continuously on $\tau$. The positive lower
bound may depend on the fixed $\eps$.

At the lower endpoint, no positive solution in the imposed
symmetry class (such solutions are understood to be of class
$C^{2,\alpha}(\overline\Omega)$, as are the fixed points of the
homotopy in \cite{FS}) exists for
$b\in[b_-,b_-+\delta/2]$ when $\eps$ is sufficiently small.
Otherwise take such solutions with $\eps_i\to0$ and $b_i\to b$
in this interval. Proposition~\ref{prop:harmonic-asymptotics}
gives $r^2f_i/\eps_i\to W+b$ on regular compact subsets.
Every radius graph contains $(0,u_i(0,0)\omega,0)$, and the phase
bound makes these points approach the origin.
Lemma~\ref{lem:signed-coordinate-contact} implies $m_0+b\ge0$,
contradicting $m_0+b\le-\delta/2$. The remaining hypotheses of
that lemma follow from the phase bound and
Proposition~\ref{prop:height-derivative-analysis}.

At the upper endpoint, and throughout
$b\in[b_+-\delta/4,b_+]$, we have $q+b\ge3\delta/4$.
The side phase is $\eps(q+b)+o(\eps)$ uniformly in this interval;
on the rounded ends it is exactly $\eps(B_0+b)\ge3\delta\eps/4$.
Thus $\min_{S(g_{\eps,b})}P\ge\delta\eps/2$ for small $\eps$, where
$S(g_{\eps,b})=\{(x,z,y):(x,y)\in\partial\Omega,\ |z|=g_{\eps,b}(x,y)\}$,
and the trace strictly dominates that of the positive leaf
$h_{\delta\eps/4}$. The phase minimum principle (see the proof
of Proposition~\ref{prop:moving-plane-phase-estimate}), together with
the monotonicity of $h_{\delta\eps/4}$, gives
\begin{equation}\label{eq:barrel-positive-endpoint}
 u|_{\partial\Omega}\ge h_{\delta\eps/4}(|x|)
 \quad\Longrightarrow\quad
 \inf_\Omega u\ge h_{\delta\eps/4}(0)=:\mu_\eps>0
\end{equation}
for every positive SME solution $u\in C^2(\Omega)\cap C^0(\overline\Omega)$,
symmetric or not, with these dominating boundary
values. Hence this leaf trace is strongly positive, and
$\varphi_1$ lies strictly above it, as required.

\smallskip\noindent\emph{Step 2: apply \cite{FS} and obtain items (1) and (2).}
We use Theorem~\ref{thm:fs-existence} with $n=5$ and $m=4$,
the mean-convex extension stated after \cite[Theorem~4.4]{FS},
and its proof in the closed space
\[
 \mathcal B=\{v\in C^{1,\alpha}(\overline\Omega):
       v(Qx,y)=v(x,y)=v(x,-y),\ Q\in O(4)\}.
\]
The domain, the boundary path, and the terminal constant in the
FS homotopy have these symmetries. For invariant input, each
linear Dirichlet problem in that homotopy has invariant coefficients
and data; uniqueness makes its solution invariant. Thus the
homotopy preserves $\mathcal B$, and the nonsolvability established
in this space is precisely the lower-endpoint condition needed
for the degree argument.

Theorem \ref{thm:fs-existence} now gives a parameter $b_\eps$ and a nonnegative
singular solution $u_\eps$, with
\begin{equation}\label{eq:barrel-singular-solution}
 u_\eps|_{\partial\Omega}=g_{\eps,b_\eps},\qquad
 \min_{\overline\Omega}u_\eps=0,\qquad
 \operatorname{dist}(\{u_\eps=0\},\partial\Omega)>0.
\end{equation}
We also retain the approximation supplied by their proof: there
are positive solutions $u_{\eps,j}\in\mathcal B$ and parameters
$b_{\eps,j}\in(b_-,b_+)$ such that
\begin{equation}\label{eq:barrel-fs-approximation}
 \begin{gathered}
 u_{\eps,j}|_{\partial\Omega}=g_{\eps,b_{\eps,j}},\qquad
 b_{\eps,j}\longrightarrow b_\eps,\\
 u_{\eps,j}\longrightarrow u_\eps
       \quad\text{uniformly on }\overline\Omega.
 \end{gathered}
\end{equation}
In particular $\min u_{\eps,j}\to0$. Step 1 excludes the lower
endpoint interval for every approximant and the upper endpoint
interval for all sufficiently large $j$, since there the minimum
is at least $\mu_\eps$. Consequently
\[
 b_\eps\in[b_-+\delta/2,b_+-\delta/4]\Subset(b_-,b_+).
\]
The symmetries pass to the limit. Applying
Proposition~\ref{prop:moving-plane-phase-estimate} to the
approximants and then using \eqref{eq:barrel-fs-approximation} gives
\[
 0\le y_1\le y_2\quad\Longrightarrow\quad
 u_\eps(x,y_1)\le u_\eps(x,y_2)
\]
whenever the two points belong to $\Omega$.
By \cite[Corollary~2.5]{FS}, $M_\eps=S(u_\eps)$ is stationary
with multiplicity one, is smooth where $u_\eps>0$, and has a
singularity at each point corresponding to a zero of $u_\eps$.
This proves items (1) and (2).

\smallskip\noindent\emph{Step 3: phase, compactness, and derivative bounds, items (3)--(5)}.
The phase estimate for the same approximants passes to the limit:
\[
 |P|\le K\eps\quad\text{on }M_\eps,
\]
with $K$ independent of $\eps$. The boundary traces have a common
upper bound for small $\eps$ and $b\in[b_-,b_+]$. The maximum
principle (at an interior maximum the left-hand side of
\eqref{eq:SME} would be nonpositive) bounds the positive approximants by the same constant,
and hence also bounds $u_\eps$. The interior gradient estimate
\cite[Theorem~8.1]{FS}, applied on balls compactly contained in
$\Omega$, therefore gives
\[
 \sup_{0<\eps<\eps_0}\|u_\eps\|_{C^{0,1}(U)}<\infty
       \qquad\text{for every }U\Subset\Omega.
\]
Every sequence $\eps_i\to0$ has a locally uniform subsequential
limit. By continuity of $P$, its radius graph lies in
$\{P=0\}=V$, so the limit must be $|x|$. Thus
$u_\eps\to|x|$ locally uniformly throughout $\Omega$.
On compact subsets with $|x|>0$, the height stays uniformly
positive, and elliptic regularity upgrades this to smooth
convergence.

The area density of the radius parametrization is
$u_\eps^3\sqrt{1+|Du_\eps|^2}$, apart from integration over
$S^3$. The uniform local Lipschitz bound dominates this density;
the parametrizations and their tangent planes converge away from
$\{x=0\}$, a set of zero base measure. Dominated convergence in
the area formula gives $M_\eps\to V$ as varifolds with multiplicity
one, with no additional mass on the spine. This proves item (4).

Proposition~\ref{prop:height-derivative-analysis}, applied to these
singular limits of solutions with the prescribed boundary data,
gives a normal graph on
$V\cap B^9_{2r_*}\cap\{r\ge C_*\eps^{1/3}\}$ and
\[
 |D^k f_\eps|\le C_k\eps r^{-2-k},\qquad k=0,1,2,
\]
for fixed sufficiently large $C_*$, with constants independent of
$\eps$. This proves item (5).

\smallskip\noindent\emph{Step 4: identify the boundary parameter and prove item (6)}.
The existence of an interior zero does not by itself identify
$b_\eps$. Along any sequence $\eps_i\to0$, take a subsequence
with $b_{\eps_i}\to b$. Proposition~\ref{prop:harmonic-asymptotics}
gives
\[
 \frac{r^2f_{\eps_i}}{\eps_i}\longrightarrow W+b
\]
on regular compact subsets. If $u_{\eps_i}(x_i,y_i)=0$, reflection
and axial monotonicity imply $u_{\eps_i}(x_i,0)=0$. The signed
phase identity at $z=0$ gives
\[
 c_H|x_i|^3\le K\eps_i,
\]
so these central contacts approach the origin.
Lemma~\ref{lem:signed-coordinate-contact} then implies $m_0+b\le0$.
On the other hand, $(0,u_{\eps_i}(0,0)\omega,0)$ always belongs
to the symmetric graph. Its radius is also $O(\eps_i^{1/3})$,
and the other conclusion of the same lemma gives $m_0+b\ge0$.
Therefore $b=-m_0$. Every subsequential limit has this value, so
\[
 b_\eps\longrightarrow-m_0,\qquad
 \frac{f_\eps}{\eps}\longrightarrow\frac{W-m_0}{r^2}
\]
smoothly on regular compact subsets, with normal
$\nu=(-x,z,0)/r$.
The same proposition gives the symmetries of $W$, $W_y>0$ for
$y>0$, and
\[
 W(r,y)-m_0=c\left(y^2-\frac{r^2}{3}\right)
           +O((r^2+y^2)^2),\qquad c>0.
\]
Analyticity supplies the differentiated Taylor bounds. This proves
item (6), and all conclusions of
Proposition~\ref{SME:prop:finite-family} follow. 
\qed

\section{Proposition~\ref{prop:SME-singularity-classification}: The classification of singularities}\label{sec:classification}
The previous section produces a singular solution with symmetry,
axial monotonicity, and a controlled leading Jacobi field. It does
not yet locate all of its zeros or identify its tangent cones at such zeros.
We first show that an off-axis zero would have density at least
$\theta_{13}>\theta_V$. Monotonicity at a fixed interior radius
and convergence of the family to $V$ exclude such zeros. Symmetry
and axial monotonicity then make the zero set a nonempty closed
interval on the axis.

The next step is a strict density drop at the origin. The field
$J_2$ has negative second variation on the spherical link of $V$,
but that link has singular poles. We estimate the area of the
omitted polar neighborhoods using the uniform Lipschitz bound and expand the
regular part of the link to second order. On a sufficiently small
fixed sphere, the leading negative term dominates the errors as
$\eps\to0$. Stationarity converts this link-area deficit into
a density strictly below $\theta_V$. A nontrivial interval of
axial zeros would force density at least $\theta_V$ in a tangent cone,
so the interval reduces to the origin. Only then do we use the
minimal-bigraph classification of \cite[Corollary~3.18]{FTW} to
identify every tangent cone as $C_{4,3}$.

\subsection{Ruling out off-axis zeros}In this subsection we rule out zeros away from the axis $\{x=0\}$. The main ingredients are the density comparison between $C_{1,3}$ and $V$ and the dimension bound for the zero set of SME limits in \cite[Theorem~9.1]{FS}. Recall from \eqref{eq:density-gaps} that
\[
 \theta_V=\Theta(V)=\frac{15\pi}{32}
 <\theta_{13}=\Theta(C_{1,3})=\frac{9\sqrt3\pi}{32}.
\]

\begin{lemma}[Density at an off-axis zero]
\label{lem:off-axis-density-extraction}
Let $u$ be an $O(4)_x$-invariant SME limit on a neighborhood of
a point $(x_0,y_0)\in\R^5$ with $x_0\ne0$ and $u(x_0,y_0)=0$, and put
$N=S(u)$ and $p=(x_0,0,y_0)$. Then
\[
 \Theta_N(p)\ge \Theta(C_{1,3})>\theta_{V}.
\]
Consequently, if $u$ is an $O(4)_x$-invariant SME limit on all
of $\R^5$ with $\Theta_N(\infty)\le\theta_V$ (in particular, if $N$ is
an SME cone with $\Theta(N)\le\theta_V$), then $u$ has no zeros off the
axis $\{x=0\}$.
\end{lemma}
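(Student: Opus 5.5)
We would prove the density bound by blowing up at $p$ and classifying the possible tangent cones. Because $x_0\neq0$, the $O(4)_x$-orbit of $p$ is a round $3$-sphere of radius $|x_0|$ contained in the singular set $\{x\text{-orbit},\ z=0\}$. In a small neighborhood of $p$ this orbit looks, after blow-up, like a $3$-plane $T\subset\R^4_x\times\R_y$ through $x_0$ tangent to the orbit. Consider rescalings $N_{p,s}=s^{-1}(N-p)$. Their profiles are $s^{-1}u(x_0+s\,\cdot)$, which are again SME limits, uniformly Lipschitz by \cite[Theorem~8.1]{FS}. By the compactness of \cite{FS}, every tangent cone $K$ at $p$ is therefore the radius graph $\{|z|=\bar u\}$ of a degree-one homogeneous SME limit $\bar u$ on $\R^5$. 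This limit is invariant under translations along $T$, since the $O(4)_x$-orbit flattens. So $\bar u$ depends only on the two variables orthogonal to $T$, namely the radial $x$-direction at $x_0$ and $y$. Hence $K=K_0\times\R^3$, where $K_0\subset\R^2\times\R^4_z$ is a $5$-dimensional stationary cone of the form $\{|z|=v(\sigma)\}$ with $\sigma\in\R^2$ and $v$ homogeneous of degree one. Moreover $\Theta(K)=\Theta(K_0)$.

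The key step is a lower bound $\Theta(K_0)\ge\Theta(C_{1,3})$. Note that $C_{1,3}=\{3|x'|^2=|z|^2\}$ with $x'\in\R^2$. Since $v(0)=0$, the cone is singular at its vertex. The function $v$ is determined by its restriction to the unit circle in $\R^2$, so the link of $K_0$ is an $O(4)$-invariant $4$-dimensional stationary hypersurface in $S^5$. This link is an ODE profile: a curve in the quarter-space $\{(\theta,|z|)\}$.

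We would then split into two cases. If $v>0$ on $S^1\subset\R^2$, the link is smooth; symmetric minimal curves of this ODE type are classified, giving $v=c|\sigma|$ and hence $K_0=C_{1,3}$. Invariance under $O(2)$ could fail a priori, so here we would instead invoke the classification of $O(2)\times O(4)$ minimal cones, or argue via a maximum principle against $C_{1,3}$ after rotation. If instead $v$ vanishes somewhere on $S^1$, then $v(\sigma_1)=0$ gives a singular ray, and a further blow-up along that ray produces a $4$-dimensional SME cone in $\R^1\times\R^4_z$, times $\R$. In dimension $\R^1$ this is either $\{|z|=0\}$ (excluded, because it has zero $8$-dimensional mass, contradicting multiplicity one and \cite[Theorem~9.1]{FS}) or $C_{0,3}$-type objects, which are impossible since $m=4$ forces $u>0$. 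Upper semicontinuity of density then yields $\Theta\ge$ that of the further cone.

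We expect the main obstacle to be this classification of $5$-dimensional symmetric SME cones, i.e.\ showing that the minimal density is attained by $C_{1,3}$. A cleaner route, if available, is to use the general fact from \cite{FS} that an SME cone with $n$ base variables has density at least that of $C_{n-1,m-1}$ at zeros — that is, the quadratic cone is the minimizer among SME cones. We would first look for this in \cite{FS} (the dimension reduction behind Theorem~9.1 there). Failing that, we would argue via the Simons-type comparison of the link areas in the ODE phase plane.

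The numerical inequality $\Theta(C_{1,3})=9\sqrt3\pi/32>15\pi/32$ holds because $9\sqrt3\approx15.59$.

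For the consequence, suppose $u$ is entire with $\Theta_N(\infty)\le\theta_V$. By monotonicity, $\Theta_N(q)\le\Theta_N(\infty)\le\theta_V<\theta_{13}$ at every point $q$. So no off-axis zero can exist. For a cone, $\Theta_N(\infty)=\Theta(N)$, which gives the parenthetical case.
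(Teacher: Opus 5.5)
\textbf{There is a genuine gap at the step you yourself flag as the main obstacle:} the bound $\Theta(K_0)\ge\Theta(C_{1,3})$ when $v>0$ on $S^1$.

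In that case the link of $K_0$ is a smooth compact embedded minimal hypersurface of $S^5$ that is invariant only under $O(4)_z$. Nothing gives you invariance under rotations of the two-dimensional base. Neither the radial $x$-direction at $x_0$ nor $y-y_0$ carries any symmetry. None of your three routes closes this.
\begin{itemize}
\item The classification of $O(2)\times O(4)$-invariant cones presupposes exactly the symmetry you lack.
\item A maximum principle against $C_{1,3}$ gives no rigidity. Writing $v=|\sigma|g(\theta)$, the SME becomes
\[
 g''=\frac{3(1+g^2+g'^2)}{g(1+g^2)}-g .
\]
At an extremum of $g$ you only get $g(g+g'')=3$, hence $\min g\le\sqrt3\le\max g$.
\item In fact this autonomous, reversible ODE has nonconstant periodic solutions near $g\equiv\sqrt3$, since its linearization there is $\eta''=-2\eta$. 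Ruling out nonconstant $2\pi$-periodic solutions is therefore a global statement about the period of this ODE (Otsuki-type period estimates), not something comparison gives.
\item The ``general fact from \cite{FS}'' that quadratic cones minimize density among SME cones is not something you can invoke. \cite[Theorem~9.1]{FS} bounds the dimension of the zero set; it says nothing about density.
\end{itemize}

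\textbf{The missing ingredient, which is what the paper uses, is an area gap for rotational minimal hypersurfaces in spheres.} By \cite[Theorem~1.1]{CWZ}, a compact embedded $O(4)$-invariant minimal hypersurface of $S^5$ has one of three areas: that of a great sphere, that of $S^1(1/2)\times S^3(\sqrt3/2)$, or more than $2(1-1/\pi)>1$ times the latter. The great sphere is excluded. An $O(4)_z$-invariant hyperplane has its normal in $\R^2$, so it cannot be a radius graph over the whole base; alternatively, density one would force $K$ to be a plane. This yields $\Theta(K_0)\ge\Theta(C_{1,3})$ without ever identifying $K_0$.

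\textbf{Your case split is also more roundabout than necessary.} If $v(X_0)=0$ for some $X_0\ne0$, the zero set of the full tangent profile contains $\R^3\times\{\tau X_0:\tau\ge0\}$. That set is four-dimensional, which contradicts \cite[Theorem~9.1]{FS}. So only the smooth-link case can occur.

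Your further blow-up can be made to prove the same thing: a degree-one profile $w=c|\tau|$ on a one-dimensional base cannot solve $w''/(1+w'^2)=3/w$. As written, however, it ends by invoking upper semicontinuity to compare with a cone you have just shown does not exist.

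The translation-splitting step and the derivation of the consequence via monotonicity agree with the paper.
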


\begin{proof}
Let $T$ be a tangent cone of $N$ at $p$, along scales $s_i\to 0$. We first show that the tangent directions of the
symmetry orbit through $p$ are directions of translation invariance
of $T$. The orbit of $p$ under rotations of \(x\) is
\[
\{(x,0,y_0):|x|=|x_0|\},
\]
a three-dimensional sphere. When we blow-up around \(p\), this sphere becomes flatter and converges to its tangent plane
\[
E=\{(w,0,0):w\cdot x_0=0\}\cong\mathbb R^3.
\]
Given $e\in E$, choose $Q_i\in O(4)$, acting on $x$, with
$Q_ix_0=x_0+s_ie+O(s_i^2)$ and $|Q_i-I|=O(s_i)$. The maps
$Z\mapsto s_i^{-1}(Q_i(p+s_iZ)-p)$ of $\R^9$ preserve $s_i^{-1}(N-p)$ and
converge locally uniformly to the translation $Z\mapsto Z+e$. Hence
the tangent cone satisfies
\[
T+e=T\qquad\text{for every }e\in E.
\]
Identifying $E$ with $\R^3$ and writing $X \in \R^2$ for the remaining base coordinates (the $x$ direction parallel to $x_0$ and the axial direction $y$), and noticing that the local SME Lipschitz bound \cite[Theorem~8.1]{FS} passes to
the rescaled profiles of $u$ at $p$ along the same scales, we deduce that $T$ is a homogeneous radius graph $U$ and 
\[
U (e, X) = v(X)\,,\qquad  T = \R^3 \times K\,,\qquad  K = S(v) = \{(X, z) \in \R^2 \times \R^4 : |z| = v(X)\}\,,
\]
where $v(\lambda X) = \lambda v(X)$ for $\lambda > 0$. The factor $K$ is therefore a five-dimensional stationary cone in $\R^6$.

If $v(X_0)=0$ at $X_0\ne0$, the zero set of the full tangent profile
contains $\R^3\times\{\tau X_0:\tau\ge0\}$, of dimension four.
This contradicts \cite[Theorem~9.1]{FS}, according to which the
zero set of an SME limit on a five-dimensional base has Hausdorff
dimension at most $5-2=3$. Hence $v>0$ away from zero.

The link $\Sigma=K\cap S^5$ is therefore a smooth compact,
embedded minimal hypersurface of $S^5$ which is rotational, that is,
invariant under the action of $O(4)$ on the $z$-factor. It cannot be a great sphere: an
$O(4)_z$-invariant hyperplane has its normal in $\R^2$ and cannot
project onto the whole two-dimensional base as $S(v)$ does.
By \cite[Theorem~1.1]{CWZ}, its area is either the great-sphere area,
the area of $S^1(1/2)\times S^3(\sqrt3/2)$, or greater than
$2(1-1/\pi)$ times the latter. The first alternative would make
$K$ a density-one cone; monotonicity centered at a regular point
would then force $K$ to be a plane, already excluded. Therefore
\[
 \Theta_N(p)=\Theta(T)=\Theta(K)\ge
 \Theta(C_{1,3})=\theta_{13}.
\]
For an entire $N$, monotonicity centered at $p$ gives
$\Theta_N(p)\le\Theta_N(\infty)$; the limit at infinity is independent
of the center by comparison of concentric balls of radii
$R-|p|$ and $R+|p|$. A cone has the same density at its vertex and at
infinity. Both stated consequences follow.
\end{proof}

\begin{corollary}[The zero set of the Dirichlet solution lies on the axis]
\label{cor:finite-axis-zero-set}
For all sufficiently small $\eps$, every zero of $u_\eps$ lies on
$x=0$. Its zero set is a nonempty closed axial interval
$\{(0,y):|y|\le\ell_\eps\}$; in particular $u_\eps(0,0)=0$.
\end{corollary}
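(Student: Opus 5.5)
The plan is to combine Lemma~\ref{lem:off-axis-density-extraction} with monotonicity at a fixed interior radius and the varifold convergence $M_\eps\to V$ from Proposition~\ref{SME:prop:finite-family}(4). We then use symmetry and axial monotonicity to describe the remaining axial zeros.

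\emph{Step 1: the zeros stay in a fixed compact set.} By \eqref{eq:barrel-singular-solution} the zero set of $u_\eps$ is nonempty. By the phase bound $|P|\le K\eps$, any zero $(x_0,y_0)$ satisfies $c_H|x_0|^3=|P(x_0,0,y_0)|\le K\eps$, so $|x_0|\le(K\eps/c_H)^{1/3}$. We also need $|y_0|$ bounded away from the caps uniformly in $\eps$. If $u_\eps(x_0,y_0)=0$, then reflection and axial monotonicity give $u_\eps(x_0,0)=0$. Hence it suffices to rule out zeros near the central slice $y=0$ with $0<|x_0|\le C\eps^{1/3}$; these lie in $B^5_{r_*}$ for small $\eps$. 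Positivity of a zero $(0,y)$ at other heights is then handled in Step 3.

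\emph{Step 2: density contradiction.} Suppose that for a sequence $\eps_i\to0$ there is a zero $(x_i,y_i)$ with $x_i\ne0$. As above, we may take $y_i=0$, and $|x_i|\to0$. Put $p_i=(x_i,0,0)$. Lemma~\ref{lem:off-axis-density-extraction} gives $\Theta_{M_i}(p_i)\ge\theta_{13}$. Fix $s<r_*$ with $\|V\|(\partial B_s)=0$. Monotonicity centered at $p_i$, valid since $B_s(p_i)\subset\Omega\times\R^4$ for large $i$, gives
\[
\theta_{13}\le\Theta_{M_i}(p_i,s)\le \frac{\|M_i\|(B_{s+|p_i|}(0))}{\omega_8 s^8}.
\]
The right-hand side tends to $\Theta(V,0,s)=\theta_V$ by multiplicity-one varifold convergence. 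Choosing a slightly larger radius $s'$ with zero boundary mass and letting $s'\downarrow s$ handles the shift $|p_i|$. This contradicts $\theta_V<\theta_{13}$ from \eqref{eq:density-gaps}. Hence for small $\eps$ every zero lies on $\{x=0\}$.

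\emph{Step 3: structure on the axis.} The zero set $Z=\{y:u_\eps(0,y)=0\}$ is closed and nonempty. It is symmetric because $u_\eps$ is even in $y$. If $y_0\in Z$ with $y_0\ge0$, then monotonicity for $y\ge0$ gives $0\le u_\eps(0,y)\le u_\eps(0,y_0)=0$ for $0\le y\le y_0$, and reflection extends this to $|y|\le y_0$. Therefore $Z=[-\ell_\eps,\ell_\eps]$ with $\ell_\eps=\max Z$, which is finite because zeros stay away from $\partial\Omega$. In particular $u_\eps(0,0)=0$.

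The main obstacle is the uniformity in Step~2. The off-axis zero must lie at a base point where the fixed-radius ball stays inside the domain, so that the varifold convergence to $V$ controls the density ratio. This is why we first reduce to the central slice via reflection and monotonicity and use the phase bound to force $|x_i|\to0$. After that, the comparison of the balls $B_s(p_i)$ and $B_{s+|p_i|}(0)$ is routine.
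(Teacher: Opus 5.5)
Your proposal is correct and follows the paper's proof step by step. You reduce to central off-axis zeros via reflection and axial monotonicity, force $p_i\to0$ with the phase bound, and contradict $\theta_{13}>\theta_V$ using Lemma~\ref{lem:off-axis-density-extraction}, monotonicity at a fixed radius and multiplicity-one convergence to $V$; the interval structure then follows from symmetry, monotonicity and positivity near $\partial\Omega$. The only cosmetic difference is that you bound the moving ball by $B_{s+|p_i|}(0)$, where the paper sandwiches it between $B_{\rho\pm\eta}(0)$.
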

\begin{proof}
If off-axis zeros existed for a sequence $\eps_i\to0$, axial
monotonicity would give central off-axis zeros
$p_i=(x_i,0,0)$ of $M_{\eps_i}$. The phase bound implies
$|x_i|=O(\eps_i^{1/3})$, so $p_i\to0$.
Choose a fixed $\rho>0$ so that $B_{2\rho}(0)$ lies in the common
interior domain. Lemma~\ref{lem:off-axis-density-extraction} and
monotonicity give
\[
 \theta_{13}\le\Theta(M_{\eps_i},p_i)
 \le\frac{\|M_{\eps_i}\|(B_\rho(p_i))}{\omega_8\rho^8}
 \longrightarrow\theta_V.
\]
For completeness, the limit follows by enclosing the moving ball
between $B_{\rho-\eta}(0)$ and $B_{\rho+\eta}(0)$, using
multiplicity-one convergence to $V$, and then sending $\eta\downarrow0$.
The limiting cone gives zero mass to the bounding spheres.
This contradicts $\theta_{13}>\theta_V$.
The zero set is nonempty by construction. Reflection, axial
monotonicity, continuity, and he positivity of $u_\eps$ near
$\partial\Omega$ (the zero set has positive distance from
$\partial\Omega$) now give
the asserted interval description. Points where the profile is
positive are smooth by elliptic regularity.
\end{proof}

\subsection{Decreasing area from the degree-zero Jacobi field}
\label{sec:link-area-deficit}

We turn the leading Jacobi field into a quantitative density
deficit. The argument has three parts: remove small neighborhoods
of the two singular poles and bound their area; compute the
second-order area change on the remaining smooth link; then use
stationarity to compare link area with the density ratio. The
cutoff radius is chosen so that the omitted polar neighborhoods and cutoff errors
are smaller than the negative quadratic contribution of $J_2$.
The required inputs are the uniform Lipschitz bound, phase trapping,
and the normalized expansion from the preceding section. Write $L=V\cap S^8$.
The link \(L\) has two singular poles. Its regular part is minimal in
\(S^8\), has dimension seven, and satisfies
\[
 |A_L|^2=\frac6{r^2},\qquad
 d\mu_L\approx r^6\,dr\,d\omega
 \quad\hbox{near either pole}.
\]
Here \(d\omega\) denotes the fixed angular measure on
\(S^3\times S^3\). Define its quadratic form by
\begin{equation}\label{link:eq:quadratic-form}
 \mathcal Q_L(\phi)
 =\int_L\left(|\nabla_L\phi|^2-(|A_L|^2+7)\phi^2\right)d\mu_L.
\end{equation}
The next lemma shows that the part of a spherical slice of $S(v)$,
for an SME limit $v$, lying near the spine has small area; this will
allow us to expand the area of the remaining part.

\begin{lemma}\label{link:lem:cap-area}
Let \(M=\{|z|=v(|x|,y)\}\) be the multiplicity-one symmetric graph
of a nonnegative function with Lipschitz constant at most \(\Lambda\),
defined on a neighborhood of the two points \((x,y)=(0,\pm1)\).
Then, for sufficiently small \(s\), depending only on \(\Lambda\),
\begin{equation}\label{link:eq:cap-area}
 \mathcal H^7\bigl(M\cap S^8\cap\{r<s\}\bigr)\le C_\Lambda s^7.
\end{equation}
The estimate includes graphs which are singular where \(v=0\).
\end{lemma}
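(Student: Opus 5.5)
The plan is to parametrize the polar cap by the symmetry reduction and estimate its area with the area formula. Since $M$ is invariant under $O(4)_x\times O(4)_z$, the set $M\cap S^8\cap\{r<s\}$ consists of points $(x,z,y)$ with $|z|=v(|x|,y)$, $|x|^2+|z|^2<s^2$ and $|x|^2+|z|^2+y^2=1$. On it we have $|x|<s$, $|z|<s$ and $|y|=\sqrt{1-r^2}\in(\sqrt{1-s^2},1]$. For $s$ small this lies in the given neighborhood of $(0,\pm1)$, so we can treat each pole separately; by symmetry we take $y>0$. The natural parameters on the cap are $x\in\R^4$ with $|x|<s$ together with the direction $\omega=z/|z|\in S^3$, and $y$ is determined by the sphere constraint. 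The two-dimensional profile is the curve obtained by intersecting the graph $\{(t,v(t,y))\}$ with $t^2+v^2+y^2=1$.

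The key step is to show that this curve is a Lipschitz graph over $t$ near the pole, with Lipschitz constant controlled by $\Lambda$. Define $F(t,y)=t^2+v(t,y)^2+y^2-1$. Near $(0,1)$ we have $\partial_yF=2y+2vv_y\ge 2y-2\Lambda v\ge 1$ wherever $v$ is small. Moreover $v$ is small in the cap, because $v\le r<s$ there. So $F$ is strictly increasing in $y$ at rate at least $1$. The Lipschitz implicit function theorem, applied via monotonicity in $y$, then gives a unique $y=Y(t)$ for $t<s$, with $|Y'|\le C(1+\Lambda)s$, and hence $|Y'|\le C_\Lambda$. The cap is then the image of $\{(x,\omega):|x|<s\}$ under
\[
(x,\omega)\mapsto\bigl(x,\ v(|x|,Y(|x|))\,\omega,\ Y(|x|)\bigr).
\]
This map is Lipschitz in $x$ with constant $C_\Lambda$, and in $\omega$ it scales by $|z|=v\le s$.

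The area formula then bounds $\mathcal H^7$ of the image by the integral of the Jacobian, which is at most $C_\Lambda\,v^3\le C_\Lambda s^3$ times $dx\,d\omega$. The $\omega$-integral over $S^3$ gives a constant and the $x$-integral gives $|B^4_s|\sim s^4$. Together these yield $C_\Lambda s^7$. Points where $v=0$ are harmless, since the image there has $\mathcal H^7$-measure zero or is simply counted with Jacobian $0$. The multiplicity-one hypothesis means no further weighting is needed, and since the area formula only requires Lipschitz maps, the singular case is included.

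The main obstacle is the implicit-function step, namely making sure that the spherical section is a single Lipschitz graph and not a set with many $y$-values per $t$. The monotonicity of $F$ in $y$, which uses only the bound $|v_y|\le\Lambda$ and the smallness $v<s$, handles this. If it turned out to be delicate, I would fall back on the coarea formula for the function $|p|$ on $M$ over a thin shell, combined with the monotonicity formula.
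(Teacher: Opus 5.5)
Your proposal is correct and follows essentially the paper's proof: transversality of the sphere equation in $y$ near each pole (the paper's bound $y+vv_y\ge\sqrt{1-s^2}-\Lambda s\ge 1/2$), a Lipschitz parametrization $(x,\omega)\mapsto(x,\hat v(x)\omega,y(x))$ whose area Jacobian is at most $C_\Lambda\hat v^3$, and the area formula, which gives $C_\Lambda s^3|B^4_s|\le C_\Lambda s^7$, with collapsed fibers where $\hat v=0$ contributing nothing. The only adjustment needed is to the parameter domain: take the subset $D\subset B^4_s$ of those $x$ whose image lies in the cap, where $\hat v\le s$ is actually available, rather than all of $\{|x|<s\}$, exactly as the paper does.
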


\begin{proof}For small $s$, the sphere equation
$|x|^2+v(|x|,y)^2+y^2=1$ is uniformly transverse to the
$y$-direction near either pole: on the north polar region,
$y+vv_y\ge\sqrt{1-s^2}-\Lambda s\ge1/2$, and the south pole
has the opposite sign. Each polar part therefore admits a
Lipschitz parametrization
\[
 (x,\omega)\longmapsto
 (x,\widehat v(x)\omega,y(x)),\qquad
 x\in D\subset B_s^4,\quad\omega\in S^3,
\]
where $\widehat v(x)=v(|x|,y(x))$, $0\le\widehat v\le s$, and
the Lipschitz bounds for $y$ and $\widehat v$ depend only on
$\Lambda$. Its area Jacobian is bounded by
$C_\Lambda\widehat v^3$. Hence the area formula gives
\[
 \mathcal H^7(\text{polar region})
 \le C_\Lambda\int_D\widehat v^3\,dx
 \le C_\Lambda s^3|B_s^4|
 \le C_\Lambda s^7.
\]
Collapsed fibers at $\widehat v=0$ contribute no seven-dimensional
area. Summing over the two poles proves the claim.
\end{proof}

In the next proposition we expand the area up to second order and prove the relevant estimates.

\begin{proposition}[Second-order link expansion]
\label{link:prop:expansion}
Let $\eps_i\downarrow0$, and let $M_i=S(u_i)$, where the $u_i$ are
$O(4)_x$-invariant SME limits on a fixed neighborhood of the closed
unit ball of $\R^5$, with a uniform Lipschitz bound. Suppose that,
near $S^8$ and away from the spine, the $M_i$ are Euclidean normal
graphs over $V$ of functions $f_i$, and that
\begin{equation}\label{link:eq:off-axis-convergence}
 \eps_i^{-1}f_i\longrightarrow f
 \quad\hbox{smoothly on compact subsets of }V\setminus\{r=0\}.
\end{equation}
Suppose in addition that for \(r\ge C_*\eps_i^{1/3}\),
\begin{equation}\label{link:eq:graph-bounds}
 |D^k f_i|\le C_k\eps_i r^{-2-k},
 \qquad 0\le k\le2,
\end{equation}
on a fixed annular neighborhood of the unit sphere. Then
\begin{equation}\label{link:eq:area-expansion}
 \mathcal H^7(M_i\cap S^8)
 =\mathcal H^7(L)+\frac{\eps_i^2}{2}\mathcal Q_L(f|_L)
 +o(\eps_i^2).
\end{equation}
In particular, the integral defining the displayed quadratic form
is absolutely convergent.
\end{proposition}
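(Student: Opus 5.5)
The plan is to split the link $M_i\cap S^8$ at a cutoff radius $r=s$, with $s$ fixed small and eventually sent to zero after $i\to\infty$. We write
\[
 \mathcal H^7(M_i\cap S^8)=\mathcal H^7(M_i\cap S^8\cap\{r<s\})+\mathcal H^7(M_i\cap S^8\cap\{r\ge s\}),
\]
and treat the two pieces differently. The polar piece should be negligible. The regular piece is a smooth normal-graph perturbation of $L\cap\{r\ge s\}$, where a classical second-order area expansion applies.

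\textbf{Regular region.} On $\{r\ge s\}$ the sets $M_i\cap S^8$ are graphs over $L\cap\{r\ge s\}$ inside $S^8$. We obtain them by composing the Euclidean normal graph $X+f_i\nu_V$ with radial projection to $S^8$. Because $\nu_V$ is tangent to $S^8$ along $L$ (since $V$ is a cone), the induced spherical normal height is $\phi_i=f_i+O(f_i^2+|f_i||\nabla f_i|)$. For a graph of $\phi$ over a minimal hypersurface $L^7\subset S^8$, the standard expansion reads
\[
 \mathcal H^7(\operatorname{graph}\phi)=\mathcal H^7(L)+\tfrac12\int\bigl(|\nabla\phi|^2-(|A_L|^2+7)\phi^2\bigr)+R(\phi),
\]
with $|R|\le C\int(|\phi|+|\nabla\phi|)(|A|^2\phi^2+|\nabla\phi|^2)$ pointwise, plus boundary terms along $\{r=s\}$. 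The first variation vanishes because $L$ is minimal. By smooth convergence \eqref{link:eq:off-axis-convergence}, we get $\eps_i^{-2}\times$(second-order term on $\{r\ge s\}$) $\to\frac12\int_{L\cap\{r\ge s\}}(\dots)(f)$, and the cubic remainder is $O(\eps_i^3)$ there. The boundary terms cancel once we compare with the correct reference region, namely $L\cap\{r\ge s\}$ versus the region of the graph projecting onto it. Any mismatch from boundary tilt is $O(\eps_i^2 s^{-4}\cdot s^6)$ times a small factor, and I expect to absorb it using \eqref{link:eq:graph-bounds}.

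\textbf{Uniform tail control.} The bounds \eqref{link:eq:graph-bounds} are uniform in $i$ for $r\ge C_*\eps_i^{1/3}$. They give integrand bounds $\eps_i^2 r^{-6}$ against $d\mu_L\approx r^6dr$, so the quadratic integrand is $O(\eps_i^2)$ per unit $r$. Integrating over $r\in(0,s)$ gives an $O(s)$ contribution. Hence $\mathcal Q_L(f)$ converges absolutely, and its portion over $\{r<s\}$ is $O(s)$. For the polar piece of $M_i$, a direct application of Lemma~\ref{link:lem:cap-area} gives only $C s^7$, which is not $o(\eps_i^2)$. So I will further split it into $\{C_*\eps_i^{1/3}\le r<s\}$ and $\{r<C_*\eps_i^{1/3}\}$. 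On the first part, I run the same graphical expansion using the uniform bounds. The relative error is $|\phi|/r+|\nabla\phi|\lesssim\eps_i r^{-3}$, which is small there, so the cubic remainder integrates to at most $\eps_i^3\int r^{-9}r^6dr\lesssim\eps_i^3(\eps_i^{1/3})^{-2}=\eps_i^{7/3}=o(\eps_i^2)$. The quadratic part on this strip is $O(\eps_i^2 s)$. The corresponding piece of $L$, $\{r<C_*\eps_i^{1/3}\}$, has area $O(\eps_i^{7/3})$. On the innermost part, Lemma~\ref{link:lem:cap-area} with $s=C_*\eps_i^{1/3}$ gives area $\le C\eps_i^{7/3}=o(\eps_i^2)$.

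\textbf{Conclusion and main obstacle.} Combining the pieces, we get $\limsup_i|\eps_i^{-2}(\mathcal H^7(M_i\cap S^8)-\mathcal H^7(L))-\frac12\mathcal Q_L(f)|\le Cs$, and letting $s\downarrow0$ proves \eqref{link:eq:area-expansion}. The main obstacle is the intermediate strip $C_*\eps_i^{1/3}\le r<s$. There, curvature $|A_L|\sim r^{-1}$ blows up, so the second-order expansion must be carried out in the rescaled setting: dilate by $r^{-1}$ near each point so that $L$ looks like a fixed cylinder, and the normalized height $\eps_i r^{-3}$ is small. The remainder estimate must be scale-invariant, and the conversion from Euclidean normal height to spherical graph height must be checked to have errors of the same weighted order. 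I expect only first and second derivatives to be needed, which matches the $k\le2$ hypothesis.
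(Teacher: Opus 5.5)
Your proposal is correct and is essentially the paper's argument. The paper removes the polar region with a single shrinking cutoff $s_i=\eps_i^\beta$, $2/7<\beta<1/3$, and bounds it (together with the omitted part of $L$) by Lemma~\ref{link:lem:cap-area} as $O(s_i^7)$. It then bounds the cubic remainder by $O(\eps_i^3s_i^{-2})$ and passes to the limit in the quadratic term by dominated convergence, using that the integrand is $\lesssim r^{-6}$ against $d\mu_L\approx r^6\,dr\,d\omega$; your fixed-$s$ argument reproduces this by hand.

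One point to tighten: at your inner cutoff $r=C_*\eps_i^{1/3}$ the relative height $\eps_i r^{-3}$ is only $O(C_*^{-3})$, not $o(1)$. You should therefore enlarge $C_*$ (the hypotheses persist for larger constants), or take $\beta<1/3$ as the paper does. Also, if you parametrize the whole graph over $L$, the pointwise second-order expansion of the area element produces no boundary terms at $r=s$.
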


\begin{proof}
Choose $s_i=\eps_i^\beta$ with $2/7<\beta<1/3$. Since
$\eps_i s_i^{-3}\to0$, the graph bounds give a spherical normal
height $\phi_i$ over $L\cap\{r\ge s_i\}$ satisfying
\begin{equation}\label{link:eq:spherical-bounds}
 |\phi_i|\le C\eps_i r^{-2},\qquad
 |\nabla_L\phi_i|\le C\eps_i r^{-3},\qquad
 \eps_i^{-1}\phi_i\longrightarrow f|_L
 \quad\hbox{in }C^1_{\mathrm{loc}}.
\end{equation}
These estimates follow by solving the sphere equation
$a^2+u_i(ap)^2=1$ for $a$ near $1$, with $p\in L$.
The omitted part of $M_i\cap S^8$ lies in $\{r<Cs_i\}$.
By Lemma~\ref{link:lem:cap-area}, it and the omitted part of $L$
have total area $O(s_i^7)=o(\eps_i^2)$.

The standard area expansion for a spherical normal graph over the
minimal link $L$ has quadratic term
$\frac12(|\nabla_L\phi_i|^2-(|A_L|^2+7)\phi_i^2)$ and a
pointwise remainder bounded by
\[
 C\bigl((1+|A_L|)|\phi_i|+|\nabla_L\phi_i|\bigr)^3
 \le C\eps_i^3r^{-9}.
\]
Since $d\mu_L\sim r^6\,dr\,d\omega$ near the poles, the
integrated remainder is $O(\eps_i^3s_i^{-2})=o(\eps_i^2)$. Thus
\[
 \mathcal H^7(M_i\cap S^8)-\mathcal H^7(L)
 =\frac12\int_{L\cap\{r\ge s_i\}}
 \left(|\nabla_L\phi_i|^2-(|A_L|^2+7)\phi_i^2\right)d\mu_L
 +o(\eps_i^2).
\]

Finally, after division by $\eps_i^2$ and extension by zero on
$\{r<s_i\}$, the absolute value of the quadratic integrand is
bounded by $Cr^{-6}$, which is integrable on $L$.
The local convergence in \eqref{link:eq:spherical-bounds} and
dominated convergence therefore give
$\mathcal Q_L(f|_L)$ as its integral limit. This also proves
absolute convergence of the quadratic form and yields
\eqref{link:eq:area-expansion}.
\end{proof}

We next show that $J_2$ is a direction of negative second
variation for the area of $L$, and that the same holds for the
restrictions to small spheres of Jacobi fields whose conjugate
$w=r^2f$ has leading term proportional to $y^2-r^2/3$.

\begin{lemma}
\label{link:lem:negative-mode}
For $J_2=\frac{y^2}{r^2}-\frac13$, one has
\begin{equation}\label{link:eq:negative-J2}
 \mathcal Q_L(J_2)=-7\int_L J_2^2\,d\mu_L<0.
\end{equation}
More generally, suppose
\[
 f(r,y)=r^{-2}w(r,y),
\]
where \(w\) is smooth across \(r=0\) as a rotationally invariant
function on \(\R^3\times\R\), and near the origin
\begin{equation}\label{link:eq:quadratic-phase}
 w(r,y)=c\left(y^2-\frac{r^2}{3}\right)
       +O\bigl((r^2+y^2)^{3/2}\bigr),\qquad c\ne0/,.
\end{equation}
Then, for all sufficiently
small fixed \(\rho>0\),
\[
 \mathcal Q_L\bigl(f(\rho\, \cdot)\bigr)<0.
\]
\end{lemma}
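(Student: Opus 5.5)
The first claim rests on the identity that $J_2$ restricted to $L$ is an eigenfunction of the Jacobi operator with eigenvalue $-7$. I will use the standard decomposition of the cone Jacobi operator. If $\phi$ is a function on $L$ and $F=\rho^{\gamma}\phi$ is a homogeneous function on $V$ (with $\rho$ the distance to the vertex, $\dim V=8$), then
\[
 L_V F=\rho^{\gamma-2}\Bigl(\Delta_L\phi+|A_L|^2\phi+\gamma(\gamma+6)\phi\Bigr).
\]
The field $J_2=y^2/r^2-1/3$ is homogeneous of degree $\gamma=0$, and it is a Jacobi field on $V$. To check the latter, write $w=r^2J_2=y^2-r^2/3$; then $\Delta_{\R^3_\xi\times\R_y}w=2-6/3=0$, so $L_VJ_2=0$ by the conjugation identity in Proposition~\ref{prop:harmonic-asymptotics}. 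With $\gamma=0$ this gives $\Delta_L J_2+|A_L|^2J_2=0$. Hence
\[
 -\Delta_LJ_2-(|A_L|^2+7)J_2=-7J_2 .
\]
Pairing with $J_2$ and integrating by parts gives $\mathcal Q_L(J_2)=-7\int_LJ_2^2$. Since $J_2\not\equiv0$, this is strictly negative.

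The integration by parts has to be justified near the two singular poles. There $J_2$ is bounded, $|\nabla_LJ_2|\lesssim r$ (because $J_2$ depends on $y^2/r^2$ and on $L$ we have $y^2=1-r^2$), and $d\mu_L\sim r^6\,dr\,d\omega$. Cut off on $\{r<\sigma\}$: the boundary term is $O(\sigma\cdot\sigma^6)\to0$, and $|A_L|^2J_2^2\sim r^{-2}$ is integrable against $r^6\,dr$. So the formula holds exactly. Its absolute convergence is consistent with Proposition~\ref{link:prop:expansion}.

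For the general statement, set $f_\rho(p)=f(\rho p)$ for $p\in L$, so that $w(\rho r,\rho y)=\rho^2 w_\rho$. Then
\[
 f_\rho=\rho^{-2}r^{-2}\rho^2\bigl(c(y^2-r^2/3)+O(\rho)\rho^{-0}\cdots\bigr)=cJ_2+e_\rho .
\]
Using \eqref{link:eq:quadratic-phase} with $r^2+y^2=1$ on $L$, the error satisfies
\[
 e_\rho=r^{-2}\rho^{-2}\,O(\rho^3)=O(\rho\,r^{-2}).
\]
To improve this near the poles I will use smoothness of $w$. Because $w$ is smooth and rotationally invariant, its Taylor expansion beyond the quadratic term is $O(\rho^3)$ together with derivatives. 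Moreover, $f$ itself solves $L_Vf=0$ wherever $w$ is harmonic, which is the case here because $w=W-m_0$; if needed I will add that assumption from the context. Consequently $e_\rho$ is itself a scaling of a conjugate $r^{-2}\tilde w$ with $\tilde w=O(\rho)$ in $C^1$. The resulting bounds are
\[
 |e_\rho|\le C\rho r^{-2},\qquad |\nabla_Le_\rho|\le C\rho r^{-3}.
\]
These are exactly the bounds that made the quadratic integrand dominated by $Cr^{-6}$ in Proposition~\ref{link:prop:expansion}.

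Expanding the quadratic form then gives
\[
 \mathcal Q_L(f_\rho)=c^2\mathcal Q_L(J_2)+2c\,\mathcal Q_L(J_2,e_\rho)+\mathcal Q_L(e_\rho).
\]
Each cross and error term is an integral of products bounded by $C\rho\,r^{-6}$ or $C\rho^2r^{-6}$ against $r^6\,dr\,d\omega$. The bound $r^{-6}$ against $r^6$ is integrable; the earlier computation used $r^{-6}$ as the integrable dominant. So these terms are $O(\rho)$. Hence $\mathcal Q_L(f_\rho)=-7c^2\int J_2^2+O(\rho)$, which is negative for small $\rho$.

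I expect the main obstacle to be the pole region. There, the pointwise error $\rho r^{-2}$ must be controlled together with its gradient, and $|A_L|^2\sim r^{-2}$ must not destroy integrability. If the crude bounds fail to be integrable, I will exploit that $\tilde w$ is smooth and even in $\xi$, which improves the vanishing near $r=0$.
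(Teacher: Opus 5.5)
Your route is the paper's. You use the identity $(\Delta_L+|A_L|^2)J_2=0$, obtained from the cone splitting of $L_V$ and the conjugation $L_V(r^{-2}w)=r^{-2}\Delta w$, where the paper just calls it a direct calculation. You then integrate by parts on $L\cap\{r>\sigma\}$. Finally you perturb, $f(\rho\,\cdot)=cJ_2+E_\rho$, with $|E_\rho|\le C\rho r^{-2}$, $|\nabla_LE_\rho|\le C\rho r^{-3}$ and $\int_Lr^{-6}\,d\mu_L<\infty$.

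One claim in your pole analysis is false: $J_2$ is not bounded at the poles. On $L$ one has $y^2=1-r^2$, so $J_2=r^{-2}-\tfrac43$. Hence $|J_2|\sim r^{-2}$, $|\nabla_LJ_2|\sim r^{-3}$, and $|A_L|^2J_2^2\sim|\nabla_LJ_2|^2\sim r^{-6}$, not $r^{-2}$.

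The argument still closes with the correct bounds. The boundary term on $\{r=\sigma\}$ is $O(\sigma^{-2}\cdot\sigma^{-3}\cdot\sigma^{6})=O(\sigma)$ rather than $O(\sigma^7)$. Every integrand is dominated by $Cr^{-6}$, which is integrable against $r^6\,dr\,d\omega$. This is exactly the mechanism that makes $\mathcal Q_L(J_2)$ finite. Your cross-term bounds $C\rho r^{-6}$ in the second half already implicitly use $|J_2|\lesssim r^{-2}$ and $|\nabla_LJ_2|\lesssim r^{-3}$, so state these in the first half too; as written, the two halves are inconsistent.

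The extra assumption that $w$ is harmonic is unnecessary. Smoothness of $w$ alone gives $|D^k(w-c(y^2-r^2/3))|\le C(r^2+y^2)^{(3-k)/2}$ for $k=0,1$, since the remainder has vanishing $2$-jet at the origin. That yields the required bounds on $E_\rho$ and $\nabla_LE_\rho$.
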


\begin{proof}
A direct calculation gives $(\Delta_L+|A_L|^2)J_2=0$.
Since $|J_2|\le Cr^{-2}$ and $|\nabla_LJ_2|\le Cr^{-3}$,
the boundary term in integration by parts over $L\cap\{r>d\}$
is $O(d)$. Letting $d\downarrow0$ therefore gives
\[
 \mathcal Q_L(J_2)
 =-\int_LJ_2(\Delta_L+|A_L|^2+7)J_2\,d\mu_L
 =-7\|J_2\|_{L^2(L)}^2<0.
\]

By the differentiated Taylor bounds in
\eqref{link:eq:quadratic-phase},
\[
 f(\rho p)=cJ_2(p)+E_\rho(p),\qquad
 |E_\rho|\le C\rho r^{-2},\quad
 |\nabla_LE_\rho|\le C\rho r^{-3}.
\]
Since $\int_Lr^{-6}\,d\mu_L<\infty$, expanding the quadratic form
and applying these bounds yields
\[
 \mathcal Q_L\bigl( f(\rho \,\cdot)\bigr)
 =c^2\mathcal Q_L(J_2)+O(\rho)<0
\]
for all sufficiently small $\rho>0$.

\end{proof}

We can now show that the density drops below $\theta_V$.

\begin{proposition}[Strict density drop]\label{link:prop:density}
Assume that the \(M_i\) in Proposition~\ref{link:prop:expansion}
are stationary in a neighborhood of the closed unit ball.
If \(\mathcal Q_L(f|_L)<0\), then for all sufficiently large \(i\),
\[
 \Theta(M_i,0)<\Theta(V,0).
\]
The same conclusion holds if the preceding hypotheses hold, after
a fixed dilation, on a fixed neighborhood of the origin, and the
conjugate $w=r^2f$ of the limit Jacobi field satisfies
\eqref{link:eq:quadratic-phase}; in this case the link is evaluated at
a sufficiently small fixed radius.
\end{proposition}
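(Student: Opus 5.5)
The approach is to convert the link-area deficit into a density deficit by means of the monotonicity formula and the cone comparison. For a stationary varifold $M$ in a neighborhood of $\overline{B_1}$, the monotonicity identity gives
\[
 \Theta(M,0)\le\Theta(M,0,1)=\frac{\|M\|(B_1)}{\omega_8},
\]
so it suffices to bound the mass of $M_i$ in the unit ball by the mass of the cone over its link. The key tool is the cone comparison for stationary varifolds. Writing $\|M\|(B_s)=m(s)$, stationarity tested against the radial field $x\varphi(|x|)$ gives
\[
 8\,m(s)\le s\,\mathcal H^7(M\cap \partial B_s)\quad\text{for a.e.\ } s,
\]
with equality for cones. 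Here one uses $|x^\perp|\ge 0$ and the coarea formula, noting that $|\nabla^M|x||\le 1$, so that $\frac{d}{ds}m(s)\ge \mathcal H^7(M\cap\partial B_s)$. Integrating $\frac{d}{ds}(s^{-8}m)\ge 0$ yields $\Theta(M,0)\le \Theta(M,0,1)$. Combining this with $8m(1)\le \mathcal H^7(M\cap S^8)$, which holds after choosing the radius $1$ among the full-measure set of good radii (or after averaging over radii near $1$), we obtain
\[
 \Theta(M_i,0)\le \frac{\mathcal H^7(M_i\cap S^8)}{8\omega_8},\qquad
 \Theta(V,0)=\frac{\mathcal H^7(L)}{8\omega_8}.
\]

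Next we insert Proposition~\ref{link:prop:expansion}. Since $\mathcal Q_L(f|_L)<0$, for large $i$ we have
\[
 \mathcal H^7(M_i\cap S^8)\le \mathcal H^7(L)+\tfrac{\eps_i^2}{4}\mathcal Q_L(f|_L)<\mathcal H^7(L),
\]
so $\Theta(M_i,0)<\Theta(V,0)$. To avoid the issue that radius $1$ might be exceptional, we can instead integrate the inequality over $s\in(1-\eta,1+\eta)$. Proposition~\ref{link:prop:expansion} applies uniformly for spheres of radius near $1$ after rescaling, since its hypotheses are scale-invariant on a fixed annulus and $f(s\,\cdot)$ varies continuously in $s$. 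Alternatively, one can note that the proof of that proposition in fact holds for every $s$ in a small interval.

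For the second statement, we apply the first to the dilated varifolds $\rho^{-1}M_i$. Their normal heights satisfy the graph bounds with $\eps_i$ replaced by $\eps_i\rho^{-3}$, and the normalized limit is a constant multiple of $f(\rho\,\cdot)$. Lemma~\ref{link:lem:negative-mode} then provides $\mathcal Q_L(f(\rho\,\cdot))<0$ for small fixed $\rho$. Since density at $0$ is dilation-invariant, the conclusion follows.

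The main obstacle is justifying the inequality $8m(s)\le s\,\mathcal H^7(M\cap\partial B_s)$ for these singular stationary varifolds. This requires that the slice $M\cap\partial B_s$ have the expected $\mathcal H^7$-measure, with no concentration at the singular axis. Here Lemma~\ref{link:lem:cap-area} and the zero spherical mass argument already used in the paper provide the needed control: the singular set has zero $\mathcal H^7$-slice measure for a.e. radius, and the coarea formula on the regular part suffices.
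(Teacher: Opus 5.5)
Your proposal is correct and follows essentially the same route as the paper: the radial first-variation inequality $8\|M_i\|(B_\rho)\le\rho\,\mathcal H^7(M_i\cap\partial B_\rho)$ at almost every radius, combined with monotonicity and the second-order link expansion at a common admissible radius near $1$ (using continuity of $\rho\mapsto\mathcal Q_L(f(\rho\,\cdot))$), and, for the second statement, rescaling with $\eps_i\mapsto\eps_i\rho^{-3}$ together with Lemma~\ref{link:lem:negative-mode}; this rescaling reproduces the paper's coefficient $\tfrac12\eps_i^2\rho^5\mathcal Q_L(F_\rho)$. One small precision: the flux inequality comes from $\frac{d}{ds}\int_{M\cap B_s}|\nabla^M|X||^2=\int_{M\cap\partial B_s}|\nabla^M|X||\,d\mathcal H^7\le\mathcal H^7(M\cap\partial B_s)$, whereas $m'(s)\ge\mathcal H^7(M\cap\partial B_s)$ is the complementary coarea bound, which combined with the flux inequality gives monotonicity; both are plain varifold identities, valid across the $\mathcal H^8$-null singular set without any further input.
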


\begin{proof}
For a compactly supported radial cutoff \(\eta\), stationarity gives
\[
 \begin{aligned}
 0&=\delta M_i(\eta(|X|)X)\\
 &=\int_{M_i}\left(8\eta(|X|)
       +|X|\eta'(|X|)\,|\nabla_{M_i}|X||^2\right)d\|M_i\|.
 \end{aligned}
\]
Approximating the characteristic function of \(B_\rho\) and applying
coarea gives, for almost every \(\rho\),
\begin{equation}\label{link:eq:radial-flux}
 8\,\|M_i\|(B_\rho)
 =\rho\int_{M_i\cap\partial B_\rho}
       |\nabla_{M_i}|X||\,d\mathcal H^7
 \le \rho\,\mathcal H^7(M_i\cap\partial B_\rho).
\end{equation}
The same first-variation calculation with the usual radial powers
gives Euclidean monotonicity in the form
\[
 \frac{\|M_i\|(B_\rho)}{\rho^8}
 -\frac{\|M_i\|(B_\sigma)}{\sigma^8}
 =\int_{M_i\cap(B_\rho\setminus B_\sigma)}
       \frac{|X^\perp|^2}{|X|^{10}}\,d\|M_i\|
 \ge0,\qquad 0<\sigma<\rho.
\]
Both calculations are varifold first-variation identities and remain
valid across the singular set. Therefore
\[
 \Theta(M_i,0)
 \le\frac{\|M_i\|(B_\rho)}{\omega_8\rho^8}
 \le\frac{\mathcal H^7(M_i\cap\partial B_\rho)}
          {8\omega_8\rho^7}.
\]
At unit radius, whenever the flux identity holds,
\[
 \begin{aligned}
 \Theta(M_i,0)
 &\le\frac{\mathcal H^7(L)}{8\omega_8}
       +\frac{\eps_i^2}{16\omega_8}\mathcal Q_L(f|_L)
       +o(\eps_i^2)\\
 &<\frac{\mathcal H^7(L)}{8\omega_8}=\Theta(V,0).
 \end{aligned}
\]
If necessary choose a nearby common admissible radius. Indeed, if
\(E_i\) is the exceptional set for \(M_i\), then
\[
 |\cup_iE_i|=0,\qquad
 \mathcal Q_L(f(\rho\,\cdot))<0
 \quad\text{for all }\rho\text{ sufficiently close to }1.
\]
The second statement follows by the weighted continuity already used
in the link expansion.

At a general fixed radius \(\rho\), set
\(F_\rho(p)=f(\rho p)\), for \(p\in L\).
Rescaling Proposition~\ref{link:prop:expansion} gives
\[
 \mathcal H^7(M_i\cap\partial B_\rho)
 =\rho^7\mathcal H^7(L)
  +\frac{\eps_i^2\rho^5}{2}\mathcal Q_L(F_\rho)
  +o(\eps_i^2).
\]
Lemma~\ref{link:lem:negative-mode} makes this coefficient negative
at every sufficiently small fixed radius under
\eqref{link:eq:quadratic-phase}. Choose a common admissible one
and apply \eqref{link:eq:radial-flux}.
\end{proof}

\subsection{Density bounds and classification of cone limits}

The next lemma shows that a symmetric SME cone with a zero on the
axis away from its vertex has density at least $\theta_V$, with
equality only for $V$.

\begin{lemma}[An axial zero of a cone forces cylindrical density]
\label{lem:axial-zero-density-extraction}
Let $K=S(u)$ be an SME cone, with $u$ $O(4)_x$-invariant. If
$u(0,1)=0$, that is, if $(0,0,1)\in\Sing K$, then
$\Theta_K(0)\geq\theta_V$.
If also $\Theta_K(0)\leq\theta_V$, then $K=V$.
\end{lemma}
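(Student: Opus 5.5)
We follow the pattern of Lemma~\ref{lem:off-axis-density-extraction} and blow up at the axial zero $p=(0,0,1)$. By upper semicontinuity of density for stationary varifolds and the cone structure, $\Theta_K(0)=\Theta_K(\infty)\ge\Theta_K(p)$; the last inequality is monotonicity centered at $p$, with the density at infinity independent of the center. So it suffices to show $\Theta_K(p)\ge\theta_V$, with equality analysis.

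First we use the dilation invariance of $K$. Since $K$ is a cone, it is invariant under $X\mapsto\lambda X$. The blow-ups $s^{-1}(K-p)$ converge, along subsequences, to tangent cones $T$ at $p$ which are invariant under translation in the $y$-direction (the standard cone-splitting: the orbit of $p$ under dilations is the ray through $p$). As in Lemma~\ref{lem:off-axis-density-extraction}, the rescaled profiles keep the Lipschitz bound \cite[Theorem~8.1]{FS}. Hence $T=S(U)$ with $U(x,y)=v(x)$, and $v$ is a degree-one homogeneous, $O(4)_x$-invariant SME limit on $\R^4$ with $v(0)=0$. Thus $T=K'\times\R$, where $K'=\{|z|=v(|x|)\}\subset\R^8$ is an $O(4)\times O(4)$-invariant stationary cone. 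Its vertex is singular, since $v(0)=0$ and zeros of SME limits are singular points \cite[Corollary~2.5]{FS}.

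Next we classify $K'$. By \cite[Theorem~9.1]{FS} the zero set of $U$ has dimension at most $3$, so $v>0$ away from $0$. Otherwise the zeros $\{\tau x_0\}\times\R$ together with their $O(4)$-orbit would give a set of dimension $\ge 5$. Hence $K'$ is regular off the vertex, and its link is a smooth $O(4)\times O(4)$-invariant embedded minimal hypersurface of $S^7$ with singular cone point. The only such homogeneous profile is $v(t)=t$: invariant minimal cones in $\R^4\times\R^4$ correspond to rays through the origin solving the ODE for $(|x|,|z|)$, and the unique ray solution is the Clifford ray $|z|=|x|$. We would cite the classical ODE/Hsiang–Lawson classification of $O(p+1)\times O(q+1)$ minimal cones, which are exactly $C_{p,q}$. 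The alternative of a hyperplane is excluded because the vertex is singular. Therefore $T=C_{3,3}\times\R=V$, so $\Theta_K(p)=\theta_V$ and $\Theta_K(0)\ge\theta_V$. This establishes the first claim.

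For the rigidity, suppose $\Theta_K(0)\le\theta_V$. Then $\Theta_K(0)=\Theta_K(p)=\theta_V$, so the monotonicity identity centered at $p$ has zero defect at all radii. Thus $K$ is a cone with respect to $p$ as well as $0$, hence invariant under translation along the $y$-axis. Then $K=K'\times\R$ with $K'$ an $O(4)\times O(4)$-invariant cone singular at its vertex, and the classification above gives $K=V$.

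The main obstacle is the middle step: justifying that the tangent cone at $p$ splits off the axis, and pinning down the cross-section as $C_{3,3}$. For the latter we rely on the ODE classification of invariant cones, and must check that degenerate SME limits, such as a half-profile vanishing on a cone, are excluded by the dimension bound of \cite[Theorem~9.1]{FS}.
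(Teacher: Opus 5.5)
Your proof is correct and follows the paper's argument: blow up at $(0,0,1)$, split off the $y$-direction, identify the tangent cone there as $V$, conclude by monotonicity centered at that point, and obtain rigidity from the vanishing monotonicity defect. The differences are cosmetic. You exclude the collapsed profile using the dimension bound of \cite[Theorem~9.1]{FS}, whereas the paper uses the noncollapse of \cite[Lemma~2.3 and Remark~2.4]{FS}. The appeal to a Hsiang--Lawson-type classification is unnecessary, since $O(4)_x$-invariance and degree-one homogeneity already force $v=c|x|$, and substituting this into the SME gives $c=1$.
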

\begin{proof}
Put $q=(0,0,1)$.  A tangent cone $T$ to a cone at a nonvertex point
splits its radial direction; thus $T=T_0\times\R_y$.
This follows directly by conjugating the dilations fixing the vertex
by the translation and dilation used to blow up at $q$; in the limit
they give every translation in the direction $q$.
The SME compactness and graph property are preserved in this limit.
The transverse profile is $O(4)_x$-invariant and homogeneous of degree
one, hence equals $c|x|$ for a constant $c\geq0$.
The noncollapse consequence of \cite[Lemma~2.3 and Remark~2.4]{FS}
excludes $c=0$: every base ball of radius $\rho$ contains a point
where the height is at least $c_*\rho$. Substituting the radial profile
$c|x|$ in the four-dimensional-base equation gives
\[
 \frac{d^2(ct)/dt^2}{1+c^2}+\frac3t\frac{d(ct)}{dt}
       -\frac3{ct}=\frac3t(c-c^{-1})=0
 \ \Longrightarrow\ c=1,
 \qquad T=V.
\]
Thus monotonicity centered at $q$ gives
\[
 \theta_V=\Theta_K(q)
 \leq\Theta_K(q,R)
 \longrightarrow\Theta_K(0)\qquad(R\to\infty).
\]
If $\Theta_K(0)=\theta_V$, then for $0<s<R$ the exact
monotonicity identity yields that  $K$ is a cone centered at $q$ as well and it equals its tangent cone $T=V$ there.
\end{proof}

We next classify symmetric, axially monotone SME cones with
density at most $\theta_V$.

\begin{lemma}[Cone classification with axial monotonicity]
\label{lem:monotone-cone-classification}
Let $K=S(u)$ be an SME cone with $\Theta(K)\le\theta_V$, where
$u(x,y)=u(|x|,y)$ is $O(4)_x$-invariant.
Suppose $u$ is even in $y$ and nondecreasing in $y\geq0$.
Then $K$ is either $C_{4,3}$ or $V$.
\end{lemma}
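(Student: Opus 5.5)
The plan is to split according to the zero set of $u$ restricted to the axis and use the two density lemmas already proved.

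\emph{Step 1: off-axis zeros.} Since $\Theta(K)\le\theta_V$, Lemma~\ref{lem:off-axis-density-extraction} shows that $u$ has no zeros with $x\ne0$.

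\emph{Step 2: axial zeros other than the vertex.} By homogeneity, evenness and axial monotonicity, the set $\{y:u(0,y)=0\}$ is a symmetric interval, and it is either $\{0\}$ or all of $\R$. Indeed, if $u(0,y_0)=0$ for some $y_0\ne0$, then monotonicity gives $u(0,y)=0$ for $|y|\le|y_0|$, and dilation spreads this to every $y$. In particular $u(0,1)=0$, so Lemma~\ref{lem:axial-zero-density-extraction} together with $\Theta(K)\le\theta_V$ gives $K=V$.

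\emph{Step 3: only the vertex is singular.} It remains to treat $\{u=0\}=\{0\}$. Then $K$ is a cone that is smooth away from its vertex. It is invariant under $O(4)_x\times O(4)_z$, and also under $y\mapsto-y$. I would like to view $K$ as a minimal bigraph, i.e. as a cone invariant under $O(4)\times O(4)$ in the coordinates $(x,z)$ with an extra variable $y$. The cleaner route is to use the evenness in $y$ and the monotonicity in $|y|$. The link $\Sigma=K\cap S^8$ is then a smooth minimal hypersurface of $S^8$ invariant under $O(4)\times O(4)\times\mathbb Z_2$, and it is a graph $|z|=v(x,y)$ over the region it projects onto.

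I would invoke the minimal-bigraph classification \cite[Corollary~3.18]{FTW}, as announced at the start of Section~\ref{sec:classification}. It classifies minimal cones $\{|z|=u(x,y)\}$ in $\R^{4}\times\R^4\times\R$ that are regular away from the origin (a bigraph because $|z|$ is a graph over $(x,y)$, with the $O(4)_z$ symmetry). The conclusion I expect is that $K$ must be $C_{4,3}$ or $C_{3,4}$-type, or the cone $\{4|z|^2=3(|x|^2+y^2)\}$ in which $y$ is grouped with $x$. The alternative grouping of $y$ with $z$, namely $\{3|x|^2=4(|z|^2+y^2)\}$, is not a graph $|z|=u(x,y)$ over all of $(x,y)$: it forces $|x|\ge c|y|$, so $u$ would vanish off the origin on the axis. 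That is excluded by Step 2. The only survivor is $C_{4,3}$, consistent with $\Theta(C_{4,3})<\theta_V$. One should also exclude the possibility of a non-quadratic cone; this is exactly the content of the cited classification under the symmetry hypotheses, and the axial monotonicity is the hypothesis used to put $u$ in its scope, since it makes the $y$-profile unimodal.

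\emph{Main obstacle.} The hard step is Step 3: checking that the hypotheses of \cite[Corollary~3.18]{FTW} hold. This means showing that $K$ is a smooth graph away from $0$ with the required symmetry. It also means showing that monotonicity in $|y|$ rules out both the non-round solutions and the solution with $y$ grouped with $z$. If the citation needs an entire positive graph, I would supply the positivity of $u$ off the origin from Steps 1 and 2.
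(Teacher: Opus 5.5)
Your Steps 1 and 2 match the paper's argument. Off-axis zeros are excluded by Lemma~\ref{lem:off-axis-density-extraction}, and an axial zero forces $u(0,1)=0$ by homogeneity, hence $K=V$ by Lemma~\ref{lem:axial-zero-density-extraction}.

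\textbf{The gap is in Step 3, and it is conceptual.} You have misread the hypothesis ``expressible as a bigraph'' in \cite[Corollary~3.18]{FTW}. It does not mean that $|z|$ is a graph over $(x,y)$. Every SME cone has that property, so on your reading axial monotonicity would play no role, which contradicts your own remark that it is needed. The hypothesis actually means the following: $K$ is the union of two graphs $y=\pm G(x,z)$ over a domain in the hyperplane $\R^8_{x,z}$, exchanged by the $\mathbb Z_2$ reflection $y\mapsto -y$, and meeting $\{y=0\}$ orthogonally. Producing this representation is where the monotonicity hypothesis enters. Weak monotonicity (your ``unimodality'') is not enough. You need $y\mapsto u(a,y)$ to be a bijection onto an explicit range for each $a=|x|>0$, and your proposal neither identifies this requirement nor supplies an argument for it.

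\textbf{The missing argument.} The paper proceeds as follows.
\begin{enumerate}
\item On the connected region $\{y>0\}$, where $u>0$, the function $u_y\ge0$ solves the differentiated SME.
\item By homogeneity, $u_y(0,y)=u(0,1)>0$ on the positive axis.
\item The local Harnack inequality then gives $u_y>0$ throughout $\{y>0\}$.
\item Homogeneity also gives $u(a,0)=c_0a$ and $u(a,y)/y=u(a/y,1)\to u(0,1)>0$. Hence $y\mapsto u(a,y)$ maps $(0,\infty)$ bijectively onto $(c_0a,\infty)$.
\item The inverse $G(a,b)$ exhibits $K\cap\{y>0\}=\{y=G(|x|,|z|):|z|>c_0|x|\}$, and $K\cap\{y<0\}$ is the reflected graph.
\item Evenness gives $u_y(a,0)=0$, so the two sheets meet $\{y=0\}$ orthogonally.
\end{enumerate}
Only after these steps does the cited corollary apply, with $n=8$ and $k=4$. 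The fact that the domain $\{|z|>c_0|x|\}$ contains the $z$-axis is what selects $C_{4,3}$ as the output.

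\textbf{Smaller issues in your exclusion of the other Lawson cone.} The idea is right, but the details need fixing.
\begin{itemize}
\item That cone is $\{4|x|^2=3(|z|^2+y^2)\}$. Your coefficients give a non-minimal cone.
\item The correct reason to exclude it is that it contains no point with $x=0$, $y\ne0$, so it cannot be $S(u)$ for any $u$ defined on all of $\R^5$. It is not that $u$ would vanish on the axis.
\item Your list of candidates double counts, since $\{4|z|^2=3(|x|^2+y^2)\}$ is $C_{4,3}$ itself.
\end{itemize}
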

\begin{proof}
Lemma~\ref{lem:off-axis-density-extraction} excludes off-axis zeros.
By homogeneity, either $u(0,1)=0$, in which case
Lemma~\ref{lem:axial-zero-density-extraction} gives $K=V$, or
$u(0,1)>0$.  In the latter case $u>0$ on $\R^5\setminus\{0\}$,
so the cone has an isolated singularity and a smooth link.

On the connected region $y>0$, the function $u_y$ satisfies the
linear equation obtained by differentiating the SME.  It is
nonnegative by hypothesis and is positive on the positive $y$ axis,
where homogeneity gives $u(0,y)=y u(0,1)$.
The local Harnack inequality for this nonnegative solution gives
\[
 u_y(0,y)=u(0,1)>0\quad(y>0)
 \ \Longrightarrow\ u_y>0\text{ on }\{y>0\}.
\]
For each $a=|x|>0$, homogeneity gives
\[
 u(a,0)=c_0a,
 \qquad
 \frac{u(a,y)}y=u(a/y,1)\longrightarrow u(0,1)>0.
\]
Thus $y\mapsto u(a,y)$ maps $(0,\infty)$ bijectively onto
$(c_0a,\infty)$. Its inverse $G(a,b)$ satisfies
\[
 \begin{aligned}
 K\cap\{y>0\}&=\{y=G(|x|,|z|):|z|>c_0|x|\},\\
 K\cap\{y<0\}&=\{y=-G(|x|,|z|):|z|>c_0|x|\},\\
 G(a,c_0a)&=0,\qquad u_y(a,0)=0.
 \end{aligned}
\]
Smoothness along $y=0$ therefore makes the two graphs meet that
hyperplane orthogonally.

These verify the precise hypotheses of \cite[Corollary~3.18]{FTW}:
a cohomogeneity-two minimal cone, equivariant under
$O(n-k)\times O(k)\times\mathbb Z_2$, expressible as a bigraph.
Set $n=8$, $k=4$, and identify their two four-dimensional factors with
$z$ and $x$. Their conclusion is the Lawson cone
\[
 C(S^3\times S^4)
 =\left\{|z|=\frac{\sqrt3}{2}\sqrt{|x|^2+y^2}\right\}
 =C_{4,3}.
\]
The relevant ODE endpoint is also explicit in
\cite[Proposition~3.17]{FTW}: the zero-height, infinite-slope endpoint
occurs only at $a_{8,4}=\sqrt{4/3}$.
\end{proof}

We can finally prove Proposition~\ref{prop:SME-singularity-classification}.

\begin{proof}[Proof of Proposition~\ref{prop:SME-singularity-classification}]
Let $M_\eps$ be the family in
Proposition~\ref{SME:prop:finite-family}, with $0<\eps<\eps_0$. Its normalized limit is
$f=r^{-2}w$, with $w=W-m_0$, where
\[
 w=c(y^2-r^2/3)+O(\rho^4),\qquad c>0.
\]
Lemma~\ref{link:lem:negative-mode} gives, on a sufficiently small fixed
sphere of radius $s_*>0$,
\[
 \mathcal Q_L\bigl(f(s_* \, \cdot)\bigr)<0.
\]
The phase, Lipschitz and normal derivative estimates from the first
proposition verify the hypotheses of
Propositions~\ref{link:prop:expansion} and~\ref{link:prop:density}
after this fixed dilation. Consequently, arguing by contradiction
along sequences $\eps_i\to0$, we obtain $\eps_1\in(0,\eps_0]$ such
that, for $0<\eps<\eps_1$,
\[
 \Theta(M_\eps,0)<\theta_V.
\]
Decreasing $\eps_1$ if necessary, we may also assume that
Corollary~\ref{cor:finite-axis-zero-set} applies for $0<\eps<\eps_1$.

For such a fixed $\eps$, suppose that the half-length
$\ell_\eps$ of its zero interval were positive. For any sequence
$s_j\downarrow0$, the rescaled profiles
$s_j^{-1}u_\eps(s_jx,s_jy)$ vanish at $(x,y)=(0,1)$ once
$s_j<\ell_\eps$. The local Lipschitz bound and SME compactness
therefore give a homogeneous tangent profile with this same axial zero.
If $K$ is its symmetric cone, Lemma~\ref{lem:axial-zero-density-extraction}
gives
\[
 \theta_V\le\Theta(K)=\Theta(M_\eps,0)<\theta_V,
\]
a contradiction. Thus $\ell_\eps=0$. Note that this argument
uses no classification of the isolated cone: the density obstruction
alone rules out the interval.

Every tangent cone at zero inherits the symmetries and axial order and
has density less than $\theta_V$. The remaining classification,
Lemma~\ref{lem:monotone-cone-classification}, gives
\[
 \operatorname{Tan}(M_\eps,0)=\{C_{4,3}\},\qquad
 \Theta(M_\eps,0)=\theta_{43}.
\]
Positive profile points are smooth by SME regularity, including the
smooth radial axis where the profile is positive. The only zero is the
origin, so it is the only singular point of the graph over $\Omega$.
This proves all assertions of the proposition.
\end{proof}

\section{Quantitative estimates for the connector}\label{sec:connector-estimates}\label{sec:end}

The estimates needed for the gluing construction of the next sections concern the error between the connector
and the Hardt--Simon leaf with the same smoothing radius $a(y):=u(0,y)$. Write
\[
 u=u_\infty,\qquad a(y)=u(0,y),\qquad
 v(t,y)=a(y)h(t/a(y)),\qquad e=u-v,
\]
where the Hardt-Simon's foliation $h$ and the minimal-surface operator $Q$ are
as in the previous sections. The required bounds, on an axial band of size
$R\approx |y|$, are
\[
 |\partial_t^m\partial_y^n e|
 \le C_{mn}a^3R^{-2-n}(a+t)^{-m}.
\]
Subtracting the equations for $u$ and $v$ gives a linear equation
for $e$ with right-hand side $-Q(v)$. Its forcing is determined by
the derivatives of $a$. Indeed, with $s=t/a$ and
$\zeta(s)=h(s)-sh'(s)$, a straightforward computation yields
\[
 v_y=a'\zeta(s),\qquad
 v_{yy}=a''\zeta(s)+\frac{(a')^2}{a}s^2h''(s),
\]
which combined with the leaf equation for $h$ gives
\begin{equation}\label{eq:end-modulated-defect}
 |Q(v)|\le C\left(|a''|+\frac{(a')^2}{a}\right)(1+t/a)^{-2}.
\end{equation}
Thus the desired forcing scale is available once the bound
$|a^{(j)}|\le C_jaR^{-j}$ is known. Differentiating the forcing
requires the corresponding higher derivatives of $a$.
At the axis, the exact identity
\[
 4e_{tt}(0,y)+\frac{a''(y)}{1+(a'(y))^2}=0
\]
already shows that the remainder estimate and the smoothing-radius
derivative estimate cannot be separated by treating $a''$ as known data.

We obtain those derivatives by differentiating the equation for $u$
itself. The function $u_y$ solves the Jacobi equation, whose
inverse-square potential prevents ordinary estimates from being
uniform between the smoothing scale and the axial scale. Dividing by the
positive dilation Jacobi field of the local leaf absorbs the
fixed-leaf potential. The principal coefficients of the resulting
equation are comparable to
\[
 \omega_\beta(x,y)=(\beta^2+|x|^2)^{-1/2},
 \qquad \beta=u(0,y_0),
\]
where $\beta$ is held fixed on a band about $y_0$.
This is a Muckenhoupt $A_2$ weight with constant independent of
$\beta$. Weighted local estimates then control the normalized
axial derivatives at scale $R$.

To apply that theory, we must first identify the smoothing scale and
compare the actual Jacobi coefficients with the displayed weight
uniformly throughout the band. The quadratic growth estimate,
together with the positive quadratic coefficient and comparison near the axis, gives $\beta^3\approx |y_0|^2$, a positive range of
comparable leaves, and a quantitative approximation by those leaves.
The approximation also controls the residual Jacobi potential.
The $A_2$ property itself holds for every $\beta$; the growth and
comparison estimates are what place this particular surface in a
uniform weighted coefficient class. We therefore prove the quadratic
estimate first, then estimate the axial derivatives of $u$, and
finally return to the forced radial equation for $e$.

Throughout the section we consider the invariant hypersurfaces
\[
 \Sigma_v=\{|z|=v(t,y)\}.
\]
Recall that the Euclidean minimal surface equation for a positive \(v\) is
\begin{equation}\label{cap:eq:Q}
 Q(v):=
 \left(\delta_{ij}-\frac{v_i v_j}{1+|Dv|^2}\right)v_{ij}
 +\frac3t v_t-\frac3v=0,\qquad i,j\in\{t,y\}.
\end{equation}
For the normal pointing towards increasing \(|z|\), the scalar mean
curvature, with convention \(H=\operatorname{div}_{\Sigma_v}\nu\), is
\[
 H=-Q(v)(1+|Dv|^2)^{-1/2}.
\]

Let \(|z|=h(|x|)\) be the invariant Hardt-Simon leaf on the
\(|z|>|x|\) side, normalized by \(h(0)=1\). The external fixed-leaf
input is the global parametrization in
\cite[\S3.1, proof of Proposition~3.3]{Sz} and its asymptotic expansion
\cite[Proposition 3.3]{Sz}. In our vertical graph coordinates these give
\begin{equation}\label{cap:eq:HS}
 h'(0)=0,\qquad
 \frac{h''}{1+h'^2}+\frac3t h'-\frac3h=0,\qquad
 h(t)=t+b_H t^{-2}+O(t^{-3}),\quad b_H>0.
\end{equation}
together with the corresponding expansions of the derivatives of
$h$. In the notation of Section~\ref{sec:connector},
$\{|z|=h(|x|)\}=H_{c_H}$, and $0<h'<1$ on $(0,\infty)$; see
the proof of Lemma~\ref{lem:linearized-hs-inverse}.
By rotational invariance, write $u_\infty(t,y)=U(x,y)$ when $|x|=t$;
within this section we also abbreviate this profile by $u$. Our goal is the following:
\begin{proposition}[The sharp connector end]\label{prop:precise-estimates-for-connector}
Let \(M=S(u_\infty)\) be the connector of
Theorem~\ref{thm:main-connector}, and put
\[
 a_\infty(Y)=u_\infty(0,Y)\qquad(Y\ne0).
\]
There are constants \(\kappa_H,c>0\) and \(Y_*>0\) such that
\[
 a_\infty(Y)\sim\kappa_H|Y|^{2/3}\quad(|Y|\to\infty),\qquad
 |a_\infty^{(j)}(Y)|
 \le C_j a_\infty(Y)|Y|^{-j}\quad(j\ge1,\ |Y|\ge Y_*).
\]
For \(|Y|\ge Y_*\) and \(0\le T\le c|Y|\),
\begin{equation}\label{roadmap:eq:sharp-end}
 \begin{split}
 u_\infty(T,Y)
 &=a_\infty(Y)h(T/a_\infty(Y))+e_\infty(T,Y),\\
 |\partial_T^m\partial_Y^ne_\infty(T,Y)|
 &\le C_{mn}a_\infty(Y)^3|Y|^{-2-n}
                   (a_\infty(Y)+T)^{-m},
 \qquad m,n\ge0.
 \end{split}
\end{equation}
On every fixed regular angular sector
\(\{r\ge\theta\rho\}\) of \(V\), where
\(r=\sqrt2t\) and \(\rho=(r^2+Y^2)^{1/2}\), the normal graph
height \(f_\infty\) of \(M\) satisfies
\[
 |D^jf_\infty|\le C_{\theta,j}\rho^{-j}\qquad(j\ge0).
\]
Every derivative order is fixed but arbitrary. The constants
$C_j,C_{mn},C_{\theta,j}$ may depend on those orders, while $c$ and
$Y_*$ can be chosen independently of them.
\end{proposition}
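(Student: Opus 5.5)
The plan follows the outline at the start of this section, in three stages: quadratic growth of the phase, axial derivatives by an $A_2$-weighted estimate, and the forced radial equation for $e_\infty$.

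\emph{Stage 1: quadratic growth of the signed phase.} On the connector, the blow-down to $V$ is unique (Theorem~\ref{thm:main-connector}). Away from the spine, the normal height $f_\infty$ is therefore eventually small relative to $\rho$, and the conjugate $w_\infty=r^2f_\infty$ is an approximate solution of the harmonic equation in $\R^3_\xi\times\R_y$, exactly as in Proposition~\ref{prop:harmonic-asymptotics}.

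I would run a three-annulus / frequency argument on dyadic shells $\{\rho\sim 2^k\}$, using the conjugate harmonic structure, to show that $w_\infty$ grows at a definite homogeneous rate. The candidate rates come from the $O(3)$-invariant, reflection-even harmonic polynomials of degree $0,2,4,\dots$.

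\begin{itemize}
\item \emph{Degree $0$} is excluded because it would make $f_\infty$ of size $\rho^{-2}$ uniformly. By Lemma~\ref{lem:signed-coordinate-contact}, applied at large scales, this is incompatible with the surface meeting both $\{x=0\}$ (phase $\ge0$) and $\{z=0\}$ (phase $\le 0$) along the central plane at every scale, which the $C_{4,3}$ structure at the origin forces.
\item \emph{Degrees $\ge4$} are excluded by the sublinear growth $|P|=o(\rho^3)$ coming from convergence to $V$, combined with the phase bound inherited from the approximants.
\end{itemize}

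The leading term is therefore $\kappa(y^2-r^2/3)$. Its coefficient is positive because $U_y>0$ forces $w_y\ge 0$ for $y>0$, and $\kappa\neq 0$ by a Hopf argument.

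The normalized phase $P/|Y|^2$ then tends to $\kappa$ on the spine. Sz\'ekelyhidi's barriers, used in the form of the trapping \eqref{eq:signed-barrier-trapping} with constant $q$ on bands of size $|Y|$, transfer this to the axis. This gives $c_H a_\infty(Y)^3\sim\kappa Y^2$, i.e.\ $a_\infty\sim\kappa_H|Y|^{2/3}$. The sector bounds on $f_\infty$ follow from the same expansion and interior Schauder estimates, as in Proposition~\ref{prop:height-derivative-analysis}.

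\emph{Stage 2: axial derivatives, the main obstacle.} Fix $y_0$ with $|y_0|\ge Y_*$, set $\beta=a_\infty(y_0)$, and work on the band $|y-y_0|\le c|y_0|$. By Stage 1 and the barriers, $u$ is $C\beta^3|y_0|^{-2}$-close (in phase) to leaves whose scales are comparable to $\beta$ throughout the band.

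Let $\phi=\beta h(t/\beta)-t h'(t/\beta)$ be the positive dilation Jacobi field of the leaf. The quotient $\psi=u_y/\phi$ then satisfies a divergence-form equation with weight $\phi^2 t^3$. After removing the $t^3$ by the rotational symmetry in $\R^4_x$, the principal coefficients are comparable to $\omega_\beta$. The lower-order residual potential is controlled by the leaf approximation.

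Since $\omega_\beta\in A_2$ with constant independent of $\beta$, the Fabes--Kenig--Serapioni Harnack and H\"older estimates give uniform interior bounds at scale $|y_0|$. Iterating on $\partial_y^j u$, which solve the same equation with lower-order forcing, yields $|\partial_y^j u|\lesssim \phi\,|y_0|^{-j}\sup|u_y/\phi|$. Evaluated on the axis, this gives $|a_\infty^{(j)}|\le C_ja_\infty|Y|^{-j}$.

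The delicate points are:
\begin{itemize}
\item verifying uniform ellipticity relative to $\omega_\beta$ across the whole transition $t\sim\beta$ to $t\sim|y_0|$;
\item controlling the residual potential, so that it is small in the weighted sense and the weighted Harnack inequality applies;
\item starting the iteration, where $\sup|u_y/\phi|\lesssim \beta|y_0|^{-1}/\beta$ comes from the phase growth of Stage 1.
\end{itemize}

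\emph{Stage 3: the remainder $e_\infty$.} Write $e=u-v$ with $v=a h(t/a)$. Subtracting the equations gives $L e=-Q(v)+N(e)$, where $L$ is the Jacobi operator of the modulated leaf. By \eqref{eq:end-modulated-defect} and Stage 2, the forcing is bounded by $a^3\cdot a^{-2}|Y|^{-2}(1+t/a)^{-2}$, with all $y$-derivatives gaining $|Y|^{-1}$.

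Since $e(0,y)=0=e_t(0,y)$, I would solve radially using the variation-of-constants inverse for the linearized leaf ODE (Lemma~\ref{lem:linearized-hs-inverse}). This gives $|e|\lesssim a^3|Y|^{-2}$ uniformly, with each $t$-derivative gaining $(a+t)^{-1}$ by scaled Schauder estimates on balls of radius comparable to $a+t$. Axial derivatives are handled by differentiating the equation and using Stage 2; the nonlinearity $N(e)$ is absorbed by smallness for $|Y|\ge Y_*$.
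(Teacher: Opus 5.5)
There is a genuine gap in Stage~1, at the point that carries the content of the proposition: the strict positivity of the quadratic coefficient, which gives $\kappa_H>0$ and the scale $\beta^3\approx|y_0|^2$ used afterwards.

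Your list of possible rates omits the decaying exterior modes $\rho^{-\ell-2}$, that is, the scenario $P\to0$ at infinity, and nothing in your argument excludes it. The contact lemma cannot do this. Since $U>0$ off the origin, $M\cap\{z=0\}=\{0\}$, so at every scale the only contact of type (ii) is the origin itself. Lemma~\ref{lem:signed-coordinate-contact} then gives $w(0,0)=0$, which rules out a nonzero constant limit but is consistent with decay. The Hopf lemma does not help either: it gives $w_{yy}>0$ at the centre of a ball on which $w$ is harmonic, as in the paper's super-quadratic lemma, and says nothing about a coefficient at infinity.

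The idea you are missing is global. Suppose $A=0$.
\begin{enumerate}
\item Iterating \eqref{eq:end-nonconcentration} along $p\mapsto\frac43p-1$ gives bounded phase.
\item Lemma~\ref{lem:bounded-phase-transfer} upgrades this to $P\to b_\infty$ uniformly on the whole end, including the smoothing regions.
\item Since $P(0)=0$ and the $C_{4,3}$ tangent cone forces both signs of $P$ near $0$, $P|_M$ attains a negative minimum (if $b_\infty\ge0$) or a positive maximum (if $b_\infty\le0$) at a regular point.
\item There $M$ touches a leaf $H_\lambda\times\R$ from one side, contradicting the strong maximum principle.
\end{enumerate}

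More basically, your three-annulus argument has no control near the spine. Without Sz\'ekelyhidi's nonconcentration estimate, $\sup_{M\cap B_R}|P|$ is not bounded by $r^2f$ on regular sectors, and normalized blow-downs may converge to zero there. You therefore never obtain the a~priori bound $|P|\le CR^2$ near the axis that any barrier comparison needs as its starting point.

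The transfer to the axis also fails as written, because you take $q$ constant. In the sliding argument the barrier must stay ordered with respect to $M$ at the axial ends of the band, where near the spine only the crude global phase bound is known. A constant profile cannot exceed that bound at the ends while staying within $\delta$ of $Ay^2$ in the middle; moreover $Ay^2$ itself changes by a fixed factor across a band of size $|Y|$. The paper uses $q_\pm=(Ay^2\pm\delta)^{1/9}\pm B\chi$, with $\chi$ supported near the ends, together with the quantitative barrier phase error on $r<r_0=R^{-1/12}$. This yields $c_Ha_\infty^3=AY^2+O(|Y|^{2-1/6})$ and the band estimate of Lemma~\ref{lem:quadratic-phase-near-spine}. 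That estimate's $T^{11/6}$ error is what gives the oscillation bound behind the factor $(s/R)^{1/2}$ in the residual potential you need in Stage~2.

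Two smaller corrections in Stage~2:
\begin{itemize}
\item The conjugated weight is $u^3\phi^2\approx\beta^6\omega_\beta$. With $\phi^2$ alone the weight is not uniformly $A_2$.
\item The starting bound cannot come from Stage~1 as a supremum. Near the axis one only has $|u_y/\phi|\lesssim s^{-1}$ pointwise, and Stage~1 gives no control of $a_\infty'$. The bound $R^{-1}$ comes instead from the weighted $L^2$ norm, which is dominated by the region $t\approx R$, via the weighted local boundedness of Lemma~\ref{lem:direct-weighted-estimate}.
\end{itemize}
Stage~3 is essentially the paper's argument.
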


The first subsection proves the phase bound and the positivity
of its leading coefficient. The second assembles the comparison in the smoothing region,
direct derivative estimates, and radial remainder bound from
Appendix~\ref{app:connector-derivatives} to prove the proposition.

\subsection{The quadratic phase bound}
The cylindrical blow-down gives only subcubic phase growth.
We first improve this to growth of order $R^{2+\eta}$ for every
$\eta>0$, using compactness and the harmonic description of Jacobi
fields. Harmonic replacements on annuli then isolate the quadratic
mode, while nonconcentration controls the part of the surface near
the spine. We must also prove that its coefficient is positive:
if it vanished, iterating the improved growth estimate would give
bounded phase. Transferring that bound and its asymptotic value
through the smoothing regions would force an interior contact with a foliation
leaf, contradicting the connector's change of side near zero.
The positive quadratic mode is what determines the smoothing scale used
in the derivative estimates. Define:
\begin{align}
    D(R) = \sup_{M\cap B_R}|P|.
\end{align}
Here $P$ is the Hardt--Simon phase of Section~\ref{sec:connector};
it is independent of $y$ and homogeneous of degree three:
\begin{align}
    P(sx,sz,y) = s^3P(x,z,y)\qquad s>0.
\end{align}
Note that $D(R)>0$ for every $R>0$, since $M$ meets both sides of
$V$ near the origin. We aim to prove:
\begin{proposition}[Quadratic bound]\label{prop:quadratic-bound}
    For the connector $S(u)=M$ of Theorem~\ref{thm:main-connector}, we have:
    \begin{align}
        D(R) \leq CR^2,
    \end{align}
    for some $C>0$ and all $R\ge1$.
Moreover there is $A>0$ such that, on every fixed regular sector
$r\ge\theta\rho$, the normal height $f$ of $M$ over $V$ satisfies
\[
 |D^k(f-AJ_2)|\le C_{k,\theta}\rho^{-7/33-k}
                  \qquad(k\ge0)
\]
for sufficiently large $\rho$.
\end{proposition}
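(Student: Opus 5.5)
The proof has three stages. First we improve the growth of $D(R)$ to $R^{2+\eta}$. Then we identify a quadratic leading mode by harmonic replacement on dyadic annuli. Finally we show that the coefficient $A$ of this mode is positive. Throughout we work with the normalized profiles $u_R(X)=R^{-1}u(RX)$. These converge to $|x|$ locally uniformly, by Theorem~\ref{thm:main-connector}, and their phase is $P_R=R^{-3}P(R\,\cdot)$, so that $\sup_{B_1}|P_R|=R^{-3}D(R)$.

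\emph{Growth improvement.} Suppose $D(R_i)/R_i^{2+\eta}\to\infty$ along some sequence; we may choose the sequence so that $D(sR_i)\le 2s^{2+\eta}D(R_i)$ for $s\ge1$ (a standard doubling-type selection). Set $\eps_i=R_i^{-3}D(R_i)\to0$. Away from the spine, Sz\'ekelyhidi's leaf expansion and separation estimates (as in Proposition~\ref{prop:height-derivative-analysis}) express $S(u_{R_i})$ as a normal graph $f_i$ with $|D^kf_i|\le C\eps_i r^{-2-k}$ on $\{r\ge C\eps_i^{1/3}\}$. After normalization, $r^2f_i/\eps_i$ converges to $w$, and by the identity in the proof of Proposition~\ref{prop:harmonic-asymptotics} $w$ is harmonic in $\R^3_\xi\times\R_y$ off the axis. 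The bound $|P|\le K\eps$ makes $w$ bounded near the axis, so the axis is removable and $w$ is an entire harmonic function with polynomial growth $|w|\le C\rho^{2+\eta}$, rotational in $\xi$ and even in $y$. The delicate point is the normalization: we must show that $\sup|w|$ on the unit scale equals $1$. This requires nonconcentration of the phase near the spine, and we will use Sz\'ekelyhidi's nonconcentration estimate \cite{Sz} together with the signed barriers of Lemma~\ref{lem:signed-coordinate-contact}. Once the normalization holds, $w$ is a harmonic polynomial of degree at most $2$. Its symmetries force $w=\alpha+\beta(y^2-r^2/3)$. A degree-$\le2$ function cannot realize the assumed $R^{2+\eta}$ growth ratio across scales, so we reach a contradiction.

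\emph{Isolating the quadratic mode.} On each annulus $R\le\rho\le 2R$, harmonically replace $r^2f$ by its projection onto the span of $\{1,\,y^2-r^2/3\}$. The rescaled comparison, again combined with nonconcentration, shows that the remainder decays with a definite rate relative to $R^2$ from scale to scale, and an iteration gives the rate. The constant mode corresponds to $f\sim r^{-2}$ and decays. The $J_2$ coefficients $A_k$ at consecutive dyadic scales form a Cauchy sequence, with differences bounded by $CR^{-\gamma}$. The exponent $7/33$ should come from balancing the nonconcentration loss, a power of $\eps^{1/3}$ relative to $R$, against the spectral gap to the next homogeneity. The gap from degree $2$ to the next even invariant harmonic of degree $4$, i.e.\ from $f\sim\rho^0$ to $f\sim\rho^2$, is in the wrong direction, so the gain comes instead from the decaying modes. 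Interior Schauder estimates on regular sectors then give the $C^k$ bounds for $f-AJ_2$. Since the phase equals $\sim r^3f\sim\rho^2$ in magnitude, this also yields $D(R)\le CR^2$.

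\emph{Positivity of $A$.} This is the step I expect to be the main obstacle. Suppose $A\le0$. If $A=0$, the iteration implies that $r^2f$ tends to a constant $m$ with decay, so $P$ is bounded on $M$ by a constant $K$. Transferring the asymptotic value $m$ through the smoothing regions with the barriers of Lemma~\ref{lem:signed-coordinate-contact} (at scale $\eps=1$, rescaled) traps $M$ between the leaves $H_{m\pm\eta}\times\R$ near infinity. A maximum principle for $P$ then gives $\sup_M P\le\max(m,0)$ and $\inf_M P\ge\min(m,0)$. Since $M$ meets both sides, $P$ attains an interior extremum, and $M$ would touch a leaf $H_\lambda\times\R$, which is impossible. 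If $A<0$, the phase near $x=0$ becomes very negative at large $|y|$. But the axial points $(0,u(0,y),y)$ have $P=c_Hu(0,y)^3\ge0$, so item (i) of Lemma~\ref{lem:signed-coordinate-contact}, applied in the rescaled picture, forces $w(0,\pm1)=AJ_2$-value $\ge0$, contradicting $A<0$. Hence $A>0$.
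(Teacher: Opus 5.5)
\textbf{The growth-improvement stage has a genuine gap.} You select $R_i$ with $D(sR_i)\le 2s^{2+\eta}D(R_i)$ for all $s\ge1$, i.e.\ an approximate \emph{forward} maximum of $E(R)=D(R)/R^{2+\eta}$. This selection is impossible in exactly the situation you must exclude. If, say, $D(R)\approx R^{5/2}$, then $E$ increases to infinity and no radius is a forward near-maximum.

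Even granting the selection, the limit gives no contradiction. A quadratic $w=\alpha+\beta(y^2-r^2/3)$ with $\sup_{B_1}|w|\approx1$ satisfies $\sup_{B_s}|w|\le2s^{2+\eta}$ for all $s\ge1$. The hypothesis $E(R_i)\to\infty$ has been normalized away.

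What must survive in the limit is the \emph{backward} bound $D(sR_i)\le Cs^{2+\eta}D(R_i)$ for $s\le1$. The paper's record-index selection provides this. In the forward direction it only provides one outward step with exponent $3+\delta$, from $D(R)=o(R^3)$, which is enough to absorb the $C_0\gamma D(4R)$ term in the nonconcentration estimate. The backward bound gives $|w|\le C\rho^{2+\eta}$ near the origin. On a ball this alone is not contradictory, since a zonal degree-$4$ harmonic has all the symmetries and vanishes to order $4$.

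The paper closes the argument with axial monotonicity: $w_y\ge0$ for $y>0$, via $f_y=\sqrt2u_y/(1+u_t)$. Hopf's lemma then forces $w_{yy}(0)>0$ unless $w\equiv0$, contradicting the vanishing order. Alternatively, you can keep your Liouville idea. Take a record index $m$ of $E$, and let $R_i$ maximize $D(R)/R^{3+\delta}$ over $R\ge R_m$; this exists because the ratio tends to zero. Such radii are still records of $E$. The limit is then entire with growth $\rho^{3+\delta}$, hence by symmetry of the form $\alpha+\beta(y^2-r^2/3)$, and the inner vanishing kills it.

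\textbf{The later stages follow the paper in outline but are heuristic where it matters.} The outline matches: dyadic harmonic replacement, nonconcentration, bounded-phase transfer through the smoothing regions, and the maximum-principle contradiction when $A=0$.

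In a scale-by-scale projection onto $\{1,\,y^2-r^2/3\}$ you must explain why the growing degree-$4$ part of each replacement is negligible. At a single scale it is only bounded by $R^{2+\eta}$; it is excluded only by consistency across all larger scales. The paper achieves this in three steps. It patches the replacements into one function $W$ with $|\Delta W|\le C\rho^{\frac43p-3}$. It subtracts a Newton potential. It then expands the remaining exterior harmonic function.

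The exponent is explicit and involves no spectral gap. The replacement error is $d^{4/3}/R$ with $d\le CR^p$, $p=23/11$, so relative to $R^2$ one loses $R^{\frac43p-3}=R^{-7/33}$.

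Three smaller corrections:
\begin{itemize}
\item The phase satisfies $P\approx r^2f$, not $r^3f$. On regular sectors this controls the phase only away from the spine. Concluding $D(R)\le CR^2$ needs the inequality $D(R)\le CR^2+C_0\gamma D(4R)$ and its forward iteration.
\item For $A=0$, bounded phase likewise requires repeatedly improving the exponent via $p\mapsto\frac43p-1$ combined with nonconcentration.
\item Your argument for $A\ge0$ via the signed contact lemma, using the axial points with $w(0,1)=A$, is valid once $D(R)\le CR^2$ is known. It is a reasonable alternative to the paper's use of $f_y\ge0$ on regular sectors.
\end{itemize}
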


We first establish the following preliminary bound:
\begin{lemma}[Super quadratic bound]\label{prop:super-quadratic-growth}
    For any $\eta>0$, there is $C_\eta>0$ such that:
    \begin{align}\label{eq:preliminary-superquadratic-estimate}
        D(R) \leq C_\eta R^{2+\eta}\quad \text{for }R\geq 1.
    \end{align}
\end{lemma}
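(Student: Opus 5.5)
We argue by contradiction and compactness, in the spirit of Simon's growth-rate arguments for cylindrical cones. Fix $\eta>0$ and suppose there are $R_i\to\infty$ with $D(R_i)\ge iR_i^{2+\eta}$. Since $D$ is nondecreasing, we first replace $R_i$ by ``good scales'': choose $R_i$ (still $\to\infty$) such that $D(R_i)\ge R_i^{2+\eta}$ and $D(2^kR_i)\le 2^{(2+\eta/2)k}D(R_i)$ for all $k\ge 0$ with $2^kR_i$ in a large but fixed range (a standard doubling selection, possible because $D$ grows at most subcubically, $D(R)=o(R^3)$, by the blow-down to $V$). Set $\eps_i:=D(R_i)R_i^{-3}\to 0$, using the subcubic growth. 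The rescaled surfaces $M_i=R_i^{-1}M$ then satisfy $|P|\le\eps_i$ on $M_i\cap B_1$, $|P|\le C2^{(2+\eta/2)k}2^{-3k}\cdot 2^{3k}\eps_i$ on $B_{2^k}$, and they converge to $V$ with multiplicity one.

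Next we extract a Jacobi field. On regular compact subsets of $V$, the phase bound gives normal graphs $f_i$, and Proposition~\ref{prop:height-derivative-analysis}-type interior estimates (Sz\'ekelyhidi's separation \cite[Prop.~3.3, Lemma~5.1]{Sz} plus rescaled Schauder) give $|D^kf_i|\le C_k\eps_i r^{-2-k}$, locally uniformly in every ball $B_{2^k}$. A subsequence of $f_i/\eps_i$ converges to an $O(4)\times O(4)$-invariant Jacobi field $f$ on $V$, and as in Proposition~\ref{prop:harmonic-asymptotics} its conjugate $w=r^2f$ is harmonic in $\R^3_\xi\times\R_y$ away from the auxiliary axis, bounded on compact sets. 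The bound $|w|\le C$ near the axis (from $|P|\le C\eps_i$) removes the axis singularity, so $w$ is an entire axially symmetric harmonic function, even in $y$, with $|w|\le C\rho^{2+\eta/2}$. Liouville then forces $w$ to be a harmonic polynomial of degree at most $2$, hence $w=\alpha+\beta(y^2-r^2/3)$.

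The contradiction has to come from normalization at scale $1$ together with nonconcentration. We need $\sup_{B_1\cap M_i}|P|/\eps_i=1$ to pass to the limit as $\sup_{B_1}|w|$ comparable to $1$; this is exactly where Sz\'ekelyhidi's barriers and nonconcentration \cite[Props.~5.8, 5.10]{Sz} (as in Lemma~\ref{lem:signed-coordinate-contact}) enter, since they prevent the phase from concentrating near the spine while being small on the regular part. With this, $w\not\equiv 0$. To contradict, we compare two scales. The same normalization at scale $2^k$ yields $\sup_{B_{2^k}}|w|\ge c\,D(2^kR_i)/D(R_i)$, while a degree $\le2$ polynomial grows at most like $4^k$. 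So it suffices to have arranged the good scales with $D(2^kR_i)\ge 2^{(2+\eta/2)k}D(R_i)$ at some fixed $k$. We therefore choose $R_i$ instead as scales where the growth ratio from $R_i$ to $2^kR_i$ exceeds $2^{(2+\eta/2)k}$ while the ratios above are controlled by the subcubic bound; such scales exist if \eqref{eq:preliminary-superquadratic-estimate} fails for every $C$. The resulting lower growth contradicts the quadratic polynomial form. If the degree-two mode happens to vanish, the same estimate only improves.

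The main obstacle is the nonconcentration step: transferring the normalization $\sup|P|=\eps_i$, which may be attained at points near the spine at the smoothing scale $\eps_i^{1/3}$, into a nontrivial bound on the limit $w$ at $r=0$. We would attempt this using the signed barriers of Lemma~\ref{lem:signed-coordinate-contact}. If $|w|<\delta$ on $\{r\le r_0\}$, the trapping \eqref{eq:signed-barrier-trapping} built from $q^9\approx\pm 2\delta$ confines $P/\eps_i$ near the spine to $[-3\delta,3\delta]$ for large $i$. Hence the sup of $|P|/\eps_i$ over $B_1$ is controlled by $\sup|w|$ up to $o(1)$, and this is the needed comparison.
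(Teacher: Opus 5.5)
\textbf{There is a genuine gap: the selection of scales.}

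Your first selection, $D(2^kR_i)\le 2^{(2+\eta/2)k}D(R_i)$ over a range of $k$, cannot be extracted from $D(R)=o(R^3)$. If $D(R)\approx R^{3-\sigma}$ with $0<\sigma<1-\eta/2$, no scale has this property. Subcubic growth only ever gives outward control at rate $3$, namely at future maxima of $D(R)/R^3$ (scales $R$ with $D(R')/R'^3\le D(R)/R^3$ for all $R'\ge R$). Moreover, a fixed range of $k$ gives $w$ only on a bounded ball, so Liouville would not apply.

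Your revised selection is future maxima of $D/R^3$ at which $D(2^kR_i)\ge 2^{(2+\eta/2)k}D(R_i)$ for one fixed $k$. You assert this, but it does not follow from the failure of the estimate. Take $k_m=m(K+1)$ and the nondecreasing step function $D=8^{k_m}/m$ on $[2^{k_m},2^{k_{m+1}})$. It satisfies $D=o(R^3)$ and $D(2^{k_m})/2^{(2+\eta)k_m}\to\infty$ for $\eta<1$. Yet at every future maximum of $D/R^3$ it is constant over the next $K$ dyadic scales.

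Correspondingly, a nonzero constant limit $w\equiv\alpha$ (a nearby leaf $H_{\alpha\eps}\times\R$) is compatible with everything you use. What excludes it is information from scales below $R_i$: the limit must vanish at the origin. Your proposal never uses such information.

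There is also a secondary issue. The lower bound $\sup_{B_{2^k}}|w|\ge c\,D(2^kR_i)/D(R_i)$ cannot come from \cite[Proposition~5.6]{Sz} with a fixed $\gamma$, since you have no doubling bound at $2^kR_i$. Your barrier transfer would be needed there.

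\textbf{The missing idea is inward decay.} The paper selects records from below: indices where $E_k=R_k^{-(2+\eta)}D(R_k)$ dominates all earlier values. It then moves to the first later index with $D(16R)\le16^{3+\eta}D(R)$, which is again a record. This gives $D(sR_j)\le Cs^{2+\eta}D(R_j)$, hence a limit $w$ on $B_8$ with $|w|\le C\rho^{2+\eta}$. Nonconcentration at the single scale $R_j$ makes $w$ nontrivial.

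Since $w$ is only local, the paper then uses axial monotonicity. We have $w_y\ge0$ on $\{y>0\}$ and $w_y=0$ on $\{y=0\}$, so Hopf gives $w_{yy}(0)>0$ unless $w_y\equiv0$. In the latter case $w=c_1+c_2/r\equiv0$. Both alternatives contradict the vanishing order.

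Your Liouville route can be repaired by combining the two selections. Let $k$ be such a record, and let $k''\ge k$ maximize $D(R_{k'})/R_{k'}^3$ over $k'\ge k$; the maximum is attained since this quantity tends to zero. For $\eta<1$, $k''$ is still a record of $E$ and is also a future maximum of $D/R^3$. The limit is then entire, $O(\rho^3)$, $O(3)$-invariant and even, hence equal to $\alpha+\beta(y^2-r^2/3)$. Vanishing to order $2+\eta$ at the origin forces $w\equiv0$, contradicting nontriviality. This variant does not even need axial monotonicity.
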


\begin{proof}

   For the proof we use the nonconcentration estimates of
\cite[Proposition~5.6]{Sz} and a linearization argument, which
produces Jacobi fields in the limit. 

\medskip

\noindent\textbf{Step 1: contradiction sequence.} In the notation of
\cite[Definition~5.2(a)]{Sz}, for a fixed value of its parameter
$\beta>0$, the distance $D_{\beta,V_0}(M;B_R)$ of $M$ from $V$ in
$B_R$ is comparable to $D(R)$, the maximum of the Hardt--Simon phase.
Since $R^{-1}M\to V$ as $R\to\infty$, we have
$R^{-3}D(R)=\sup_{R^{-1}M\cap B_1}|P|\to0$. It suffices to prove the
lemma for $0<\eta<1$, and since $D$ is nondecreasing it suffices to
consider the radii $R_k=16^k$, $k\ge0$. Put
$E_k=R_k^{-(2+\eta)}D(R_k)$, and suppose that $\sup_kE_k=\infty$. We
claim that there are indices $k_1<k_2<\cdots$ such that

    \begin{align}\label{eq:record-indices}
        \begin{cases}E_{k_j} = R_{k_j}^{-(2+\eta)}D(R_{k_j})\to\infty,\\  D(16R_{k_j}) \leq 16^{3+\delta} D(R_{k_j}),&\text{for all } j\in \mathbb{N}\\
        E_{k_j}\geq E_k,& \text{for all }k\leq k_j.
        \end{cases}
    \end{align}
To see this, call $k$ a \emph{record index} if $E_k\ge E_{k'}$ for all $k'\le k$.
Since $(E_k)$ is unbounded, there are infinitely many record indices,
and $E_k\to\infty$ along them. If $k$ is a record index at which the
second condition fails, then
\[
 E_{k+1}=16^{-(2+\eta)}R_k^{-(2+\eta)}D(16R_k)>16^{1}E_k,
\]
so that $k+1$ is again a record index. The second condition cannot
fail at all the indices $k,k+1,\dots,k+\ell$ for every $\ell$:
otherwise $D(16^\ell R_k)>16^{(3+\eta)\ell}D(R_k)$ for every $\ell$,
which, since $D(R_k)>0$, contradicts $R^{-3}D(R)\to0$. Replacing each
record index by the first subsequent index at which the second
condition holds, which is again a record index with a larger value of
$E$, and passing to a subsequence, we obtain the claim. 

The first and third conditions now give
\begin{equation}\label{eq:end-record-inner}
 D(sR_{k_j})\le16^{2+\eta}s^{2+\eta}D(R_{k_j})
 \qquad\text{for }0<s\le1\text{ with }sR_{k_j}\ge1.
\end{equation}
Indeed, if $k\le k_j$ is the least index with $sR_{k_j}\le R_k$, then
$R_k\le16sR_{k_j}$ and
$D(sR_{k_j})\le D(R_k)=E_kR_k^{2+\eta}\le E_{k_j}R_k^{2+\eta}
=(R_k/R_{k_j})^{2+\eta}D(R_{k_j})$.
Relabel the selected radii as $R_j$. 

\medskip

\noindent\textbf{Step 2: Jacobi field.} Now let $f_j$ be the normal height of $R_j^{-1}M$, and set
\[
 \eps_j=\frac{D(R_j)}{R_j^3},\qquad
 v_j=\eps_j^{-1}f_j.
\]
By the second condition in \eqref{eq:record-indices},
$|P|\le16^{3+\eta}\eps_j$ on $R_j^{-1}M\cap B_{16}$. The outer bound and interior graph estimates yield
\[
 v_j\longrightarrow v\quad\text{in }C^\infty_{\mathrm{loc}}
      (\operatorname{Reg}\Cyl\cap B_8),\qquad
 L_{\Cyl}v=0,
\]
where
\[
 \begin{aligned}
 L_{\Cyl}
 &=\partial_r^2+\frac6r\partial_r+\partial_y^2+\frac6{r^2},\\
 L_{\Cyl}(r^{-2}w)
 &=r^{-2}\left(w_{rr}+\frac2r w_r+w_{yy}\right)
 =r^{-2}\Delta_{\R^3_\xi\times\R_y}w.
 \end{aligned}
\]
Thus $w=r^2v$ is harmonic off the auxiliary axis and bounded near
it. 

Nonconcentration prevents the normalized limit $w$ from vanishing.
Indeed, the nonconcentration estimate \cite[Proposition~5.6]{Sz},
with its parameter $\delta$ equal to zero, together with the maximum
principle for $|P|$, gives for every $\gamma>0$
\[
 \Delta w=0\text{ in }B_8,\qquad
 |w(X)|\le C|X|^{2+\eta}\text{ near }0.
\]
We aim to show that non-concentration prevents the normalized limit from vanishing. Notice the the non-concentration estimates of \cite[Proposition 5.6]{Sz} with the choice $\delta=0$ and the maximum principle for $|P|$ asserts that for every $\gamma>0$ we have:
\begin{equation}\label{eq:end-nonconcentration}
 D(R)\le C_\gamma
 \sup_{\substack{M\text{ represented over }\Cyl\\
                   R/2<\rho<2R,\ r>\theta_\gamma R}}
       |r^2 f|+C_0\gamma D(4R),
\end{equation}
where $f$ is the normal graph of $M$ over $C_{3,3}\times \R = V$. Here $R$ is sufficiently large, $\theta_\gamma>0$ and the threshold
may depend on $\gamma$, and $C_0$ is independent of $\gamma$. Choose $\gamma>0$ so that
\[
C_0\gamma\,16^{3+\eta}<\frac12.
\]
Apply \eqref{eq:end-nonconcentration} at $R_j$ and divide by
$D(R_j)$. Since $D(4R_j)\le D(16R_j)$, this with the second condition in \eqref{eq:record-indices} gives
\[
1\le C_\gamma
 \sup_{\substack{1/2<\rho<2\\r>\theta_\gamma}}|r^2v_j|
 +\frac12,
\qquad
\sup |r^2v|\ge(2C_\gamma)^{-1}>0.
\]

Finally, $w_y\ge0$ for $y>0$. To check this in normal coordinates,
write $t=(r-f)/\sqrt2$ and $u(t,y)=(r+f)/\sqrt2$. Then
\[
 f_y=\frac{\sqrt2u_y}{1+u_t}\ge0
\]
where the normal graph is small; the inequality passes to the limit.

\medskip

\noindent\textbf{Step 3: final contradiction.} Evenness and the strong maximum principle now give the alternatives
\[
 \begin{aligned}
 w_y\not\equiv0
 &\Longrightarrow w_y>0\text{ in }B_8\cap\{y>0\},\
                   w_y=0\text{ on }\{y=0\}\\
 &\Longrightarrow w_{yy}(0)>0
 \quad\text{(Hopf lemma)},\\
 w_y\equiv0
 &\Longrightarrow w=c_1+c_2/r
 \Longrightarrow w\equiv0
 \quad\text{(boundedness and }w(0)=0\text{)}.
 \end{aligned}
\]
The first contradicts $w=O(\rho^{2+\eta})$; the second contradicts
nontriviality and \eqref{eq:preliminary-superquadratic-estimate} follows.
\end{proof}

We next improve this bound to a quadratic one by linearization.
The main difficulty is that one cannot linearize near the spine; the
missing ingredient is the nonconcentration estimate
\cite[Proposition~5.6]{Sz}, which allows the linearization to go
through. We also need the following transfer lemma, which relies on
the barriers recalled in the proof of
Lemma~\ref{lem:signed-coordinate-contact}.

\begin{lemma}[Uniform phase convergence up to the spine]
\label{lem:bounded-phase-transfer}
Let $M$ be the connector and $P$ its signed Hardt--Simon phase.
Assume that $|P|\le K$ on $M\setminus B_{R_0}$ for some
$K,R_0>0$. Suppose there is a constant $b\in\R$ such that,
for every fixed $0<\theta<1$,
\[
 \lim_{R\to\infty}
 \sup_{M\cap\{\rho\ge R,\ r\ge\theta\rho\}}|P-b|=0.
\]
Then
\[
 \lim_{R\to\infty}\sup_{M\cap\{\rho\ge R\}}|P-b|=0.
\]
Thus the same convergence holds uniformly without any restriction
on the distance to the spine.
\end{lemma}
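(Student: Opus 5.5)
We argue by contradiction with the signed barriers $X_q$ of \cite[Propositions~5.8 and~5.10]{Sz}, exactly as in the proof of Lemma~\ref{lem:signed-coordinate-contact}, but now at large scales with $\eps=1$ after rescaling. Suppose there are $\eta>0$ and points $p_k\in M$ with $\rho(p_k)=R_k\to\infty$ and $P(p_k)\ge b+\eta$; the case $P(p_k)\le b-\eta$ is symmetric, obtained by exchanging $x$ and $z$, which reverses the phase and the comparison direction. By hypothesis the points must satisfy $r(p_k)/R_k\to0$, so they approach the spine after rescaling by $R_k^{-1}$. Rescale: $M_k=R_k^{-1}M$ has phase $R_k^{-3}P$, so with $\eps_k=R_k^{-3}$ we get $|P|\le K\eps_k$ on $M_k$ outside $B_{R_0/R_k}$, and $P/\eps_k\to b$ uniformly on every regular sector $\{r\ge\theta\rho\}$ of the annulus $\{1/2\le\rho\le2\}$. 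The rescaled points $\hat p_k=R_k^{-1}p_k$ lie on the unit sphere, near the spine, and (after a subsequence) converge to $(0,0,\pm1)$; by evenness in $y$ take $(0,0,1)$.

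Around $y_*=1$ choose an axial interval $I_0\Subset(1/2,2)$ and a cutoff $\psi$ as in that proof, and a family $q_s=A-s\psi$ with $A^9>K+\eta$, run from $s=0$ until $q_s^9=b+\eta/2$ near $y_*$ (for the upper bound on $P$, we use barriers from above; i.e.\ family with $q_s^9$ decreasing from a large value to $b+\eta/2$, with $M_k$ on the negative side). Fix $\eta'=\eta/8$ in the trapping estimate \eqref{eq:signed-barrier-trapping} and obtain one lateral radius $r_0$ for the whole family. On the lateral boundary $r=r_0$ of the comparison cylinder we are in a regular sector, so $P|_{M_k}\le\eps_k(b+\eta/8)<\eps_k(q_s^9-\eta')$ once $q_s^9\ge b+\eta/2$; on the axial ends and at $s=0$ the bound $P\le K\eps_k<\eps_k(A^9-\eta')$ holds. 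The first-contact argument, using that $X_{q_s}$ cannot be touched from its negative side (including at spine points when $q=0$), keeps $M_k\cap\{r<r_0,\ y\in I_0\}$ on the negative side of $X_{q_s}$ for all $s$. At the final parameter this gives $P\le\eps_k(b+\eta/2+\eta')<\eps_k(b+\eta)$ in the region near $y_*$, contradicting $P(\hat p_k)\ge\eps_k(b+\eta)$ for large $k$. The case of a lower bound is identical with the factors exchanged.

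The main obstacle is verifying the barrier hypotheses uniformly: one needs the family $q_s$ to be uniformly $C^2$-bounded and of constant sign-type where needed, including when $b\le0$ so that $q_s$ must cross zero (handled by the cited inclusion of spine contacts at $q=0$), and one needs the rescaled surfaces $M_k$ to be of the admissible class (stationary, $O(4)\times O(4)$-invariant graphs $S(u)$ with $|P|\le K\eps_k$ in the comparison region), which holds since the region avoids $B_{R_0/R_k}$. A secondary point is choosing $I_0$ compactly inside where the sector convergence is valid; this is automatic since the sector hypothesis holds for every $\theta$ and the cylinder $\{r\le r_0\}$ with $y\in I_0$ has lateral boundary in a fixed sector.
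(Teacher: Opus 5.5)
Your argument is correct and is essentially the paper's proof. It slides the same Sz\'ekelyhidi barrier families inside the rescaled slab around unit height, controls the lateral boundary by the regular-sector convergence at a radius $r_0$ fixed before the scale, and controls the axial ends by the global bound $|P|\le K$. The differences are only organizational: you argue by contradiction with one-sided barriers (exchanging $x$ and $z$ for the lower bound) and rescale at $\rho(p_k)$, whereas the paper slides the upper and lower barriers simultaneously and rescales at $R=y$.
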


\begin{proof}
Fix $0<\delta<1$, $I=(2/3,4/3)$, and $I'=[3/4,5/4]$.
Choose $\chi\in C^\infty(I;[0,1])$ equal to zero on $I'$ and one
near both ends of $I$. With real ninth roots of the constants,
put
\[
 q_+=(b+\delta)^{1/9}+B\chi,\qquad
 q_-=(b-\delta)^{1/9}-B\chi.
\]
Choose $B$ so large that $q_+^9>K+2$ and $q_-^9<-K-2$ near
the ends. Choose $s_0$ so large that
$(q_++s_0)^9>K+2$ and $(q_--s_0)^9<-K-2$ throughout $I$.
These choices give uniformly $C^2$-bounded signed families for
$0\le s\le s_0$; they do not depend on the dilation below.

Set $M_R=R^{-1}M$ and $\eps=R^{-3}$. Cubic homogeneity gives
$|P|\le K\eps$ on $M_R$ in this slab for large $R$.
Let $X_\eps(q)$ denote the negative-side comparison barrier in
the proof of Lemma~\ref{lem:signed-coordinate-contact}, and let
$\mathcal S(x,z,y)=(z,x,y)$. Use the upper and lower barriers
\[
 X_\eps(q_++s),\qquad
 \mathcal S X_\eps(-q_-+s),\qquad 0\le s\le s_0.
\]
The sign in the second expression is necessary: after exchanging
the factors, its phase is within $\eps\delta/4$ of
$\eps(q_--s)^9$, and it excludes contact from its positive side.
By \eqref{eq:signed-barrier-trapping}, first choose a common fixed
lateral radius $r_0>0$ giving phase error at most $\eps\delta/4$
for both families. Only then take $R$ large enough that the barriers
exist and
\[
 |P/\eps-b|<\delta/4\quad
       \text{on }M_R\cap\{r=r_0,\ y\in I\}.
\]
The latter follows from the assumed regular-sector convergence,
since $r_0$ is fixed and the rescaled axial coordinate stays away
from zero.

At $s=s_0$, phase trapping puts the whole surface between the
barriers. The phase bound preserves strict ordering at the axial
ends for every $s$. At the lateral boundary, the upper barrier has
phase at least $\eps(b+3\delta/4)$ and the lower at most
$\eps(b-3\delta/4)$, whereas the surface has phase within
$\eps\delta/4$ of $b\eps$. Slide $s$ down to zero. Boundary
contact is excluded by these inequalities; interior contact,
including at a barrier spine point, is excluded by
\cite[Proposition~5.8]{Sz}. At $s=0$ and $y\in I'$, trapping
between the two barriers gives
\[
 |P/\eps-b|\le5\delta/4\quad\text{on }M_R\cap\{r<r_0,\ y\in I'\}.
\]
For any physical point with large $y>0$ and $r<r_0y$, apply this
estimate with $R=y$; its rescaled axial coordinate is one.
Reflection handles $y<0$. The complement lies in a fixed regular
angular sector, where the original convergence applies.
Letting $\delta\downarrow0$ proves uniform convergence on the end.
This argument uses neither a positive quadratic coefficient nor a
positive lower bound for the phase.
\end{proof}

We can now prove the quadratic decay. The combination of growth bounds and interior elliptic estimates used in the proof is also familiar from optimal regularity, see for instance \cite[\S4, Lemma~4.2 and Theorem~4.1]{FigalliObstacle}.

\begin{proof}[Proof of Proposition~\ref{prop:quadratic-bound}]
    The idea is to replace the regular normal graph by harmonic functions on
full auxiliary annuli and control the error caused by the removed tube. 
Their expansion identifies the quadratic term. If its coefficient
vanished, iteration of the replacement and nonconcentration estimates
would give bounded phase. Lemma~\ref{lem:bounded-phase-transfer} would
then give constant phase at infinity, contradicting the two-sided
geometry of the connector by the strong maximum principle.

    \medskip
   \noindent\textbf{Step 1: harmonic replacement in annuli.} Let $f$ be the normal height of $M$ over $V$ on the regular graph region in $\{\rho<8R\}$, and define
    \begin{align}
        d = D(8R),\qquad \eps_R = d/R^3,\qquad f_R(X) = R^{-1}f(RX),
    \end{align}
    Then $f_R$ is the normal height of $R^{-1}M$ over $V$, and $|P|\le\eps_R$ on $R^{-1}M\cap B_8$. The proof of Proposition~\ref{prop:height-derivative-analysis}, which uses only this phase bound and the Lipschitz bound, shows that in $B_5$ the function $f_R$ is defined on $\{r>\sigma\}$, $\sigma=C_*\eps_R^{1/3}$, for a fixed large $C_*>0$, and satisfies $|D^kf_R|\le C_k\eps_Rr^{-2-k}$ there. We set
    \begin{align}
        w_R = r^2f_{R}/\eps_R.
    \end{align}
    These estimates, together with the quadratic structure of the
nonlinear terms in the minimal surface equation for normal graphs
over $V$, give
    \begin{align}
    \begin{cases}
        w_R:B_{5}\setminus \{r \leq C_*\eps_R^{1/3}\}\to \R,\\
        w_R(x,y) = w_R(r,y)\\
        |w_R|\leq C,\\
        |\Delta_{\xi,y} w_R | \leq C\eps_R r^{-5}.
    \end{cases}
    \end{align}
Regard $w_R$ as an $O(3)$-invariant function in the auxiliary
space $\R^3_\xi\times\R_y$. Put $\mathcal A=B_4\setminus\overline B_{1/4}$.
Prescribe $g_R=w_R$ on $\partial\mathcal A\cap\{r>\sigma\}$ and
extend it to a bounded continuous function over the remaining spherical polar neighborhoods, preserving
the symmetries. Let $H_R$ be its harmonic extension to the full
annulus $\mathcal A$; then $|H_R|\le C$.

 \medskip
   \noindent\textbf{Step 2: error estimates.} We now construct harmonic replacements on dyadic annuli and compare
them on their overlaps. This uses the approximation principle
underlying harmonic-replacement proofs of Schauder estimates;
see, for example, \cite[\S1]{WangSchauder}. Here we must also
control the error across the removed tube before patching the
replacements together.

On the region $r>\sigma$, define
\[
 \mathcal B(r)=C\frac{\sigma}{r}
 +C\eps_R\left(\frac{\sigma^{-2}}r-\frac1{r^3}\right).
\]
Fixing $\sigma = C_*\eps_R^{1/3}$, the boundary values and Laplacian satisfy
\[
 \begin{aligned}
 \mathcal B&\ge0,\qquad
 \mathcal B(\sigma)=C\ge|w_R-H_R|_{\{r=\sigma\}},\\
 (w_R-H_R)|_{\{\rho=1/4,4\}\cap\{r>\sigma\}}&=0,\\
 \Delta(r^{-1})&=0,\qquad
 \Delta(r^{-3})=6r^{-5},\\
 -\Delta\mathcal B&=6C\eps_R r^{-5}\ge|\Delta(w_R-H_R)| = |\Delta(H_R-w_R)|.
 \end{aligned}
\]
The maximum principle in $\mathcal A\cap\{r>\sigma\}$ therefore gives
\begin{equation}\label{eq:end-replacement-bound}
 |w_R-H_R|\le\mathcal B
 \le C\frac{\sigma+\eps_R\sigma^{-2}}r
 \le C\frac{\eps_R^{1/3}}r.
\end{equation}
Interior estimates for the Poisson equation give the corresponding
scaled derivative bounds away from $\{r=\sigma\}$. We now perform this
construction at dyadic scales and patch the results together. Let
$R_j=2^jR_0$, with $R_0>1$ large, let $H^{(j)}$ be the harmonic
replacement constructed above at scale $R=R_j$, and put
$H_j(X)=d_jH^{(j)}(X/R_j)$, where $d_j=D(8R_j)$; thus $H_j$
approximates $r^2f$ at scale $R_j$. Let $\{\chi_j\}_{j\ge1}$ be radial
functions with $\operatorname{supp}\chi_j\subset B_{2R_j}\setminus
B_{R_j/2}$, $|D^k\chi_j|\le C_kR_j^{-k}$ and $\sum_j\chi_j=1$ on
$\{\rho\ge R_1\}$, and define
\begin{align}
    W = \sum_{j=1}^\infty \chi_j H_j.
\end{align}
For the rescaled functions, on a slightly smaller auxiliary
annulus, the replacement bound gives
\[
 \begin{aligned}
 \|w_{R_j}-H^{(j)}\|_{L^2(\mathcal A\cap\{r>\sigma_{R_j}\})}^2
 &\le C\varepsilon_{R_j}^{2/3}\int_{\sigma_{R_j}}^4 r^{-2}r^2\,dr
 \le C\varepsilon_{R_j}^{2/3},\\
 |\mathcal A\cap\{r<\sigma_{R_j}\}|^{1/2}&\le C\sigma_{R_j}^{3/2}
 =C\varepsilon_{R_j}^{1/2}\le C\varepsilon_{R_j}^{1/3}.
 \end{aligned}
\]
The $L^2$ norms here are taken on fixed rescaled annuli. Taking
the square root in the first estimate gives an error of order
$\varepsilon_{R_j}^{1/3}$ outside the tube, whereas the omitted
tube contributes only $\varepsilon_{R_j}^{1/2}$. Since
$\varepsilon_{R_j}\to0$, the first contribution dominates.
Restoring the physical amplitude $d_j$, these two contributions are
\[
 d_j\varepsilon_{R_j}^{1/3}=\frac{d_j^{4/3}}{R_j},
 \qquad
 d_j\varepsilon_{R_j}^{1/2}
 =\left(\frac{d_j}{R_j}\right)^{3/2}.
\]
Thus the overlap estimate must retain the larger first term.

For adjacent indices $j$ and $j+1$, the function $H_j-H_{j+1}$ is
harmonic on the overlap of the corresponding annuli. Combining the
first bound in the preceding display, the volume bound together with
$|H_j|+|H_{j+1}|\le C(d_j+d_{j+1})$, and interior estimates for
harmonic functions on $\R^3_\xi\times\R_y$, we obtain
\begin{align}
    \|H_j - H_{j+1}\|_{C^{k}_{\mathrm{scaled}}(B_{R_{j+1}}\setminus B_{R_j})} \leq C_k \frac{d_j^{4/3}}{R_j} + C_k\frac{d_{j+1}^{4/3}}{R_{j+1}},
\end{align}
where $\|g\|_{C^{k}_{\mathrm{scaled}}} := \sum_{i=0}^{k} R^i\|D^ig\|_\infty$. By Lemma~\ref{prop:super-quadratic-growth}, $d_j\le C_\eta R_j^p$ with $p=2+\eta$, and hence
\begin{align}
    \|H_j - H_{j+1}\|_{C^{k}_{\mathrm{scaled}}(B_{R_{j+1}}\setminus B_{R_j})}\leq C_{k,\eta} R_j^{\frac43p-1}. 
\end{align}
Accordingly, the exponent used below is $p'=\frac43p-1$.
Since $\sum_jD\chi_j=0$ and $\sum_j\Delta\chi_j=0$ on $\{\rho\ge R_1\}$, on each overlap, subtracting a fixed
neighbor $H_*$ gives
\[
 \Delta_{\xi,y} W
 =\sum_j\left((\Delta\chi_j)(H_j-H_*)
                  +2D\chi_j\cdot D(H_j-H_*)\right).
\]
Thus
\begin{equation}\label{eq:end-patched-poisson}
 |W|\le C\rho^p,\qquad
 |D^k\Delta_{\xi,y} W|\le C_k\rho^{\frac43p-3-k}.
\end{equation}
On each fixed regular angular sector $\{r\ge\theta\rho,\ \rho\approx R\}$, the replacement estimate and interior elliptic estimates give:
\begin{align}\label{eq:diff-with-normal-graph}
    \|W-r^2f\|_{C^{k}_{\mathrm{scaled}}} \leq C_{k,\theta,p} R^{\frac43p-1}.
\end{align}

 \medskip
   \noindent\textbf{Step 3: leading order of the harmonic parto fo $W$.} 
 We next subtract a particular solution of the Poisson equation
and expand the remaining harmonic function at infinity.
For the general method of modified Newton potentials and
asymptotic harmonic expansions used here, see
\cite[Theorem~1.7 and Theorem~1.17 with Remark~1]{Bartnik}.
We give the estimates needed here explicitly. For this we just integrate the Newtonian potential as follows. We will use \eqref{eq:end-patched-poisson} with $-2<p'<2$ and
$p'\ne0$. Extend its source smoothly across a fixed compact set,
preserving $O(3)$ invariance and reflection in $y$. If
$p'>0$, solve it with the modified Newton potential
\[
 v(X)=\int_{\R^4}
 \bigl[\Gamma(X-Y)-\chi(Y)\{\Gamma(-Y)+D\Gamma(-Y)\cdot X\}\bigr]
 (\Delta W)(Y)\,dY,
\]
where $\Gamma(X)=-(4\pi^2|X|^2)^{-1}$ is the fundamental solution of the Laplacian on $\R^4$, and$\chi$ is radial, zero near zero and one near infinity. The
kernel at $|Y|>2|X|$ satisfies
\[
 \begin{aligned}
 |\Gamma(X-Y)-\Gamma(-Y)-D\Gamma(-Y)\cdot X|
 &\le C|X|^2|Y|^{-4},\\
 \int_{2|X|}^\infty |X|^2s^{p'-3}\,ds
 &\le C|X|^{p'}\qquad(p'<2).
 \end{aligned}
\]
On every centered truncated ball the linear subtraction integrates to
zero by symmetry. Splitting the remaining integral into
$|Y|<|X|/2$ and $|X|/2<|Y|<2|X|$ gives
\[
 \begin{aligned}
 |X|^{-2}\int_1^{|X|/2}s^{p'+1}\,ds
 +\int_1^{|X|/2}s^{p'-1}\,ds
 &\le C|X|^{p'}\qquad(p'>0),\\
 \int_{\{|X|/2<|Y|<2|X|\}}
 (|\Gamma(X-Y)|+|\Gamma(-Y)|)\,|Y|^{p'-2}\,dY
 &\le C|X|^{p'}.
 \end{aligned}
\]
The combined integral is therefore absolutely convergent and
$v=O(\rho^{p'})$. For $-2<p'<0$, use the unmodified Newton kernel:
\[
 \rho^{-2}\int_1^\rho s^{p'+1}\,ds
 +\int_\rho^\infty s^{p'-1}\,ds
 \le C\rho^{p'}.
\]
Interior Poisson estimates yield $|D^kv|\le C_k\rho^{p'-k}$.
Hence
\[
 \Delta( W-v)=0
\]
on the full exterior auxiliary domain. The invariant even expansion
has the form
\begin{align}\label{eq:harmonic-expansion}
  W-v
 =\sum_{\ell\in2\mathbb N_0}
 \left(a_\ell\rho^\ell+b_\ell\rho^{-\ell-2}\right)
 Y_\ell(r/\rho,y/\rho),
\end{align}
where $Y_\ell$ is the $O(3)$ invariant spherical harmonic of degree
$\ell$ on $S^3$.

We first take $\eta=1/11$, so that
\begin{align}
    p = \frac{23}{11},\quad p' = \frac{59}{33} = 2-\kappa,\quad \kappa = \frac{7}{33}, 
\end{align}
and using \eqref{eq:harmonic-expansion},\eqref{eq:diff-with-normal-graph} and the growth, we see that:
\[
 \begin{aligned}
W-v&=O(\rho^{2+\eta}),\qquad 2+\eta<3,\\
\Rightarrow W-v
 &=A\left(y^2-\frac{r^2}{3}\right)+a_0+O(\rho^{-2}),\\
 f&=AJ_2+O(\rho^{-\kappa})
 \qquad\text{on }\{r>\theta\rho\}.
 \end{aligned}
\]
Axial monotonicity passes to the normal coordinates by the exact
identity $f_y=\sqrt2u_y/(1+u_t)\ge0$. The differentiated expansion
on fixed regular sectors then gives $A\ge0$.

\medskip
\noindent \textbf{Step 4: non degeneracy of the quadratic harmonic part.}
 Suppose now that $A=0$. Then $|r^2f|\le C(1+R^{p'})$ on the
regular sectors at scale $R$, and the nonconcentration estimate
\eqref{eq:end-nonconcentration} gives
\begin{align}
    D(R) \leq C_\gamma (1+R^{p'}) + C_0\gamma D(4R).
\end{align}
Now name $\alpha_0= C_0\gamma$ and choose $\gamma$ small such that $\alpha_04^p < 1$ and iterate the above to see that for every $N$,
\[
 \begin{aligned}
 D(R)&\le C_\gamma\sum_{j=0}^{N-1}
             \alpha_0^j\bigl(1+(4^jR)^{p'}\bigr)+\alpha_0^ND(4^NR),\\
 \alpha_0^ND(4^NR)&\le CR^p(\alpha_04^p)^N\longrightarrow0.
 \end{aligned}
\]
Since $\alpha_04^{p'}<\alpha_04^p<1$, letting $N\to\infty$ improves the bound to
\begin{align}
    D(R)\le C(1+R^{\max(p',0)}) = C(1+R^{\max(\frac43p-1,0)}).
\end{align}
We now repeat the argument with $p$ replaced by $p'$. Since
$p'<2$, the quadratic term is then absent from the expansion of $W-v$,
and we obtain the same conclusion with the exponent $\frac43p'-1$.
Iterating the exponent map $p\mapsto\frac43p-1$ gives
\[
 \frac{23}{11}\ \longmapsto\ \frac{59}{33}\ \longmapsto\ \frac{137}{99}
 \ \longmapsto\ \frac{251}{297}\ \longmapsto\ \frac{113}{891}
 \ \longmapsto\ -\frac{2221}{2673}.
\]
None of these exponents is zero, and the last is negative, so the
preceding bound gives $D(R)\le C$.
 
Repeat the replacement with this bounded $D$ and retain the
unweakened errors. In particular,
$d_j\eps_{R_j}^{1/3}=d_j^{4/3}/R_j=O(R_j^{-1})$.
The overlap and regular-sector estimates give
\[
 \Delta W=O(\rho^{-3}),\qquad
 W-r^2f=O_\theta(\rho^{-1})\quad\text{on }r\ge\theta\rho.
\]
The unmodified Newton potential is $O(\rho^{-1})$, and the
remaining bounded exterior harmonic function is
$b_\infty+O(\rho^{-2})$. The fixed-leaf expansion
\cite[Proposition~3.3]{Sz}, with the phase bounded, therefore yields
\[
 P=b_\infty+O_\theta(\rho^{-1})\quad\text{on }r\ge\theta\rho.
\]
Lemma~\ref{lem:bounded-phase-transfer} now supplies the missing
uniformity through the smoothing regions:
\[
 P=b_\infty+o(1)\quad\text{on the entire end of }M.
\]
If $b_\infty\ge0$, the $C_{4,3}$ tangent at zero supplies a regular
point of negative phase. Uniform convergence at infinity and
$P(0)=0$ imply that the negative global minimum of $P|_M$ is
attained at a regular interior point. At that point $M$ touches the
smooth negative Hardt--Simon leaf from one side. The strong maximum
principle gives local equality. The set of points of equality is
then open and closed in the connected regular hypersurface, by the
same comparison at each of its limit points. This would make $P$
constant there, contradicting $P(0)=0$ and the negative minimum.
If $b_\infty\le0$, a positive-phase point gives the analogous
contradiction at a positive maximum. Hence $A>0$.

\medskip
\noindent \textbf{Step 5: conclusion of the proof.}
Finally, the expansion $f=AJ_2+O(\rho^{-\kappa})$ on regular
sectors gives
\[
 \sup_{\{r>\theta R,\ \rho\approx R\}}|r^2f|\le CR^2
 \quad\overset{\eqref{eq:end-nonconcentration}}{\Longrightarrow}\quad
 D(R)\le CR^2+qD(4R)
 \quad\Longrightarrow\quad D(R)\le CR^2,
\]
using the same forward iteration (note that $16\alpha_0<\alpha_04^p<1$) and the previously known growth bound.
\end{proof}

\subsection{Higher derivative estimates}
We prove Proposition \ref{prop:precise-estimates-for-connector} by highlighting the geometric properties coming the problem, and deferring to the appendix the standard PDE techniques. The ideas of this section were provided to us by ChatGPT Astra.

\begin{proof}[Proof of Proposition~\ref{prop:precise-estimates-for-connector}]
First we apply Lemma~\ref{lem:quadratic-phase-near-spine} (which is a somewhat standard application of the estimates in \cite{Sz}), to push the quadratic decay of Proposition \ref{prop:quadratic-bound} to the spine to bound the smoothing radius by
\[
 a_\infty(Y)=\left(\frac{A}{c_H}\right)^{1/3}|Y|^{2/3}
 +O(|Y|^{2/3-1/6}),\qquad A>0.
\]
The details are given for the readers convenience in the appendix for the reader's convenience.

Fix a large $|y_0|$, put $\beta=u_\infty(0,y_0)$ and
$R=\theta_0|y_0|$, and keep $\beta$ fixed throughout the band.
Here $\theta_0>0$ is a sufficiently small geometric constant.
Then $\beta^3\approx R^2$ and $\beta/R\to0$.
For the fixed leaf family, write
\[
 H(t,\lambda)=\lambda^{1/3}h(t/\lambda^{1/3}),\qquad
 P(t,H(t,\lambda))=c_H\lambda.
\]

We now identify the weighted equation. All derivatives and divergences
in the following formulas are in the base variables
$X=(x,y)\in\R^4\times\R$. Put
\[
 W=\sqrt{1+|Du|^2},\qquad
 B=\frac{I}{W}-\frac{Du\otimes Du}{W^3},\qquad
 d(X)=H_\lambda\!\left(t,c_H^{-1}P(t,u(X))\right)\,,
\]
where $H_\lambda$ denotes the derivative of $H$ in the second variable. Differentiating the Euler--Lagrange equation for $u^3W$ in $y$
gives the exact equation
\[
 \mathcal Ju_y=0,\qquad
 \mathcal J\phi=-\operatorname{div}(u^3B D\phi)-\frac{3u}{W}\phi.
\]
For $q_1=u_y/(\beta^3d)$, the identity
$d\mathcal J(dq)= -\operatorname{div}(u^3d^2B Dq)+d(\mathcal Jd)q$
therefore gives
\begin{equation}\label{eq:direct-overview-equation}
 -\operatorname{div}(\mathcal A Dq_1)+\mathcal Vq_1=0,
 \qquad \mathcal A=u^3d^2B,\quad \mathcal V=d\mathcal Jd.
\end{equation}
The field $d$ is evaluated at the leaf through each point of the
connector. Because that leaf varies with $X$, the residual potential
$\mathcal V$ need not vanish. On a fixed leaf, however, the positive
dilation field $d_*=H_\lambda(\cdot,\lambda_*)$ satisfies
$\mathcal J_*d_*=0$. The normalization therefore cancels the
potential of the model leaf, leaving only the error from comparison
with that leaf.

Here is where the positive quadratic asymptotics enter. On the chosen
band the quadratic estimate of Proposition \ref{prop:quadratic-bound}, in the form of Lemma~\ref{lem:quadratic-phase-near-spine}, gives
\begin{equation}\label{eq:phaseosc}
 c\beta^3\le c_H^{-1}P(t,u(t,y))\le C\beta^3.
\end{equation}
(see Lemma~\ref{lem:direct-leaf-comparison}).
The fixed-leaf bounds consequently imply
$u=H(t,\lambda)\approx s$ and $d=H_\lambda(t,\lambda)\approx s^{-2}$, where $s=\beta+t$.
Since $|Du|\le\Lambda$, the matrix $B$ is uniformly elliptic, and hence
\[
 c\omega_\beta I\le\mathcal A\le C\omega_\beta I,
 \qquad \omega_\beta(X)=(\beta^2+|x|^2)^{-1/2}.
\]
The model weights satisfy $[\omega_\beta]_{A_2}\le C$ uniformly
for every $\beta>0$, as verified in
Appendix~\ref{sec:direct-weighted-theory}. Thus the scalar factor
$u^3d^2\approx \omega_\beta$ also has a uniform local $A_2$ bound.
The role of the quadratic asymptotics is to obtain this comparison
for the actual solution throughout the band; the $A_2$ property
of the model weight itself is independent of those asymptotics.

The quantitative comparison with a fixed leaf also gives
\[
 |\mathcal V|\le C\omega_\beta s^{-2}(s/R)^{1/2}.
\]
The factor $(s/R)^{1/2}$ comes from the phase oscillation estimate
\eqref{eq:phaseosc}, using
$\beta^3\approx R^2$. Relative to its principal weight, the
original Jacobi potential has size $3/(Wu^2)\approx s^{-2}$.
The new residual potential gains the factor $(s/R)^{1/2}$, as it is measuring the error of the foliation after modulating the dilation, although
it need not satisfy a pointwise bound by $C\omega_\beta R^{-2}$
near the axis. The useful smallness is instead measured by the
Dirichlet Green kernel $G_D$ of
$-\operatorname{div}(\mathcal A D)$ on an interior ball
$D=B_\rho$. Lemma~\ref{lem:direct-weighted-estimate}, using
\cite[Theorem~3.3]{FJK}, proves
\[
 \sup_{X\in D}\int_D G_D(X,Z)|\mathcal V(Z)|\,dZ
 \le C(\rho/R)^{1/2},
\]
with a constant independent of $\beta/R$.

The summability behind this estimate can be seen on dyadic shells
centered on the axis. At radius $r\gg\beta$, their weighted volume
is comparable to $r^4$, the Green kernel is bounded by $Cr^{-2}$,
and $|\mathcal V|/\omega_\beta\le Cr^{-2}(r/R)^{1/2}$.
The contribution of each shell is consequently bounded by
\[
 Cr^{-2}\,r^4\,r^{-2}(r/R)^{1/2}=C(r/R)^{1/2}.
\]
These contributions form a summable geometric series. Without
the additional factor $(r/R)^{1/2}$, the same calculation would
give a constant on each shell and a bound growing with the number
of scales between $\beta$ and $\rho$.

Choose $\rho=\delta_0R$, with $\delta_0>0$ fixed sufficiently small
that the Green norm above is at most $1/4$.
Lemma~\ref{lem:direct-weighted-estimate} then solves
\[
 \vartheta(X)=1-\int_D G_D(X,Z)\mathcal V(Z)\vartheta(Z)\,dZ,
 \qquad \tfrac12\le\vartheta\le2.
\]
Thus $\mathcal L\vartheta=0$, where
$\mathcal L=-\operatorname{div}(\mathcal A D\cdot)+\mathcal V$.
Equation~\eqref{eq:direct-overview-equation} then becomes
\[
 -\operatorname{div}\!\left(\vartheta^2\mathcal A
                   D(q_1/\vartheta)\right)=0.
\]
Writing $z=q_1/\vartheta$ and testing with $\eta^2z$ gives the
weighted Caccioppoli, or reverse Poincar\'e, inequality
\[
 \int_D\eta^2|Dz|^2\omega_\beta\,dX
 \le C\int_D|D\eta|^2z^2\omega_\beta\,dX.
\]
This energy estimate uses only weighted ellipticity. The uniform
$A_2$ bound supplies the accompanying weighted Sobolev inequality,
which gives local boundedness by iteration. This is the equation to which
\cite[Theorem~2.3.1]{FKS} applies: its symmetric coefficient matrix
is comparable to the same $A_2$ weight, with constants independent
of $\beta/R$. Appendix~\ref{sec:direct-weighted-theory} states the
weighted estimates and treats the potential and the higher-order forcing.

Using higher order analogue of the estimate \eqref{eq:phaseosc}, obtained in Lemma~\ref{lem:direct-leaf-comparison} by scaling of standard elliptic estimates, Proposition~\ref{prop:direct-axial-regularity} proves, for every
fixed finite derivative order,
\begin{equation}\label{eq:direct-overview-bounds}
 |D^k\partial_y^ju|
 \le C_{j,k}\beta^3R^{-j}(\beta+t)^{-2-k},
 \qquad j\ge1,\quad k\ge0,
\end{equation}
on $B_{R/4}(0,y_0)$. Its proof uses the same normalization
$q_j=\partial_y^ju/(\beta^3d)$ at every order. The additional
factor $(\beta/(\beta+t))^3$ in the higher-order forcing makes
its Green potential uniformly bounded. Energy estimates and the
weighted local boundedness estimate then give one factor $R^{-1}$
for each axial derivative.

At $t=0$, \eqref{eq:direct-overview-bounds} immediately gives
\[
 |a_\infty^{(j)}(y)|\le C_j\beta R^{-j}
 \le C_ja_\infty(y)R^{-j}.
\]
Proposition~\ref{prop:direct-leaf-expansion} applies these direct
derivative estimates to the radial equation. For
$0\le t\le cR$ and $|y-y_0|\le cR$ it gives
\[
 u_\infty(t,y)=a_\infty(y)h(t/a_\infty(y))+e_\infty(t,y),
\]
with
\[
 |\partial_t^m\partial_y^ne_\infty|
 \le C_{mn}a_\infty(y)^3R^{-2-n}(a_\infty(y)+t)^{-m}.
\]
For any specified $m,n$, use the direct estimates through axial
order $n+2$. The geometric band and lower height threshold are
independent of the chosen finite order; the constants may depend
on it. Taking $y_0=Y$ proves \eqref{roadmap:eq:sharp-end}, and
the smoothing-radius asymptotic identifies
$\kappa_H=(A/c_H)^{1/3}$.

On a fixed regular angular sector, the quadratic phase bound and
the fixed-leaf expansion give $|f_\infty|\le C_\theta$.
After rescaling an annulus of radius $\rho$ to unit size, the
normal height is at most $C_\theta\rho^{-1}$. The interior
normal-graph estimates of
Proposition~\ref{prop:height-derivative-analysis}, on a slightly
larger regular sector, give the same bound for every fixed
derivative order in rescaled coordinates. Scaling back yields
$|D^jf_\infty|\le C_{\theta,j}\rho^{-j}$, as required.

Set
\[
 d(X)=H_\lambda\!\left(t,c_H^{-1}P(t,u(t,y))\right)
 \approx(\beta+t)^{-2}.
\]
This is the fixed leaf's positive Jacobi field evaluated at the
solution. We differentiate the equation for $u$; $P$ is used only to select
this normalizing field.
Proposition~\ref{prop:direct-axial-regularity} proves, for every
fixed finite derivative order,
\begin{equation}\label{eq:direct-overview-bounds-recalled}
 |D^k\partial_y^ju|
 \le C_{j,k}\beta^3R^{-j}(\beta+t)^{-2-k},
 \qquad j\ge1,\quad k\ge0,
\end{equation}
on $B_{R/4}(0,y_0)$. Its proof starts with the exact Jacobi
equation for $u_y$. Normalizing each $\partial_y^ju$ by
$\beta^3d$ gives a common weighted operator. The comparison
above makes its potential small in the Green-kernel norm.
The nonlinear forcing at higher orders decays by an additional
factor $(\beta/(\beta+t))^3$, which makes its Green potential
uniformly bounded. Energy estimates and weighted local boundedness
then give one factor $R^{-1}$ for each axial derivative.

At $t=0$, \eqref{eq:direct-overview-bounds-recalled} immediately gives
\[
 |a_\infty^{(j)}(y)|\le C_j\beta R^{-j}
 \le C_ja_\infty(y)R^{-j}.
\]

Proposition~\ref{prop:direct-leaf-expansion} applies these direct
derivative estimates to the radial equation. For
$0\le t\le cR$ and $|y-y_0|\le cR$ it gives
\[
 u_\infty(t,y)=a_\infty(y)h(t/a_\infty(y))+e_\infty(t,y),
\]
with
\[
 |\partial_t^m\partial_y^ne_\infty|
 \le C_{mn}a_\infty(y)^3R^{-2-n}(a_\infty(y)+t)^{-m}.
\]
For any specified $m,n$, use the direct estimates through axial
order $n+2$. The geometric band and lower height threshold are
independent of the chosen finite order; the constants may depend
on it. Taking $y_0=Y$ proves \eqref{roadmap:eq:sharp-end}, and
the smoothing-radius asymptotic identifies
$\kappa_H=(A/c_H)^{1/3}$.

On a fixed regular angular sector, the quadratic phase bound and
the fixed-leaf expansion give $|f_\infty|\le C_\theta$.
After rescaling an annulus of radius $\rho$ to unit size, the
normal height is at most $C_\theta\rho^{-1}$. The interior
normal-graph estimates of
Proposition~\ref{prop:height-derivative-analysis}, on a slightly
larger regular sector, give the same bound for every fixed
derivative order in rescaled coordinates. Scaling back yields
$|D^jf_\infty|\le C_{\theta,j}\rho^{-j}$, as required.
\end{proof}

\section{Accumulating singularities}\label{sec:accumulation}
This section proves Theorem~\ref{thm:main-accumulation} using the quantitative connector estimates and a suitable gluing construction. 

We first describe the background model and show why direct interpolation with a modulation of the foliation fails.

We then prove Theorem~\ref{thm:main-accumulation} by preparing a background adapted to the connector insertions. The construction is organized around one transverse correction $f(t,y)$. We first specify its required estimates, then use them to prepare the background and its metric, insert the connectors, and update the
metric locally. Finally we choose $f$ from the finite-order
profiles constructed in Section~\ref{sec:finite-correctors},
where we prove the residual and matching estimates.

The purpose of $f$ is to control both the background mean curvature
and the error on the attachment annuli. These two requirements
permit a smooth conformal metric with the full Euclidean jet at
the accumulation point. The parameter-selection order is similar
to that in \cite{DHLS}: choose a background modification to
fit the intended insertions. Here its transverse dependence is
essential.

Preparing the pieces before joining them also appears in the
constant-mean-curvature construction of Mazzeo--Pacard--Pollack
\cite{MPP01}, where Green-function corrections for the Jacobi
operator adapt the summands to the inserted neck. Here we prepare
the background by successive radial corrections about Hardt--Simon
leaves and then correct the ambient metric conformally.

\subsection{The base model and the interpolation obstruction}
We first keep only the zeroth-order approximation by foliation leaves.
On a transition annulus it has exactly the connector's axis value $a(y)$,
so any error comes from how the profiles open away from the axis, that is how far the connector is from the foliation modulated by $a(y)$.
The calculation below isolates that error at the axis and shows why
decreasing the smoothing radius does not by itself give a regular conformal
correction. This identifies the terms that the higher-order profiles
must cancel.

The unmodified model is $V=C_{3,3}\times\R$. Let $a(y)>0$ for
$y\ne0$, with $a(0)=0$, and let $\mathcal R(y)\downarrow0$ be a
transverse cutoff scale, with $a(y)\ll\mathcal R(y)\ll|y|$.
Near the spine, replace the cone in the slice of height $y$ by
the positive Hardt--Simon leaf
\[
 v_0(t,y)=a(y)h(t/a(y)),\qquad t=|x|.
\]
Cut this profile off to $t$ for $t\gtrsim\mathcal R(y)$ and set
the slice at $y=0$ equal to the Simons cone. This is the first
candidate background. Later we take $a(y)\approx e^{-1/y^2}$ and
$\mathcal R(y)\approx|y|^4$: other choices are possible but $a(y)$ must vanish at infinite order at $0$ to obtain a $C^\infty$ metric whose jet at zero coincide with the euclidean metric.

At $p_k=(0,0,y_k)$, $y_k\downarrow0$, place a translated and dilated
connector
\[
 u_k(t,y)=\lambda_k u_\infty(t/\lambda_k,(y-y_k)/\lambda_k)
\]
in a ball of radius comparable to $R_k\ll y_k$ (to keep the transition regions disjoint), with
$\lambda_k/R_k\to0$ (so that the connector is blown down to resemble $V$ at infinity, that is the model). The parts of its attachment annulus near
$x=0$, above and below $y=y_k$, will be called the \emph{polar
regions}.

 On the corresponding axial matching intervals choose
$a(y)=u_k(0,y)$, so that the connector and the
background leaf have exactly the same smoothing radius. One might then try the
direct interpolation
\[
 v_\chi=v_0+\chi\!\left(
       \frac{\sqrt{2t^2+(y-y_k)^2}}{R_k}\right)(u_k-v_0),
\]
where $\chi=1$ on $[0,1]$ and $\chi=0$ on $[2,\infty)$.
The transverse cutoff of the background is one near the axis, so it
does not affect the following calculation.

Write $Q$ for the radial SME operator, as in \eqref{cap:eq:Q}:
\begin{equation}\label{eq:gluing-Q-naive}
 Q(v)=\left(\delta_{ij}-\frac{v_i v_j}{1+|Dv|^2}\right)v_{ij}
          +\frac3t v_t-\frac3v,\qquad i,j\in\{t,y\}.
\end{equation}
At $t=0$ the smooth radial interpretation gives
$Q(v)=4v_{tt}+v_{yy}/(1+v_y^2)-3/v$.
Since $h''(0)=3/4$ and $u_k$ solves $Q(u_k)=0$, as already observed in Section \ref{sec:connector-estimates}, the difference
$e=u_k-v_0$ satisfies
\[
 e(0,y)=e_t(0,y)=0,\qquad
 4e_{tt}(0,y)=-\frac{a''}{1+(a')^2}.
\]
Writing $\chi_0(y)=\chi(|y-y_k|/R_k)$, all derivatives of the
interpolation cutoff multiply data that vanish at $t=0$.
Consequently the scalar mean curvature, with the normal pointing
towards increasing $|z|$, satisfies the exact identity
\begin{equation}\label{eq:naive-axis-curvature}
 H_\chi(0,y)=-\frac{Q(v_\chi)(0,y)}{\sqrt{1+(a')^2}}
       =-(1-\chi_0(y))\frac{a''(y)}{(1+(a'(y))^2)^{3/2}}.
\end{equation}
Now suppose there is a $C^2$ metric $g=e^{2F}g_{\rm euc}$, then the equation for the mean curvature is $H_\chi+8\nu F=H^g_\chi=0$, if we require minimality.
Writing \(z=a(y)\omega\), with \(|\omega|=1\), the normal pointing toward increasing \(|z|\) is
\[
\nu=\frac{(0,\omega,-a'(y))}{\sqrt{1+(a'(y))^2}}.
\]
Since the averaged \(F\) depends on \(z\) through \(s=|z|\), this gives
\[
\nu F
=\frac{F_s(0,a(y),y)-a'(y)F_y(0,a(y),y)}
       {\sqrt{1+(a'(y))^2}}.
\]
Substitute this into \(H_\chi+8\nu F=0\):
\[
-\frac{(1-\chi_0)a''}{(1+(a')^2)^{3/2}}
+8\frac{F_s-a'F_y}{\sqrt{1+(a')^2}}=0.
\]
Multiplying by \(\sqrt{1+(a')^2}\) and dividing by \(8a\) yields exactly
\[
\frac{(1-\chi_0)a''}{8a(1+(a')^2)}
=\frac{F_s(0,a,y)}{a}
-\frac{a'}a F_y(0,a,y).
\]
Fix $\tau\in(1,2)$ such that $1-\chi(\tau)>0$, and evaluate
at $y=y_k+\tau R_k$. By
Proposition~\ref{prop:precise-estimates-for-connector},
\[
 a(y)\sim\kappa_H\lambda_k^{1/3}(y-y_k)^{2/3},
\]
with the asymptotic valid through two derivatives on the scale
$R_k$. Since $\lambda_k/R_k\to0$, it follows that
\[
 a'(y)\to0,\qquad
 \left|\frac{a'}a\right|\le CR_k^{-1},
 \qquad
 -\frac{a''}a\ge cR_k^{-2}.
\]

On the other hand, rotational invariance and $C^2$ regularity
give $F_s(0,0,y)=0$, and hence
\[
 \frac{|F_s(0,a(y),y)|}{a(y)}
 \le \|D^2F\|_{L^\infty(U)},
\]
where $U$ is a fixed neighborhood of the origin containing
these points for all sufficiently large $k$.
Taking absolute values in the preceding identity therefore yields
\[
 cR_k^{-2}
 \le \|D^2F\|_{L^\infty(U)}
       +CR_k^{-1}\|DF\|_{L^\infty(U)}.
\]
Multiplying by $R_k^2$ and letting $k\to\infty$ gives a
contradiction. Thus the interpolated hypersurface cannot be
minimal in a $C^2$ conformally Euclidean metric near the origin.

\subsection{The foliation and the prescribed insertions}

Write $(x,z,y)\in\R^4\times\R^4\times\R$ and $t=|x|$, and let $h$ be the
positive normalized Hardt--Simon profile from \eqref{cap:eq:HS}. Thus
\begin{equation}\label{bg:eq:h}
 \frac{h''}{1+h'^2}+\frac{3h'}{\rho}-\frac3h=0,
 \qquad h(0)=1,\quad h'(0)=0,\quad h''(0)=\frac34.
\end{equation}
We use its differentiated end expansion
\begin{equation}\label{bg:eq:h-end}
 h(\rho)=\rho+b_H\rho^{-2}+O(\rho^{-3}),\qquad b_H>0.
\end{equation}
Write $S(v)=\{|z|=v(|x|,y)\}$. For a positive radius
function $a$ define the uncorrected background
\begin{equation}\label{bg:eq:raw}
 V_0[a](t,y)=a(y)h(t/a(y)).
\end{equation}
It suffices to consider this surface in a tube $0\le t<\mathcal R(y)$
with $\mathcal R(y)/a(y)\to\infty$ as $y\downarrow0$.

For a rotational profile $v$, put
\begin{equation}\label{eq:gluing-Q}
 Q(v)=\frac{(1+v_y^2)v_{tt}-2v_tv_yv_{ty}
                    +(1+v_t^2)v_{yy}}{1+v_t^2+v_y^2}
                  +\frac{3v_t}{t}-\frac3v.
\end{equation}
With the normal pointing towards increasing $|z|$, its scalar
Euclidean mean curvature is $H=-Q(v)/\sqrt{1+v_t^2+v_y^2}$.

Let $u_\infty(t,Y)$ be the connector of
Theorem~\ref{thm:main-connector}. Its properties and
Proposition~\ref{prop:precise-estimates-for-connector} give the following
properties. The Cartesian profile $u_\infty(|x|,Y)$ is globally
Lipschitz and vanishes only at $(x,Y)=(0,0)$. The hypersurface
$S(u_\infty)$ is stationary with multiplicity one. Its only singular
point is the ambient origin, with unique tangent cone $C_{4,3}$;
at infinity it has finite density and unique tangent cone
$C_{3,3}\times\R$. Its cap satisfies
\[
 u_\infty(0,Y)\sim\kappa_H|Y|^{2/3},\qquad
 |\partial_Y^n u_\infty(0,Y)|\le C_n|Y|^{2/3-n}
 \quad (|Y|\text{ large}).
\]
We also use its regular-sector derivative estimates and the
matching estimates recalled in Section~\ref{sec:finite-correctors}.

Fix a small $c_*>0$. For all sufficiently large $k$, choose
\begin{equation}\label{bg:eq:insertion-data}
\begin{aligned}
 y_k&=2^{-k},& p_k&=(0,0,y_k),& R_k&=c_*y_k^4,\\
 \lambda_k&=\frac{e^{-3/y_k^2}}{\kappa_H^3R_k^2},&
 u_k(t,y)&=\lambda_k u_\infty(t/\lambda_k,(y-y_k)/\lambda_k).
\end{aligned}
\end{equation}
Thus $u_k(0,y)\approx e^{-1/y_k^2}$ when
$|y-y_k|\approx R_k$, and
$\lambda_k\ll e^{-1/y_k^2}\ll R_k\ll y_k$.
The insertion neighborhoods are disjoint.

The cutoff coordinate $\rho_k=\sqrt{2t^2+(y-y_k)^2}$ separates
an exact core $\rho_k\le R_k$ from the attachment annulus
$R_k<\rho_k<2R_k$. The parts of this annulus near $t=0$ are
its \emph{polar regions}; the remaining part is the
\emph{regular region}. Figure~\ref{fig:modulated-foliation-gluing}
shows these regions and the corrected background to be chosen.

\begin{figure}[htbp]
\centering
\scalebox{1.1}{%
\begingroup
\definecolor{glueink}{HTML}{202C3B}
\definecolor{glueblue}{HTML}{2563A6}
\definecolor{glueorange}{HTML}{B85818}
\definecolor{gluepurple}{HTML}{78549A}
\definecolor{gluegray}{HTML}{929AA5}
\definecolor{gluelight}{HTML}{DCE2E9}
\definecolor{gluebluefill}{HTML}{F0F5FB}
\definecolor{glueorangefill}{HTML}{FFF0DF}
\definecolor{gluepurplefill}{HTML}{EEE6F5}
\definecolor{gluewhite}{HTML}{FFFFFF}
\definecolor{gluegreen}{HTML}{087F73}
\definecolor{gluegreenfill}{HTML}{DDF3EA}
\begin{tikzpicture}[x=1cm,y=1cm,line join=round,line cap=round]
\path[use as bounding box] (0,0.3) rectangle (5.5,8.0);
\node[anchor=west,align=center,inner sep=0pt,text=glueink,font={\fontsize{10}{12.0}\selectfont\bfseries}] at (0.1200,7.6600) {Gluing neighborhood};
\path[draw=gluebluefill,line width=0.1pt,fill=gluebluefill] (1.1000,2.2400) -- (4.6500,2.2400) -- (4.6500,7.0000) -- (1.1000,7.0000) -- cycle;
\path[draw=gluepurple,line width=0.85pt,fill=gluepurplefill] (1.1000,2.4200) -- (1.1864,2.4217) -- (1.2726,2.4268) -- (1.3586,2.4352) -- (1.4442,2.4471) -- (1.5292,2.4623) -- (1.6136,2.4808) -- (1.6972,2.5026) -- (1.7798,2.5277) -- (1.8615,2.5560) -- (1.9419,2.5875) -- (2.0211,2.6221) -- (2.0988,2.6598) -- (2.1750,2.7005) -- (2.2495,2.7442) -- (2.3223,2.7908) -- (2.3931,2.8402) -- (2.4620,2.8923) -- (2.5288,2.9471) -- (2.5934,3.0045) -- (2.6556,3.0644) -- (2.7155,3.1266) -- (2.7729,3.1912) -- (2.8277,3.2580) -- (2.8798,3.3269) -- (2.9292,3.3977) -- (2.9758,3.4705) -- (3.0195,3.5450) -- (3.0602,3.6212) -- (3.0979,3.6989) -- (3.1325,3.7781) -- (3.1640,3.8585) -- (3.1923,3.9402) -- (3.2174,4.0228) -- (3.2392,4.1064) -- (3.2577,4.1908) -- (3.2729,4.2758) -- (3.2848,4.3614) -- (3.2932,4.4474) -- (3.2983,4.5336) -- (3.3000,4.6200) -- (3.2983,4.7064) -- (3.2932,4.7926) -- (3.2848,4.8786) -- (3.2729,4.9642) -- (3.2577,5.0492) -- (3.2392,5.1336) -- (3.2174,5.2172) -- (3.1923,5.2998) -- (3.1640,5.3815) -- (3.1325,5.4619) -- (3.0979,5.5411) -- (3.0602,5.6188) -- (3.0195,5.6950) -- (2.9758,5.7695) -- (2.9292,5.8423) -- (2.8798,5.9131) -- (2.8277,5.9820) -- (2.7729,6.0488) -- (2.7155,6.1134) -- (2.6556,6.1756) -- (2.5934,6.2355) -- (2.5288,6.2929) -- (2.4620,6.3477) -- (2.3931,6.3998) -- (2.3223,6.4492) -- (2.2495,6.4958) -- (2.1750,6.5395) -- (2.0988,6.5802) -- (2.0211,6.6179) -- (1.9419,6.6525) -- (1.8615,6.6840) -- (1.7798,6.7123) -- (1.6972,6.7374) -- (1.6136,6.7592) -- (1.5292,6.7777) -- (1.4442,6.7929) -- (1.3586,6.8048) -- (1.2726,6.8132) -- (1.1864,6.8183) -- (1.1000,6.8200) -- cycle;
\path[draw=glueorange,line width=0.85pt,fill=glueorangefill] (1.1000,3.5200) -- (1.1432,3.5208) -- (1.1863,3.5234) -- (1.2293,3.5276) -- (1.2721,3.5335) -- (1.3146,3.5411) -- (1.3568,3.5504) -- (1.3986,3.5613) -- (1.4399,3.5738) -- (1.4807,3.5880) -- (1.5210,3.6037) -- (1.5605,3.6210) -- (1.5994,3.6399) -- (1.6375,3.6603) -- (1.6747,3.6821) -- (1.7111,3.7054) -- (1.7466,3.7301) -- (1.7810,3.7562) -- (1.8144,3.7836) -- (1.8467,3.8122) -- (1.8778,3.8422) -- (1.9078,3.8733) -- (1.9364,3.9056) -- (1.9638,3.9390) -- (1.9899,3.9734) -- (2.0146,4.0089) -- (2.0379,4.0453) -- (2.0597,4.0825) -- (2.0801,4.1206) -- (2.0990,4.1595) -- (2.1163,4.1990) -- (2.1320,4.2393) -- (2.1462,4.2801) -- (2.1587,4.3214) -- (2.1696,4.3632) -- (2.1789,4.4054) -- (2.1865,4.4479) -- (2.1924,4.4907) -- (2.1966,4.5337) -- (2.1992,4.5768) -- (2.2000,4.6200) -- (2.1992,4.6632) -- (2.1966,4.7063) -- (2.1924,4.7493) -- (2.1865,4.7921) -- (2.1789,4.8346) -- (2.1696,4.8768) -- (2.1587,4.9186) -- (2.1462,4.9599) -- (2.1320,5.0007) -- (2.1163,5.0410) -- (2.0990,5.0805) -- (2.0801,5.1194) -- (2.0597,5.1575) -- (2.0379,5.1947) -- (2.0146,5.2311) -- (1.9899,5.2666) -- (1.9638,5.3010) -- (1.9364,5.3344) -- (1.9078,5.3667) -- (1.8778,5.3978) -- (1.8467,5.4278) -- (1.8144,5.4564) -- (1.7810,5.4838) -- (1.7466,5.5099) -- (1.7111,5.5346) -- (1.6747,5.5579) -- (1.6375,5.5797) -- (1.5994,5.6001) -- (1.5605,5.6190) -- (1.5210,5.6363) -- (1.4807,5.6520) -- (1.4399,5.6662) -- (1.3986,5.6787) -- (1.3568,5.6896) -- (1.3146,5.6989) -- (1.2721,5.7065) -- (1.2293,5.7124) -- (1.1863,5.7166) -- (1.1432,5.7192) -- (1.1000,5.7200) -- cycle;
\path[draw=gluegreen,line width=0.65pt,fill=gluegreenfill] (1.1000,6.8200) -- (1.1208,6.8199) -- (1.1417,6.8196) -- (1.1625,6.8191) -- (1.1833,6.8184) -- (1.2042,6.8175) -- (1.2250,6.8164) -- (1.2458,6.8152) -- (1.2667,6.8137) -- (1.2875,6.8120) -- (1.3083,6.8101) -- (1.3292,6.8080) -- (1.3500,6.8057) -- (1.3708,6.8033) -- (1.3917,6.8006) -- (1.4125,6.7977) -- (1.4333,6.7946) -- (1.4542,6.7913) -- (1.4750,6.7878) -- (1.4958,6.7841) -- (1.5167,6.7802) -- (1.5375,6.7761) -- (1.5583,6.7717) -- (1.5792,6.7672) -- (1.6000,6.7624) -- (1.6000,5.5998) -- (1.5792,5.6102) -- (1.5583,5.6200) -- (1.5375,5.6293) -- (1.5167,5.6380) -- (1.4958,5.6463) -- (1.4750,5.6541) -- (1.4542,5.6614) -- (1.4333,5.6683) -- (1.4125,5.6747) -- (1.3917,5.6806) -- (1.3708,5.6861) -- (1.3500,5.6912) -- (1.3292,5.6959) -- (1.3083,5.7001) -- (1.2875,5.7039) -- (1.2667,5.7073) -- (1.2458,5.7103) -- (1.2250,5.7129) -- (1.2042,5.7151) -- (1.1833,5.7168) -- (1.1625,5.7182) -- (1.1417,5.7192) -- (1.1208,5.7198) -- (1.1000,5.7200) -- cycle;
\path[draw=gluegreen,line width=0.65pt,fill=gluegreenfill] (1.1000,2.4200) -- (1.1208,2.4201) -- (1.1417,2.4204) -- (1.1625,2.4209) -- (1.1833,2.4216) -- (1.2042,2.4225) -- (1.2250,2.4236) -- (1.2458,2.4248) -- (1.2667,2.4263) -- (1.2875,2.4280) -- (1.3083,2.4299) -- (1.3292,2.4320) -- (1.3500,2.4343) -- (1.3708,2.4367) -- (1.3917,2.4394) -- (1.4125,2.4423) -- (1.4333,2.4454) -- (1.4542,2.4487) -- (1.4750,2.4522) -- (1.4958,2.4559) -- (1.5167,2.4598) -- (1.5375,2.4639) -- (1.5583,2.4683) -- (1.5792,2.4728) -- (1.6000,2.4776) -- (1.6000,3.6402) -- (1.5792,3.6298) -- (1.5583,3.6200) -- (1.5375,3.6107) -- (1.5167,3.6020) -- (1.4958,3.5937) -- (1.4750,3.5859) -- (1.4542,3.5786) -- (1.4333,3.5717) -- (1.4125,3.5653) -- (1.3917,3.5594) -- (1.3708,3.5539) -- (1.3500,3.5488) -- (1.3292,3.5441) -- (1.3083,3.5399) -- (1.2875,3.5361) -- (1.2667,3.5327) -- (1.2458,3.5297) -- (1.2250,3.5271) -- (1.2042,3.5249) -- (1.1833,3.5232) -- (1.1625,3.5218) -- (1.1417,3.5208) -- (1.1208,3.5202) -- (1.1000,3.5200) -- cycle;
\path[draw=gluegray,line width=0.7pt] (1.1000,2.0700) -- (1.1000,7.1900);
\path[draw=gluegray,line width=0.7pt] (1.1850,7.0400) -- (1.1000,7.1900) -- (1.0150,7.0400);
\path[draw=gluegray,line width=0.7pt] (1.1000,4.6200) -- (4.5700,4.6200);
\path[draw=gluegray,line width=0.7pt] (4.4200,4.5350) -- (4.5700,4.6200) -- (4.4200,4.7050);
\path[draw=gluegreen,line width=2.8pt] (1.1000,5.7200) -- (1.1000,6.8200);
\path[draw=gluegreen,line width=2.8pt] (1.1000,3.5200) -- (1.1000,2.4200);
\node[anchor=east,align=center,inner sep=0pt,text=glueink,font={\fontsize{8}{9.6}\selectfont}] at (0.9600,6.8200) {$2R_k$};
\node[anchor=east,align=center,inner sep=0pt,text=glueink,font={\fontsize{8}{9.6}\selectfont}] at (0.9600,5.7200) {$R_k$};
\node[anchor=east,align=center,inner sep=0pt,text=glueink,font={\fontsize{8}{9.6}\selectfont}] at (0.9600,3.5200) {$-R_k$};
\node[anchor=east,align=center,inner sep=0pt,text=glueink,font={\fontsize{8}{9.6}\selectfont}] at (0.9600,2.4200) {$-2R_k$};
\path[draw=glueink,line width=0.3pt,fill=glueink] (1.1430,4.6200) -- (1.1415,4.6311) -- (1.1372,4.6415) -- (1.1304,4.6504) -- (1.1215,4.6572) -- (1.1111,4.6615) -- (1.1000,4.6630) -- (1.0889,4.6615) -- (1.0785,4.6572) -- (1.0696,4.6504) -- (1.0628,4.6415) -- (1.0585,4.6311) -- (1.0570,4.6200) -- (1.0585,4.6089) -- (1.0628,4.5985) -- (1.0696,4.5896) -- (1.0785,4.5828) -- (1.0889,4.5785) -- (1.1000,4.5770) -- (1.1111,4.5785) -- (1.1215,4.5828) -- (1.1304,4.5896) -- (1.1372,4.5985) -- (1.1415,4.6089) -- (1.1430,4.6200) -- cycle;
\node[anchor=east,align=center,inner sep=0pt,text=glueink,font={\fontsize{8}{9.6}\selectfont}] at (0.9800,4.4500) {$p_k$};
\node[anchor=center,align=center,inner sep=0pt,text=glueink,font={\fontsize{8}{9.6}\selectfont}] at (1.3500,7.2400) {$y-y_k$};
\node[anchor=center,align=center,inner sep=0pt,text=glueink,font={\fontsize{8}{9.6}\selectfont}] at (4.4300,4.4000) {$\sqrt2\,t$};
\node[anchor=center,align=center,inner sep=0pt,text=glueblue,font={\fontsize{8.5}{10.2}\selectfont}] at (3.8000,6.2000) {Prepared\\background\\$v_b$};
\node[anchor=center,align=center,inner sep=0pt,text=gluegreen,font={\fontsize{8.5}{10.2}\selectfont}] at (3.6000,7.0400) {Upper polar\\region};
\path[draw=gluegreen,line width=0.55pt] (2.5700,6.8800) -- (1.3400,6.3100);
\path[draw=gluegreen,line width=0.55pt] (1.4404,6.4502) -- (1.3400,6.3100) -- (1.5118,6.2959);
\node[anchor=center,align=center,inner sep=0pt,text=glueorange,font={\fontsize{8.5}{10.2}\selectfont}] at (3.9000,5.3500) {Exact core\\$u_k$};
\path[draw=glueorange,line width=0.55pt] (3.0000,5.2200) -- (1.5500,4.9800);
\path[draw=glueorange,line width=0.55pt] (1.6841,5.0884) -- (1.5500,4.9800) -- (1.7119,4.9206);
\node[anchor=center,align=center,inner sep=0pt,text=gluepurple,font={\fontsize{8.5}{10.2}\selectfont}] at (3.8000,3.6800) {Regular\\region};
\path[draw=gluepurple,line width=0.55pt] (3.2600,3.9700) -- (2.7000,4.1400);
\path[draw=gluepurple,line width=0.55pt] (2.8682,4.1778) -- (2.7000,4.1400) -- (2.8188,4.0151);
\node[anchor=center,align=center,inner sep=0pt,text=gluegreen,font={\fontsize{8.5}{10.2}\selectfont}] at (3.6500,2.6000) {Lower polar\\region};
\path[draw=gluegreen,line width=0.55pt] (2.7200,2.9100) -- (1.3400,2.9300);
\path[draw=gluegreen,line width=0.55pt] (1.4912,3.0128) -- (1.3400,2.9300) -- (1.4888,2.8428);
\node[anchor=center,align=center,inner sep=0pt,text=glueink,font={\fontsize{8.5}{10.2}\selectfont}] at (2.5200,1.6700) {Attachment annulus:\\$R_k<\rho_k<2R_k$};
\node[anchor=center,align=center,inner sep=0pt,text=glueink,font={\fontsize{9}{10.799999999999999}\selectfont}] at (2.5200,0.7200) {$\rho_k=\sqrt{2t^2+(y-y_k)^2}$};
\end{tikzpicture}
\endgroup
}
\caption{The gluing neighborhood in the parameter half-plane
$(\sqrt2t,y-y_k)$ (schematic; scales are exaggerated). The semicircles
are level sets of $\rho_k$, rather than ambient distance.
The orange region contains the exact connector core. The attachment
annulus consists of the green upper and lower polar regions near
$t=0$ and the purple regular region away from the axis.
The thick green segments on $t=0$ are the axial matching intervals.
Outside the outer semicircle, the construction retains the prepared
background $v_b$. It agrees with $\widehat v=V_0[a]+f$ in the
inner transverse tube and is cut back to the cone outside that tube;
these profiles are defined below.}
\label{fig:modulated-foliation-gluing}
\end{figure}

Choose $\eta_k$ supported in
$R_k/4<|y-y_k|<4R_k$, equal to one on
$R_k/2\le|y-y_k|\le3R_k$, with
$|\eta_k^{(n)}|\le C_nR_k^{-n}$. For $y\ne0$, set
\begin{equation}\label{bg:eq:radius}
 a(y)=e^{-1/y^2}+\sum_k\eta_k(y)
             \bigl(u_k(0,y)-e^{-1/y^2}\bigr),
 \qquad \mathcal R(y)=c_*|y|^4.
\end{equation}
Each summand is extended by zero outside its support. Since
$R_k=O(y_k^4)$, the functions $e^{-1/y^2}$ and
$e^{-1/y_k^2}$ are comparable on the cutoff supports. The cap
estimates therefore imply
\begin{equation}\label{eq:soft-cap-data}
 a(y)\approx e^{-1/y^2},\qquad
 |a^{(n)}(y)|\le C_n a(y)\mathcal R(y)^{-n}.
\end{equation}
In particular $a$ extends smoothly and flatly by $a(0)=0$.
On the polar matching intervals it equals the cap radius of the
corresponding connector. We have chosen the insertion data before
the background: the function $f$ will be adapted to these data.

\subsection{The transverse correction}
We now leave $f$ unspecified and write
\begin{equation}\label{bg:eq:corrected-background}
 \widehat v(t,y)=V_0[a](t,y)+f(t,y),\qquad s=a(y)+t.
\end{equation}
We impose $f(0,y)=f_t(0,y)=0$ to preserve the prescribed cap
radius and the radial first-derivative condition. The correction
changes how the profile opens away from the axis.
Here $f$ denotes a correction to the SME radius profile, distinct
from the normal graph heights used earlier in the paper.
Figure~\ref{fig:axis-radius-matching} shows the prescribed axis
data: the background stays positive at $y_k$, while the inserted
connector vanishes there. Agreement on the matching intervals
fixes the axis value; $f$ must also control the transverse error.

\begin{figure}[htbp]
\centering
\scalebox{1.05}{%
\begingroup
\definecolor{glueink}{HTML}{202C3B}
\definecolor{glueblue}{HTML}{2563A6}
\definecolor{glueorange}{HTML}{B85818}
\definecolor{gluepurple}{HTML}{78549A}
\definecolor{gluegray}{HTML}{929AA5}
\definecolor{gluelight}{HTML}{DCE2E9}
\definecolor{gluebluefill}{HTML}{F0F5FB}
\definecolor{glueorangefill}{HTML}{FFF0DF}
\definecolor{gluepurplefill}{HTML}{EEE6F5}
\definecolor{gluewhite}{HTML}{FFFFFF}
\definecolor{gluegreen}{HTML}{087F73}
\definecolor{gluegreenfill}{HTML}{DDF3EA}
\begin{tikzpicture}[x=1cm,y=1cm,line join=round,line cap=round]
\path[use as bounding box] (5.8,0.3) rectangle (15.25,8.0);
\node[anchor=west,align=center,inner sep=0pt,text=glueink,font={\fontsize{10}{12.0}\selectfont\bfseries}] at (6.0000,7.6600) {Radius along $t=0$};
\path[draw=gluegreenfill,line width=0.1pt,fill=gluegreenfill] (8.3600,3.4000) -- (9.4900,3.4000) -- (9.4900,6.9000) -- (8.3600,6.9000) -- cycle;
\path[draw=gluegreenfill,line width=0.1pt,fill=gluegreenfill] (11.7500,3.4000) -- (12.8800,3.4000) -- (12.8800,6.9000) -- (11.7500,6.9000) -- cycle;
\path[draw=gluegray,line width=0.7pt] (6.9814,3.4000) -- (14.4959,3.4000);
\path[draw=gluegray,line width=0.7pt] (14.3459,3.3150) -- (14.4959,3.4000) -- (14.3459,3.4850);
\path[draw=gluegray,line width=0.7pt] (6.9814,3.4000) -- (6.9814,6.9800);
\path[draw=gluegray,line width=0.7pt] (7.0664,6.8300) -- (6.9814,6.9800) -- (6.8964,6.8300);
\node[anchor=center,align=center,inner sep=0pt,text=glueink,font={\fontsize{8}{9.6}\selectfont}] at (6.9414,7.1500) {$|z|$};
\node[anchor=east,align=center,inner sep=0pt,text=glueink,font={\fontsize{8}{9.6}\selectfont}] at (14.5185,3.0200) {$y-y_k$};
\path[draw=glueblue,line width=1.0pt,dashed] (9.4900,4.3971) -- (9.5183,4.3754) -- (9.5465,4.3531) -- (9.5747,4.3301) -- (9.6030,4.3069) -- (9.6312,4.2841) -- (9.6595,4.2624) -- (9.6877,4.2427) -- (9.7160,4.2258) -- (9.7442,4.2121) -- (9.7725,4.2022) -- (9.8007,4.1962) -- (9.8290,4.1940) -- (9.8572,4.1954) -- (9.8855,4.1996) -- (9.9137,4.2060) -- (9.9420,4.2136) -- (9.9703,4.2215) -- (9.9985,4.2286) -- (10.0267,4.2343) -- (10.0550,4.2380) -- (10.0832,4.2397) -- (10.1115,4.2400) -- (10.1397,4.2400) -- (10.1680,4.2400) -- (10.1962,4.2400) -- (10.2245,4.2400) -- (10.2527,4.2400) -- (10.2810,4.2400) -- (10.3092,4.2400) -- (10.3375,4.2400) -- (10.3657,4.2400) -- (10.3940,4.2400) -- (10.4223,4.2400) -- (10.4505,4.2400) -- (10.4787,4.2400) -- (10.5070,4.2400) -- (10.5352,4.2400) -- (10.5635,4.2400) -- (10.5917,4.2400) -- (10.6200,4.2400) -- (10.6482,4.2400) -- (10.6765,4.2400) -- (10.7047,4.2400) -- (10.7330,4.2400) -- (10.7612,4.2400) -- (10.7895,4.2400) -- (10.8177,4.2400) -- (10.8460,4.2400) -- (10.8742,4.2400) -- (10.9025,4.2400) -- (10.9307,4.2400) -- (10.9590,4.2400) -- (10.9872,4.2400) -- (11.0155,4.2400) -- (11.0437,4.2400) -- (11.0720,4.2400) -- (11.1002,4.2400) -- (11.1285,4.2400) -- (11.1567,4.2397) -- (11.1850,4.2380) -- (11.2132,4.2343) -- (11.2415,4.2286) -- (11.2697,4.2215) -- (11.2980,4.2136) -- (11.3262,4.2060) -- (11.3545,4.1996) -- (11.3827,4.1954) -- (11.4110,4.1940) -- (11.4392,4.1962) -- (11.4675,4.2022) -- (11.4957,4.2121) -- (11.5240,4.2258) -- (11.5522,4.2427) -- (11.5805,4.2624) -- (11.6087,4.2841) -- (11.6370,4.3069) -- (11.6652,4.3301) -- (11.6935,4.3531) -- (11.7218,4.3754) -- (11.7500,4.3971);
\path[draw=glueblue,line width=1.55pt] (7.2300,5.8484) -- (7.2582,5.8326) -- (7.2865,5.8167) -- (7.3148,5.8008) -- (7.3430,5.7849) -- (7.3712,5.7689) -- (7.3995,5.7528) -- (7.4277,5.7368) -- (7.4560,5.7206) -- (7.4842,5.7045) -- (7.5125,5.6883) -- (7.5407,5.6720) -- (7.5690,5.6557) -- (7.5972,5.6393) -- (7.6255,5.6229) -- (7.6537,5.6065) -- (7.6820,5.5900) -- (7.7102,5.5734) -- (7.7385,5.5568) -- (7.7668,5.5402) -- (7.7950,5.5235) -- (7.8232,5.5067) -- (7.8515,5.4899) -- (7.8797,5.4730) -- (7.9080,5.4561) -- (7.9362,5.4391) -- (7.9645,5.4221) -- (7.9927,5.4050) -- (8.0210,5.3879) -- (8.0492,5.3707) -- (8.0775,5.3535) -- (8.1058,5.3361) -- (8.1340,5.3188) -- (8.1623,5.3014) -- (8.1905,5.2839) -- (8.2188,5.2663) -- (8.2470,5.2487) -- (8.2752,5.2310) -- (8.3035,5.2133) -- (8.3317,5.1955) -- (8.3600,5.1776);
\path[draw=gluegreen,line width=1.55pt] (8.3600,5.1776) -- (8.3882,5.1597) -- (8.4165,5.1417) -- (8.4447,5.1236) -- (8.4730,5.1055) -- (8.5012,5.0873) -- (8.5295,5.0690) -- (8.5577,5.0507) -- (8.5860,5.0322) -- (8.6142,5.0137) -- (8.6425,4.9952) -- (8.6707,4.9765) -- (8.6990,4.9578) -- (8.7272,4.9390) -- (8.7555,4.9201) -- (8.7837,4.9012) -- (8.8120,4.8821) -- (8.8402,4.8630) -- (8.8685,4.8438) -- (8.8967,4.8245) -- (8.9250,4.8051) -- (8.9532,4.7857) -- (8.9815,4.7661) -- (9.0097,4.7465) -- (9.0380,4.7267) -- (9.0663,4.7069) -- (9.0945,4.6870) -- (9.1227,4.6670) -- (9.1510,4.6468) -- (9.1792,4.6266) -- (9.2075,4.6063) -- (9.2357,4.5859) -- (9.2640,4.5653) -- (9.2922,4.5447) -- (9.3205,4.5240) -- (9.3487,4.5031) -- (9.3770,4.4821) -- (9.4052,4.4611) -- (9.4335,4.4398) -- (9.4617,4.4185) -- (9.4900,4.3971);
\path[draw=glueorange,line width=1.55pt] (9.4900,4.3971) -- (9.5183,4.3755) -- (9.5465,4.3538) -- (9.5747,4.3320) -- (9.6030,4.3100) -- (9.6312,4.2879) -- (9.6595,4.2657) -- (9.6877,4.2434) -- (9.7160,4.2208) -- (9.7442,4.1982) -- (9.7725,4.1754) -- (9.8007,4.1524) -- (9.8290,4.1293) -- (9.8572,4.1060) -- (9.8855,4.0826) -- (9.9137,4.0590) -- (9.9420,4.0352) -- (9.9703,4.0113) -- (9.9985,3.9872) -- (10.0267,3.9629) -- (10.0550,3.9384) -- (10.0832,3.9137) -- (10.1115,3.8888) -- (10.1397,3.8637) -- (10.1680,3.8384) -- (10.1962,3.8129) -- (10.2245,3.7872) -- (10.2527,3.7612) -- (10.2810,3.7350) -- (10.3092,3.7086) -- (10.3375,3.6819) -- (10.3657,3.6550) -- (10.3940,3.6278) -- (10.4223,3.6004) -- (10.4505,3.5727) -- (10.4787,3.5446) -- (10.5070,3.5163) -- (10.5352,3.4877) -- (10.5635,3.4588) -- (10.5917,3.4296) -- (10.6200,3.4000);
\path[draw=glueorange,line width=1.55pt] (10.6200,3.4000) -- (10.6482,3.4296) -- (10.6765,3.4588) -- (10.7047,3.4877) -- (10.7330,3.5163) -- (10.7612,3.5446) -- (10.7895,3.5727) -- (10.8177,3.6004) -- (10.8460,3.6278) -- (10.8742,3.6550) -- (10.9025,3.6819) -- (10.9307,3.7086) -- (10.9590,3.7350) -- (10.9872,3.7612) -- (11.0155,3.7872) -- (11.0437,3.8129) -- (11.0720,3.8384) -- (11.1002,3.8637) -- (11.1285,3.8888) -- (11.1567,3.9137) -- (11.1850,3.9384) -- (11.2132,3.9629) -- (11.2415,3.9872) -- (11.2697,4.0113) -- (11.2980,4.0352) -- (11.3262,4.0590) -- (11.3545,4.0826) -- (11.3827,4.1060) -- (11.4110,4.1293) -- (11.4392,4.1524) -- (11.4675,4.1754) -- (11.4957,4.1982) -- (11.5240,4.2208) -- (11.5522,4.2434) -- (11.5805,4.2657) -- (11.6087,4.2879) -- (11.6370,4.3100) -- (11.6652,4.3320) -- (11.6935,4.3538) -- (11.7217,4.3755) -- (11.7500,4.3971);
\path[draw=gluegreen,line width=1.55pt] (11.7500,4.3971) -- (11.7782,4.4185) -- (11.8065,4.4398) -- (11.8347,4.4611) -- (11.8630,4.4821) -- (11.8912,4.5031) -- (11.9195,4.5240) -- (11.9477,4.5447) -- (11.9760,4.5653) -- (12.0042,4.5859) -- (12.0325,4.6063) -- (12.0607,4.6266) -- (12.0890,4.6468) -- (12.1172,4.6670) -- (12.1455,4.6870) -- (12.1737,4.7069) -- (12.2020,4.7267) -- (12.2302,4.7465) -- (12.2585,4.7661) -- (12.2867,4.7857) -- (12.3150,4.8051) -- (12.3432,4.8245) -- (12.3715,4.8438) -- (12.3997,4.8630) -- (12.4280,4.8821) -- (12.4562,4.9012) -- (12.4845,4.9201) -- (12.5127,4.9390) -- (12.5410,4.9578) -- (12.5692,4.9765) -- (12.5975,4.9952) -- (12.6258,5.0137) -- (12.6540,5.0322) -- (12.6822,5.0507) -- (12.7105,5.0690) -- (12.7387,5.0873) -- (12.7670,5.1055) -- (12.7952,5.1236) -- (12.8235,5.1417) -- (12.8517,5.1597) -- (12.8800,5.1776);
\path[draw=glueblue,line width=1.55pt] (12.8800,5.1776) -- (12.9082,5.1955) -- (12.9365,5.2133) -- (12.9647,5.2310) -- (12.9930,5.2487) -- (13.0212,5.2663) -- (13.0495,5.2839) -- (13.0777,5.3014) -- (13.1060,5.3188) -- (13.1342,5.3361) -- (13.1625,5.3535) -- (13.1907,5.3707) -- (13.2190,5.3879) -- (13.2472,5.4050) -- (13.2755,5.4221) -- (13.3037,5.4391) -- (13.3320,5.4561) -- (13.3602,5.4730) -- (13.3885,5.4899) -- (13.4167,5.5067) -- (13.4450,5.5235) -- (13.4732,5.5402) -- (13.5015,5.5568) -- (13.5297,5.5734) -- (13.5580,5.5900) -- (13.5862,5.6065) -- (13.6145,5.6229) -- (13.6427,5.6393) -- (13.6710,5.6557) -- (13.6992,5.6720) -- (13.7275,5.6883) -- (13.7557,5.7045) -- (13.7840,5.7206) -- (13.8122,5.7368) -- (13.8405,5.7528) -- (13.8687,5.7689) -- (13.8970,5.7849) -- (13.9252,5.8008) -- (13.9535,5.8167) -- (13.9817,5.8326) -- (14.0100,5.8484);
\path[draw=gluegray,line width=0.6pt] (8.3600,3.3400) -- (8.3600,3.4600);
\node[anchor=center,align=center,inner sep=0pt,text=glueink,font={\fontsize{8}{9.6}\selectfont}] at (8.3600,3.1500) {$-2R_k$};
\path[draw=gluegray,line width=0.6pt] (9.4900,3.3400) -- (9.4900,3.4600);
\node[anchor=center,align=center,inner sep=0pt,text=glueink,font={\fontsize{8}{9.6}\selectfont}] at (9.4900,3.1500) {$-R_k$};
\path[draw=gluegray,line width=0.6pt] (10.6200,3.3400) -- (10.6200,3.4600);
\node[anchor=center,align=center,inner sep=0pt,text=glueink,font={\fontsize{8}{9.6}\selectfont}] at (10.6200,3.1500) {$0$};
\path[draw=gluegray,line width=0.6pt] (11.7500,3.3400) -- (11.7500,3.4600);
\node[anchor=center,align=center,inner sep=0pt,text=glueink,font={\fontsize{8}{9.6}\selectfont}] at (11.7500,3.1500) {$R_k$};
\path[draw=gluegray,line width=0.6pt] (12.8800,3.3400) -- (12.8800,3.4600);
\node[anchor=center,align=center,inner sep=0pt,text=glueink,font={\fontsize{8}{9.6}\selectfont}] at (12.8800,3.1500) {$2R_k$};
\path[draw=glueink,line width=0.3pt,fill=glueink] (10.6720,3.4000) -- (10.6702,3.4135) -- (10.6650,3.4260) -- (10.6568,3.4368) -- (10.6460,3.4450) -- (10.6335,3.4502) -- (10.6200,3.4520) -- (10.6065,3.4502) -- (10.5940,3.4450) -- (10.5832,3.4368) -- (10.5750,3.4260) -- (10.5698,3.4135) -- (10.5680,3.4000) -- (10.5698,3.3865) -- (10.5750,3.3740) -- (10.5832,3.3632) -- (10.5940,3.3550) -- (10.6065,3.3498) -- (10.6200,3.3480) -- (10.6335,3.3498) -- (10.6460,3.3550) -- (10.6568,3.3632) -- (10.6650,3.3740) -- (10.6702,3.3865) -- (10.6720,3.4000) -- cycle;
\node[anchor=center,align=center,inner sep=0pt,text=glueblue,font={\fontsize{9}{10.799999999999999}\selectfont}] at (10.6200,4.6600) {$a(y)>0$};
\node[anchor=center,align=center,inner sep=0pt,text=glueorange,font={\fontsize{9}{10.799999999999999}\selectfont}] at (12.0300,4.1100) {$u_k(0,y)$};
\path[draw=glueorange,line width=0.55pt] (11.7000,4.1600) -- (11.1850,3.9384);
\path[draw=glueorange,line width=0.55pt] (11.2892,4.0757) -- (11.1850,3.9384) -- (11.3564,3.9196);
\node[anchor=center,align=center,inner sep=0pt,text=glueorange,font={\fontsize{9}{10.799999999999999}\selectfont}] at (10.6200,2.3500) {Conical tip at $y=y_k$};
\path[draw=glueorange,line width=0.6pt] (9.8200,2.6300) -- (10.5400,3.3700);
\path[draw=glueorange,line width=0.6pt] (10.4963,3.2032) -- (10.5400,3.3700) -- (10.3745,3.3218);
\node[anchor=center,align=center,inner sep=0pt,text=gluegreen,font={\fontsize{8.5}{10.2}\selectfont}] at (10.6200,6.6300) {Polar matching intervals\\$a(y)=u_k(0,y)$};
\node[anchor=center,align=center,inner sep=0pt,text=glueink,font={\fontsize{8.5}{10.2}\selectfont}] at (10.6200,1.4200) {$\widehat v(0,y)=a(y)$: $f$ preserves the axis value.};
\node[anchor=center,align=center,inner sep=0pt,text=glueink,font={\fontsize{8.5}{10.2}\selectfont}] at (10.6200,0.7000) {It improves the transverse profile and mean curvature.};
\end{tikzpicture}
\endgroup
}
\caption{Axis radii before and after inserting the connector.
The horizontal coordinate is $y-y_k$ and the vertical coordinate
is the radius $|z|$ at $t=0$. The dashed blue curve is the positive
background radius $a(y)$; the orange curve is $u_k(0,y)$ in the
exact core and vanishes conically at $y_k$. On the green matching
intervals $R_k\le|y-y_k|\le2R_k$, the two radii agree.
The solid blue portions represent the retained background.
Matching these values does not match the full transverse profiles.
The correction $f$ preserves the axis value and improves the
transverse profile and mean curvature. All curves are schematic.}
\label{fig:axis-radius-matching}
\end{figure}

The two essential requirements are small mean curvature and
accurate matching to the connectors. The following formulation
records them without mentioning a correction order.

\begin{definition}[An admissible transverse correction]\label{bg:def:admissible}
On a sufficiently small punctured neighborhood of $y=0$,
the function $f$ is smooth, with a smooth radial
extension in $x$, and satisfies $f(0,y)=f_t(0,y)=0$.
On a tube $0\le t<C\mathcal R(y)$, the profile $\widehat v$ satisfies
\begin{equation}\label{bg:eq:geometry}
 \widehat v\approx s,\qquad |D^m\widehat v|\le C_m s^{1-m}\quad(m\ge1).
\end{equation}
Here and below $D$ denotes Cartesian base derivatives in $(x,y)$;
radial formulas at $x=0$ are interpreted by their smooth extension.
For every $L,m,N\ge0$, its residual obeys
\begin{equation}\label{bg:eq:admissible-residual}
 |D^m Q(\widehat v)|\le C_{L,m,N}|y|^N s^{L-m}.
\end{equation}
Fix small constants $0<c<c_1$ so that $t\le cR_k$ lies inside
$t<c_1\mathcal R(y)$ on each polar attachment region. On
\[
 R_k\le\sqrt{2t^2+(y-y_k)^2}\le2R_k,\qquad t\le cR_k,
\]
we require, for every $L,m,N\ge0$,
\begin{equation}\label{bg:eq:admissible-matching}
 |D^m(u_k-\widehat v)|\le C_{L,m,N}|y|^N s^{L+2-m}.
\end{equation}
The constants are independent of $k$. Finally, throughout each
fixed transverse collar $\theta\mathcal R\le t\le C\mathcal R$,
$\theta>0$, we require
\begin{equation}\label{bg:eq:admissible-collar}
 |D^m(\widehat v-t)|\le C_{\theta,m,N}|y|^N\qquad(m,N\ge0).
\end{equation}
\end{definition}

The quantifiers in \eqref{bg:eq:admissible-residual} and
\eqref{bg:eq:admissible-matching} are useful: every fixed power lost
by differentiating at the small scale $s$ can be absorbed by
choosing $L$ larger. The annular estimate has no such small-scale
issue because $s\approx\mathcal R$ there and $\mathcal R$ is a
fixed power of $|y|$.

\begin{proposition}[Choice of the correction]\label{bg:prop:choose-f}
For the radius function \(a\) and scaled connector profiles \(u_k\) fixed in \eqref{bg:eq:insertion-data}--\eqref{bg:eq:radius}, there exists an admissible
transverse correction $f$.
\end{proposition}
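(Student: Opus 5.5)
We construct $f$ as a diagonal combination of the finite-order radial correctors sketched in the introduction. For a fixed order $J$ we build $V_J[a]=V_0[a]+\sum_{j=1}^J W_j[a]$ by successive radial solves.

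\textbf{Order one.} In the scaled variable $\sigma=t/a(y)$ we expand $Q(V_0[a])$ using $v_y=a'\zeta(\sigma)$ and $v_{yy}=a''\zeta+(a')^2a^{-1}\sigma^2h''$. The leading error is
\[
a''\,\Phi_1(\sigma)+\frac{(a')^2}{a}\,\Phi_2(\sigma),
\]
with fixed profiles $\Phi_1,\Phi_2$ satisfying $|\Phi_i|\lesssim(1+\sigma)^{-2}$, as in \eqref{eq:end-modulated-defect}. We then solve the linearized leaf ODE
\[
\mathcal L_h\psi=\frac{\psi''}{1+h'^2}-\frac{2h'h''\psi'}{(1+h'^2)^2}+\frac{3\psi'}{\sigma}+\frac{3\psi}{h^2}=-\Phi_i,\qquad \psi(0)=\psi'(0)=0,
\]
by variation of parameters. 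The homogeneous solutions are the dilation field $h-\sigma h'$, regular at $0$, and a singular one, so the initial-value problem selects a unique smooth radial solution. Its growth is at most $(1+\sigma)^{1+\gamma}$ for some small loss $\gamma$ coming from the indicial roots $-2$ and $-3$ at infinity of the Jacobi operator of $C_{3,3}$. This yields
\[
W_1=a^3(a^2)^{-1}\bigl(a''\psi_1+(a')^2a^{-1}\psi_2\bigr)
\]
after restoring scaling. Concretely $W_1=a^2a''\psi_1(t/a)+a(a')^2\psi_2(t/a)$, multiplied by the correct powers.

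\textbf{Higher orders.} Iterating, each new residual is a finite sum of monomials $\prod a^{(i_l)}$ of total derivative weight $2(j+1)$ times fixed profiles of $\sigma$. By \eqref{eq:soft-cap-data}, every such monomial is bounded by $a\,\mathcal R^{-2(j+1)}$ times powers of $s/a$. Hence the residual after $J$ steps satisfies
\[
|D^mQ(V_J[a])|\le C_{J,m}\,\bigl(a/\mathcal R\bigr)^{2J}\,\mathcal R^{-2}s^{\,c_J-m},
\]
where $c_J$ is only mildly negative. Since $a\approx e^{-1/y^2}$ while $\mathcal R$ and $s$ lie between $a$ and $\mathcal R$, the flat factor $a^{2J}$ beats any power $|y|^N s^L$ once $J=J(L,N)$ is large enough. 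This gives \eqref{bg:eq:admissible-residual} at each fixed order. The geometric bounds \eqref{bg:eq:geometry} follow from the same sizes, because the correctors are tiny relative to $s$.

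\textbf{Matching and collar.} On the polar regions, $a(y)=u_k(0,y)$ exactly. We compare $u_k$ with $V_J[a]$ by rescaling to the connector: there $e_\infty=u_\infty-a_\infty h(t/a_\infty)$ satisfies the sharp bounds of Proposition~\ref{prop:precise-estimates-for-connector}. The key step is an analogue of Proposition~\ref{prop:direct-leaf-expansion} at every order. The difference $u_k-V_J[a]$ solves $\mathcal L_h$ (plus quadratic terms) with the forcing $-Q(V_J[a])$ and zero Cauchy data at $t=0$. Uniqueness of the regular solution of the radial ODE with prescribed axis data, together with Gronwall in $\sigma$ and the axial derivative bounds, then gives
\[
|D^m(u_k-V_J)|\lesssim (\lambda_k\text{-scale ratio})^{J}\,s^{2-m}.
\]
Here the relevant ratio is $a^3/R_k^2$, or equivalently $(a/R_k)$ up to powers, which is again flat in $y$, yielding \eqref{bg:eq:admissible-matching}.

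In the collar $t\approx\mathcal R$ we have $\widehat v-t=a\,(h(\sigma)-\sigma)+W$, which is of size $a^3\mathcal R^{-2}$ up to mild powers. This is flat, so \eqref{bg:eq:admissible-collar} holds.

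\textbf{Diagonal choice.} Since no single $J$ serves all $(L,m,N)$, we choose radii $\delta_J\downarrow0$ and set $f=\sum_J\chi_J(y)(V_J[a]-V_0[a])$. Here $\chi_J$ is a partition of unity in $y$, with transitions placed in gaps between the supports of the $\eta_k$, away from the polar intervals, so that matching is unaffected. The $y$-derivatives of $\chi_J$ cost powers of $|y|^{-1}$ and multiply $V_{J+1}-V_J$, which is flat. Choosing $\delta_J$ so small that all constants up to order $J$ are beaten by $|y|^{-1}$ yields every estimate with $k$-independent constants.

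\textbf{Main obstacle.} The step I expect to be hardest is the matching estimate: controlling $u_k-V_J[a]$ uniformly up to $t\le cR_k$, where $s$ ranges over the whole window from $a$ to $R_k$. This requires propagating the connector's axial derivative bounds through the radial ODE at every order. I would first try the Gronwall/Volterra formulation in $\sigma$, using the growth of the two homogeneous solutions.
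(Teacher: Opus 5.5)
Your architecture matches the paper's: profiles $V_J[a]$ built by radial solves with zero axis data, a residual bound, a matching bound on the polar regions, and a diagonal increase of $J$ with transitions placed between insertion neighborhoods. The gap is in the quantitative core. The bounds you state are not what this construction gives, and your diagonal argument puts the gain in $J$ in the wrong region.

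First, the radial inverse loses nothing. By Lemma~\ref{lem:linearized-hs-inverse}, forcing of size $(1+\rho)^{b-2}$ with $b\ge0$ gives a solution of size $(1+\rho)^{b}$; the indicial roots $-2,-3$ at infinity are negative, so there is no resonance. Hence the order-one profiles are bounded, and the weight-$2\ell$ profiles grow like $\rho^{2\ell-2}$ and no faster. This sharpness is essential. Since $W_\ell=a\,q^\gamma P_{2\ell,\gamma}(t/a)$ with $|q^\gamma|\lesssim(a/\mathcal R)^{2\ell}$, profile growth $\rho^{g_\ell}$ gives $|W_\ell|\lesssim a^{1+2\ell-g_\ell}\mathcal R^{g_\ell-2\ell}$ on the collar $t\approx\mathcal R$. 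That is flat only if $g_\ell<2\ell+1$. A loss like your $(1+\sigma)^{1+\gamma}$, incurred at each inversion, makes this exponent negative by $\ell=3$. That would destroy \eqref{bg:eq:admissible-collar} and \eqref{bg:eq:geometry} exactly at the large orders the diagonal needs.

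Second, with $g_\ell=2\ell-2$ the ``powers of $s/a$'' are not mild: they turn $(a/\mathcal R)^{2J}$ into $(s/\mathcal R)^{2J}$. The correct bounds are
\[
 |\partial_t^m\partial_y^nQ(V_J[a])|\lesssim a^3\mathcal R^{-2J-2-n}s^{2J-2-m},
 \qquad
 |\partial_t^m\partial_y^n(u_k-V_J[a])|\lesssim a^3R_k^{-2J-2-n}s^{2J-m}.
\]
At $s\approx\mathcal R$ (resp.\ $s\approx R_k$) these have size $a^3\mathcal R^{-4-m}$ (resp.\ $a^3R_k^{-2-m}$), independently of $J$. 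So there is no uniform flat factor $a^{2J}$, and your displayed bounds (with $c_J$ ``mildly negative'', resp.\ with $(a^3/R_k^2)^J s^{2-m}$) are not what the construction yields. Admissibility must be checked at both ends of $a\le s\le C\mathcal R$. Near the axis it needs $2J+1>L$. At $s\approx\mathcal R$, flatness comes only from the fixed factor $a^3$, since $s^L$ is then a fixed power of $|y|$. This is the budget \eqref{bg:eq:admissibility-budget}. Likewise, it is the weighted bound on $W_{J+1}$, not mere flatness, that lets a transition profile $V_J+\chi W_{J+1}$ obey the order-$J$ residual estimate near the axis.

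Your matching step also does not close as sketched. The equation for $E=u_k-V_J[a]$ is not $\mathcal L_h E$ plus quadratic terms. The linearization of $Q$ contains linear axial terms in $E_{yy}$, $E_{ty}$, $E_y$; these are not part of an ODE in $t$, so Gronwall in $\sigma$ cannot absorb them. The a priori bounds (the order-zero connector expansion plus the bounds on the $W_\ell$) only give $|E_{yy}|\lesssim a^3R_k^{-4}$, which is too weak once $J\ge2$.

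The missing idea is the induction on $J$ in Corollary~\ref{cor:arbitrary-order-matching}. The order-$(J-1)$ matching estimate together with the bound on $W_J$ gives the weaker estimate
\[
 |\partial_t^m\partial_y^nE|\lesssim a^3R_k^{-2J-n}s^{2J-2-m}.
\]
The axial part of $Q(u_k)-Q(V_J[a])$, moved to the right-hand side, is then $O(a^3R_k^{-2J-2}s^{2J-2})$, the same size as $Q(V_J[a])$. The radial response estimate then gains the factor $s^2$: rescale by $a$ in the smoothing region, and in $\tau=\log(t/a)$ on the tail the forcing grows like $e^{2J\tau}$ against modes decaying like $e^{-2\tau},e^{-3\tau}$. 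A second induction on the number of $y$-derivatives handles the commutators.

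If you invoke Proposition~\ref{prop:correctors-estimates} and Corollary~\ref{cor:arbitrary-order-matching} with these actual bounds, the rest of your diagonal argument goes through essentially as in the paper.
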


We prove this proposition after the geometric construction.
No smooth extension of $f$ to $(x,y)=0$ is asserted. Its high
transverse derivatives may grow in the shrinking caps. The smooth
flat object obtained below is the ambient metric exponent. Given such a correction we can conclude the proof of Theorem \ref{thm:main-accumulation}.

\begin{proof}[Proof of Theorem~\ref{thm:main-accumulation}]
Take an admissible correction given by
Proposition~\ref{bg:prop:choose-f}. We first make its background
minimal in a smooth conformal metric. We then insert the connectors
and update that same metric formula locally. The estimates in
Definition~\ref{bg:def:admissible} control both stages.

\smallskip\noindent\textbf{Step 1: Prepare the background and its metric.}
Choose a smooth cutoff $\chi$ equal to one on $[0,1]$ and zero
on $[2,\infty)$. Given an admissible $f$, set
\begin{equation}\label{bg:eq:cut-background}
 v_b(t,y)=t+\chi\!\left(\frac{t}{c_1\mathcal R(y)}\right)
                  \bigl(\widehat v(t,y)-t\bigr)\quad(y\ne0),
 \qquad v_b(t,0)=t.
\end{equation}
The background $S(v_b)$ retains the prescribed cap $a(y)$ and
agrees with the cone outside the narrow tube. Its slope is
uniformly bounded. It is smooth away from the origin.

We use the explicit conformal extension from
Lemma~\ref{lem:conformal-correction} for both the background and
the surface after insertion. For any positive profile $v$,
let $H_v$ be its Euclidean scalar mean curvature. Fix
$\vartheta\in C_c^\infty((-1,1))$ equal to one near zero and define
\begin{equation}\label{bg:eq:extension}
 F[v](x,z,y)=
 -\frac{H_v(t,y)}{8\sqrt{1+v_t^2+v_y^2}}\,
       (|z|-v(t,y))
       \vartheta\!\left(\frac{4(|z|-v(t,y))}{v(t,y)}\right).
\end{equation}
A common outer cutoff, equal to one on the working neighborhood,
may be included. The radial support avoids $z=0$, and the smooth
even dependence in $x$ makes the expression smooth at $x=0$.
On $S(v)$,
\begin{equation}\label{bg:eq:normal-extension}
 F[v]=0,\qquad \nu_vF[v]=-H_v/8,\qquad
 H_{S(v),e^{2F[v]}g_{\rm euc}}=0.
\end{equation}
The formula is zero wherever $H_v=0$.

Here is the derivative estimate that explains the choice of
conditions on $f$. On a region with geometric scale $s=a+t$,
the profile bounds \eqref{bg:eq:geometry} and differentiation of
\eqref{bg:eq:extension} give
\begin{equation}\label{bg:eq:extension-estimate}
 |D^mF[v]|\le C_m\sum_{j=0}^m
                 s^{1-m+j}|D^jH_v|.
\end{equation}
Derivatives on the left are ambient Cartesian derivatives;
those on the right are base derivatives of the scalar curvature.
The factor $s$ comes from $|z|-v$ on the support, and each
derivative of a profile coefficient or cutoff costs $s^{-1}$.

Inside the tube, \eqref{bg:eq:admissible-residual} gives
$|D^jH_{\widehat v}|\le C|y|^Ns^{L-j}$. Taking $L\ge m-1$ in
\eqref{bg:eq:extension-estimate} controls the $m$th metric derivative
by $C|y|^N$. On the transverse cutoff collar,
\eqref{bg:eq:admissible-collar} gives the same conclusion: cutoff
derivatives cost only powers of $\mathcal R^{-1}$, which are
absorbed by increasing $N$ in that estimate.
Consequently
\begin{equation}\label{bg:eq:background-flat}
 F_b:=F[v_b],\qquad
 |D^mF_b(x,z,y)|\le C_{m,N}|y|^N\quad(m,N\ge0).
\end{equation}
Extend $F_b$ by zero at the origin and wherever the background
is exactly the cone. It is smooth and flat at zero, and
$g_b=e^{2F_b}g_{\rm euc}$ makes the prepared background minimal.

\smallskip\noindent\textbf{Step 2: Insert the connectors and update the metric.}
With the same cutoff $\chi$, define
\begin{equation}\label{bg:eq:final-profile}
 v=v_b+\sum_k
  \chi\!\left(\frac{\sqrt{2t^2+(y-y_k)^2}}{R_k}\right)
                       (u_k-v_b).
\end{equation}
The supports are disjoint. The surface $\Sigma=S(v)$ agrees
with the exact connector in each inner region and with the
prepared background outside the insertion regions.

On a polar attachment region, the transverse cutoff in
\eqref{bg:eq:cut-background} is one. The matching estimate
\eqref{bg:eq:admissible-matching} loses two derivatives when the
curvature operator is applied. Its remaining bound is therefore
of the form \eqref{bg:eq:admissible-residual}. Differentiating the
interpolation cutoff costs $R_k^{-1}\le C s^{-1}$, so these terms
obey the same bound. The positive profiles and their first
derivatives stay in the controlled range in
\eqref{bg:eq:geometry}, after restricting to sufficiently large $k$.

On the rest of the attachment annulus, $t\approx R_k$.
The connector's regular-sector estimates give
\[
 |D^m(u_k-t)|\le C_m\lambda_kR_k^{-m}
       \le C_m e^{-3/y_k^2}R_k^{-2-m}.
\]
Together with \eqref{bg:eq:admissible-collar}, this controls the
interpolation and every curvature derivative by every prescribed
power of $|y|$. Since $s\approx R_k$ here, the metric extension
again has the bounds \eqref{bg:eq:background-flat}. Inside the
exact connector regions its curvature is zero, so the extension
vanishes identically; there is no derivative cost at scale
$\lambda_k$.

It follows that $F=F[v]$, defined by \eqref{bg:eq:extension} off
the singular points and extended by zero on the exact cores, is
smooth away from zero and satisfies
\begin{equation}\label{bg:eq:final-flat}
 |D^mF(x,z,y)|\le C_{m,N}|y|^N\quad(m,N\ge0).
\end{equation}
It extends smoothly and flatly at zero. The final metric is
$g=e^{2F}g_{\rm euc}$.

The background-first metric construction can be stated literally:
\begin{equation}\label{bg:eq:metric-update}
 g=e^{2\Delta F}g_b,\qquad \Delta F=F[v]-F[v_b].
\end{equation}
Use the same cutoffs in the two extensions. Locality of
\eqref{bg:eq:extension} gives $\Delta F=0$ where the profiles agree
on a base neighborhood. Thus the metric update is supported
over the insertion regions. On an exact connector core,
$F[v]=0$ and $\Delta F=-F_b$: the final metric is Euclidean
there, although the preliminary metric need not be. The estimates
for $\Delta F$ follow from \eqref{bg:eq:background-flat} and
\eqref{bg:eq:final-flat}. No independent patching of metric pieces
is needed.

\smallskip\noindent\textbf{Step 3: Mass bounds and stationarity.} For completeness, the varifold conclusions follow from the same
localization. The modified background has mass in $B_r$ bounded by
\[
 C\int_{|y|<r}\int_0^{C|y|^4}t^3(t+a(y))^3\,dt\,dy
       \le Cr^{29}.
\]
Each inserted region has mass at most $CR_k^8$, using the bounded
slopes on its annulus and the connector's finite density at
infinity on its core. The total contribution near zero is
$C\sum_{y_k\le2r}R_k^8\le Cr^{32}$. The removed cone pieces
have the same negligible bounds. Hence the unique tangent at zero
is $C_{3,3}\times\R$ with multiplicity one, and
$\mu_\Sigma(B_r)\le Cr^8$. Exact agreement on smaller ambient
balls about $p_k$ gives the unique $C_{4,3}$ tangent there.
There are no other singularities.

Stationarity holds on the regular part by
\eqref{bg:eq:normal-extension}, and at each $p_k$ by the exact
Euclidean connector. Removing $B_r(0)$ with a cutoff whose
derivative is $O(r^{-1})$ gives a first-variation error
$O(r^8+r^7)\to0$. This proves stationarity also at zero.
Finally, \eqref{bg:eq:final-flat} implies
$D^m(g-g_{\rm euc})(0)=0$ for all $m$. This proves
the theorem after restricting to a sufficiently small open ball.
Indeed, the same bounds give
$\|g-g_{\rm euc}\|_{C^m(B_\rho)}\le C_{m,N}\rho^N$
for every $m,N$.
\end{proof}

\subsection{Successive corrections around the foliation and the choice of \texorpdfstring{$f$}{f}}\label{sec:finite-correctors}
We establish two estimates with different roles.
Proposition~\ref{prop:correctors-informal} summarizes the control of the corrected
background and its remaining mean curvature for a slowly varying
radius function. Corollary~\ref{cor:arbitrary-order-matching} shows that,
when the radius function comes from the connector, the same profiles approximate
that connector to the corresponding order. The first estimate
controls the region between insertions; the second controls the
attachment annuli. In both, the axis value is kept fixed.

We construct the finite-order profiles used below to choose the transverse
correction $f$. At each fixed $y$, we retain the prescribed axis value
$a(y)$ and solve a radial linearized equation around the leaf
$a(y)h(t/a(y))$. Successive solutions give the approximations $V_J[a]$
and cancel their mean-curvature errors through increasing even orders.
When $a$ is the connector radius, these profiles also approximate the
connector to the corresponding order. The construction is a finite
expansion in the normalized axial jets $q_j=a^{j-1}a^{(j)}$, with
smooth radial coefficients in $\rho=t/a(y)$ determined by the
linearized Hardt--Simon equation. No convergence of an infinite
series is required.

A related scheme for successively improving approximate
constant-mean-curvature hypersurfaces by solving model linearized
equations appears in Mahmoudi--Mazzeo--Pacard \cite{MMP2006}.

At order $J$, the estimates stated below and proved in
Appendix~\ref{app:corrector-proofs}, together with the conformal
correction lemma, yield
\begin{equation}\label{eq:gluing-metric-budget}
 |D^mF|\le C_{J,m}\left(
      a^3\mathcal R^{-m-3}
     +a^{2J+2-m}\mathcal R^{-2J-2}\right).
\end{equation}
The second term measures the cost near the axis. In the uncorrected
interpolation that cost has the scaling $a^{2-m}R_k^{-2}$;
successive corrections supply the additional positive powers of
$a/\mathcal R$. For each fixed $m$, taking $2J+2>m$ makes both
terms flat in $y$. The diagonal choice below increases $J$ as $y$ tends to zero,
while retaining exact connector cores. This is the mechanism
that gives one smooth metric with the full Euclidean jet at zero.

We now define the correctors and state the estimates used in
this construction. The scalar mean curvature is
$H=-Q(v)/\sqrt{1+|Dv|^2}$, with $Q$ as in \eqref{eq:gluing-Q}.
Set $\rho=t/a(y)$, $q_j=a^{j-1}a^{(j)}$, and
\[
 \mathcal I_d
 =\left\{\gamma\in\mathbb N_0^d:
          \sum_{j=1}^d j\gamma_j=d\right\},
 \qquad q^\gamma=\prod_{j=1}^d q_j^{\gamma_j}.
\]
The initial approximation is
\[
 V_0[a](t,y)=a(y)h\!\left(\frac{t}{a(y)}\right).
\]

Suppose $V_{J-1}[a]$ has been constructed, and write
$v=V_{J-1}[a]$. For a vertical perturbation
$v_\tau=v+\tau\varphi$, its mean curvature, viewed as a
function of $(t,y)$, is
\[
 H_{v_\tau}
 =-\frac{Q(v_\tau)}{\sqrt{1+|Dv_\tau|^2}}.
\]
Consequently,
\[
 \left.\frac{d}{d\tau}H_{v+\tau\varphi}\right|_{\tau=0}
 =
 -\frac{DQ_v[\varphi]}{\sqrt{1+|Dv|^2}}
 +\frac{Q(v)\,Dv\cdot D\varphi}
        {(1+|Dv|^2)^{3/2}},
 \qquad
 DQ_v[\varphi]
 :=\left.\frac{d}{d\tau}Q(v+\tau\varphi)\right|_{\tau=0}.
\]

Express the normalized residual $aQ(v)$ in the variables
$(\rho,q_1,q_2,\ldots)$, and define $S_{2J,\gamma}$ by
\[
 [aQ(v)]_{2J}
 =\sum_{\gamma\in\mathcal I_{2J}}
       q^\gamma S_{2J,\gamma}(\rho).
\]
Here $[\cdot]_{2J}$ denotes the coefficient of
$\varepsilon^{2J}$ after the substitution
$q_j\mapsto\varepsilon^j q_j$.
By induction, all residual terms of weight less than $2J$
vanish.

Consider a correction of the form
\[
 \varphi(t,y)
 =a(y)\sum_{\gamma\in\mathcal I_{2J}}
       q^\gamma(y)P_{2J,\gamma}(\rho).
\]
At weight $2J$, its contribution to $aDQ_v[\varphi]$ is
\[
 [aDQ_v[\varphi]]_{2J}
 =\sum_{\gamma\in\mathcal I_{2J}}
       q^\gamma L_hP_{2J,\gamma},
\]
where
\[
 L_hw
 =\frac{w''}{1+h'^2}
  +\left(\frac3\rho
         -\frac{2h'h''}{(1+h'^2)^2}\right)w'
  +\frac3{h^2}w.
\]
Indeed, replacing the linearization at $v/a$ by that at $h$
only changes higher-weight terms, as do the axial and
nonlinear contributions of the new correction.
Thus the perturbed mean curvature satisfies
\[
 [aH_{v+\varphi}]_{2J}
 =-\frac{1}{\sqrt{1+h'^2}}
   \sum_{\gamma\in\mathcal I_{2J}}
   q^\gamma
   \bigl(S_{2J,\gamma}+L_hP_{2J,\gamma}\bigr).
\]

We therefore define $P_{2J,\gamma}$ as the unique regular
even solution of
\[
 L_hP_{2J,\gamma}=-S_{2J,\gamma},
 \qquad
 P_{2J,\gamma}(0)=P'_{2J,\gamma}(0)=0.
\]
Set
\begin{align}\label{eq:corrector-inductive-def}
     W_J[a](t,y)
 =a(y)\sum_{\gamma\in\mathcal I_{2J}}
       q^\gamma(y)
       P_{2J,\gamma}\!\left(\frac{t}{a(y)}\right),
 \qquad
 V_J[a]=V_{J-1}[a]+W_J[a].
\end{align}
This cancels the weight-$2J$ residual without changing the
lower-weight cancellations. Since only even weights occur,
the residual $aQ(V_J[a])$ has no terms of weight less than
$2J+2$.

In Proposition~\ref{prop:correctors-estimates} we prove precise estimates for the profiles constructed above, for arbitrary given slowly varying profile $a$. We state the result informally here and collect the precise estimates in the Appendix~\ref{sec:corrector-estimates}.

\begin{proposition}[Informal version; see Proposition~\ref{prop:correctors-estimates} in Appendix~\ref{sec:corrector-estimates}]
\label{prop:correctors-informal}
Let $a(y)>0$ be any prescribed smooth radius varying on an axial
scale $R$, with $|a^{(n)}|\le C_n aR^{-n}$. For each fixed $J$,
if $a/R$ is sufficiently small, the universal corrections in
\eqref{eq:corrector-inductive-def} give a positive smooth profile
$V_J[a]\approx a+t$ with the same axis data as $V_0[a]$.
They cancel successive even-weight curvature errors, leaving
\[
 |Q(V_J[a])|\le C_J a^3R^{-2J-2}(a+t)^{2J-2}
\]
on the interior of the band, with corresponding weighted estimates
for all derivatives. The construction uses only $a$ and its
derivatives; it does not require $a$ to come from the connector.
\end{proposition}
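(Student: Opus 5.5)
The plan is to prove the estimate by induction on $J$, tracking the weighted structure of the residual $aQ(V_J[a])$ as a finite polynomial in the normalized jets $q_j=a^{j-1}a^{(j)}$ with coefficients that are smooth functions of $\rho=t/a$. The slow-variation hypothesis $|a^{(n)}|\le C_n aR^{-n}$ gives $|q_j|\le C_j(a/R)^j$, so a monomial $q^\gamma$ with $\gamma\in\mathcal I_d$ has size $(a/R)^d$. The claimed bound $|Q(V_J)|\le C_Ja^3R^{-2J-2}(a+t)^{2J-2}$ then reads, after multiplying by $a$, as $(a/R)^{2J+2}$ times a coefficient growing like $(1+\rho)^{2J-2}$ in $\rho$. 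So we need two facts. First, the weight bookkeeping: all terms of weight $<2J+2$ cancel, by construction. Second, growth control for the radial coefficients: the weight-$d$ coefficients $S_{d,\gamma}(\rho)$ and $P_{d,\gamma}(\rho)$ grow at most like $\rho^{d-3}$ and $\rho^{d-1}$ respectively, as $\rho\to\infty$. The weight bookkeeping is the formal computation already displayed before the statement. I would make it rigorous by writing $Q(v)$ for $v=a\,\Phi(\rho,q)$ via the chain rule, $\partial_y=\partial_y|_\rho-\rho\,(a'/a)\,\partial_\rho$, and $a\,\partial_y q_j=q_{j+1}-(j-1)q_1q_j$. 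This shows that axial differentiation raises weight by exactly one, so $aQ$ is a convergent-in-weight expression whose weight-$d$ part is polynomial in $q$.

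For the radial step, I would analyze $L_h$ directly. Near $\rho=0$ it is a regular singular operator with indicial roots $0$ and $-2$. The regular even solution of $L_hP=-S$ with $P(0)=P'(0)=0$ exists and is smooth provided $S$ is even and smooth, and this evenness propagates inductively. Near $\rho=\infty$, since $h\sim\rho$ and $h'\to1$, $L_h\approx\frac12\partial_\rho^2+\frac3\rho\partial_\rho+\frac3{\rho^2}$. Its homogeneous solutions are $\rho^{\alpha}$ with $\alpha^2/2+(3-\tfrac12)\alpha+3=0$, which gives $\alpha=-2,-3$, matching the Jacobi fields of $C_{3,3}$ (the dilation field $\zeta=h-\rho h'$ decays like $\rho^{-2}$). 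By variation of parameters with the two fundamental solutions, a forcing $O(\rho^{d-3})$ with $d-3\ge -1$ yields $P=O(\rho^{d-1})$, with corresponding derivative bounds $|\partial_\rho^kP|\le C\rho^{d-1-k}$. There is no resonance, since $d-1$ is never an indicial root once $d\ge 2$. The new residual at weight $d+2$ collects products of earlier $P$'s, their derivatives, and $h$-dependent coefficients. Scaling homogeneity, with each $\partial_y$ costing $\rho/a$ or a weight, shows these coefficients are $O(\rho^{d-1})$ relative to $a^{-1}$. This is consistent with the target $(1+\rho)^{2J-2}$ at weight $2J+2$.

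The remainder then splits into two parts. The first is the truncation: only finitely many weights up to $2J$ are inserted, and all terms of weight $\ge 2J+2$ that are generated are estimated pointwise using $|q^\gamma|\le C(a/R)^{|\gamma|}$ and the $\rho$-growth bounds. The second is the nonlinear Taylor remainder of $Q$ around $V_0$ when adding the correction $\sum W_j$. For that we need the smallness $|DW_j|\le C(a/R)^{2j}(1+\rho)^{2j-2}\ll1$ in the band $t\lesssim R$ (indeed $(a/R)^{2j}\rho^{2j-2}\le (t/R)^{2j-2}(a/R)^2$), which keeps $V_J\approx a+t$, positive, and with bounded slope. Derivative estimates follow by the same bookkeeping, since each Cartesian derivative costs $(a+t)^{-1}$ via $\partial_t=a^{-1}\partial_\rho$ and $\partial_y$ as above.

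The main obstacle I expect is the uniform growth control of the coefficients across all weights. In particular, I must ensure that the residual coefficients at weight $d$ grow no faster than $\rho^{d-3}$. Otherwise the correction would dominate the leaf for $t$ comparable to $R$ and destroy the expansion. I would first try to prove this by an explicit homogeneity argument on the cone end, where $h(\rho)=\rho+b_H\rho^{-2}+\dots$. There, $V_0$ is exactly the cylinder-plus-Jacobi-field, and weight-$d$ terms correspond to $t^{d-1}$ corrections of $V$ times $a^3/R^d$-type prefactors.
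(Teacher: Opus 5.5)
Your overall architecture is the paper's: a weight grading in $q_j=a^{j-1}a^{(j)}$, radial inversion of $L_h$ with zero axis data, growth control of the profiles, then a remainder estimate. However, your growth bookkeeping is off by one power of $\rho$, and that is exactly where the content of the estimate lies.

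You correctly note that the target means $aQ(V_J[a])=(a/R)^{2J+2}\cdot O\bigl((1+\rho)^{2J-2}\bigr)$. This requires weight-$d$ residual coefficients of size $(1+\rho)^{d-4}$ at $d=2J+2$. Your rates $S_{d,\gamma}=O(\rho^{d-3})$ and $P_{d,\gamma}=O(\rho^{d-1})$ give $(1+\rho)^{2J-1}$ there, so the claim that this is ``consistent with the target'' is false. Carried through, your scheme only yields $|W_\ell|\lesssim a^2R^{-2\ell}(a+t)^{2\ell-1}$ and $|Q(V_J[a])|\lesssim a^2R^{-2J-2}(a+t)^{2J-1}$. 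This is weaker than the statement by the factor $(a+t)/a=1+\rho$, which is of size $R/a$ at the outer edge of the band. Your cone-end heuristic, ``$t^{d-1}$ corrections times $a^3/R^d$'', is dimensionally inconsistent for the same reason. The correct scaling is $a^3t^{d-2}R^{-d}$, i.e. $W_\ell\lesssim a^3R^{-2\ell}(a+t)^{2\ell-2}$.

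The missing input is the decay of the leaf's own axial derivatives, which seeds the induction one power better. Note that
$\partial_y(ah(t/a))=q_1(h-\rho h')$ and $a\,\partial_y^2(ah(t/a))=q_2(h-\rho h')+q_1^2\rho^2h''$.
Since $h-\rho h'=O(\rho^{-2})$ (you even record this) and $\rho^2h''=O(\rho^{-2})$, you get $S_{2,\gamma}=O((1+\rho)^{-2})$, not merely $O(\rho^{-1})$, and hence $P_{2,\gamma}=O(1)$.

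The invariant to propagate, using $a\partial_y\rho=-q_1\rho$ and $a\partial_yq_j=(j-1)q_1q_j+q_{j+1}$, is the following, with differentiated bounds:
\begin{itemize}
\item weight-$k$ coefficients of $V_{\ell-1}/a-h$ grow like $(1+\rho)^{k-2}$;
\item those of $(V_{\ell-1})_y$ grow like $(1+\rho)^{k-3}$;
\item those of $a(V_{\ell-1})_{yy}$ grow like $(1+\rho)^{k-4}$.
\end{itemize}
This gives $S_{2\ell,\gamma}=O((1+\rho)^{2\ell-4})$. The radial inverse with $b=2\ell-2\ge0$ (no resonance with the indicial roots $-2,-3$) then gives $P_{2\ell,\gamma}=O((1+\rho)^{2\ell-2})$.

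There is also a secondary issue in your remainder step. Splitting into truncation plus a Taylor remainder in the corrections leaves infinitely many higher-weight terms, coming from the denominators and from $3/v$, to be summed. On the band $s/R$ is only bounded, not small, so this summation is not automatic. The paper avoids any convergence question as follows:
\begin{itemize}
\item set $\varepsilon=a(y)/R$ and let $E_\delta$ be $aQ(V_J[a])$ after substituting $q_j\mapsto\delta^jq_j$;
\item note that $\partial_\delta^kE_\delta|_{\delta=0}=0$ for $k\le2J+1$;
\item bound $\partial_\delta^{2J+2}E_\delta$ uniformly for $\delta\in[0,1]$, using $\varepsilon(1+\rho)=s/R\le C_1+2$, $|\varepsilon^{-j}q_j|\le M_j$, and denominators bounded below;
\item conclude with Taylor's formula with integral remainder.
\end{itemize}
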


For the connector radius, the same profiles also approximate
$u_\infty$ to arbitrary finite order. The proof of the following
corollary is given in Appendix~\ref{app:corrector-proofs}.
\begin{corollary}[Matching to arbitrary order]\label{cor:arbitrary-order-matching}
Take $a(y)=u_\infty(0,y)$ and let
$V_J[a]$ be the profiles supplied by
Proposition~\ref{prop:correctors-estimates}. For each $J>0$ and $|y_0|$ large enough depending on $J$ and $R = \theta_0|y_0|$ with $\theta_0>0$ small and fixed, we have:
    \begin{equation}\label{glue:eq:matching}
 |\partial_t^m\partial_y^n(u_\infty-V_J[a])|
 \le C_{J,m,n}a(y)^3R^{-2J-2-n}(a(y)+t)^{2J-m}
 \qquad(m,n\ge0),
\end{equation}
for all $m,n\in\mathbb N_0$ and $(t,y) \in \{t^2 + |y-y_0|^2 \leq R^2\}$.
\end{corollary}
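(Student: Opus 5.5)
The plan is to compare $u_\infty$ with $V_J[a]$ through the linearized radial equation, using the fact that both have the same axis value $a(y)$ and vanishing radial derivative at $t=0$. Set
\[
 E_J=u_\infty-V_J[a],\qquad E_J(0,y)=\partial_tE_J(0,y)=0.
\]
Since $Q(u_\infty)=0$, subtracting gives a linear equation
\[
 \mathcal L_J E_J=-Q(V_J[a]),
\]
where $\mathcal L_J$ is the integrated linearization of $Q$ along the segment joining $V_J[a]$ and $u_\infty$. The inputs are the following.
\begin{itemize}
\item The axial derivative bounds of Proposition~\ref{prop:precise-estimates-for-connector}: $|a^{(n)}|\le C_naR^{-n}$, together with \eqref{roadmap:eq:sharp-end} for $e_\infty=u_\infty-V_0[a]$.
\item The residual bound of Proposition~\ref{prop:correctors-informal} (precisely, Proposition~\ref{prop:correctors-estimates}): $|D^mQ(V_J[a])|\le C a^3R^{-2J-2}s^{2J-2-m}$ with $s=a+t$.
\end{itemize}

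\textbf{Step 1 (reduce to an ODE at fixed $y$).} Write $\mathcal L_J=L_{a,y}+\partial_y$-terms. Here $L_{a,y}$ is the rescaled radial operator $a^{-1}L_h(\cdot/a)$ in $\rho=t/a$, up to errors of relative size $a/R$ and $e_\infty/s$. The $y$-derivative terms contain $\partial_y^2E_J$ and $\partial_{ty}E_J$. I would treat these as forcing and bound them by induction on $J$. Since $E_J=E_{J-1}-W_J$, and $W_J$ is explicit with $|\partial_y^n\partial_t^mW_J|\le Ca\,q^{2J}$-type bounds $\lesssim a^{2J+1}R^{-2J-n}s^{-m}\cdot(\text{polynomial growth in }\rho)$, the base case $J=0$ is exactly \eqref{roadmap:eq:sharp-end}. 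Axial derivatives of $E_{J-1}$ then gain $R^{-1}$ each. The point is that $\partial_y^2E_J$ is controlled by $R^{-2}$ times the previous-order bound, which has size $a^3R^{-2J-2}s^{2J-2}$. This matches the order of $Q(V_J)$.

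\textbf{Step 2 (solve the radial ODE).} At fixed $y$, solve $L_hP=$ forcing with $P(0)=P'(0)=0$ by variation of parameters. Use the two homogeneous solutions of $L_h$: the regular one is the dilation field $h-\rho h'\sim\rho^{-2}$, and the singular one grows like $\rho^{0}$ or $\rho^{-3}$-type. With forcing bounded by $a^2R^{-2J-2}(a\rho+a)^{2J-2}$ in $\rho$-variables, double integration gives growth $\rho^{2J}$, because $2J>0$ lies above both indicial roots. The zero Cauchy data at $\rho=0$ uniquely fix the solution, so $E_J$ is determined by this representation. This yields the $m=n=0$ estimate $|E_J|\le Ca^3R^{-2J-2}s^{2J}$.

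\textbf{Step 3 (derivatives).} Radial derivatives follow from the ODE and scaling: each $\partial_t$ costs $s^{-1}$. Axial derivatives follow by differentiating the equation in $y$ and repeating Step 2, using the $R^{-1}$ gain per $y$-derivative from the coefficient and forcing bounds. This requires the direct estimates through axial order $n+2$.

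\textbf{Main obstacle.} The hardest part is closing Step 1 uniformly up to $t\sim R$. There the quantity $(s/R)^{2J}$ is no longer small, and the treatment of the axial terms as perturbations must not lose powers. The first thing to try is to work only on the interior band $t\le cR$ and let the constants absorb $(s/R)$ factors. If that fails, the fallback is to run the weighted $A_2$ argument of Appendix~\ref{app:connector-derivatives} for $E_J/(\beta^3 d)$ directly instead of the pure ODE.
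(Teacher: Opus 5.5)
Your proposal is correct and is essentially the paper's own proof. The shared steps are: induction on $J$ from the base case \eqref{roadmap:eq:sharp-end}; the weaker bound for $E_J=E_{J-1}-W_J$, with up to two extra axial derivatives, used to bound the axial part $\mathcal T(u_\infty)-\mathcal T(V_J)$ of $Q$ as a forcing of size $a^3R^{-2J-2}(a+t)^{2J-2}$; inversion of the radial equation with zero axis data; and $y$-differentiation with commutator bounds. The one difference is in the radial inversion: instead of your variation of parameters for $L_h$, the paper works directly with the averaged radial coefficients, using inner rescaling plus the stable system in $\tau=\log(t/a)$ with eigenvalues $-2,-3$. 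Your ``main obstacle'' does not arise, because the axial terms are bounded a priori by the order-$(J-1)$ estimate rather than absorbed, so on $t\lesssim R$ the bounded factors $(s/R)^{2J}$ go into the constants and no fallback to the $A_2$ argument is needed.
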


We now use these finite-order estimates to choose the single
correction $f$ required by Proposition~\ref{bg:prop:choose-f}.

\begin{proof}[Proof of Proposition~\ref{bg:prop:choose-f}]
The preceding construction used only the properties in
Definition~\ref{bg:def:admissible}. We choose $f$ using the
finite-order profiles of
Proposition~\ref{prop:correctors-estimates} and
Corollary~\ref{cor:arbitrary-order-matching}, whose proofs are given
in Appendix~\ref{app:corrector-proofs}.

\smallskip\noindent\textbf{Step 1: The finite-order estimates.}
The complete statement of
Proposition~\ref{prop:correctors-estimates} applies on an enlarged
band of scale $R$ where
$a(y)\approx a(y_0)$, $|a^{(n)}|\le C_n aR^{-n}$, and
$a/R$ is sufficiently small. It provides profiles satisfying
\[
 V_J[a]=V_0[a]+\sum_{j=1}^J W_j[a],\qquad
 W_j(0,y)=\partial_tW_j(0,y)=0.
\]
For every fixed finite collection of derivative orders,
\begin{align}
 |\partial_t^m\partial_y^nW_j[a]|
 &\le C_{j,m,n}a^3R^{-2j-n}(a+t)^{2j-2-m},
       \label{bg:eq:increment-input}\\
 |\partial_t^m\partial_y^nQ(V_J[a])|
 &\le C_{J,m,n}a^3R^{-2J-2-n}(a+t)^{2J-2-m}.
       \label{bg:eq:residual-input}
\end{align}
The profiles are comparable to $a+t$ and have the corresponding
scaled derivative bounds.
For matching, translate and dilate the connector and cover each
polar attachment region
by finitely many bands whose radii are comparable to $\theta_0R_k$.
The enlarged bands lie where $a(y)=u_k(0,y)$; choose the fixed
polar width $c$ small enough for their transverse estimates.
Corollary~\ref{cor:arbitrary-order-matching} then gives
\begin{equation}\label{bg:eq:matching-input}
 |\partial_t^m\partial_y^n(u_k-V_J[a])|
 \le C_{J,m,n}a^3R_k^{-2J-2-n}(a+t)^{2J-m}.
\end{equation}
These are precisely the model and matching bounds
\eqref{glue:eq:correction}--\eqref{glue:eq:defect} and
\eqref{glue:eq:matching}. Constants and thinness thresholds may
depend on $J$ and
on the prescribed derivative orders. No uniformity in $J$ or
convergence of an infinite series is assumed. Even radial
smoothness gives the same weighted bounds in Cartesian variables.

\smallskip\noindent\textbf{Step 2: Choice of $f$ and conclusion.}
Choose radii $r_j\downarrow0$, $j\ge2$, and smooth cutoffs
$\zeta_j$ equal to one on $|y|\le r_j$, zero on
$|y|\ge5r_j/4$, with $|\zeta_j^{(n)}|\le C_nr_j^{-n}$.
Take $r_{j+1}<r_j/4$ and arrange their transition intervals
between consecutive insertion neighborhoods. For example, use
$r_j=\tfrac34 2^{-k_j}$ with sufficiently separated large
integers $k_j$. Set
\begin{equation}\label{bg:eq:choose-f}
 \boxed{\quad f=W_1[a]+\sum_{j=2}^{\infty}\zeta_j(y)W_j[a].\quad}
\end{equation}
For each $y\ne0$ the sum is locally finite. The profile
$V_0+f$ is either a completed $V_J$ or a transition
$V_J+\zeta_{J+1}W_{J+1}$. Its order is constant throughout
each insertion neighborhood. Since $\mathcal R(y)\ll r_j$ on
the $j$th transition, derivatives of its cutoff cost at most
the corresponding powers of $\mathcal R^{-1}$.

The transition obeys the residual estimate at the lower order
$J$. Indeed, \eqref{bg:eq:increment-input} bounds its added term
by $Ca^3\mathcal R^{-2J-2}s^{2J}$, with differentiated bounds,
and the curvature operator costs two derivatives at scale $s$.
Thus \eqref{bg:eq:residual-input} remains valid during the transition.

Here is the diagonal choice needed for admissibility. Fix
$L,m,N$. Convert the axial factors $\mathcal R^{-n}$ in the
input estimates to $Cs^{-n}$, since $s\le C\mathcal R$.
After dividing the residual by $s^{L-m}$, or the matching error
by $s^{L+2-m}$, both estimates reduce to
\begin{equation}\label{bg:eq:admissibility-budget}
 C_J a^3\mathcal R^{-2J-2}s^{2J-2-L}
 \le C_J\left(a^{2J+1-L}\mathcal R^{-2J-2}
                    +a^3\mathcal R^{-L-4}\right).
\end{equation}
The two terms on the right are the endpoint bounds for
$a\le s\le C\mathcal R$. If $2J+1>L$, both are positive
powers of $a$ times fixed powers of $\mathcal R^{-1}$.
By \eqref{eq:soft-cap-data}, they are bounded by $|y|^N$
sufficiently near zero.

Choose $r_j$ successively so small that these bounds hold
throughout $0<|y|\le2r_j$ for $J=j-1,j$ and all $L,m,N\le j$,
including the finite number
of additional derivatives needed for the curvature estimates.
All relevant constants and thinness thresholds are included
in this choice. Here $2(j-1)+1-L\ge j-1>0$ for $j\ge2$.
On a transverse collar, the leaf expansion and
\eqref{bg:eq:increment-input} give
$|D^m(V_J-t)|\le C_{J,m}a^3\mathcal R^{-2-m}$;
include the corresponding flatness requirements as well.

The same choice can ensure that each activated $W_j$, with its
cutoff, has scaled $C^m$ norm at most $2^{-j}$ for $m\le j$.
To see that this is available, divide
\eqref{bg:eq:increment-input} by $s^{1-m-n}$. For $j\ge2$ the
result is bounded by $C_{j,m,n}(a/\mathcal R)^3$ on the tube.
The fixed first correction is bounded in the same norm by
$C(a/\mathcal R)^2$. These quantities tend to zero. This
gives the uniform geometric bounds \eqref{bg:eq:geometry} and
preserves positivity after the working neighborhood is reduced.

For each fixed $L,m,N$, sufficiently small nonzero heights
lie in bands whose completed orders exceed all these indices.
The finitely many outer bands only change the constants.
Equations \eqref{bg:eq:admissibility-budget} and
\eqref{bg:eq:matching-input} therefore give
\eqref{bg:eq:admissible-residual}--\eqref{bg:eq:admissible-collar}.
The zero cap data and radial smoothness follow term by term.
\end{proof}

\appendix

\section{Higher derivative estimates for the connector}\label{app:connector-derivatives}

This appendix proves the analytic implications used in
Proposition~\ref{prop:precise-estimates-for-connector}. The first
step transfers the regular-sector quadratic asymptotics to the
spine, determining the smoothing radius without assuming bounds for its
derivatives. Local comparison with fixed leaves then identifies a
uniform weighted class for the differentiated equation. We prove
the axial derivative estimates by induction and only afterward
use the radial equation to estimate the difference from the leaf
with the same smoothing radius. This order avoids using derivatives of the smoothing
radius before they have been controlled.

\subsection{Quadratic phase bound near the spine}
Here we improve the quadratic estimates of Proposition~\ref{prop:quadratic-bound} to hold near the spine:
\begin{lemma}[Quadratic phase asymptotics near the spine]
\label{lem:quadratic-phase-near-spine}
Let $M=S(u_\infty)$ be the connector of
Theorem~\ref{thm:main-connector}, and assume the quadratic bound
of Proposition~\ref{prop:quadratic-bound}. Let $A>0$ be the
quadratic coefficient in that proposition, and let $c_H>0$
be determined by
\[
 P(0,z,Y)=c_H|z|^3.
\]
Then, as $|Y|\to\infty$,
\[
 a_\infty(Y)
 =\left(\frac{A}{c_H}\right)^{1/3}|Y|^{2/3}
  +O\bigl(|Y|^{2/3-\kappa_2}\bigr),
 \qquad a_\infty(Y)=u_\infty(0,Y),
\]
where one may take $\kappa_2=1/6$. Moreover, there is a fixed
$\theta_0>0$ such that, for $T=|y_0|$ large and
$R=\theta_0T$, throughout the base ball $B_{4R}(0,y_0)$,
\begin{equation}\label{eq:quadratic-uniform-band}
 \left|P(t,u_\infty(t,y))
 -A\left(y^2-\frac{t^2+u_\infty(t,y)^2}{3}\right)\right|
 \le CT^{11/6},\qquad
 cT^2\le P(t,u_\infty(t,y))\le CT^2.
\end{equation}
These constants are independent of $y_0$.
\end{lemma}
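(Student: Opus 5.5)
The plan is to transfer the regular-sector expansion of Proposition~\ref{prop:quadratic-bound} to the spine by slice-wise barrier comparison, in the spirit of Lemma~\ref{lem:bounded-phase-transfer}, but now at the correct scale. Fix a large $T=|y_0|$ and rescale by $T^{-1}$: $M_T=T^{-1}M$, with phase $P_T=T^{-3}P$. Set $\eps=T^{-1}$. Then $|P_T|\le C\eps$ on $M_T\cap B_{8}$ by the quadratic bound. On regular sectors $\{r\ge\theta\rho\}$ the expansion $f=AJ_2+O(\rho^{-7/33})$, combined with the fixed-leaf expansion \cite[Proposition~3.3]{Sz}, gives
\[
 P_T/\eps=A\Bigl(y^2-\tfrac{r^2}{3}\Bigr)+O(\eps^{7/33})+O(\theta\text{-dependent error}).
\]
The error also involves the conversion $r^2f\leftrightarrow P$, which is $O(\eps^{1/3})$ relative.

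The main step is to extend this to $r<r_0$ near $y\approx 1$, where the model phase $w(0,y)=Ay^2$ is positive and bounded below. I will use Sz\'ekelyhidi's signed barriers $X_q$ \cite[Propositions~5.8, 5.10]{Sz}, with slowly varying $q(y)^9=Ay^2\pm\delta$ on an axial interval $I\ni1$. The sliding argument of Lemma~\ref{lem:bounded-phase-transfer} then traps $M_T$ between the two barriers. The boundary ordering holds at the lateral radius $r=r_0$ by the regular-sector expansion, and at the axial ends by adding a large bump $B\chi$. This yields
\[
 |P_T/\eps-A(y^2-r^2/3)|\le\delta+Cr_0^2\quad\text{for } r<r_0,\ y\in I'.
\]
To get a rate rather than $o(1)$, I will choose $\delta$ and $r_0$ as powers of $\eps$ and track the trapping constant in \eqref{eq:signed-barrier-trapping} quantitatively. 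I expect this to be the main obstacle, since \cite{Sz} gives the trapping width as $\eps\eta$ with $r_0=r_0(\eta)$. I would first try to show that $r_0$ can be taken as $\eta^{1/2}$, using the $C^2$ bound on $q$ together with the phase oscillation of the model $A(y^2-r^2/3)$ within $r<r_0$. This should give an error exponent $1/6$, hence the $T^{11/6}$ bound in \eqref{eq:quadratic-uniform-band}. If that fails, any positive exponent $\kappa_2$ still suffices for later use.

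With the uniform phase estimate in hand, evaluate it at $t=0$, where $P=c_Hu_\infty(0,y)^3$. This gives
\[
 c_Ha_\infty^3=A\bigl(y^2-a_\infty^2/3\bigr)+O(T^{11/6}).
\]
Since $a_\infty=O(T^{2/3})$, the term $a_\infty^2$ is absorbed into the error, so $a_\infty^3=(A/c_H)Y^2(1+O(T^{-1/6}))$. Taking cube roots gives the stated asymptotic. The two-sided bound $cT^2\le P\le CT^2$ on $B_{4R}(0,y_0)$ follows from the same expansion, for $\theta_0$ small. On that ball $|y|\approx T$, and $t^2+u^2\le C\theta_0^2T^2$ because $u\le t+Ca$ by the Lipschitz bound. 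Hence $A(y^2-(t^2+u^2)/3)\ge \tfrac A2T^2$, and this dominates the $T^{11/6}$ error for large $T$.
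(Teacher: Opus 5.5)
\textbf{There is a gap, at the lateral boundary of the barrier comparison, not on the barrier side.} Your architecture matches the paper's: rescale at scale $T$, slide Sz\'ekelyhidi's slice-wise barriers with $q_\pm^9=Ay^2\pm\delta$ plus an endpoint bump, quantify the trapping, and read off $c_Ha_\infty^3$ at the axis orbit. Your guess $r_0\sim\eta^{1/2}$ is what the paper obtains directly from the correction bound $|\psi_q|\le C\eps(|q|^3+r)$ and $|DP|\le Cr^2$, namely $\eps^{-1}|P-\eps q^9|\le Cr_0^2$ on $\{r<r_0\}$. So the barrier side is not the real obstacle.

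The barriers have constant phase on each slice, while the model $A(y^2-r^2/3)$ varies by $Ar_0^2/3$ inside the tube. A rate therefore forces $r_0\to0$ as a power of $\eps$. You then need the boundary ordering at $r=r_0$ with an error that is explicit in $r_0$. The expansion of Proposition~\ref{prop:quadratic-bound} holds only on \emph{fixed} sectors $\{r\ge\theta\rho\}$. Its constants $C_{k,\theta}$, and its threshold in $\rho$, have unspecified $\theta$-dependence; this is your ``$\theta$-dependent error''. It says nothing at $\theta\sim r_0=\eps^\alpha$. As written, your argument only reproduces the qualitative $o(1)$ of Lemma~\ref{lem:bounded-phase-transfer}. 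It gives neither $\kappa_2=1/6$ nor any positive power, so your fallback does not rescue it. The same problem affects your band estimate on the part of $B_{4R}(0,y_0)$ that lies outside the tube but below every fixed sector.

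\textbf{The fix: redo the harmonic replacement using $D(R)\le CR^2$, keeping the $1/r$ loss explicit.}
\begin{enumerate}
\item With $d_R=D(8R)\le CR^2$, the barrier $\mathcal B$ in \eqref{eq:end-replacement-bound} bounds the replacement error by $C\eps_R^{1/3}/r$ all the way down to $r>C_*\eps_R^{1/3}$. In physical units this is $Cd_R^{4/3}/(Rr)\le CR^{5/3}/r$.
\item The patched function then satisfies $|\Delta\widehat W|\le C\rho^{-1/3}$, so its Newton potential is $O(\rho^{5/3})$.
\item The fixed-leaf conversion gives $|P-s^2f|\le Cd_R^{4/3}/s$.
\end{enumerate}
Together these give the rescaled estimate $|\Pi_R-A(y^2-r^2/3)|\le CR^{-1/3}/r\le CR^{-1/4}/r$ for $r>C_*R^{-1/3}$. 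Only with this can you take $\delta=L(r_0^2+R^{-1/4}/r_0)$ and balance at $r_0=R^{-1/12}$. That gives $\delta=O(R^{-1/6})$, which is the $T^{11/6}$ error. The same estimate controls the region $r_0\le r$ outside the tube in the band.

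Your remaining steps are fine once this estimate is in place: evaluating at $t=0$, absorbing $a_\infty^2=O(T^{4/3})$, taking cube roots, and deriving $cT^2\le P\le CT^2$ for small $\theta_0$.
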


\begin{proof}
By reflection symmetry, it suffices to consider $Y\to+\infty$.
For $R$ large, set
\[
 M_R=R^{-1}M,\qquad \eps=R^{-1},\qquad
 \Pi_R(X)=R^{-2}P(RX)\quad(X\in M_R).
\]
The cubic homogeneity of $P$ gives
\[
 \Pi_R(X)=R\,P(X)=\eps^{-1}P(X).
\]
Thus $\Pi_R$ is the physical phase divided by $R^2$, pulled
back to the rescaled surface; the actual phase on $M_R$ is
$\eps\Pi_R$. In what follows, $(r,y)$ are rescaled coordinates,
and the annulus under consideration has fixed size.

We first establish the regular-region estimate
\begin{equation}\label{eq:cap-normalized-phase}
 \left|\Pi_R-A\left(y^2-\frac{r^2}{3}\right)\right|
 \le C\frac{R^{-\kappa}}{r},
 \qquad
 \frac12<\sqrt{r^2+y^2}<2,\quad
 r>C_*R^{-1/3},
\end{equation}
where we take $\kappa=1/4$ and $C_*$ is sufficiently large.

To see this, put
\[
 d_R=D(8R),\qquad \eps_R=d_R/R^3.
\]
The quadratic bound gives $d_R\le CR^2$. Multiplying
\eqref{eq:end-replacement-bound} by $d_R$, the error between
the physical conjugate $s^2f(s,Y)$ and its harmonic
replacement, at $(s,Y)=(Rr,Ry)$, is bounded by
\[
 C\frac{d_R\eps_R^{1/3}}{r}
 =C\frac{d_R^{4/3}}{Rr}
 \le C\frac{R^{5/3}}{r}.
\]
Repeat the dyadic patching construction in the proof of
Proposition~\ref{prop:quadratic-bound}, now using this quadratic
bound. The overlap errors are $O(R^{5/3})$, so the resulting
function $\widehat W$ on the exterior auxiliary space satisfies
\[
 |\widehat W|\le C\rho^2,\qquad
 |\Delta_{\xi,Y}\widehat W|\le C\rho^{-1/3},
 \qquad \rho=(s^2+Y^2)^{1/2}.
\]
The same Newton-potential construction gives a correction
$v=O(\rho^{5/3})$ for which $\widehat W-v$ is harmonic.
Its invariant, reflection-even harmonic expansion therefore
gives
\[
 \widehat W(s,Y)
 =A\left(Y^2-\frac{s^2}{3}\right)+O(\rho^{5/3}).
\]
The coefficient agrees with the previously identified $A$
by comparison on any fixed regular angular sector.

The fixed-leaf expansion
\cite[Proposition~3.3]{Sz} gives
\[
 f_{\mathrm{leaf}}(s)
 =\lambda s^{-2}+O(|\lambda|^{4/3}s^{-3}).
\]
Consequently, at points of the normal graph with
$s>C d_R^{1/3}$,
\[
 |P-s^2f|\le C\frac{d_R^{4/3}}{s}.
\]
Combining these estimates and dividing by $R^2$ yields
\[
 \Pi_R
 =A\left(y^2-\frac{r^2}{3}\right)
  +O\left(\frac{R^{-1/3}}{r}\right),
\]
which implies \eqref{eq:cap-normalized-phase}.
Here the computation initially uses the cone radius.
It also holds with $r=|(x,z)|$ on $M_R$: the squared ambient
and cone radii differ by $f_R^2$, where
$|f_R|\le C R^{-1}r^{-2}$, and this difference is absorbed
by the displayed error.

We next transfer this estimate near the spine. Fix
\[
 I=(2/3,4/3),\qquad J=[3/4,5/4],
 \qquad r_0=R^{-\kappa/3}.
\]
On the lateral boundary $r=r_0$, \eqref{eq:cap-normalized-phase}
gives
\begin{equation}\label{eq:cap-lateral-phase}
 |\Pi_R-Ay^2|
 \le C\left(r_0^2+\frac{R^{-\kappa}}{r_0}\right).
\end{equation}
Moreover, the quadratic bound implies $|\Pi_R|\le K$ on
$M_R\cap\{r<r_0,\ y\in I\}$, for a fixed $K$.

Let $H_{\mathrm{ph}}$ be the positive Hardt-Simon leaf
normalized by $P=1$. For a uniformly bounded $C^2$ family
of functions $q:I\to\R$, the barriers of
\cite[Proposition~5.8]{Sz} are normal graphs over the
slice-wise leaves
\[
 \eps^{1/3}q(y)^3H_{\mathrm{ph}},
 \qquad
 P\big|_{\eps^{1/3}q(y)^3H_{\mathrm{ph}}}
 =\eps q(y)^9.
\]
Negative dilations here mean positive dilations with the
$x$ and $z$ factors interchanged. These barriers have
negative mean curvature; interchanging the factors also
provides the barriers with the opposite sign.

We record the quantitative phase error. The normal
correction $\psi_q$ in that construction satisfies
\[
 |\psi_q|\le C\eps(|q|^3+r),\qquad
 r\ge c\eps^{1/3}|q|^3.
\]
Hence $|\psi_q|/r\le C(\eps^{2/3}+\eps)$.
Using $|DP|\le Cr^2$, we obtain, on $r<r_0$,
\begin{equation}\label{eq:cap-barrier-phase-error}
 \eps^{-1}|P-\eps q^9|\le Cr_0^2.
\end{equation}
This is the quantitative form of the phase comparison
used in \cite[Proposition~5.10]{Sz}; it follows directly
from the correction estimates in its proof.

Choose
\[
 \delta=L\left(r_0^2+\frac{R^{-\kappa}}{r_0}\right),
\]
where $L$ is a sufficiently large fixed constant.
Let $\chi\in C^\infty(I;[0,1])$ vanish on $J$ and equal
one near the endpoints of $I$, and define
\[
 q_+(y)=(Ay^2+\delta)^{1/9}+B\chi(y),\qquad
 q_-(y)=(Ay^2-\delta)^{1/9}-B\chi(y).
\]
For large $R$, the expressions inside the roots are
uniformly positive. Choose the fixed constant $B$ so large
that, near the endpoints of $I$,
\[
 q_+^9>K+2,\qquad q_-^9<-K-2.
\]
These functions have uniformly bounded $C^2$ norms and satisfy
\[
 q_+^9\ge Ay^2+\delta,\qquad
 q_-^9\le Ay^2-\delta
 \quad\text{on }I,
\]
with equality on $J$.

Apply the upper barriers to $q_++s$ and the lower barriers
to $q_-s$. Start with a sufficiently large fixed $s$,
so that the barriers lie beyond the global phase range,
and decrease $s$ to zero. The axial boundary comparisons
follow from the choice of $B$. The lateral comparisons
follow from \eqref{eq:cap-lateral-phase},
\eqref{eq:cap-barrier-phase-error}, and the choice of $L$.
Interior first contact is excluded by
\cite[Proposition~5.8]{Sz}, including at singular points
of the barriers. Thus the comparison persists to $s=0$.

On the smaller band $J$, the resulting phase bounds are
\[
 |\Pi_R-Ay^2|\le C\delta
 \qquad\text{on }M_R\cap\{r<r_0,\ y\in J\}.
\]
The axis orbit at height $y=1$ belongs to this region for large $R$:
indeed, the quadratic phase bound gives
$a_\infty(R)/R\le CR^{-1/3}\ll r_0$.
Evaluating there gives
\[
 \left|\frac{c_Ha_\infty(R)^3}{R^2}-A\right|
 \le C\delta.
\]
Finally, since $\kappa=1/4$,
\[
 r_0=R^{-1/12},\qquad
 \delta=O(R^{-1/6}).
\]
Therefore
\[
 c_Ha_\infty(R)^3=AR^2+O(R^{2-1/6}).
\]
Taking cube roots, using $A>0$, proves the claimed expansion
with $\kappa_2=1/6$.

We also record the uniform band conclusion. Apply the preceding
rescaling at $T=y_0$. Choose $\theta_0$ so small that the graph over
$B_{4\theta_0T}(0,y_0)$ lies in the rescaled annulus and has
$y\in J$. This follows from $|Du_\infty|\le\Lambda$ and
$a_\infty(y_0)/T=O(T^{-1/3})$. In the shrinking tube
$r<r_0=T^{-1/12}$, the barrier comparison gives
$|\Pi_T-Ay^2|\le CT^{-1/6}$, and the omitted term $Ar^2/3$
has the same bound. Outside that tube,
\eqref{eq:cap-normalized-phase} gives an error at most
$CT^{-1/4}/r_0=CT^{-1/6}$. Here
$r^2=T^{-2}(t^2+u_\infty^2)$ in the physical graph coordinates.
Multiplying by $T^2$ proves the first assertion of
\eqref{eq:quadratic-uniform-band}. The quadratic polynomial is
uniformly positive on the chosen band, so the second assertion
follows for large $T$. Reflection gives all conclusions for
negative $y_0$.
\end{proof}

\subsection{Local comparison with fixed leaves}
We now work directly with the minimal graph $u=u_\infty$.
The quadratic asymptotics first give local comparison estimates;
these will control the coefficients of its differentiated equation.
Throughout this subsection and the next, set
\[
 T=|y_0|,\qquad R=\theta_0T,\qquad
 \beta=u(0,y_0),\qquad X_0=(0,y_0),\qquad s(X)=\beta+|x|.
\]
Choose the fixed $\theta_0>0$ sufficiently small and then $T$
sufficiently large. In particular $\beta^3\approx T^2$ and
$\beta<R/100$. Cartesian derivatives are denoted by $D$.

\begin{lemma}[Fixed-leaf comparison and Jacobi normalization]
\label{lem:direct-leaf-comparison}
On $B_{4R}(X_0)$ the leaf labels satisfy
\[
 c\beta^3\le c_H^{-1}P(t,u(t,y))\le C\beta^3.
\]
For $X_*=(x_*,y_*)\in B_{2R}(X_0)$ put $s_* =\beta+|x_*|$,
$\lambda_*=c_H^{-1}P(t_*,u(X_*))$, and
$u_*(t)=H(t,\lambda_*)$. There is a fixed $c>0$ such that
\begin{equation}\label{eq:direct-frozen-comparison}
 |D^k(u-u_*)|\le C_k\beta^3s_*^{-2-k}(s_*/R)^{1/2}
 \quad\hbox{on }B_{cs_*}(X_*),\qquad k\ge0.
\end{equation}
Define, by evaluation of fixed functions at $u$,
\begin{equation}\label{eq:direct-jacobi-normalization}
 d(X)=H_\lambda\!\left(t,c_H^{-1}P(t,u(X))\right),\qquad
 W=\sqrt{1+|Du|^2},\qquad
 B=\frac{I}{W}-\frac{Du\otimes Du}{W^3}.
\end{equation}
Then $d\approx s^{-2}$ and $|D^kd|\le C_ks^{-2-k}$.
For the vertical Jacobi operator and its conjugate, set
\begin{equation}\label{eq:direct-operators}
 \begin{split}
 \mathcal J\phi&=-\operatorname{div}(u^3B D\phi)-\frac{3u}{W}\phi,\\
 \mathcal A&=u^3d^2B,\qquad \mathcal V=d\mathcal Jd,\qquad
 \mathcal Lq=-\operatorname{div}(\mathcal A Dq)+\mathcal Vq.
 \end{split}
\end{equation}
With $\omega_\beta=(\beta^2+t^2)^{-1/2}$, these coefficients satisfy
\begin{equation}\label{eq:direct-coefficients}
 \begin{gathered}
 c\omega_\beta I\le\mathcal A\le C\omega_\beta I,\qquad
 |D^k\mathcal A|\le C_k\omega_\beta s^{-k},\\
 |D^k\mathcal V|\le C_k\omega_\beta s^{-2-k}(s/R)^{1/2}
 \quad\hbox{on }B_{2R}(X_0).
 \end{gathered}
\end{equation}
All constants are independent of $\beta/R$.
\end{lemma}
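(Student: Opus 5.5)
The plan is to read every assertion off the band estimate \eqref{eq:quadratic-uniform-band} of Lemma~\ref{lem:quadratic-phase-near-spine}, combined with the fixed-leaf structure $H(t,\lambda)=\lambda^{1/3}h(t/\lambda^{1/3})$, which satisfies $P(t,H(t,\lambda))=c_H\lambda$.

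\emph{Labels and $d$.} Since $B_{4R}(X_0)$ lies in the band where $cT^2\le P\le CT^2$, and since $\beta^3\approx T^2$ (from $a_\infty(y_0)^3\sim (A/c_H)T^2$), the label $\lambda=c_H^{-1}P(t,u)$ satisfies $\lambda\approx\beta^3$. This is the first claim. Scaling gives $u=H(t,\lambda)\approx \lambda^{1/3}+t\approx s$. Differentiating the scaling, $H_\lambda=\tfrac13\lambda^{-2/3}\zeta(t/\lambda^{1/3})$ with $\zeta=h-\rho h'$. Here $\zeta>0$, $\zeta(0)=1$ and $\zeta(\rho)\sim 3b_H\rho^{-2}$ by \eqref{cap:eq:HS}. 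Hence $d\approx\lambda^{-2/3}(1+t\lambda^{-1/3})^{-2}\approx s^{-2}$. For the derivatives of $d$, apply the chain rule through $P(t,u(X))$: $P$ is smooth on the positive side and homogeneous of degree three, so $|D^kP|\lesssim s^{3-k}$ near $M$, and $|D^k u|\lesssim s^{1-k}$ by Lipschitz bounds and scaled interior estimates at scale $s$. Each derivative then costs $s^{-1}$.

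\emph{Frozen comparison \eqref{eq:direct-frozen-comparison}.} This is the key step and the main obstacle: one needs an oscillation bound for the label on $B_{cs_*}(X_*)$ carrying the gain $(s_*/R)^{1/2}$. I would take the leading term $A(y^2-(t^2+u^2)/3)$ from \eqref{eq:quadratic-uniform-band}. Its variation over a ball of radius $cs_*$ is $O(T s_*)$, since the $y$-derivative is $2Ay\approx T$ and the $t$- and $u$-derivatives are $O(s)$. I would then try to control the $O(T^{11/6})$ remainder not pointwise but through its oscillation, by rerunning the barrier argument of that lemma at scale $s_*$, or by using interior estimates for the phase restricted to $M$. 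The difference of two nearby leaves is controlled through $H_\lambda\,\Delta\lambda$, giving $|u-u_*|\lesssim s_*^{-2}\cdot T s_*$. With $\beta^3\approx T^2$ this equals $\beta^3 s_*^{-2}(s_*/T)^{1/2}\cdot (s_*/T)^{1/2}$, which is better than needed when $s_*\le R$. If the remainder oscillation cannot be bounded this way, I would fall back on a weaker H\"older gain, which still suffices downstream as long as some power of $s_*/R$ is gained. Once the $C^0$ bound holds, $u-u_*$ solves a linear uniformly elliptic equation at scale $s_*$: both functions are minimal graphs of height $\approx s_*$, so the difference satisfies an equation with positive-height coefficients. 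Rescaled Schauder estimates then give the $D^k$ bounds.

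\emph{Coefficients.} $B$ is uniformly elliptic because $|Du|\le\Lambda$. Combined with $u^3d^2\approx s^3 s^{-4}=s^{-1}\approx\omega_\beta$, this gives $c\,\omega_\beta I\le\mathcal A\le C\,\omega_\beta I$, and the derivative bounds on $\mathcal A$ follow from those on $u$ and $d$. For $\mathcal V$, write $\mathcal V=d(\mathcal J-\mathcal J_*)d+d\mathcal J_*(d-d_*)$, where $\mathcal J_*$ is the Jacobi operator of the frozen leaf and $d_*=H_\lambda(\cdot,\lambda_*)$ satisfies $\mathcal J_*d_*=0$, as the $\lambda$-derivative of a leaf family. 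The coefficient differences are controlled by \eqref{eq:direct-frozen-comparison} with relative size $(s/R)^{1/2}$, as is $d-d_*$, since it depends smoothly on $u-u_*$. Each term is therefore $\lesssim \omega_\beta s^{-2}(s/R)^{1/2}$ with derivatives. None of the constants depend on $\beta/R$, because every estimate is scale-invariant at scale $s$.
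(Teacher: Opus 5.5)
Your outline matches the paper's structure: the label band comes from \eqref{eq:quadratic-uniform-band}, the oscillation of the label is turned into a $C^0$ bound via $H_\lambda\,\Delta\lambda$, interior estimates are applied to $u-u_*$ at scale $s_*$, and you write $\mathcal V=d(\mathcal J-\mathcal J_*)d+d\mathcal J_*(d-d_*)$ with $\mathcal J_*d_*=0$. However, the step you call the main obstacle is left open, and it closes by a one-line computation. You do not need to control the oscillation of the $O(T^{11/6})$ remainder; bound it pointwise. Then
\[
 \frac{\operatorname{osc}_{B_{2cs_*}(X_*)}P(t,u)}{\beta^3}
 \le C\,\frac{Ts_*+T^{11/6}}{T^2}
 \le C\Bigl(\frac{s_*}{R}+T^{-1/6}\Bigr)
 \le C\Bigl(\frac{s_*}{R}\Bigr)^{1/2}.
\]
The last inequality uses $s_*\le3R$ and, crucially, $s_*\ge\beta\approx T^{2/3}$, so that $(s_*/R)^{1/2}\ge c\,T^{-1/6}$. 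The exponent $1/2$ is calibrated exactly so that the remainder is absorbed at the innermost scale $s_*=\beta$. The mean-value step $|u-u_*|\le\sup|H_\lambda|\,|\lambda-\lambda_*|$ then gives the $k=0$ case of \eqref{eq:direct-frozen-comparison}. As written, your proposal offers only unexecuted alternatives (rerunning the barriers at scale $s_*$, interior estimates for the phase along $M$). Your fallback to a weaker power of $s_*/R$ would not prove \eqref{eq:direct-frozen-comparison} as stated.

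A second problem is that you derive $|D^kd|\le C_ks^{-2-k}$ before the comparison, by a crude chain rule in which "each derivative costs $s^{-1}$". That fails for the $\lambda$-slot. From $|DP|\lesssim s^2$ you only get $|D\lambda|\lesssim s^2$ for $\lambda(X)=c_H^{-1}P(t,u(X))$. Meanwhile \eqref{eq:direct-leaf-parameter-bounds} gives $|H_{\lambda\lambda}|\lesssim\beta^{-3}s^{-2}$. So the term $H_{\lambda\lambda}\,D\lambda$ is only bounded by $\beta^{-3}$, far above $s^{-3}$ once $s\gg\beta$. What you need is that the label is nearly constant along $M$ on the scale $s$, and the comparison supplies exactly this. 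Differentiating the identity $u=H(t,\lambda(X))$ gives
\[
 H_\lambda(t,\lambda)\,D\lambda
 =D(u-u_*)+\bigl(H_t(t,\lambda_*)-H_t(t,\lambda)\bigr)Dt.
\]
Inverting $H_\lambda\approx s^{-2}$ and using \eqref{eq:direct-frozen-comparison} then yields $|D^k\lambda|\lesssim\beta^3s^{-k}(s/R)^{1/2}$ for $k\ge1$. This is how the paper obtains $|D^k(d-d_*)|\le C_ks_*^{-2-k}(s_*/R)^{1/2}$, and from it the bounds on $d$ and on $D^k\mathcal A$. Your later remark that $d-d_*$ depends smoothly on $u-u_*$ is this same inversion. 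So the repair is simply to reorder: prove the comparison first, then derive the bounds on $d$ and $\mathcal A$ from it.
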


\begin{proof}
Lemma~\ref{lem:quadratic-phase-near-spine} gives the positive
leaf band and the uniform estimate
\[
 \left|P(t,u)-A\left(y^2-\frac{t^2+u^2}{3}\right)\right|
 \le CT^{11/6}.
\]
The polynomial evaluated on the graph is Lipschitz with constant
$CT$ on $B_{4R}(X_0)$, because $|Du|\le\Lambda$ and
$u\le C(\beta+t)$. Its variation on a ball of radius $cs_*$ is
therefore at most $CTs_*$. Since $\beta^3\approx T^2$,
\[
 \frac{\operatorname{osc}_{B_{2cs_*}(X_*)}P(t,u)}{\beta^3}
 \le C\left(\frac{s_*}{R}+T^{-1/6}\right)
 \le C(s_*/R)^{1/2}.
\]
The last inequality uses $s_*\ge\beta\approx T^{2/3}$ and
$s_*/R\le C$. This is a consequence of the quantitative geometric
comparison, without differentiating a leaf-coordinate equation.

For $\lambda\approx\beta^3$, the fixed-leaf bounds give
\begin{equation}\label{eq:direct-leaf-parameter-bounds}
 H\approx\beta+t,\qquad H_\lambda\approx(\beta+t)^{-2},\qquad
 |\partial_t^k\partial_\lambda^rH|
 \le C_{k,r}\beta^{3-3r}(\beta+t)^{-2-k},\quad r\ge1.
\end{equation}
The mean-value theorem gives the $k=0$ case of
\eqref{eq:direct-frozen-comparison}. After rescaling lengths
and graph heights by $s_*$, both $u$ and $u_*$ have uniformly
positive height and bounded gradient. Standard interior regularity
for the SME bounds their coefficient derivatives, and their
difference satisfies a linear uniformly elliptic equation with
uniformly controlled lower-order coefficients on this scale.
Interior estimates for that difference prove
\eqref{eq:direct-frozen-comparison} for all $k$.

For clarity, the required differentiation of the fixed functions
in \eqref{eq:direct-jacobi-normalization} can be checked using
the implicit identity
$u=H(t,c_H^{-1}P(t,u))$ on this local ball.
Inverting its parameter derivative and using
\eqref{eq:direct-leaf-parameter-bounds} gives, with
$d_*(t)=H_\lambda(t,\lambda_*)$,
\[
 |D^k(d-d_*)|\le C_ks_*^{-2-k}(s_*/R)^{1/2}.
\]
This also gives the asserted bounds on $d$ and $\mathcal A$.
The field $d_*$ is an exact Jacobi field on the fixed leaf $u_*$,
so $\mathcal J_*d_*=0$. Subtract this identity from
$\mathcal Jd$ on the local ball. Coefficients of the principal
part of $\mathcal J$ have size $s_*^3$, and its potential has
size $s_*$. Their differences from the fixed-leaf coefficients
retain a factor $(s_*/R)^{1/2}$. Together with the displayed
bound for $d-d_*$, this gives
\[
 |D^k(\mathcal Jd)|\le C_ks_*^{-1-k}(s_*/R)^{1/2}.
\]
Multiplication by $d\approx s_*^{-2}$ proves the bound on
$\mathcal V$ in \eqref{eq:direct-coefficients}.
\end{proof}

\subsection{Weighted estimates for the differentiated equation}\label{sec:direct-weighted-theory}
The Euler--Lagrange expression for the area density
$u^3\sqrt{1+|Du|^2}$ is
$-u^3Q(u)/W$. Its linearization at a solution is the operator
$\mathcal J$ in \eqref{eq:direct-operators}. Thus
\begin{equation}\label{eq:direct-jacobi-identities}
 \mathcal Ju_y=0,\qquad
 \mathcal J\phi=-\frac{u^3}{W}DQ_u[\phi],\qquad
 d\mathcal J(dq)=\mathcal Lq.
\end{equation}
These identities explain both the direct differentiation and the
Jacobi normalization. In particular, the remaining potential
$\mathcal V$ is controlled by the local difference from a fixed leaf.

We recall the precise weighted results used below. For a positive
weight $\omega$ on $\R^5$, its Muckenhoupt characteristic is
\begin{equation}\label{eq:direct-a2-definition}
 [\omega]_{A_2}
 :=\sup_{\mathbb B}
 \left(\frac1{|\mathbb B|}\int_{\mathbb B}\omega\,dX\right)
 \left(\frac1{|\mathbb B|}\int_{\mathbb B}\omega^{-1}\,dX\right),
\end{equation}
where the supremum is over Euclidean balls. Membership in $A_2$
means that this quantity is finite. With $d\mu=\omega\,dX$,
\cite[Theorems~1.2 and~1.5]{FKS} give, for some $\kappa>1$,
\[
 \begin{split}
 \left(\frac1{\mu(\mathbb B)}
       \int_{\mathbb B}|\psi|^{2\kappa}\,d\mu\right)^{1/(2\kappa)}
 &\le C\rho\left(\frac1{\mu(\mathbb B)}
       \int_{\mathbb B}|D\psi|^2\,d\mu\right)^{1/2},
       \qquad \psi\in C_c^\infty(\mathbb B),\\
 \int_{\mathbb B}|\psi-\psi_{\mathbb B,\mu}|^2\,d\mu
 &\le C\rho^2\int_{\mathbb B}|D\psi|^2\,d\mu,
       \qquad \psi\in C^1(\overline{\mathbb B}),
 \end{split}
\]
where $\rho$ is the ball radius and $\psi_{\mathbb B,\mu}$ is the
weighted average. The constants depend only on dimension and
$[\omega]_{A_2}$; the measure is also doubling
\cite[equation~(1.2)]{FJK}.
For a measurable symmetric matrix $\mathcal A$ with
$c\omega I\le\mathcal A\le C\omega I$, a weak solution
$h\in H^1_{\mathrm{loc}}(d\mu)$ of
$-\operatorname{div}(\mathcal A Dh)=0$ in $\mathbb B$ satisfies
\begin{equation}\label{eq:direct-fks-local-bound}
 \|h\|_{L^\infty(\mathbb B/2)}
 \le C\left(\frac1{\mu(\mathbb B)}
                 \int_{\mathbb B}|h|^2\,d\mu\right)^{1/2}.
\end{equation}
This is \cite[Theorem~2.3.1]{FKS} applied to $h$ and $-h$;
see also \cite[equations~(1.5)--(1.7)]{FJK}.
Its constant depends only on the same data and the weighted
ellipticity constants. We use this local boundedness result;
higher derivatives will follow by differentiating the SME and
applying ordinary interior estimates on the local scale $s$.

For the present weight $\omega_\beta=(\beta^2+|x|^2)^{-1/2}$,
the $A_2$ bound is independent of $\beta>0$. Indeed, for a ball
$\mathbb B=B_\rho(X)$ put $\ell=(\beta^2+|x|^2)^{1/2}$.
If $\rho\le\ell/2$, the weight is comparable to $\ell^{-1}$
throughout the ball. If $\rho>\ell/2$, then $|x|<2\rho$,
$\omega_\beta^{-1}\le3\rho$ on $\mathbb B$, and
\[
 \int_{\mathbb B}\omega_\beta\,dX
 \le2\rho\int_{|\xi|<3\rho,\ \xi\in\R^4}|\xi|^{-1}\,d\xi
 \le C\rho^4.
\]
Since $|\mathbb B|\approx\rho^5$, the product in
\eqref{eq:direct-a2-definition} is uniformly bounded in both cases.
Direct integration also gives, now with $d\mu=\omega_\beta\,dX$,
\begin{equation}\label{eq:direct-weighted-volume}
 \mu(B_\rho(X))\approx\frac{\rho^5}{\beta+|x|+\rho}.
\end{equation}
Thus all the preceding constants are uniform as $\beta/R\to0$.
Lemma~\ref{lem:direct-leaf-comparison} puts the actual matrix
$\mathcal A=u^3d^2B$ in this weighted ellipticity class, using the
positive quadratic phase band to obtain $u\approx s$ and
$d\approx s^{-2}$. Multiplication of the weight by a function
bounded above and below by fixed positive constants preserves
its uniform $A_2$ bound.

Equation $\mathcal Lq=F$ still contains $\mathcal Vq$ and $F$.
To use \eqref{eq:direct-fks-local-bound}, we remove the potential
by a bounded positive solution of $\mathcal L\vartheta=0$ and
then subtract the Green potential of the source.
The Green-function bounds and representation are supplied by
\cite[Theorem~3.3 and Lemma~2.5]{FJK}; the next proof checks the
uniform bounds needed for these two reductions.

\begin{lemma}[Uniform weighted local estimate]
\label{lem:direct-weighted-estimate}
Let $d\mu=\omega_\beta\,dX$, and let $\mathcal L$ be as in
Lemma~\ref{lem:direct-leaf-comparison}. Suppose on an interior
ball $B\subset B_{2R}(X_0)$ that
\[
 \mathcal Lq=F,\qquad
 |F|\le M\omega_\beta s^{-2}(\beta/s)^3.
\]
If $B'\Subset B$ have radii and separation comparable to $R$,
then
\begin{equation}\label{eq:direct-linear-bound}
 \|q\|_{L^\infty(B')}
 \le C\left[\left(\frac1{\mu(B)}\int_Bq^2\,d\mu\right)^{1/2}+M\right].
\end{equation}
The constant may depend on the fixed ball ratios, but is
independent of $\beta/R$.
\end{lemma}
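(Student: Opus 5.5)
The plan is the two reductions announced just before the statement: first remove the potential $\mathcal V$ with a positive multiplier $\vartheta$, then remove the source $F$ with a Green potential. After that, the homogeneous weighted estimate \eqref{eq:direct-fks-local-bound} applies directly.

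\textbf{Step 1: the potential and the multiplier.} Let $D\subset B$ be a ball of radius $\rho\approx R$ containing $B'$, and let $G_D$ be the Dirichlet Green function of $-\operatorname{div}(\mathcal A D\,\cdot)$ on $D$. By \cite[Theorem~3.3]{FJK}, with the uniform $A_2$ and doubling bounds and \eqref{eq:direct-weighted-volume}, it satisfies
\[
 G_D(X,Z)\le C\int_{|X-Z|}^{C\rho}\frac{\tau^2}{\mu(B_\tau(X))}\,\frac{d\tau}{\tau}.
\]
I will bound $\sup_X\int_D G_D(X,Z)|\mathcal V(Z)|\,dZ$ by splitting $Z$ into dyadic shells about the axis and about $X$, using the bound on $\mathcal V$ from \eqref{eq:direct-coefficients}.
\begin{itemize}
\item On the shell where $\beta+|z|\approx\sigma\ge\beta$, the integrand per unit weighted mass is $|\mathcal V|/\omega_\beta\le C\sigma^{-2}(\sigma/R)^{1/2}$.
\item The kernel is at most $C\tau^2/\mu(B_\tau)$ at distance $\tau$.
\end{itemize}
Together these give a contribution $C(\sigma/R)^{1/2}$ from each shell. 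Summing the geometric series yields $C(\rho/R)^{1/2}$. One must also check the case where $X$ is near the axis and $Z$ is close to $X$; there the factor $s^{-2}$ is integrated against a weighted ball of radius about $s$.

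This bound is the main obstacle. The constant must be uniform in $\beta/R$. The potential is only small in this Green norm, not pointwise, so every dyadic regime (radius $\tau$ below and above $\beta+|x|$) has to be verified explicitly through \eqref{eq:direct-weighted-volume}.

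With $\rho=\delta_0R$ small, the Green norm is at most $1/4$. A Neumann series then solves $\vartheta=1-G_D(\mathcal V\vartheta)$ with $\tfrac12\le\vartheta\le2$, and \cite[Lemma~2.5]{FJK} gives $\mathcal L\vartheta=0$ weakly. Covering $B'$ by finitely many such balls, with the count depending only on the fixed ratios, reduces to this case.

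\textbf{Step 2: the source.} Let $g=G_D(F)$, so $\mathcal Lg=F$ up to the potential term. More precisely, I would solve $\mathcal L g=F$ by the same Neumann series: $g=G_D(F)-G_D(\mathcal V g)$. The weight $(\beta/s)^3$ in the hypothesis on $F$ makes the same shell computation give per-shell contributions $C M(\beta/\sigma)^3$. These sum to $CM$, so $|g|\le CM$, and the series then gives $\|g\|_\infty\le 2CM$.

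\textbf{Step 3: the local bound.} The difference $p=q-g$ satisfies $\mathcal Lp=0$. Set $z=p/\vartheta$. By the identity stated in the overview, $-\operatorname{div}(\vartheta^2\mathcal A\,Dz)=0$. The matrix $\vartheta^2\mathcal A$ lies in the same weighted ellipticity class, so \eqref{eq:direct-fks-local-bound} applies and gives
\[
 \|z\|_{L^\infty}\le C\left(\mu(D)^{-1}\int_D z^2\,d\mu\right)^{1/2}.
\]
Finally I undo the substitutions. Since $\vartheta$ lies between $\tfrac12$ and $2$, and $\|g\|_{L^2(d\mu)}$ normalized by $\mu(D)$ is at most $CM$, we get $\|q\|_{L^\infty(B')}\le C(\|q\|_{L^2(d\mu),\mathrm{avg}}+M)$. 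Doubling of $\mu$ lets us replace $\mu(D)$ by $\mu(B)$, which proves \eqref{eq:direct-linear-bound}.
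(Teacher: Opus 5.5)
Your proposal is correct and follows the paper's proof. The shared steps are the FJK kernel bound $G_D(X,Z)\le C(s(X)+|X-Z|)|X-Z|^{-3}$, the Green-norm smallness $C(\rho/R)^{1/2}$ of $\mathcal V$ on balls of radius $\varepsilon R$, the Neumann-series multiplier $\tfrac12\le\vartheta\le2$, a uniformly bounded source potential coming from the extra factor $(\beta/s)^3$, and finally \eqref{eq:direct-fks-local-bound} together with covering and doubling. The only variation is in removing the source: you solve $\mathcal L g=F$ by a second Neumann series with the original Green operator, whereas the paper subtracts the Green potential of $\vartheta F$ for the conjugated operator $-\operatorname{div}(\vartheta^2\mathcal A D\,\cdot)$. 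Both give $\|g\|_\infty\le CM$, and in either case the representation is justified by \cite[Lemma~2.5]{FJK}, since the density is bounded for fixed $\beta$.
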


\begin{proof}
The $A_2$ theory applies to the principal part of $\mathcal L$.
We first estimate the Green potentials of the zeroth-order term
and the source, then reduce to the homogeneous equation in
\eqref{eq:direct-fks-local-bound}.

Let $D$ be a ball of radius $\rho$ in the coefficient domain.
The Dirichlet Green kernel $G_D$ of
$-\operatorname{div}(\mathcal A D)$ is normalized with respect
to Lebesgue measure. Extend $\mathcal A$ by $\omega_\beta I$
outside $D$ and consider its Green function on the concentric ball
of radius $8\rho$. Points of $D$ lie in the inner quarter of that
ball, so \cite[Theorem~3.3]{FJK} applies. Domain monotonicity and
\eqref{eq:direct-weighted-volume} give
\begin{equation}\label{eq:direct-green-kernel}
 G_D(X,Z)\le C\int_{|X-Z|}^{8\rho}
                  \frac{r}{\mu(B_r(X))}\,dr
 \le C\frac{s(X)+|X-Z|}{|X-Z|^3}.
\end{equation}
The same conclusion holds for any coefficients uniformly
comparable to $\omega_\beta I$.

Two different integrability gains will be used. For the potential,
\eqref{eq:direct-coefficients} and the kernel bound imply
\begin{equation}\label{eq:direct-potential-smallness}
 \sup_{X\in D}\int_DG_D(X,Z)|\mathcal V(Z)|\,dZ
 \le C(\rho/R)^{1/2}.
\end{equation}
Indeed, at distances $r\le s(X)/2$ the contribution is bounded
by $CR^{-1/2}s(X)^{-3/2}r^2\le C(r/R)^{1/2}$.
On a dyadic annulus with radius $r\ge s(X)/2$, the kernel is
at most $Cr^{-2}$, while
\[
 \int_{B_r(X)}\omega_\beta s^{-3/2}\,dZ
 \le Cr\int_0^{Cr}t^3(\beta+t)^{-5/2}\,dt\le Cr^{5/2}.
\]
Summing gives \eqref{eq:direct-potential-smallness}.

For the source, the separate radial factor gives
\begin{equation}\label{eq:direct-source-potential}
 \sup_{X\in D}\int_DG_D(X,Z)
       \omega_\beta(Z)s(Z)^{-2}(\beta/s(Z))^3\,dZ\le C.
\end{equation}
For $r\le s(X)/2$, its contribution is bounded by
$C\beta^3s(X)^{-5}r^2\le C$.
For $r\ge s(X)/2$, the source mass obeys
\[
 \int_{B_r(X)}\omega_\beta s^{-2}(\beta/s)^3\,dZ
 \le Cr\beta^3\int_0^{Cr}\frac{t^3}{(\beta+t)^6}\,dt
 \le C\beta r.
\]
The dyadic contribution is consequently at most $C\beta/r$,
whose sum is uniformly bounded because $s(X)\ge\beta$.
This source need not be small at the axis.

Work first on a ball $D$ of radius $\varepsilon R$.
Choose a fixed $\varepsilon>0$ so that
\eqref{eq:direct-potential-smallness} is at most $1/4$.
The convergent Green-operator series solves
\[
 \vartheta=1-G_D(\mathcal V\vartheta),\qquad
 \tfrac12\le\vartheta\le2,\qquad\mathcal L\vartheta=0.
\]
Writing $z=q/\vartheta$ gives
\[
 -\operatorname{div}(\vartheta^2\mathcal A Dz)=\vartheta F.
\]
The matrix $\vartheta^2\mathcal A$ is still symmetric and
uniformly comparable to $\omega_\beta I$, so its Green kernel
satisfies \eqref{eq:direct-green-kernel} with the same uniform
constants. Subtract its Dirichlet Green potential of $\vartheta F$.
The supremum of this potential is at most $CM$ by
\eqref{eq:direct-source-potential}.
For each fixed $\beta>0$, the density $\vartheta F/\omega_\beta$
is bounded; thus the representation follows from
\cite[Lemma~2.5]{FJK}. The remaining function solves a homogeneous
weighted divergence equation, and
\eqref{eq:direct-fks-local-bound} yields
\[
 \|q\|_{L^\infty(D/2)}
 \le C\left[\left(\frac1{\mu(D)}\int_Dq^2\,d\mu\right)^{1/2}+M\right].
\]
A finite covering of $B'$ by such balls, and doubling to compare
their averages with the average on $B$, proves
\eqref{eq:direct-linear-bound}.
\end{proof}

\begin{proposition}[Direct axial estimates for the minimal graph]
\label{prop:direct-axial-regularity}
In the band of Lemma~\ref{lem:direct-leaf-comparison}, for every
fixed $N\ge1$ and every fixed $k\ge0$,
\begin{equation}\label{eq:direct-axial-bounds}
 |D^k\partial_y^ju(X)|
 \le C_{N,k}\beta^3R^{-j}(\beta+t)^{-2-k},
 \qquad 1\le j\le N,\quad X\in B_{R/4}(X_0).
\end{equation}
In particular,
$|a_\infty^{(j)}(y)|\le C_j a_\infty(y)R^{-j}$ on the
smaller axial interval. The band and the lower threshold for
$|y_0|$ are independent of $N$.
\end{proposition}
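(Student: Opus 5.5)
The proof will be an induction on the axial order $j$, applying Lemma~\ref{lem:direct-weighted-estimate} at each order to the normalized quantities
\[
 q_j=\frac{\partial_y^ju}{\beta^3d},
\]
on a nested family of balls $B_{2R}(X_0)\supset B_1\supset B_2\supset\dots\supset B_{R/4}(X_0)$, with radii shrinking by a fixed geometric amount at each step so that at most $N$ steps are used. The target at order $j$ is the pointwise bound $|q_j|\le C_jR^{-j}$. Since $d\approx s^{-2}$, this is exactly the case $k=0$ of \eqref{eq:direct-axial-bounds}. The cases $k\ge1$ will then follow by rescaling. On a ball $B_{cs}(X)$ we dilate by $s$; there $u/s$ has uniformly positive height and bounded coefficients, and $\partial_y^ju$ satisfies a linear uniformly elliptic equation (the $j$-times differentiated SME), whose forcing is a polynomial in lower derivatives. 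Ordinary interior Schauder estimates then give one factor $s^{-1}$ per Cartesian derivative, provided the lower orders are already controlled with all $k$. For this reason the induction hypothesis at order $j$ should include all $k$ up to a fixed finite number, which depends on $N$.

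For the base case $j=1$, the identity \eqref{eq:direct-jacobi-identities} gives $\mathcal Ju_y=0$, hence $\mathcal Lq_1=0$. Lemma~\ref{lem:direct-weighted-estimate} with $M=0$ then reduces everything to a weighted $L^2$ bound, namely
\[
 \frac1{\mu(B)}\int_Bq_1^2\,d\mu\le CR^{-2}.
\]
I would obtain this from the oscillation of $u$ itself, not by estimating $u_y$ pointwise. Lemma~\ref{lem:direct-leaf-comparison}, together with the quadratic band \eqref{eq:quadratic-uniform-band}, shows that $u$ is within $C\beta^3s^{-2}(s/R)^{1/2}$ of a leaf whose label varies across the band by at most $C\beta^3$, at rate $T\approx R$ per unit $y$. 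Concretely, I would write $q_1=\partial_y(u/(\beta^3d))+u\,\partial_y(1/(\beta^3d))$, or more simply use the difference quotients $(u(\cdot,y+h)-u)/(\beta^3d)$ with $|h|\approx R$. These are $O(1)$ in sup norm, and they satisfy $\mathcal L$-equations with small errors. The weighted Caccioppoli inequality for $z=q/\vartheta$, combined with the weighted Poincaré inequality of \cite{FKS}, converts the $O(1)$ oscillation over axial length $R$ into $L^2(d\mu)$ control of the derivative of size $R^{-1}$.

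For the inductive step $j\ge2$, differentiate $\mathcal Ju_y=0$ a further $j-1$ times in $y$. This produces
\[
 \mathcal J\partial_y^ju=\text{commutator terms},
\]
bilinear and higher in $\partial_y^iD^ku$ with $1\le i<j$. By the induction hypothesis each factor $\partial_y^iu$ carries $\beta^3R^{-i}s^{-2-\cdot}$. Since at least two factors are present, an extra $(\beta^3s^{-3})$ appears; this is the origin of the $(\beta/s)^3$ in the forcing hypothesis of Lemma~\ref{lem:direct-weighted-estimate}. After multiplying by $d/(\beta^3)$, the forcing is at most $MR^{-j}\omega_\beta s^{-2}(\beta/s)^3$. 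The weighted $L^2$ bound for $q_j$ then comes from the Caccioppoli inequality applied to $q_{j-1}$ on a slightly larger ball, which gives $\|Dq_{j-1}\|_{L^2(d\mu)}\lesssim R^{-j}$. Writing $q_j=\partial_yq_{j-1}+q_{j-1}\partial_y\log(1/d)$ and using $|\partial_yd|\le Cs^{-3}$ controls the second term, possibly with help from the $(s/R)^{1/2}$-smallness coming from leaf comparison. Lemma~\ref{lem:direct-weighted-estimate} then gives the pointwise bound, and Schauder rescaling gives all $k$. Evaluating at $t=0$, where $d\approx\beta^{-2}$, yields $|a_\infty^{(j)}|\le C\beta R^{-j}$.

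The main obstacle is verifying the source bound exactly in the form $\omega_\beta s^{-2}(\beta/s)^3$. The commutator contains terms such as $(\partial_y^{j-1}u)\cdot\partial_y(u^3B)$ acting through the divergence, and one must check that every term has at least two $y$-derivative factors, so that the factor $\beta^3$ appears twice and the decay in $s$ is cubic. A second concern is the term $\partial_y$ falling on the potential $3u/W$. I would check this term first: it gives $3u_y\partial_y^{j-1}u\,u^{-1}$-type terms. Each of these is again quadratic in axial derivatives, so it fits the required form.
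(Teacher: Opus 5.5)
Your induction is the paper's: the same normalization $q_j=\partial_y^ju/(\beta^3d)$, the same quadratic structure of the differentiated SME producing the $(\beta/s)^3$ gain in the source, the same Caccioppoli step on $q_{j-1}$ combined with $q_j=\partial_yq_{j-1}+(d_y/d)q_{j-1}$ for the $L^2$ input at orders $j\ge2$, Lemma~\ref{lem:direct-weighted-estimate} for the sup bound, and the same rescaled interior estimates on balls of radius comparable to $s$ for the mixed derivatives.

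The step that does not work as you describe it is the weighted $L^2$ bound for $q_1$ in the base case.

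The quantity $u/(\beta^3d)\approx(s/\beta)^3$ is not $O(1)$ on the band: it has size $(R/\beta)^3$ where $t\approx R$. It also solves no equation of the form $\mathcal L(\cdot)\approx0$, so there is no bounded oscillation to exploit. Incidentally, the correct identity is $q_1=\partial_y(u/(\beta^3d))-u\,\partial_y(1/(\beta^3d))$.

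The difference quotients $(u(\cdot,y+h)-u)/(\beta^3d)$ with $|h|\approx R$ are bounded. However, they solve the Jacobi equation averaged between $u$ and its translate, not $\mathcal L$. More to the point, a Caccioppoli inequality for them only bounds their gradient, that is, differences of $q_1$ at axial separation $h$, not $q_1$ itself. Recovering $q_1$ would require either $h\to0$, which destroys the $O(1)$ bound, or control of $\partial_yq_1$ with a small constant, which you do not have. The weighted Poincar\'e inequality runs from gradients to functions and cannot supply this.

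The repair is simple, and it is exactly what the paper does. There is no need to avoid pointwise information on $u_y$, because using it is not circular. You used only the case $k=0$ of \eqref{eq:direct-frozen-comparison}, but that estimate holds for every $k$, and the frozen leaf $u_*(t)=H(t,\lambda_*)$ does not depend on $y$. Hence $u_y=\partial_y(u-u_*)$, and the case $k=1$ gives $|u_y|\le C\beta^3s^{-3}(s/R)^{1/2}$. Therefore $|q_1|\le Cs^{-1}$ on $B_{2R}(X_0)$. Since $\mu(B_R)\approx R^4$,
\[
 \int_{B_R}q_1^2\,d\mu\le CR\int_0^R\frac{t^3}{(\beta+t)^3}\,dt\le CR^2\le CR^{-2}\mu(B_R),
\]
and Lemma~\ref{lem:direct-weighted-estimate} with $M=0$ yields $\|q_1\|_\infty\le C/R$ on a smaller ball.

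You already rely on this very integral at the inductive step. Your bound $|d_y/d|\le Cs^{-1}$, which is weaker than the paper's $C/R$, suffices there only because $\int_Bs^{-2}\,d\mu\le CR^{-2}\mu(B)$. With the base case replaced in this way, the rest of your argument goes through as in the paper.
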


\begin{proof}
Fix $N$ and use finitely many nested concentric balls between
$B_R(X_0)$ and $B_{R/4}(X_0)$. Their separations may depend on
$N$. Let
\[
 u_j=\partial_y^ju,\qquad q_j=\frac{u_j}{\beta^3d},\qquad j\ge1,
\]
where $\beta$ is fixed. These are normalized derivatives of the
original graph. We prove
\begin{equation}\label{eq:direct-induction}
 \|q_j\|_\infty\le C_jR^{-j},\qquad
 |D^k u_j|\le C_{j,k}\beta^3R^{-j}s^{-2-k},\quad k\ge0,
\end{equation}
on successively smaller balls. No axial derivative estimate
for $a_\infty$ is assumed.

For $j=1$, \eqref{eq:direct-jacobi-identities} gives
$\mathcal Lq_1=0$. The local comparison
\eqref{eq:direct-frozen-comparison}, whose comparison leaf
has zero axial derivative, gives $|q_1|\le Cs^{-1}$.
Since $\mu(B_R)\approx R^4$,
\[
 \int_{B_R}q_1^2\,d\mu
 \le CR\int_0^R\frac{t^3}{(\beta+t)^3}\,dt
 \le CR^2\le CR^{-2}\mu(B_R).
\]
Lemma~\ref{lem:direct-weighted-estimate}, with zero forcing,
therefore gives $\|q_1\|_\infty\le C/R$ on a smaller ball.
The local elliptic estimate described below supplies the mixed
derivatives and completes this first step.

We record the sources at higher orders. In Cartesian coordinates,
\[
 Q(u)=A^{ij}(Du)u_{ij}-3/u,\qquad
 A^{ij}(\xi)=\delta_{ij}-\frac{\xi_i\xi_j}{1+|\xi|^2}.
\]
Differentiating $j$ times and separating the complete linearization
gives
\begin{equation}\label{eq:direct-differentiated-sme}
 DQ_u[u_j]=-T_j,\qquad T_1=0.
\end{equation}
Every term of $T_j$ contains at least two factors with positive
axial order, all strictly less than $j$. Apart from fixed
contractions and constants, its principal terms are
\[
 (\partial_\xi^b A^{ij})(Du)
       \prod_{\nu=1}^b Du_{r_\nu}\,D_{ij}u_q,
 \qquad r_\nu\ge1,\quad \sum r_\nu+q=j,
\]
with the linear terms $(b,q)=(0,j)$ and $(b,r_1,q)=(1,j,0)$
removed. Its remaining terms come from differentiating $-3/u$
and have the form
\[
 C u^{-1-b}\prod_{\nu=1}^b u_{r_\nu},
 \qquad b\ge2,\quad r_\nu\ge1,\quad \sum r_\nu=j.
\]
The local bounds for the undifferentiated solution are
$|D^ku|\le C_ks^{1-k}$, and $|Du|\le\Lambda$.
Assuming \eqref{eq:direct-induction} at lower axial orders,
each differentiated gradient costs
$\beta^3R^{-r}s^{-3}$, and each differentiated Hessian costs
$\beta^3R^{-q}s^{-4}$. In a principal term with $q=0$
there are at least two gradient factors and $|D^2u|\le C/s$.
The reciprocal terms have the same bound. Since $\beta/s\le1$,
the product rule therefore gives
\begin{equation}\label{eq:direct-raw-sources}
 |D^kT_j|\le C_{j,k}\beta^6R^{-j}s^{-7-k},\qquad j\ge2.
\end{equation}
All fixed local Cartesian derivative orders are retained in the
induction, so
the extra derivatives in these products are already available.

By \eqref{eq:direct-jacobi-identities}, the normalized equation is
\begin{equation}\label{eq:direct-normalized-sources}
 \mathcal Lq_j=F_j,\qquad
 F_j=\frac{d u^3}{\beta^3W}T_j,\qquad
 |D^kF_j|\le C_{j,k}\omega_\beta R^{-j}s^{-2-k}(\beta/s)^3.
\end{equation}
The factor $(\beta/s)^3$ is the gain that permits direct
differentiation of $u$ at every order. It gives a uniformly
bounded source potential by \eqref{eq:direct-source-potential}.

To obtain the needed $L^2$ bound at order $j\ge2$, test the
equation for $q_{j-1}$ with $\zeta^2q_{j-1}$, where
$|D\zeta|\le C_N/R$. Ellipticity and Young's inequality give
\[
 \begin{split}
 \int\zeta^2|Dq_{j-1}|^2\,d\mu
 &\le CR^{-2}\int_Bq_{j-1}^2\,d\mu
 +C\|q_{j-1}\|_\infty^2\int_B|\mathcal V|\,dX\\
 &\quad+C\|q_{j-1}\|_\infty\int_B|F_{j-1}|\,dX.
 \end{split}
\]
The ball $B$ is centered at $X_0$ and has radius comparable to
$R$. The radial integrations in the preceding lemma give
\[
 \int_B|\mathcal V|\,dX\le CR^{-2}\mu(B),\qquad
 \int_B\omega_\beta s^{-2}(\beta/s)^3\,dX
 \le C\beta R\le CR^{-2}\mu(B).
\]
Together with the induction hypothesis and
\eqref{eq:direct-normalized-sources}, this proves
\[
 \int\zeta^2|Dq_{j-1}|^2\,d\mu\le CR^{-2j}\mu(B).
\]
The normalization contributes one additional, controlled term:
\[
 q_j=\partial_yq_{j-1}+\frac{d_y}{d}q_{j-1},\qquad
 \frac{d_y}{d}=\beta^3\frac{H_{\lambda\lambda}}{H_\lambda}q_1.
\]
Here the fixed-profile derivatives are evaluated at the leaf
through $u(X)$. By \eqref{eq:direct-leaf-parameter-bounds} and
the first step, $|d_y/d|\le C/R$. Thus, on the next smaller ball $B_1$,
\[
 \left(\frac1{\mu(B_1)}\int_{B_1}q_j^2\,d\mu\right)^{1/2}
 \le CR^{-j}.
\]
Applying Lemma~\ref{lem:direct-weighted-estimate} to
\eqref{eq:direct-normalized-sources}, with $M=CR^{-j}$, gives
$\|q_j\|_\infty\le CR^{-j}$.

At each step, recover the mixed derivatives on local balls of
radius comparable to $s=\beta+t$. Divide the normalized equation
by $\omega_\beta$ and rescale by $s$. Its principal part is
uniformly elliptic, and Lemma~\ref{lem:direct-leaf-comparison}
bounds all coefficient derivatives on that scale. Equation
\eqref{eq:direct-normalized-sources} bounds the rescaled source
and its derivatives by $C_{j,k}R^{-j}$.
Standard interior estimates therefore give
\[
 |D^kq_j|\le C_{j,k}R^{-j}s^{-k}.
\]
Multiplying by $\beta^3d$ proves the second estimate in
\eqref{eq:direct-induction}. Near the boundary of an intermediate
ball, its radius can be reduced by a factor depending on $N$;
this changes only the constants. The induction is now complete.

Restricting to $B_{R/4}(X_0)$ proves
\eqref{eq:direct-axial-bounds}. At $x=0$,
$a_\infty^{(j)}(y)=u_j(0,y)$, so the smoothing-radius estimates follow
from $\beta\approx a_\infty(y)$. The geometric threshold was used
only in the initial comparison lemma. Higher orders change
constants and intermediate ball separations, not that threshold.
\end{proof}

\subsection{Expansion around Hardt--Simon leaves}
The direct axial estimates now control both the smoothing radius
$a_\infty(y)=u_\infty(0,y)$ and the axial terms in the SME.
We compare each slice with the leaf having that smoothing radius:
\[
 u_\infty(t,y)=a_\infty(y)h(t/a_\infty(y))+e_\infty(t,y),
 \qquad e_\infty(0,y)=\partial_te_\infty(0,y)=0.
\]
Moving the axial terms of the SME to the right-hand side gives a
forced radial equation. Each term contains two axial derivatives
or a product of two first axial derivatives, hence a factor
$R^{-2}$. The following proposition converts that forcing estimate
into the remainder bounds required for gluing.

\begin{proposition}[Expansion about Hardt-Simon leaves]
\label{prop:direct-leaf-expansion}
Let $N\ge1$ be an arbitrary integer.
Fix $|y_0|$ sufficiently large and set
\[
 \beta:=u_\infty(0,y_0),\qquad R:=\theta_0|y_0|,
\]
where $\theta_0>0$ is the fixed constant chosen in
Lemma~\ref{lem:direct-leaf-comparison}.
Thus $\beta$ is the smoothing radius at the reference height $y_0$
and is held fixed throughout the band.
Assume the direct estimates of Proposition~\ref{prop:direct-axial-regularity}
through axial order $N+2$. There is a fixed $c>0$, independent
of $N$, such that, for $|y-y_0|\le cR$,
the varying smoothing radius
\[
 a_\infty(y):=u_\infty(0,y)
\]
satisfies
\[
 |a_\infty^{(j)}(y)|
 \le C_j a_\infty(y)R^{-j},
 \qquad 1\le j\le N+2.
\]
For $0\le t\le cR$ and $|y-y_0|\le cR$, one has
\begin{equation}\label{eq:leaf-expansion}
 u_\infty(t,y)
 =a_\infty(y)h\bigl(t/a_\infty(y)\bigr)+e_\infty(t,y),
\end{equation}
where
\begin{equation}\label{eq:leaf-remainder}
 |\partial_t^m\partial_y^ne_\infty(t,y)|
 \le C_{mn}a_\infty(y)^3R^{-2-n}
                    (a_\infty(y)+t)^{-m},
 \qquad m+n\le N.
\end{equation}
The constants may depend on $N$ and the indicated derivative
orders, but are independent of $\beta/R$. The geometric constant $c$ and the
lower threshold for $|y_0|$ can be chosen independently of $N$.
\end{proposition}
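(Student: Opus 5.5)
The derivative bounds for $a_\infty$ are immediate. At $x=0$ we have $a_\infty^{(j)}(y)=u_j(0,y)$, so Proposition~\ref{prop:direct-axial-regularity} with $k=0$ gives $|a_\infty^{(j)}|\le C\beta^3R^{-j}\beta^{-2}=C\beta R^{-j}$. Lemma~\ref{lem:quadratic-phase-near-spine} gives $a_\infty(y)\approx\beta$ on $|y-y_0|\le cR$, hence $|a_\infty^{(j)}(y)|\le C_ja_\infty(y)R^{-j}$ for $1\le j\le N+2$.

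For the remainder, write $v(t,y)=a(y)h(t/a(y))$ with $a=a_\infty$ and set $e=u-v$. We have $e(0,y)=e_t(0,y)=0$ because $h(0)=1$, $h'(0)=0$, and $u$ is even and smooth in $x$. Split the SME as $Q(u)=\mathcal R_t(u)+\mathcal Y(u)$, where
\[
 \mathcal R_t(u)=\frac{(1+u_y^2)u_{tt}}{1+|Du|^2}+\frac3tu_t-\frac3u
\]
is the radial part at fixed $y$, and $\mathcal Y(u)$ collects the terms containing $u_y$, $u_{ty}$ or $u_{yy}$. By Proposition~\ref{prop:direct-axial-regularity},
\[
 |\partial_t^m\partial_y^n\mathcal Y(u)|\le C\beta^3R^{-2-n}s^{-3-m},
\]
since every term has either $u_{yy}$ or a product of two first axial derivatives. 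The cross term $u_tu_yu_{ty}$ carries $\beta^6R^{-2}s^{-6}\le\beta^3R^{-2}s^{-3}$. The slice equation for $h$ gives the analogous estimate for the radial part of $v$ with $a_y=0$, so the exact radial residual of $v$ vanishes at each fixed $y$. Subtracting, $e(\cdot,y)$ solves a forced radial ODE
\[
 L_{a}e=\mathcal F,\qquad |\partial_t^m\mathcal F|\le C\beta^3R^{-2}s^{-3-m},
\]
where $L_a$ is the linearization of the radial operator about the leaf $a h(t/a)$ plus a quadratic remainder in $e$. In the rescaled variable $\rho=t/a$, this is $L_h$ acting on $e/a$ with forcing of size $(a/R)^2(1+\rho)^{-3}$.

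Next I invert $L_h$ with zero Cauchy data at $\rho=0$. The homogeneous solutions near infinity behave like $\rho^{-2}$ and $\rho^{-3}$ (the Jacobi fields of $C_{3,3}$), and the regular one at $0$ is the dilation field $h-\rho h'$. Variation of parameters with the forcing $(1+\rho)^{-3}$ gives a solution bounded by $C(a/R)^2$ uniformly in $\rho$. That is $|e|\le Ca^3R^{-2}$, as claimed for $m=n=0$. The quadratic terms in $e$ are absorbed by a continuity/bootstrap argument, since $e/a=O((a/R)^2)$ is small.

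\textbf{Main obstacle.} The hard step is the uniformity of this ODE inversion on the whole range $0\le\rho\le cR/a$. The uniformity rests on $L_h$ having no growing mode compatible with the data: the solution regular at $0$ is unique up to the dilation field, and that field is killed by $e_t(0)=0$ together with $e(0)=0$. Even so, one must check that the Wronskian integrals converge against the $\rho^{-2},\rho^{-3}$ decay and do not produce a $\log$. The exponents differ and the forcing decays like $\rho^{-3}$, so I expect a bounded contribution, but this has to be verified.

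Radial derivatives then follow from the ODE itself, each costing $(a+t)^{-1}$. Axial derivatives follow by differentiating the decomposition in $y$. Terms where $\partial_y$ falls on $a$ in $v$ cost $a'/a\lesssim R^{-1}$, and terms where it falls on the forcing also cost $R^{-1}$, using the direct estimates through axial order $n+2$. Re-solving the differentiated ODE with zero Cauchy data then gives the $R^{-2-n}$ factor. This is why order $N+2$ is assumed. The constants $c$ and the threshold on $|y_0|$ enter only through Lemma~\ref{lem:direct-leaf-comparison}, so they are independent of $N$.
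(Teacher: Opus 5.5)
Your route is the paper's: smoothing-radius bounds from Proposition~\ref{prop:direct-axial-regularity} at $t=0$, the remainder $e=u-a h(t/a)$ with $e(0,y)=e_t(0,y)=0$, a radial ODE forced by the axial terms of the SME, a radial inversion uniform in $\beta/R$, and induction in $y$ using $N+2$ axial derivatives. One step is wrong as stated, however, and your ``main obstacle'' rests on it.

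The bound $|\partial_t^m\partial_y^n\mathcal Y(u)|\le C\beta^3R^{-2-n}s^{-3-m}$ does not follow from Proposition~\ref{prop:direct-axial-regularity}, which gives only $|u_{yy}|\le C\beta^3R^{-2}s^{-2}$. (Your cross term is $\beta^6R^{-2}s^{-5}$, not $s^{-6}$, but that is harmless.) The $u_{yy}$ bound is sharp. Near the outer edge $t\approx cR$ of the band, which lies in a fixed regular sector, Proposition~\ref{prop:quadratic-bound} gives normal height $AJ_2+O(\rho^{-7/33})$ with derivatives, so $u_{yy}\approx\sqrt2A\,t^{-2}$. Moreover, the leaf with the same axis radius reproduces only the $Ay^2r^{-2}$ part of $AJ_2$, since $c_Ha^3=Ay^2(1+o(1))$ by Lemma~\ref{lem:quadratic-phase-near-spine}. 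Hence $e\approx-\sqrt2A/3\neq0$ there. So the rescaled forcing is $(a/R)^2(1+\rho)^{-2}$, not $(a/R)^2(1+\rho)^{-3}$, and $e/a$ does not decay in $\rho$. Your version would instead force $|e|\lesssim a^3R^{-2}(1+\rho)^{-1}\to0$ at $t\approx cR$.

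The estimate still closes, but for a different reason than the one you give. The indicial roots of $L_h$ at infinity are $-2$ and $-3$, so both homogeneous solutions decay. There is no growing mode at all, and that is what makes the inversion uniform on $0\le\rho\le cR/a$. The response exponent $0$ for $(1+\rho)^{-2}$ forcing is non-resonant, so the solution is bounded and no logarithm appears. This is Lemma~\ref{lem:linearized-hs-inverse} with $b=0$, or equivalently the paper's log-variable system with eigenvalues $-2,-3$ and integrable perturbation. Note also that the dilation field is excluded by $e(0)=0$ alone; the other homogeneous solution is singular at the axis.

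Two smaller points. First, your $\mathcal R_t$ still contains $u_y$ through $(1+u_y^2)/(1+|Du|^2)$, so $\mathcal R_t(v)\neq0$. Take the radial part to be $u_{tt}/(1+u_t^2)+3u_t/t-3/u$, as the paper does. Then $a(y)h(t/a(y))$ solves it exactly at each $y$, and $u_t^2u_y^2u_{tt}/((1+u_t^2)(1+|Du|^2))$ moves into the forcing. Second, you linearize about the leaf and bootstrap the quadratic terms in $e$, whereas the paper writes the exact difference equation with coefficients averaged along the segment from $a h(t/a)$ to $u$. Those coefficient bounds and tail expansions follow from the a priori bounds for both profiles, so no smallness of $e$ and no absorption of top-order nonlinear terms is needed. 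Your bootstrap can be run as a continuity argument in $t$ from the axis, with a smallness threshold on $(a/R)^2$ independent of $N$, but it has to be written out.
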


\begin{proof}
Choose $c>0$ so that the enlarged rectangle
\[
 0\le t\le2cR,\qquad |y-y_0|\le2cR
\]
lies inside $B_{R/4}(X_0)$, where the direct derivative estimates
hold. Abbreviate $u=u_\infty$, $a=a_\infty$, and $e=e_\infty$.

\medskip
\noindent\emph{1. Smoothing radius and the forced radial equation.}
Evaluating Proposition~\ref{prop:direct-axial-regularity}
at $t=0$, where $a(y)=u(0,y)$, gives
\[
 a\approx\beta,\qquad |a^{(j)}|\le C_j\beta R^{-j},
 \qquad 1\le j\le N+2.
\]
The local fixed-leaf comparison of
Lemma~\ref{lem:direct-leaf-comparison} and the direct derivative
estimates of Proposition~\ref{prop:direct-axial-regularity} give
\begin{equation}\label{eq:leaf-u-derivative-bounds}
 \begin{aligned}
 |\partial_t^m(u-t)|
 &\le C_m\beta^3(\beta+t)^{-2-m},\\
 |\partial_t^m\partial_y^ju|
 &\le C_{mj}\beta^3R^{-j}(\beta+t)^{-2-m},
 \qquad 1\le j\le N+2,
 \end{aligned}
\end{equation}
for every fixed transverse derivative order $m\ge0$.
The same estimates hold for $u_a(t,y)=a(y)h(t/a(y))$. Write $D_u=1+u_t^2+u_y^2$. Rearranging the SME
\eqref{cap:eq:Q} gives
\begin{equation}\label{eq:leaf-forced-radial}
 \frac{u_{tt}}{1+u_t^2}+\frac3t u_t-\frac3u=F,
\end{equation}
where
\[
 F=-\frac{u_t^2u_y^2}{(1+u_t^2)D_u}u_{tt}
   +\frac{2u_tu_y}{D_u}u_{ty}
   -\frac{1+u_t^2}{D_u}u_{yy}.
\]
In particular,
\[
 |F|
 \le C\bigl(|u_y|^2|u_{tt}|+|u_yu_{ty}|+|u_{yy}|\bigr)
 \le C\beta^3R^{-2}(\beta+t)^{-2}.
\]
Differentiating this identity and using
\eqref{eq:leaf-u-derivative-bounds} yields
\begin{equation}\label{eq:leaf-radial-forcing-bounds}
 |\partial_t^m\partial_y^nF|
 \le C_{mn}\beta^3R^{-2-n}(\beta+t)^{-2-m},
 \qquad 0\le n\le N,
\end{equation}
for every fixed $m\ge0$.
At most $n+2\le N+2$ axial derivatives of $u$ are needed,
because the forcing contains at most two axial derivatives of $u$.
The direct mixed estimates already control every fixed transverse
order, so increasing $m$ requires no further axial derivatives.

\medskip
\noindent\emph{2. The equation for the remainder.}
Define $e=u-u_a$. Since $u_a$ solves the radial
Hardt-Simon equation, subtraction gives
\begin{equation}\label{eq:leaf-remainder-ode}
 \mathsf a\,e_{tt}+\mathsf b\,e_t+\mathsf c\,e=F,
 \qquad e(0,y)=e_t(0,y)=0.
\end{equation}
More explicitly, with $v_\theta=u_a+\theta e$,
\[
 \begin{aligned}
 \mathsf a
 &=\int_0^1\frac{d\theta}{1+(v_\theta)_t^2},\\
 \mathsf b
 &=\frac3t-\int_0^1
       \frac{2(v_\theta)_t(v_\theta)_{tt}}
            {(1+(v_\theta)_t^2)^2}\,d\theta,\\
 \mathsf c
 &=\int_0^1\frac3{v_\theta^2}\,d\theta.
 \end{aligned}
\]
The bounds already established for $u$ and $u_a$ imply
$\mathsf a\ge c_*>0$ and
\begin{equation}\label{eq:leaf-radial-coefficients}
 \begin{aligned}
 &|\partial_t^m\partial_y^j\mathsf a|
 +(\beta+t)
       |\partial_t^m\partial_y^j(\mathsf b-3/t)|\\
 &\hspace{35mm}
 +(\beta+t)^2|\partial_t^m\partial_y^j\mathsf c|
 \le C_{mj}R^{-j}(\beta+t)^{-m},
 \qquad 0\le j\le N.
 \end{aligned}
\end{equation}
At the axis, $\mathsf a(0,y)=1$,
$\mathsf b-3/t$ is smooth and odd in $t$, and
$\mathsf a,\mathsf c$ are smooth and even.
All equations at $t=0$ are understood through this smooth
radial extension.

For $t\ge T\beta$, with $T$ a sufficiently large fixed constant,
\eqref{eq:leaf-u-derivative-bounds} gives
\begin{equation}\label{eq:leaf-radial-tail-coefficients}
 \mathsf a=\frac12+O(\beta^3t^{-3}),\qquad
 \mathsf b=\frac3t+O(\beta^3t^{-4}),\qquad
 \mathsf c=\frac3{t^2}+O(\beta^3t^{-5}).
\end{equation}
These estimates hold with their differentiated versions.
Since $\beta/R\to0$ as $|y_0|\to\infty$, we may take $|y_0|$
sufficiently large that $2T\beta<cR$.

\medskip
\noindent\emph{3. A uniform estimate for the radial equation.}
For the fixed coefficients above, consider a smooth even solution
\[
 \mathsf a z_{tt}+\mathsf b z_t+\mathsf c z=g,
 \qquad z(0)=z_t(0)=0,
\]
at a fixed value of $y$. We claim that
\begin{equation}\label{eq:leaf-radial-response}
 |\partial_t^mg|\le K_mM(\beta+t)^{-2-m}
 \quad\Longrightarrow\quad
 |\partial_t^mz|\le C_mM(\beta+t)^{-m}
\end{equation}
for every fixed $m\ge0$. The constants depend on finitely many
$K_m$ and coefficient bounds, but not on $\beta/R$.

First consider the smoothing region. Set $r=t/\beta$ and
$Z(r)=z(\beta r)$. After division by the leading coefficient,
the equation becomes
\[
 Z''+\left(\frac3r+b_1(r)\right)Z'+c_1(r)Z=g_1(r),
 \qquad Z(0)=Z'(0)=0.
\]
On $0\le r\le2T$, the coefficients have uniform smooth bounds,
$b_1$ is odd with $|b_1(r)|\le Cr$, and $c_1,g_1$ are even,
with $|g_1|\le CM$.
Writing
\[
 J(r)=r^3\exp\left(\int_0^r b_1(s)\,ds\right),
\]
we obtain
\[
 Z'(r)=J(r)^{-1}\int_0^r
           J(s)\bigl(g_1(s)-c_1(s)Z(s)\bigr)\,ds.
\]
Since $J(r)\approx r^3$ on this fixed interval,
integration and Gronwall's inequality give
\[
 |Z|+|Z'|\le CM.
\]
The equation is a smooth radial elliptic equation in $\R^4$:
the coefficient $b_1(r)/r$ extends smoothly across the origin.
Interior elliptic estimates therefore give
$|\partial_r^m Z|\le C_mM$ on $0\le r\le T$.
This proves \eqref{eq:leaf-radial-response} in the smoothing region.

For $t\ge T\beta$, put $\tau=\log(t/\beta)$ and
$\mathcal Z=(z,\partial_\tau z)$.
By \eqref{eq:leaf-radial-tail-coefficients}, the equation becomes
\[
 \partial_\tau\mathcal Z
 =\left[
   \begin{pmatrix}0&1\\-6&-5\end{pmatrix}
   +O(e^{-3\tau})\right]\mathcal Z+\mathcal G,
 \qquad |\mathcal G|\le CM.
\]
The constant matrix has eigenvalues $-2$ and $-3$.
Since the perturbation is integrable in $\tau$, its evolution
operator satisfies
\[
 |\Phi(\tau,\sigma)|\le Ce^{-2(\tau-\sigma)},
 \qquad \tau\ge\sigma\ge\log T.
\]
Variation of constants, together with the estimate in the smoothing region at
$t=T\beta$, gives
\[
 |\mathcal Z(\tau)|
 \le Ce^{-2(\tau-\log T)}|\mathcal Z(\log T)|
      +CM\int_{\log T}^{\tau}e^{-2(\tau-\sigma)}\,d\sigma
 \le CM.
\]
Differentiating the system in $\tau$ gives bounds for every
fixed $\tau$-derivative, because the differentiated coefficients
and forcing are uniformly controlled.
Converting back to $t$ proves
\eqref{eq:leaf-radial-response} in the remaining region.

Applying \eqref{eq:leaf-radial-response} to
\eqref{eq:leaf-remainder-ode}, with $M=\beta^3R^{-2}$, yields
\[
 |\partial_t^me|\le C_m\beta^3R^{-2}(\beta+t)^{-m}
\]
for every fixed $m\ge0$.

\medskip
\noindent\emph{4. Mixed derivatives.}
We induct on $n=0,\ldots,N$, proving the stronger intermediate
statement
\begin{equation}\label{eq:leaf-remainder-induction}
 |\partial_t^m\partial_y^ne|
 \le C_{mn}\beta^3R^{-2-n}(\beta+t)^{-m}
 \quad\text{for every fixed }m\ge0.
\end{equation}
Keeping all transverse orders at each step ensures that the
second transverse derivatives in the commutators are already
controlled.

Put $e_n=\partial_y^ne$. Differentiating
\eqref{eq:leaf-remainder-ode} gives
\[
 \begin{aligned}
 (\mathsf a\partial_t^2+\mathsf b\partial_t+\mathsf c)e_n
 &=\partial_y^nF
   -\sum_{j=1}^n\binom nj
      \bigl[
       (\partial_y^j\mathsf a)\partial_t^2
       +(\partial_y^j\mathsf b)\partial_t
       +\partial_y^j\mathsf c
      \bigr]e_{n-j},\\
 e_n(0,y)&=\partial_t e_n(0,y)=0.
 \end{aligned}
\]
For $j\ge1$, the singular term $3/t$ has zero $y$-derivative.
Thus \eqref{eq:leaf-radial-coefficients}, the induction
hypothesis, and \eqref{eq:leaf-radial-forcing-bounds} show
that the complete right-hand side, denoted by $g_n$, satisfies
\[
 |\partial_t^mg_n|
 \le C_{mn}\beta^3R^{-2-n}(\beta+t)^{-2-m}
\]
for every fixed $m\ge0$.
It is smooth and even at the axis.
Applying \eqref{eq:leaf-radial-response} with
$M=\beta^3R^{-2-n}$ proves
\eqref{eq:leaf-remainder-induction} and closes the induction.

Finally, $a_\infty(y)\approx\beta$ throughout the band.
Replacing $\beta$ by this comparable quantity and restricting
to $m+n\le N$ gives \eqref{eq:leaf-remainder}.
The choices of the smoothing region and the tail transition $T$
use only fixed low-order coefficient bounds.
Consequently the requirement $2T\beta<cR$ is independent of $N$;
higher derivatives change only the constants in the estimates.
Together with the smoothing-radius estimates, this proves the proposition.
Since $R=\theta_0|y_0|$, taking $y_0=y$ also gives the corresponding
bounds with $R$ replaced by $|y|$, on $0\le t\le c|y|$ after
adjusting the fixed constant $c$.
\end{proof}

\section{Gluing estimates}\label{sec:corrector-estimates}
The gluing argument uses two analytic operations, established here.
First, we invert the radial linearized Hardt--Simon operator with
zero data at the axis, obtaining the growth and derivative bounds for each
corrector profile. We then prove the full corrector estimates and
the connector matching statement used in
Section~\ref{sec:finite-correctors}. Finally, we extend a prescribed mean-curvature
correction off the hypersurface to an ambient conformal factor.
The weighted bounds in this extension record the loss from taking
derivatives at the shrinking smoothing scale. They explain why the
high-order cancellations in Section~\ref{sec:accumulation} are
needed for a smooth metric at the accumulation point.

\subsection{Linearized Hardt-Simon operator}
In the next lemma we prove estimates for solutions of the linearized Hard-Simon ODE.
\begin{lemma}[Radial inverse for the linearized Hardt-Simon operator]
\label{lem:linearized-hs-inverse}
Let $h$ be the normalized Hardt--Simon profile in
\eqref{cap:eq:HS}, and let
\[
 L_hw
 =\frac{w''}{1+h'^2}
  +\left(\frac3\rho
         -\frac{2h'h''}{(1+h'^2)^2}\right)w'
  +\frac3{h^2}w.
\]
Suppose $b\ge0$ and $f$ is smooth and even, with
\[
 |f^{(m)}(\rho)|
 \le C_m(1+\rho)^{b-2-m},
 \qquad \rho\ge0,\quad m\ge0.
\]
Then there is a unique smooth even solution of
\[
 L_hw=f,\qquad w(0)=w'(0)=0.
\]
It is defined on $[0,\infty)$ and satisfies
\[
 |w^{(m)}(\rho)|
 \le C_{b,m}\left(\sum_{j=0}^m C_j\right)
             (1+\rho)^{b-m},
 \qquad \rho\ge0,\quad m\ge0,
\]
where $C_{b,m}$ depends only on $b$, $m$, and the fixed
profile $h$.
\end{lemma}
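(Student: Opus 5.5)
The plan is to treat $L_h$ as a radial elliptic operator on $\R^4$, solve near the origin by a regular power series or fixed-point argument, and then control the growth at infinity with an explicit fundamental system built from the two dilation-type Jacobi fields of the leaf.

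\emph{Existence, uniqueness, smoothness.} Since $h$ is even, smooth, with $h(0)=1$, the coefficients $1/(1+h'^2)$ and $3/h^2$ are smooth and even, and $2h'h''/(1+h'^2)^2$ is smooth and odd. Thus $L_h$ is the radial part of a smooth uniformly elliptic operator on $\R^4$, with principal part comparable to the Laplacian. Near $\rho=0$ I would write the equation, as in Step~3 of Proposition~\ref{prop:direct-leaf-expansion}, in the integrated form
\[
 w'(\rho)=J(\rho)^{-1}\int_0^\rho J(\sigma)(1+h'^2)\bigl(f-3h^{-2}w\bigr)\,d\sigma,
\]
with $J\approx\rho^3$. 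A Volterra fixed point on $[0,1]$ then gives the unique solution with $w(0)=w'(0)=0$. Uniqueness holds because the second solution of the homogeneous equation behaves like $\rho^{-2}$ at the origin. Smoothness and evenness follow from elliptic regularity for the radial extension. The ODE is linear with smooth coefficients on $(0,\infty)$, so the solution extends globally.

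\emph{Bounds on a compact interval.} On $[0,\rho_0]$, for a fixed large $\rho_0$, Gronwall applied to the integral form bounds $|w|+|w'|$ by $C\sup|f|\le C C_0$. Differentiating the equation, together with interior estimates for the radial extension, gives $|w^{(m)}|\le C_m\sum_{j\le m}C_j$ there.

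\emph{The end $\rho\ge\rho_0$.} By \eqref{cap:eq:HS}, $h=\rho+b_H\rho^{-2}+O(\rho^{-3})$, so the coefficients are $\tfrac12+O(\rho^{-3})$, $3/\rho+O(\rho^{-4})$ and $3/\rho^2+O(\rho^{-5})$. Setting $\tau=\log\rho$ turns the equation into the system $\partial_\tau\mathcal W=(A_0+O(e^{-3\tau}))\mathcal W+\mathcal G$ with $A_0=\begin{pmatrix}0&1\\-6&-5\end{pmatrix}$, eigenvalues $-2,-3$, and $|\mathcal G|\le C\rho^{b}$. This is the same computation as in Step~3 of Proposition~\ref{prop:direct-leaf-expansion}, now with a growing source.

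Variation of constants gives the bound
\[
 \int_{\log\rho_0}^\tau e^{-2(\tau-\sigma)}e^{b\sigma}\,d\sigma\le C_b e^{b\tau}\qquad(b\ge0),
\]
because $b>-2$. The homogeneous part decays, so $|w|+|\rho w'|\le C_b\sum C_j\,\rho^b$. Differentiating the system in $\tau$ gives $|(\rho\partial_\rho)^m w|\le C\rho^b$, since $\tau$-derivatives of the coefficients are $O(e^{-3\tau})$ and $|(\rho\partial_\rho)^m f|\le C\rho^{b-2}$ from the hypothesis. Converting back to ordinary derivatives yields $|w^{(m)}|\le C\rho^{b-m}$.

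\emph{Main obstacle.} The delicate points are the precise perturbative structure of the coefficients at infinity and the uniformity in $b$ of the integral bound. Because $b\ge0>-2$, no resonance occurs, and the integrable $O(e^{-3\tau})$ perturbation only changes the evolution operator's bound $|\Phi(\tau,\sigma)|\le Ce^{-2(\tau-\sigma)}$ by a constant. I would verify this first, using the differentiated expansion of $h$ from \cite[Proposition~3.3]{Sz}.
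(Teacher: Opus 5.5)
Your proof is correct, but it takes a different route from the paper's.

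\emph{The paper's route.} It uses the exact Jacobi field of dilations $d=h-\rho h'$. It first proves $d>0$ (via $h'<1$, a first-crossing argument on the leaf equation) and $d\approx(1+\rho)^{-2}$. Together with the integrating factor $\mu=\rho^3h^3(1+h'^2)^{-3/2}$, reduction of order gives the global closed formula
\[
 w(\rho)=d(\rho)\int_0^\rho\frac{1}{\mu(r)d(r)^2}\int_0^r\mu(s)\bigl(1+h'(s)^2\bigr)d(s)f(s)\,ds\,dr .
\]
Existence on $[0,\infty)$, evenness and smoothness at the axis, and uniqueness are then read off this formula. The growth bound follows directly from $\mu\approx\rho^3(1+\rho)^3$ and $\mu d^2\approx\rho^3/(1+\rho)$.

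\emph{Your route.} You replace the global formula with a local Volterra/Gronwall argument near the axis and an asymptotically constant-coefficient system in $\tau=\log\rho$ at infinity. There the non-resonance $b>-2$ with the slowest eigenvalue $-2$ does the work. This is the scheme the paper uses in Step~3 of Proposition~\ref{prop:direct-leaf-expansion} and in the proof of Corollary~\ref{cor:arbitrary-order-matching}. Your integrating factor $J$ is in fact proportional to $\mu$: the difference of their logarithmic derivatives is $h'$ times the leaf equation.

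\emph{Comparison.} The paper's argument is shorter and global, but it rests on the global positivity of $d$, a specific fact about the Hardt--Simon leaf. Yours needs only regularity at the axis and the coefficient asymptotics at infinity. It therefore survives perturbation of the coefficients, for example the averaged coefficients in the matching corollary, where no exact positive Jacobi field is available. The cost is a separate compact-interval estimate and a matching step at $\rho_0$.

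One small inaccuracy: your opening sentence speaks of a fundamental system built from two dilation-type Jacobi fields. There is only one such field, $d$, and your argument never actually uses it.
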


\begin{proof}
The dilation Jacobi field
\[
 d(\rho)=h(\rho)-\rho h'(\rho)
\]
satisfies $L_hd=0$, as follows by differentiating the
family $\lambda h(\rho/\lambda)$ at $\lambda=1$.
Moreover, $d$ is strictly positive. Indeed, $h(\rho)>\rho$
because the leaf lies on the $|z|>|x|$ side. Since
$h'(0)=0$, a first point at which $h'=1$ would satisfy
\[
 h''=6\left(\frac1h-\frac1\rho\right)<0,
\]
contradicting a first crossing from below. Hence $h'<1$
and $d>0$.

The differentiated asymptotics of $h$ give
\[
 d(\rho)=3b_H\rho^{-2}+O(\rho^{-3})
 \quad\text{as }\rho\to\infty.
\]
Together with positivity and smoothness on compact
intervals, this implies
\[
 d(\rho)\approx(1+\rho)^{-2},\qquad
 |d^{(m)}(\rho)|\le C_m(1+\rho)^{-2-m}.
\]

Introduce the integrating factor
\[
 \mu(\rho)
 =\frac{\rho^3h(\rho)^3}{(1+h'(\rho)^2)^{3/2}}.
\]
Using the equation for $h$, one checks that
\[
 L_hw=f
 \quad\Longleftrightarrow\quad
 (\mu w')'
 +\frac{3\mu(1+h'^2)}{h^2}w
 =\mu(1+h'^2)f.
\]
Since $L_hd=0$, reduction of order gives
\[
 \left(\mu d^2\left(\frac wd\right)'\right)'
 =\mu(1+h'^2)d f.
\]
The zero data at the axis therefore lead to the formula
\[
 w(\rho)
 =d(\rho)\int_0^\rho
   \frac{1}{\mu(r)d(r)^2}
   \left(
     \int_0^r
       \mu(s)(1+h'(s)^2)d(s)f(s)\,ds
   \right)\,dr.
\]

This formula defines a solution for every $\rho\ge0$.
Near zero, $\mu(\rho)/\rho^3$ and $d(\rho)$ are smooth,
positive, and even. The inner integral is consequently
$r^4$ times a smooth even function, and the outer
integrand is $r$ times a smooth even function.
Thus $w$ is smooth and even, with $w(0)=w'(0)=0$.
Conversely, every regular solution with zero data at the axis
must satisfy this formula, proving uniqueness.

We next estimate the solution. The leaf bounds imply
\[
 \mu(\rho)\approx\rho^3(1+\rho)^3,
 \qquad
 \mu(\rho)d(\rho)^2
 \approx\frac{\rho^3}{1+\rho}.
\]
Hence, for $b\ge0$,
\[
 \left|
 \int_0^r
   \mu(s)(1+h'(s)^2)d(s)f(s)\,ds
 \right|
 \le C_b C_0\,r^4(1+r)^{b-1}.
\]
Substituting this into the solution formula yields
\[
 |w(\rho)|
 \le C_b C_0(1+\rho)^{-2}
       \int_0^\rho r(1+r)^b\,dr
 \le C_b C_0(1+\rho)^b.
\]
Differentiating the formula and using
$|d'(\rho)|\le C(1+\rho)^{-3}$ likewise gives
\[
 |w'(\rho)|\le C_b C_0(1+\rho)^{b-1}.
\]

On $0\le\rho\le1$, rescaling both integration variables
to fixed intervals and differentiating the formula
bounds $w^{(m)}$ by a constant times
$\sum_{j=0}^m C_j$.
On $\rho\ge1$, divide the equation by its leading
coefficient. The coefficient of $w'$ and its $k$th
derivative are $O(\rho^{-1-k})$, while those of $w$
are $O(\rho^{-2-k})$.
The bounds already proved for $w,w'$, followed by
successive differentiation of the equation, therefore
give
\[
 |w^{(m)}(\rho)|
 \le C_{b,m}\left(\sum_{j=0}^m C_j\right)
             (1+\rho)^{b-m}.
\]
Combining the two regions proves the lemma.
\end{proof}
\subsection{Corrector estimates and connector matching}\label{app:corrector-proofs}
We give the complete statement underlying
Proposition~\ref{prop:correctors-informal} and its proof, followed by
the proof of Corollary~\ref{cor:arbitrary-order-matching}.
The profiles are those defined in
Section~\ref{sec:finite-correctors}.

\begin{proposition}[Correctors for a prescribed radius function]
\label{prop:correctors-estimates}
The recursively defined profiles $P_{2\ell,\gamma}$ depend only
on the normalized Hardt--Simon profile $h$, and are independent
of the prescribed radius function $a$.
For every $\ell\ge1$ and $\gamma\in\mathcal I_{2\ell}$,
they are smooth and even, satisfy
\[
 P_{2\ell,\gamma}(0)=P'_{2\ell,\gamma}(0)=0,
\]
and obey
\[
 |\partial_\rho^mP_{2\ell,\gamma}(\rho)|
 \le C_{\ell,\gamma,m}(1+\rho)^{2\ell-2-m},
 \qquad \rho\ge0,\quad m\ge0.
\]

Fix $C_1>0$ and constants $M_n\ge1$, $n\ge1$.
For each fixed $J\ge1$, there is an $\varepsilon_J>0$,
depending only on $J$, $C_1$, and
$M_1,\ldots,M_{2J+2}$, with the following property.

Let $y_0\in\R$, $R>0$, and let
\[
 a\in C^\infty(I_{2R};(0,\infty)),
 \qquad I_{2R}=(y_0-2R,y_0+2R),
\]
be any prescribed function satisfying
\[
 |a^{(n)}(y)|\le M_n a(y)R^{-n}
 \quad(n\ge1),\qquad
 \sup_{y\in I_{2R}}\frac{a(y)}R\le\varepsilon_J.
\]

Then $V_J[a]$, defined in \eqref{eq:corrector-inductive-def}, is positive on
\[
 0\le t<(C_1+1)R,\qquad y\in I_{2R},
\]
and $V_J[a](|x|,y)$ is smooth in Cartesian coordinates.
Throughout the interior region
\[
 0\le t\le C_1R,\qquad |y-y_0|\le R,
\]
one has
\[
 V_J[a](t,y)\approx a(y)+t,\qquad
 |\partial_tV_J[a](t,y)|\le C_J,\qquad
 |\partial_yV_J[a](t,y)|
 \le C_J\frac{a(y)}R.
\]
Moreover, for every $m,n\ge0$,
\begin{align}
 |\partial_t^m\partial_y^nW_\ell[a](t,y)|
 &\le C_{\ell,m,n}\,a(y)^3R^{-2\ell-n}
                  (a(y)+t)^{2\ell-2-m},
 \qquad 1\le\ell\le J,
 \label{glue:eq:correction}\\
 |\partial_t^m\partial_y^nQ(V_J[a])(t,y)|
 &\le C_{J,m,n}\,a(y)^3R^{-2J-2-n}
                  (a(y)+t)^{2J-2-m}.
 \label{glue:eq:defect}
\end{align}

Define
\[
 \Sigma_J=
 \left\{
 (x,z,y)\in\R^4\times\R^4\times\R:
 |x|<(C_1+1)R,\quad y\in I_{2R},\quad
 |z|=V_J[a](|x|,y)
 \right\}.
\]
Its Euclidean scalar mean curvature satisfies, for every
$j\ge0$,
\[
 |\nabla_{\Sigma_J}^{\,j}H_{\Sigma_J}(X)|
 \le C_{J,j}\,a(y)^3R^{-2J-2}
                 (a(y)+|x|)^{2J-2-j}
\]
whenever $X=(x,z,y)\in\Sigma_J$,
$|x|\le C_1R$, and $|y-y_0|\le R$.

All constants are independent of $a$, $y_0$, and $R$,
apart from the prescribed modulation bounds $M_n$.
Each constant depends on only finitely many of these bounds.
The smallness threshold $\varepsilon_J$ is independent of
the derivative orders $m,n,j$.
\end{proposition}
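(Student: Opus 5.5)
Three things need proof: the universal profiles $P_{2\ell,\gamma}$, the geometric bounds on $V_J[a]$ together with the corrector bound \eqref{glue:eq:correction}, and the defect bound \eqref{glue:eq:defect} with its geometric version. I will work in normalized variables. Put $\rho=t/a(y)$ and $\widetilde V=V_J[a]/a$. Then $\widetilde V=h(\rho)+\sum_{\ell\le J}\sum_{\gamma\in\mathcal I_{2\ell}}q^\gamma P_{2\ell,\gamma}(\rho)$. The hypothesis $|a^{(n)}|\le M_naR^{-n}$ gives $|q_j|=|a^{j-1}a^{(j)}|\le M_j(a/R)^j$, so every monomial $q^\gamma$ with $\gamma\in\mathcal I_d$ has size $\lesssim(a/R)^d$. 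Moreover $\partial_y q_j=(j-1)a^{j-2}a'a^{(j)}+a^{j-1}a^{(j+1)}=a^{-1}\bigl((j-1)q_1q_j+q_{j+1}\bigr)$. Hence one axial derivative raises the weight by one and costs a factor $a^{-1}$, and the derivative $\partial_y\rho=-\rho q_1/a$ behaves the same way. Consequently $a\,Q(v)$, written in the variables $(\rho,q_1,q_2,\dots)$, is a formal series graded by weight in which axial derivatives always appear through the substitution $q_j\mapsto\varepsilon^jq_j$. I would first verify that only even weights occur, by the reflection $y\mapsto -y$, which sends $q_j\mapsto(-1)^jq_j$.

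\textbf{Step 1: the profiles, by induction on $\ell$.} I will show that each residual coefficient $S_{2\ell,\gamma}$ is smooth and even, with $|S^{(m)}_{2\ell,\gamma}|\le C(1+\rho)^{2\ell-4-m}$. Lemma~\ref{lem:linearized-hs-inverse} with $b=2\ell-2$ then gives $P_{2\ell,\gamma}$ with growth $(1+\rho)^{2\ell-2-m}$. For the base case, $S_{2,\cdot}$ comes from the computation $v_{yy}=a''\zeta+(a')^2a^{-1}\rho^2h''$ with $\zeta=h-\rho h'$. Here $\zeta\approx\rho^{-2}$ and $\rho^2h''=O(\rho^{-1})$, which gives decay $(1+\rho)^{-2}$, i.e.\ $b=0$. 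For the inductive step, each weight-$2\ell$ term of $aQ(V_{\ell-1})$ is a polynomial in $\rho$-derivatives of $h$ and of the earlier $P_{2i,\gamma}$, with coefficients built from $(1+h'^2)^{-1}$, $1/h$ and $1/\rho$. Using $h'=O(1)$, $h''=O(\rho^{-3})$, and the fact that each $P_{2i}$ contributes $(1+\rho)^{2i-2}$ while every axial derivative pair contributes $\rho^2$ through $\partial_y\rho$, the weighted degree count closes at $2\ell-4$. This degree bookkeeping is the step I expect to be the main obstacle. The delicate point is that quadratic terms in the $P$'s carry $(1+\rho)^{2i-2+2k-2}$, and these must be matched against the weight to stay within $(1+\rho)^{2\ell-4}$. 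To handle it, I would assign to each building block a \emph{scaled order} (weight minus $\rho$-growth) and check that it is subadditive under the products and derivatives appearing in $Q$.

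\textbf{Step 2: bounds on $W_\ell$ and on the geometry.} Convert back to physical variables. We have $W_\ell=a\sum q^\gamma P_{2\ell,\gamma}$, $|q^\gamma|\lesssim(a/R)^{2\ell}$ and $|P|\lesssim(1+t/a)^{2\ell-2}$, which gives $a^3R^{-2\ell}(a+t)^{2\ell-2}$. Each $\partial_t$ costs $(a+t)^{-1}$. Each $\partial_y$ costs at most $R^{-1}$: it either hits $q^\gamma$, costing $a^{-1}\cdot(a/R)$, or hits $\rho$, costing $(t/a)q_1/a\cdot\partial_\rho$, which is bounded by $R^{-1}$ times the scale loss $\rho\partial_\rho$. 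This proves \eqref{glue:eq:correction}. On $t\le C_1R$ we have $(a+t)^{2\ell-2}\le C(a+t)R^{2\ell-2}\cdot(a/R)^{0}$, so $W_\ell\lesssim (a/R)^2(a+t)$. Choosing $\varepsilon_J$ small then yields $V_J\approx a+t$, $|\partial_tV_J|\le C_J$, $|\partial_yV_J|\le C_Ja/R$, and positivity on the slightly larger region $t<(C_1+1)R$.

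\textbf{Step 3: the defect.} By construction, all weight-$<2J+2$ terms of $aQ(V_J)$ vanish. I would write $aQ(V_J)$ via a Taylor expansion in the formal parameter $\varepsilon$ with integral remainder at order $2J+2$. The remainder is a finite sum of weight-$\ge2J+2$ monomials, with coefficients controlled by the same scaled-order count as in Step 1; these terms are polynomial in $q$ up to the nonlinear factors $(1+|D V|^2)^{-1}$, which are smooth and bounded by Step 2. This gives $|Q(V_J)|\lesssim a^{-1}(a/R)^{2J+2}(1+\rho)^{2J}\cdot(\dots)$, which is \eqref{glue:eq:defect}. Derivatives follow exactly as in Step 2. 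The bound for $\nabla^jH_{\Sigma_J}$ follows from $H=-Q/\sqrt{1+|DV|^2}$ and the Cartesian smoothness of the even radial profiles, with each tangential derivative costing $(a+|x|)^{-1}$. The constants depend only on $J$, $C_1$ and $M_1,\dots,M_{2J+2}$, and $\varepsilon_J$ enters only through positivity and the bounds of Step 2, so it is independent of $m,n,j$.
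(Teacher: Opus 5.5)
Your proposal follows the paper's proof essentially step for step. It uses the same weight grading through $q_j=a^{j-1}a^{(j)}$ and $a\partial_yq_j=(j-1)q_1q_j+q_{j+1}$, and the same inductive inversion of $L_h$ by Lemma~\ref{lem:linearized-hs-inverse} with $b=2\ell-2$. It then evaluates on $a$ to get \eqref{glue:eq:correction} and the geometric bounds, and uses an integral Taylor remainder in the weight-scaling parameter, with $\varepsilon(1+\rho)=s/R$ bounded, for \eqref{glue:eq:defect}. Your reflection argument $q_j\mapsto(-1)^jq_j$ is a good justification of the absence of odd weights, which the paper only asserts.

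Two corrections are needed.
\begin{itemize}
\item \eqref{bg:eq:h-end} gives $h''=O(\rho^{-4})$, so $\rho^2h''=O(\rho^{-2})$. With your stated $h''=O(\rho^{-3})$, the coefficient $S_{2,(2,0)}$ would decay only like $\rho^{-1}$, and the count would not close at $2\ell-4$.
\item In the Taylor step you must also bound the factor $1/V$ uniformly along $0\le\delta\le1$, not just the slope denominators. This follows from the normalized interpolated profile satisfying $\widetilde V_\delta/h=1+O_J(\varepsilon^2)$.
\end{itemize}
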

\begin{proof}[Proof of Proposition~\ref{prop:correctors-estimates}]
Throughout the proof, $a$ is an arbitrary prescribed function
satisfying the hypotheses. In particular,
\[
 |(\log a)'|\le M_1R^{-1},
\]
so
\[
 e^{-2M_1}a(y_0)\le a(y)\le e^{2M_1}a(y_0),
 \qquad y\in I_{2R}.
\]
We work on the enlarged region
\[
 0\le t<(C_1+1)R,\qquad y\in I_{2R},
\]
and write
\[
 \rho=\frac{t}{a(y)},\qquad
 q_j=a^{j-1}a^{(j)},\qquad s=a(y)+t.
\]
We may decrease $\varepsilon_J$ below $1$ when necessary.
In particular, $s\le (C_1+2)R$ on this region.

We first establish the universal profile estimates.
The identities
\[
 a\partial_y\rho=-q_1\rho,\qquad
 a\partial_yq_j=(j-1)q_1q_j+q_{j+1}
\]
show that an axial differentiation, multiplied by $a$,
raises the weight by one. These are identities for every
positive smooth function $a$.

The differentiated leaf estimates give
\[
 |\partial_\rho^m(h-\rho h')|
 \le C_m(1+\rho)^{-2-m}.
\]
Thus the weight-one coefficient of
\[
 \partial_y\bigl(ah(t/a)\bigr)=q_1(h-\rho h')
\]
has the required decay.

Suppose the profiles of weights below $2\ell$ have already
been constructed with the asserted bounds.
A coefficient of weight $k$ in $V_{\ell-1}/a-h$
has radial growth $(1+\rho)^{k-2}$, with one power lost
for each radial derivative.
The differentiation identities therefore show that a
coefficient of weight $k$ in $(V_{\ell-1})_y$ has growth
$(1+\rho)^{k-3}$, and one in
$a(V_{\ell-1})_{yy}$ has growth $(1+\rho)^{k-4}$.
These statements include all differentiated bounds.

Substituting into $aQ(V_{\ell-1})$ and expanding the
denominators about their weight-zero terms $h$ and
$1+h'^2$ gives
\[
 |\partial_\rho^mS_{2\ell,\gamma}(\rho)|
 \le C_{\ell,\gamma,m}(1+\rho)^{2\ell-4-m}.
\]
Indeed, the second-derivative terms have this order,
while additional products of positive-weight corrections
have further radial decay.
The coefficients are smooth and even.
At the axis, terms involving $\rho^{-1}$ have smooth
even extensions because their numerators are odd.

Lemma~\ref{lem:linearized-hs-inverse}, applied with
$b=2\ell-2$ and $f=-S_{2\ell,\gamma}$, now gives the
unique smooth even solution of
\[
 L_hP_{2\ell,\gamma}=-S_{2\ell,\gamma},
 \qquad
 P_{2\ell,\gamma}(0)=P'_{2\ell,\gamma}(0)=0,
\]
with
\[
 |\partial_\rho^mP_{2\ell,\gamma}(\rho)|
 \le C_{\ell,\gamma,m}(1+\rho)^{2\ell-2-m}.
\]
This closes the induction.
In particular, the construction involves only the formal
axial jets and the fixed radial operator $L_h$; the resulting
profiles are independent of the choice of $a$.

We next evaluate these profiles on the prescribed function.
The modulation hypotheses imply
\[
 |\partial_y^n q_j|
 \le C_{j,n}\left(\frac{a(y)}R\right)^jR^{-n},
\]
and hence, for $\gamma\in\mathcal I_{2\ell}$,
\[
 |\partial_y^nq^\gamma|
 \le C_{\ell,n}
      \left(\frac{a(y)}R\right)^{2\ell}R^{-n}.
\]
Using the formula for $W_\ell$, the profile estimates, and
$\partial_y\rho=-(a'/a)\rho$, we obtain
\[
 \begin{aligned}
 |\partial_t^m\partial_y^nW_\ell[a]|
 &\le C_{\ell,m,n}\,
       a(y)^{1-m}
       \left(\frac{a(y)}R\right)^{2\ell}
       R^{-n}(1+\rho)^{2\ell-2-m}\\
 &=C_{\ell,m,n}\,
       a(y)^3R^{-2\ell-n}
       (a(y)+t)^{2\ell-2-m}.
 \end{aligned}
\]
This proves \eqref{glue:eq:correction}.

Since $s\le (C_1+2)R$, these estimates also yield
\[
 \sum_{\ell=1}^J
 \left(
 \frac{|W_\ell|}{s}+|\partial_tW_\ell|
 \right)
 \le C_J\left(\frac aR\right)^2,
 \qquad
 \sum_{\ell=1}^J|\partial_yW_\ell|
 \le C_J\left(\frac aR\right)^3.
\]
On the other hand,
\[
 ah(t/a)\approx s,\qquad
 \partial_t(ah(t/a))=h'(\rho),\qquad
 \partial_y(ah(t/a))=a'(h-\rho h').
\]
Choosing $\varepsilon_J$ sufficiently small therefore gives
positivity, comparability, and the asserted first-derivative
bounds for $V_J[a]$ on the enlarged region.
This choice uses only finitely many modulation bounds and
does not depend on the derivative orders appearing later.
The axis identities follow from $h(0)=1$, $h'(0)=0$,
and the zero data of the correctors.
Evenness gives smoothness in Cartesian $x$ coordinates.

We now estimate the residual.
The recursive construction cancels all terms of weight at
most $2J$ in $aQ(V_J[a])$.
Only even weights occur, so the first possible remaining
weight is $2J+2$.
To turn this formal cancellation into a uniform estimate,
we use a finite Taylor expansion.

At a fixed $y$, put $\varepsilon=a(y)/R$.
Express $aQ(V_J[a])$ in
$(\rho,q_1,\ldots,q_{2J+2})$, and let $E_\delta$ be
the resulting expression after the substitution
$q_j\mapsto\delta^jq_j$, $0\le\delta\le1$.
Then
\[
 E_1=aQ(V_J[a]),\qquad
 \partial_\delta^kE_\delta\big|_{\delta=0}=0
 \quad(0\le k\le2J+1).
\]

A weight-$k$ contribution to $V_J/a-h$ and its first
two radial derivatives is bounded, respectively, by
\[
 (1+\rho)^{-2}\bigl(\varepsilon(1+\rho)\bigr)^k,\qquad
 (1+\rho)^{-3}\bigl(\varepsilon(1+\rho)\bigr)^k,\qquad
 (1+\rho)^{-4}\bigl(\varepsilon(1+\rho)\bigr)^k.
\]
The corresponding contributions to $(V_J)_y$ and
$a(V_J)_{yy}$ are bounded by
\[
 (1+\rho)^{-3}\bigl(\varepsilon(1+\rho)\bigr)^k,
 \qquad
 (1+\rho)^{-4}\bigl(\varepsilon(1+\rho)\bigr)^k.
\]
All these estimates hold with differentiated radial bounds.

After subtracting the equation for $h$, the residual is
therefore $(1+\rho)^{-4}$ times a rational expression
in the weighted variables, with uniformly controlled
coefficients. The relevant normalized quantities satisfy
\[
 \varepsilon(1+\rho)=\frac{s}{R}\le C_1+2,
 \qquad
 \left|\varepsilon^{-j}q_j\right|
 =\left|\frac{R^ja^{(j)}}a\right|\le M_j.
\]
Along the entire interpolation $0\le\delta\le1$, the
normalized profile divided by $h$ equals
$1+O_J(\varepsilon^2)$.
The slope denominator is bounded below by $1$.
Thus all inverse denominators and their differentiated
expressions are uniformly controlled.

Consequently,
\[
 |\partial_\rho^m\partial_\delta^{2J+2}E_\delta|
 \le C_{J,m}\,
       \varepsilon^{2J+2}(1+\rho)^{2J-2-m},
 \qquad 0\le\delta\le1.
\]
Taylor's formula with integral remainder gives
\[
 aQ(V_J[a])
 =\frac1{(2J+1)!}
   \int_0^1(1-\delta)^{2J+1}
       \partial_\delta^{2J+2}E_\delta\,d\delta.
\]

The same argument applies after axial differentiation.
Indeed, the modulation hypotheses give
\[
 |\partial_y^n\varepsilon|
 \le C_n\varepsilon R^{-n},\qquad
 |\partial_y^n\rho|
 \le C_n\rho R^{-n},
\]
and
\[
 \left|
 \partial_y^n\left(\frac{R^ja^{(j)}}a\right)
 \right|
 \le C_{j,n}R^{-n}.
\]
Thus each axial differentiation contributes a factor
$R^{-1}$ to the preceding bounds.
Differentiating the integral remainder and dividing by $a$
therefore yields
\[
 \begin{aligned}
 |\partial_t^m\partial_y^nQ(V_J[a])|
 &\le C_{J,m,n}\,
       a^{-1-m}R^{-n}
       \left(\frac aR\right)^{2J+2}
       (1+\rho)^{2J-2-m}\\
 &=C_{J,m,n}\,
       a(y)^3R^{-2J-2-n}
       (a(y)+t)^{2J-2-m}.
 \end{aligned}
\]
This proves \eqref{glue:eq:defect}.
Higher derivative orders require higher modulation bounds,
but do not require a smaller $\varepsilon_J$.

Finally, the correction estimates and the differentiated
leaf bounds imply
\[
 |D_{x,y}^kV_J[a](|x|,y)|
 \le C_{J,k}(a(y)+|x|)^{1-k},
 \qquad k\ge1.
\]
Near the axis, these follow from the smooth even radial
extension; away from it, they follow from the usual radial
differentiation formulas.

Since $V_J[a]\approx a+|x|$, the parametrization
\[
 (x,y,\omega)\longmapsto
 \bigl(x,V_J[a](|x|,y)\omega,y\bigr),
 \qquad \omega\in S^3,
\]
gives uniformly controlled local charts after rescaling
by $a(y)+|x|$.
On patches of a sufficiently small fixed multiple of this
radius, the rescaled induced metric, its inverse, and
their derivatives of every fixed order are uniformly
bounded.
The enlarged domain provides these patches throughout
the stated interior region.

In these coordinates,
\[
 H_{\Sigma_J}
 =-\frac{Q(V_J[a])}{\sqrt{1+|DV_J[a]|^2}}.
\]
The differentiated bounds for $V_J[a]$ control the
denominator. Since $a(y)+|x|\le CR$, each axial factor
$R^{-1}$ is bounded by $C(a(y)+|x|)^{-1}$.
The defect estimate therefore gives the coordinate
derivative bounds
\[
 |D^jH_{\Sigma_J}|
 \le C_{J,j}\,a(y)^3R^{-2J-2}
                 (a(y)+|x|)^{2J-2-j}.
\]
The uniformly controlled rescaled metric and connection
give the same bounds for covariant derivatives, proving
the final assertion.
\end{proof}

\begin{proof}[Proof of Corollary~\ref{cor:arbitrary-order-matching}]
Write $u=u_\infty$ throughout this proof.

Choose $\theta_0$ sufficiently small that the connector
estimates hold on the enlarged rectangle
\[
 0\le t\le2R,\qquad |y-y_0|\le2R.
\]
We prove the estimate
on $0\le t\le R$, $|y-y_0|\le R$, which contains the region
in the statement.

We induct on $J$, retaining every fixed mixed derivative
order at each stage. The case $J=0$ is
Proposition~\ref{prop:precise-estimates-for-connector}.
Suppose the assertion holds at order $J-1$, and set
\[
 E=u-V_J[a].
\]
The induction hypothesis and \eqref{glue:eq:correction}
first give the weaker estimate
\[
 |\partial_t^m\partial_y^nE|
 \le C_{J,m,n}a(y)^3R^{-2J-n}
                  (a(y)+t)^{2J-2-m}.
\]
Moreover, because $a(y)=u(0,y)$ and all corrector profiles
have zero data at the axis,
\[
 E(0,y)=E_t(0,y)=0.
\]

Separate the axial terms of $Q$ by writing
\[
 Q(v)=\frac{v_{tt}}{1+v_t^2}+\frac3t v_t-\frac3v
       +\mathcal T(v),
\]
where
\[
 \mathcal T(v)
 =
 \frac{v_t^2v_y^2v_{tt}}
      {(1+v_t^2)(1+v_t^2+v_y^2)}
 -\frac{2v_tv_yv_{ty}}{1+v_t^2+v_y^2}
 +\frac{(1+v_t^2)v_{yy}}{1+v_t^2+v_y^2}.
\]
The established estimates for $u$ and $V_J$, and hence for
their interpolations, imply
\[
 |v_t|\le C_J,\qquad
 |v_{tt}|\le \frac{C_J}{a+t},\qquad
 |v_y|\le C_J\frac{a+t}{R},\qquad
 |v_{ty}|\le \frac{C_J}{R},\qquad
 |v_{yy}|\le C_J\frac{a+t}{R^2},
\]
together with their differentiated versions.
Linearizing the displayed expression for $\mathcal T$
along the segment between $V_J$ and $u$ therefore gives
\[
 \begin{aligned}
 |\mathcal T(u)-\mathcal T(V_J)|
 \le C_J\bigg(
 &\frac{(a+t)^2}{R^2}|E_{tt}|
 +\frac{a+t}{R}|E_{ty}|+|E_{yy}|\\
 &+\frac{a+t}{R^2}|E_t|
 +\frac1R|E_y|
 \bigg).
 \end{aligned}
\]
The same coefficient bounds hold after differentiation,
with an additional factor $(a+t)^{-1}$ for each transverse
derivative and $R^{-1}$ for each axial derivative.
Consequently, the weaker estimate for $E$ yields
\[
 \left|
 \partial_t^m\partial_y^n
 \bigl(\mathcal T(u)-\mathcal T(V_J)\bigr)
 \right|
 \le C_{J,m,n}a(y)^3R^{-2J-2-n}
                    (a(y)+t)^{2J-2-m}.
\]
Here the induction hypothesis is used with up to two
additional axial derivatives.

Since $Q(u)=0$, subtraction of the radial parts gives
\[
 (\mathsf a\partial_t^2+\mathsf b\partial_t+\mathsf c)E
 =-Q(V_J)-\bigl(\mathcal T(u)-\mathcal T(V_J)\bigr),
 \qquad E(0,y)=E_t(0,y)=0.
\]
The coefficients are obtained by averaging the radial
linearization between $V_J$ and $u$, exactly as in
\eqref{eq:leaf-remainder-ode}. By \eqref{glue:eq:defect}
and the preceding estimate, the right-hand side satisfies
\[
 \left|
 \partial_t^m\partial_y^n
 \bigl[-Q(V_J)-\mathcal T(u)+\mathcal T(V_J)\bigr]
 \right|
 \le C_{J,m,n}a(y)^3R^{-2J-2-n}
                    (a(y)+t)^{2J-2-m}.
\]

We verify that the radial response argument used in
Proposition~\ref{prop:direct-leaf-expansion} applies to these
averaged coefficients. The estimates for $u$ give
\[
 |\partial_t^m\partial_y^n(u-t)|
 \le C_{m,n}a(y)^3R^{-n}(a(y)+t)^{-2-m}.
\]
The same bound, with constants depending on $J$, holds
for $V_J-t$: indeed, each corrector satisfies
\[
 |\partial_t^m\partial_y^nW_\ell|
 \le C_{\ell,m,n}a(y)^3R^{-n}(a(y)+t)^{-2-m}
       \left(\frac{a(y)+t}{R}\right)^{2\ell},
\]
and the last factor is bounded on the region under
consideration. Thus the averaged coefficients satisfy
the bounds \eqref{eq:leaf-radial-coefficients}, with
$\beta$ replaced by $a(y)$ and constants depending on $J$.
They also have the tail expansions
\[
 \mathsf a=\frac12+O_J(a^3t^{-3}),\qquad
 \mathsf b=\frac3t+O_J(a^3t^{-4}),\qquad
 \mathsf c=\frac3{t^2}+O_J(a^3t^{-5}),
\]
with their differentiated versions. At the axis,
$\mathsf a(0,y)=1$, $\mathsf b-3/t$ is smooth and odd,
and $\mathsf a,\mathsf c$ are smooth and even.

The radial response estimate
\eqref{eq:leaf-radial-response} extends to the polynomial
forcing occurring here. To see this, fix $y$. Rescaling
$t$ by $a(y)$ in the smoothing region gives a bound of size
\[
 a(y)^{2J+3}R^{-2J-2}
\]
for $E$ on a fixed interval in the smoothing region. On the tail, the
logarithmic variable $\tau=\log(t/a(y))$ gives the same
stable system with limiting eigenvalues $-2$ and $-3$
as in that proof. The forcing now grows like
$e^{2J\tau}$, and
\[
 \int_{\log T_J}^{\tau}
 e^{-2(\tau-\sigma)}e^{2J\sigma}\,d\sigma
 \le C_J e^{2J\tau},
\]
where $T_J$ is a fixed constant separating the inner and outer regions.
Since $a(y)/R\to0$, this transition lies inside the
available region once $|y_0|$ is sufficiently large.
Variation of constants and differentiation of the radial
equation therefore give
\[
 |\partial_t^mE|
 \le C_{J,m}a(y)^3R^{-2J-2}(a(y)+t)^{2J-m}
\]
for every fixed $m\ge0$.

Finally, differentiate the radial equation in $y$ and
induct on the axial derivative order. The commutators
are sums of terms of the form
\[
 (\partial_y^r\mathsf a)
       \partial_t^2\partial_y^{n-r}E,\qquad
 (\partial_y^r\mathsf b)
       \partial_t\partial_y^{n-r}E,\qquad
 (\partial_y^r\mathsf c)
       \partial_y^{n-r}E,
 \qquad 1\le r\le n.
\]
The coefficient bounds give
\[
 \begin{aligned}
 &|\partial_t^m\partial_y^r\mathsf a|
 +(a+t)|\partial_t^m\partial_y^r(\mathsf b-3/t)|\\
 &\hspace{20mm}
 +(a+t)^2|\partial_t^m\partial_y^r\mathsf c|
 \le C_{J,m,r}R^{-r}(a+t)^{-m}.
 \end{aligned}
\]
The singular term $3/t$ has zero axial derivative.
Hence the induction hypothesis in $n$, retaining every
fixed transverse order, bounds the differentiated forcing
and all commutators by
\[
 C_{J,m,n}a(y)^3R^{-2J-2-n}
                 (a(y)+t)^{2J-2-m}.
\]
The differentiated data at the axis still vanish. Applying the
same radial response estimate proves
\[
 |\partial_t^m\partial_y^nE|
 \le C_{J,m,n}a(y)^3R^{-2J-2-n}
                  (a(y)+t)^{2J-m}.
\]
This closes both inductions and proves
\eqref{glue:eq:matching}.
\end{proof}

\subsection{Metric corrections} In the next lemma we collect the estimates needed for the conformal metric correction.
\begin{lemma}[Conformal correction with weighted estimates]
\label{lem:conformal-correction}
Use the axial band, the function $a$, and the scale $R$ from
Proposition~\ref{prop:correctors-estimates}.
Let $V$ be a positive smooth even profile on
\[
 0\le t<(C_1+1)R,\qquad |y-y_0|<2R,
\]
and set
\[
 \Sigma=\{(x,z,y): |z|=V(|x|,y)\}.
\]
Suppose that throughout this enlarged region
\[
 V(t,y)\approx a(y)+t\le CR,\qquad
 |D_{x,y}^{\,k}V(|x|,y)|
 \le C_k(a(y)+|x|)^{1-k}\quad(k\ge1),
\]
and, for some fixed $J\ge1$,
\[
 |\nabla_\Sigma^{\,j}H_\Sigma|
 \le C_{J,j}a(y)^3R^{-2J-2}
                (a(y)+t)^{2J-2-j}
 \qquad(j\ge0).
\]
Here $H_\Sigma=\operatorname{div}_\Sigma\nu$, with $\nu$
pointing towards increasing $|z|$.

There is a smooth compactly supported function $F$ such that
$g=e^{2F}g_{\rm euc}$ makes $\Sigma$ minimal on
\[
 t\le C_1R,\qquad |y-y_0|\le R.
\]
Moreover, $F|_\Sigma=0$, its support lies in the enlarged band
and in
\[
 \frac34V(t,y)<|z|<\frac54V(t,y),
\]
and, for every $m\ge0$,
\begin{equation}
\label{eq:conformal-correction-scale}
 |D^mF(x,z,y)|+
 |D^m(g-g_{\rm euc})(x,z,y)|
 \le C_{J,m}a(y)^3R^{-2J-2}
                (a(y)+t)^{2J-1-m},
 \qquad t=|x|.
\end{equation}
The derivatives in this estimate are ambient Cartesian
derivatives. The constants are independent of $a$ and $R$,
apart from the constants in the assumed estimates.
\end{lemma}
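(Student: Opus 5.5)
The plan is to use the explicit extension already written in \eqref{bg:eq:extension}. Set $\nu$ to be the unit normal of $\Sigma$ pointing towards increasing $|z|$ and define
\[
 F(x,z,y)=-\frac{H_V(t,y)}{8\sqrt{1+V_t^2+V_y^2}}\,(|z|-V(t,y))\,
 \vartheta\!\left(\frac{4(|z|-V(t,y))}{V(t,y)}\right)\psi(t,y),
\]
where $\vartheta\in C_c^\infty((-1,1))$ is equal to one near zero, and $\psi$ is an even axial and transverse cutoff. The cutoff $\psi$ equals one on $t\le C_1R$, $|y-y_0|\le R$ and is supported in the enlarged band, with $|D^k\psi|\le C_kR^{-k}$. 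Here $H_V(t,y)$ is the mean curvature of $\Sigma$ at the orbit over $(t,y)$, extended as a function of the base variables.

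\textbf{Step 1: support and equation.} On the support, $|z|-V$ lies within $V/4$ of zero, which gives the support in $\tfrac34V<|z|<\tfrac54V$. Because $V>0$, this region avoids $z=0$, so the function is smooth in $z$; evenness of $V$ and $H_V$ in $|x|$ makes it smooth at $x=0$. On $\Sigma$ we have $F=0$, so the tangential derivatives vanish and $\nabla F=(\nu F)\nu$. The function $|z|-V(|x|,y)$ has gradient $\sqrt{1+V_t^2+V_y^2}\,\nu$ on $\Sigma$, hence $\nu F=-H_\Sigma/8$ where $\psi=1$. The conformal formula for an $8$-dimensional hypersurface under $g=e^{2F}g_{\rm euc}$ gives $H^g=e^{-F}(H+8\nu F)$, and this vanishes on the stated region, so $\Sigma$ is $g$-minimal there.

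\textbf{Step 2: the estimate.} This is the main bookkeeping. Write $F$ as a product of four factors:
\begin{itemize}
\item $A=H_V/\sqrt{1+|DV|^2}$, a function of $(x,y)$;
\item $B=|z|-V$;
\item $\vartheta(4B/V)$;
\item $\psi$.
\end{itemize}
Set $s=a+t$. On the support $|z|\approx s$, so ambient derivatives of $|z|$ cost $s^{-1}$. The hypothesis $|D^kV|\le C_ks^{1-k}$ gives $|D^kB|\le C_ks^{1-k}$ and $|D^k\vartheta(4B/V)|\le C_ks^{-k}$, and derivatives of $\psi$ cost $R^{-1}\le Cs^{-1}$.

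For $A$, convert the covariant bounds on $H_\Sigma$ into Cartesian base derivatives of $H_V(t,y)$. This uses the rescaled charts at scale $s$, exactly as at the end of the proof of Proposition~\ref{prop:correctors-estimates}, where the rescaled metric and parametrization are uniformly controlled. It yields
\[
 |D^jA|\le C_{J,j}\,a^3R^{-2J-2}s^{2J-2-j}.
\]
The Leibniz rule then gives
\[
 |D^mF|\le C\sum_j s^{1-(m-j)}|D^jA|\le C_{J,m}\,a^3R^{-2J-2}s^{2J-1-m},
\]
which is \eqref{bg:eq:extension-estimate} specialized. For the metric, $g-g_{\rm euc}=(e^{2F}-1)g_{\rm euc}$. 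Faà di Bruno, together with $|F|\le C(a/R)^{2J+2}\le C$ (since $s\le CR$), gives the same bound for $D^m(g-g_{\rm euc})$: each product of derivatives of $F$ is dominated by the single-factor term, because the extra factors are bounded by $C a^3R^{-2J-2}s^{2J-1}\cdot s^{-k}$ with $a^3R^{-2J-2}s^{2J-1}\le C$.

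\textbf{Main obstacle.} The delicate point is the transfer from intrinsic covariant derivatives $\nabla^j_\Sigma H$ to base derivatives of $H_V(t,y)$ that are uniform down to the axis. This requires the orbit-space parametrization and the rescaling by $s$ to be uniformly nondegenerate, including near $x=0$, where one should use the Cartesian chart $(x,y,\omega)$. I would handle it via local charts of size $cs$ around each point, using the even smooth extension.
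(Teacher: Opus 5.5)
Your proposal is correct and follows the paper's proof essentially step for step. It uses the same explicit extension with an axial/transverse cutoff and the same identity $\nu F=-\tfrac18 H_\Sigma$, obtained from $\nu(|z|-V)=\sqrt{1+V_t^2+V_y^2}$, together with the conformal formula $H_{\Sigma,g}=e^{-F}(H_\Sigma+8\nu F)$. It also converts $\nabla_\Sigma^jH_\Sigma$ into Cartesian base derivatives using the controlled geometry at scale $s=a+t$, and does the same product-rule and Fa\`a di Bruno bookkeeping for $F$ and $e^{2F}-1$.

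One small slip: the weight $a^3R^{-2J-2}s^{2J-1}$ is largest at $s\approx R$, so the correct bound is $|F|\le C(a/R)^3$, not $C(a/R)^{2J+2}$. Since you only use $|F|\le C$, the argument is unaffected.
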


\begin{proof}
Choose a smooth cutoff $\eta(t,y)$, supported in the enlarged
band and equal to one on the smaller region, such that
$\eta(|x|,y)$ is smooth and
\[
 |D_{x,y}^{\,k}\eta|\le C_kR^{-k}.
\]
Choose also $\chi\in C_c^\infty((-1,1))$ with $\chi=1$ near
zero. By rotational symmetry, write the mean curvature as
$H_\Sigma(t,y)$, and define
\[
 F(x,z,y)=
 -\frac{\eta(t,y)H_\Sigma(t,y)}
        {8\sqrt{1+V_t(t,y)^2+V_y(t,y)^2}}\,
 \bigl(|z|-V(t,y)\bigr)
 \chi\!\left(\frac{4(|z|-V(t,y))}{V(t,y)}\right).
\]
Extend this function by zero outside the indicated region.
The cutoff vanishes near $z=0$, while smoothness at $x=0$
follows from the smooth even profiles. The cutoff $\eta$
ensures smooth extension across the boundaries of the band.

On $\Sigma$,
\[
 F=0,\qquad
 \nu\bigl(|z|-V(t,y)\bigr)
 =\sqrt{1+V_t^2+V_y^2}.
\]
Consequently,
\[
 \nu F=-\frac18\eta H_\Sigma.
\]
Since $\dim\Sigma=8$, the conformal transformation formula gives
\[
 H_{\Sigma,g}
 =e^{-F}(H_\Sigma+8\nu F)
 =(1-\eta)H_\Sigma.
\]
Thus $\Sigma$ is minimal wherever $\eta=1$.

We next justify the ambient derivative estimates.
In the coordinates $(x,y,\omega)$, where
$z=V(|x|,y)\omega$ and $\omega\in S^3$, the induced metric is
\[
 \bigl(I+D_{x,y}V\otimes D_{x,y}V\bigr)
 \oplus V^2g_{S^3}.
\]
The bounds on $V$ imply that the connection coefficients
in the $(x,y)$ directions have $k$th derivatives bounded by
$C_k(a(y)+t)^{-1-k}$.
Since $H_\Sigma$ is independent of $\omega$, conversion of
its covariant derivatives to Cartesian $(x,y)$ derivatives
therefore gives
\[
 |D_{x,y}^{\,k}H_\Sigma(t,y)|
 \le C_{J,k}a(y)^3R^{-2J-2}
                (a(y)+t)^{2J-2-k}.
\]

On the support of the normal cutoff,
\[
 |z|\approx V(t,y)\approx a(y)+t.
\]
Hence, for every $k\ge0$,
\begin{align*}
 |D^k(|z|-V)|&\le C_k(a(y)+t)^{1-k},\\
 \left|D^k(1+V_t^2+V_y^2)^{-1/2}\right|
 &\le C_k(a(y)+t)^{-k},\\
 \left|D^k\chi\!\left(\frac{4(|z|-V)}{V}\right)\right|
 &\le C_k(a(y)+t)^{-k}.
\end{align*}
Also, $R^{-k}\le C_k(a(y)+t)^{-k}$ on the enlarged band.
Applying the product rule to the formula for $F$ yields
\[
 |D^mF|
 \le C_{J,m}a(y)^3R^{-2J-2}
                (a(y)+t)^{2J-1-m}.
\]

Finally, the undifferentiated factor satisfies
\[
 a(y)^3R^{-2J-2}(a(y)+t)^{2J-1}
 =
 \left(\frac{a(y)}R\right)^3
 \left(\frac{a(y)+t}R\right)^{2J-1}
 \le C_J.
\]
In differentiating $e^{2F}$, each product of derivatives
of $F$ is therefore bounded by the same weight as a single
derivative of the corresponding total order. Together with
$|e^{2F}-1|\le C_J|F|$, this proves
\eqref{eq:conformal-correction-scale} for the metric.
\end{proof}

\section{General quadratic cones}\label{sec:general-cones}

The proposed construction follows the proof of
Theorem~\ref{thm:main-accumulation}. We will insert successively
smaller copies of the conjectured connector and increase the
correction order as the insertion points approach zero.
We will then remove the remaining mean curvature by a conformal
change supported away from the exact connector cores.
The increasing correction order should ensure that this metric
change is smooth and flat at the accumulation point.
Carrying this out requires the corresponding arbitrary-order
end and matching estimates in addition to the existence of
the connectors.

The expected behavior at infinity is determined by the
same Jacobi calculation as in the present example. Put
\[
 m=p+q,\qquad
 \gamma=\frac{m-1-\sqrt{m^2-6m+1}}2,\qquad
 V=C_{p,q}\times\R,
\]
and write $r=(|x|^2+|z|^2)^{1/2}$. For the minimizing
quadratic cones, $1<\gamma\le2$. On rotationally invariant
functions, the Jacobi operator of $V$ is
\[
 L_V=\partial_{rr}+\frac mr\partial_r
       +\partial_{yy}+\frac m{r^2}.
\]
A direct calculation gives
\[
 L_V\left[
 r^{-\gamma}
 \left(y^2-\frac{r^2}{m+1-2\gamma}\right)
 \right]=0.
\]
This field has homogeneity $2-\gamma<1$ and therefore
describes a perturbation compatible with the cylindrical
blow-down.

More precisely, orient the normal of $V$ towards
$p|z|^2-q|x|^2>0$, and let $f_R$ be the normal height of
$R^{-1}M$ over the regular part of $V$. For the attachment
with tip $C_{p+1,q}$, we expect
\[
 R^{\gamma-1}f_R
 \longrightarrow
 A r^{-\gamma}
 \left(y^2-\frac{r^2}{m+1-2\gamma}\right),
 \qquad A>0,
\]
locally smoothly on $\operatorname{reg}V$.
The opposite attachment has the opposite sign.
When $p=q=3$, this reduces to
\[
 \gamma=2,\qquad
 A\left(\frac{y^2}{r^2}-\frac13\right).
\]

We expect to use the following replacements for the
ingredients of the present construction.

\begin{enumerate}[leftmargin=*,itemsep=0.7em]
\item
The transverse models will be the Hardt--Simon leaves
associated with $C_{p,q}$. Their dilations provide the
foliations on the two sides of the cone. For a radius graph
$|z|=v(t,y)$, with $t=|x|$, the equation becomes
\[
 \left(\delta_{ij}
       -\frac{v_i v_j}{1+|Dv|^2}\right)v_{ij}
 +\frac pt v_t-\frac qv=0,
 \qquad i,j\in\{t,y\}.
\]
The cylindrical profile is $\sqrt{q/p}\,t$.
The two sides must be treated with their respective leaf
models; they need not have identical expansions.

\item
The barriers and non-concentration estimates used here
have general versions in
\cite[Propositions~2.9 and~4.3]{SzCyl}.
These apply to strictly stable, strictly minimizing cones,
including the minimizing quadratic cones considered here.
They provide the replacements for the corresponding
Simons-cone inputs in our barrier and end arguments.

The phase analysis must also be adjusted. If $f$ is an
invariant Jacobi field and $P=r^\gamma f$, then
\[
 P_{rr}+\frac{m-2\gamma}{r}P_r+P_{yy}=0.
\]
For $m=6$, this is the radial harmonic equation used in
the present paper. In general, the harmonic replacement
must be carried out for this weighted equation.
The quadratic growth estimate and the nonvanishing of
its quadratic coefficient still need to be proved.

\item
The existence argument for the Dirichlet problem on a bounded domain and the classification
of singularities must be adapted to the new equation.
The relevant density inequalities can be checked directly.

The link of $C_{p,q}$ is
\[
 S^p\!\left(\sqrt{\frac{p}{p+q}}\right)
 \times
 S^q\!\left(\sqrt{\frac{q}{p+q}}\right).
\]
Consequently, its normalized density is
\[
 \Theta(C_{p,q})
 =
 \frac{|S^p|\,|S^q|}{|S^{p+q}|}
 \left(\frac{p}{p+q}\right)^{p/2}
 \left(\frac{q}{p+q}\right)^{q/2}.
\]
Taking a product with $\R$ preserves the normalized density,
so
\[
 \Theta(C_{p,q}\times\R)=\Theta(C_{p,q}).
\]

We claim that the displayed density strictly decreases
when either index increases. To see this, set
\[
 \Phi(s)=\log\Gamma\!\left(\frac{s+1}{2}\right)
          -\frac{s}{2}\log s,\qquad s>0.
\]
The sphere-area formula gives
\[
 \log\Theta(C_{p,q})
 =\log(2\sqrt{\pi})
   +\Phi(p+q)-\Phi(p)-\Phi(q).
\]
Moreover, the integral representation of the derivatives
of $\log\Gamma$ yields
\[
 \Phi''(s)
 =\frac12\int_0^\infty e^{-su}
       \left(\frac{u}{\sinh u}-1\right)\,du<0.
\]
Thus $\Phi'$ is strictly decreasing, and
\[
 \log\frac{\Theta(C_{p+1,q})}{\Theta(C_{p,q})}
 =
 \int_0^1
 \bigl[\Phi'(p+q+s)-\Phi'(p+s)\bigr]\,ds
 <0.
\]
Interchanging $p$ and $q$ proves the other inequality.
Therefore both proposed attachments satisfy
\[
 \frac{\Theta(C_{p+1,q})}
      {\Theta(C_{p,q}\times\R)}<1,
 \qquad
 \frac{\Theta(C_{p,q+1})}
      {\Theta(C_{p,q}\times\R)}<1.
\]

These are precisely the density comparisons required by
the monotonicity formula.

\item
Once the connector and its quantitative end estimates are
available, we expect to repeat the corrector and matching
construction using the linearized operator of the
corresponding Hardt--Simon leaf.

The exponents change with $\gamma$. A leaf dilated by $a$
has leading normal coefficient proportional to
$a^{1+\gamma}$, so the natural phase has homogeneity
$1+\gamma$. A quadratic axial phase therefore predicts
\[
 a_\infty(Y)\sim\kappa |Y|^{2/(1+\gamma)},
 \qquad \kappa>0.
\]
Thus the cubic phase and the exponent $2/3$ in the present
paper are the special case $\gamma=2$.

The corresponding conformal estimate to establish is
\[
 |D^jF|
 \le C_{J,j}a(y)^{1+\gamma}R^{-2J-2}
                  (a(y)+t)^{2J+1-\gamma-j}.
\]
The conformal construction itself is unchanged, with the
dimension factor $8$ replaced by $m+2$:
\[
 H_{e^{2F}g_{\rm euc}}
 =e^{-F}\bigl(H+(m+2)\nu F\bigr).
\]
With these weighted estimates, the same increasing-order
choice would give a smooth metric flat at the accumulation
point.
\end{enumerate}

The general cylindrical estimates therefore supply several
of the analytic tools. The remaining work is to construct
the entire connectors, identify their isolated tangent
cones, and establish the weighted end and matching
estimates needed for the accumulating construction.

\end{document}